\newtheorem{thm}{Theorem}
\newtheorem{cor}{Corrolarry}
\newtheorem{lem}{Lemma}
\newtheorem{pro}{Proposition}
\newtheorem{rem}{Remark}
\newtheorem{definition}{Definition}
\numberwithin{equation}{section} \numberwithin{lem}{section}
\numberwithin{thm}{section} \numberwithin{cor}{section}
\numberwithin{pro}{section} \numberwithin{rem}{section}
\begin{document}
\title[The ground state of a Gross--Pitaevskii energy]{The ground state of a Gross--Pitaevskii energy with general potential in the
Thomas--Fermi limit}

\author{Georgia Karali}

\address{Department of Applied Mathematics, University of
Crete, GR--714 09 Heraklion, Crete, Greece, and Institute of Applied
and Computational Mathematics, IACM, FORTH, Greece.}
\email{gkarali@tem.uoc.gr}
\author{Christos Sourdis}
\address{Department of Applied Mathematics, University of
Crete, GR--714 09 Heraklion, Crete, Greece.}
\email{csourdis@tem.uoc.gr} \maketitle
\begin{abstract}
We study the ground state which minimizes a Gross--Pitaevskii energy
with \emph{general non-radial} trapping potential, under the unit
mass constraint, in the Thomas--Fermi limit where a small parameter
$\varepsilon$ tends to $0$. This ground state plays an important
role in the mathematical treatment of recent experiments on the
phenomenon of Bose--Einstein condensation, and in the study of
various types of solutions of nonhomogeneous defocusing nonlinear
 Schr\"{o}dinger equations. Many of these applications
require delicate estimates for the behavior of the ground state near
the boundary of the condensate, as $\varepsilon\to 0$, in the
vicinity of which the ground state has irregular behavior in the
form of a steep \emph{corner layer}. In particular, the role of this
layer is important in order to detect the presence of vortices in
the small density region of the condensate, understand the
superfluid flow around an obstacle, and also has a leading order
contribution in the energy. In contrast to previous approaches, we
utilize a perturbation argument to go beyond the classical
Thomas--Fermi approximation and \emph{accurately} approximate the
layer by the Hastings--McLeod solution of the Painlev\'{e}--II
equation. This settles an open problem (cf. \cite[pg. 13 or Open
Problem 8.1]{aftalionbook}), answered very recently \emph{only} for
the special case of the model harmonic potential \cite{pelinovsky}.
In fact, we even improve upon previous results that relied heavily
on the radial symmetry of the potential trap. Moreover, we show that
the ground state has the  \emph{maximal regularity} available,
namely it remains uniformly bounded in the $\frac{1}{2}$-H\"{o}lder
norm, which is the \emph{exact} H\"{o}lder regularity of the
singular limit profile, as $\varepsilon\to 0$.
Our study is highly motivated by an interesting open problem posed
recently by Aftalion, Jerrard, and Royo-Letelier
\cite{aftalion-jerrard}, and an open question of Gallo and
Pelinovsky \cite{gallo}, concerning the removal of the radial
symmetry assumption from the potential trap.
%
\end{abstract}
\tableofcontents

\section{Introduction}
\subsection{The problem}\label{secproblem}
This paper is concerned with the analysis of the $\varepsilon \to 0$
limiting behavior of the Gross-Pitaevskii energy
\begin{equation}\label{eqGminimizer}
G_\varepsilon(u)=\int_{\mathbb{R}^2}^{} \left\{\frac{1}{2}|\nabla
u|^2+\frac{1}{4\varepsilon^2}|u|^4+\frac{1}{2\varepsilon^2}W(\textbf{y})|u|^2
\right\}d\textbf{y},
\end{equation}
minimized in
\begin{equation}\label{eqconstraint}
\mathcal{H}\equiv \left\{u\in W^{1,2}(\mathbb{R}^2;\mathbb{C}):\
\int_{\mathbb{R}^2}^{}W(\textbf{y})|u|^2d\textbf{y}<\infty,\ \
\int_{\mathbb{R}^2}^{}|u|^2d\textbf{y}=1\right\},
\end{equation}
 where $\varepsilon>0$ is a small parameter and,
unless specified otherwise, the potential $W$ will satisfy:
\begin{equation}\label{eqV1}
W \ \textrm{is\ nonnegative},\ W\in C^1,
\end{equation}
and
\begin{equation}\label{eqainfinity}
\textrm{there\ exist\ constants}\ C>1,\ p\geq 2\ \textrm{such\
that}\ \ \frac{1}{C}|\textbf{y}|^p\leq W(\textbf{y})\leq
C|\textbf{y}|^p\ \ \textrm{if}\ \ |\textbf{y}|\geq C,
\end{equation}
(see also Remark \ref{remainfinity} below). It is common to refer to
the above problem as the minimization of $G_\varepsilon$ under the
unit mass constraint.

Let $\lambda_0>\inf_{\mathbb{R}^2}W(\textbf{y})$ be  uniquely
determined from the relation
\begin{equation}\label{eqlambda0}
\int_{\mathbb{R}^2}^{}\left(\lambda_0-W(\textbf{y})
\right)^+d\textbf{y}=1,
\end{equation}
where throughout this paper we will denote $f^+\equiv \max\{f,0\}$.
The choice of the value one in the above relation is dictated by the
constraint (\ref{eqconstraint}), see also (\ref{eqn2toA+}) below.
We further assume that the region
\begin{equation}\label{eqD0}
\mathcal{D}_0\equiv \{\textbf{y}\in \mathbb{R}^2:\
W(\textbf{y})<\lambda_0\}
\end{equation}
is a  simply connected
 bounded domain,
containing the origin, with smooth boundary $\partial
\mathcal{D}_0$, such that
\begin{equation}\label{eqVnormal}
\frac{\partial W}{\partial \textbf{n}}>0 \ \ \textrm{on}\ \partial
\mathcal{D}_0,
\end{equation}
where $\textbf{n}=\textbf{n}(\textbf{y})$ denotes the outward unit
normal vector to $\partial \mathcal{D}_0$. This last assumption can
be viewed as a non--degeneracy condition. We point out that these
hypotheses admit physically relevant
examples, used to model certain experiments (see the next
subsection). We stress that the simply connectedness assumption
 is assumed  for convenience purposes
only (see Remark \ref{remconnected} below), and so is the fact that
the setting is two-dimensional (see Remark \ref{remuniq} below).

 It follows from \cite[Prop. 1]{seiringerLNF} (see also
\cite{aftalion-jerrard}, \cite{ignat}, \cite{lieb}) that the
functional $G_\varepsilon$ has a unique real valued minimizer
$\eta_\varepsilon>0$ in $\mathcal{H}$ (all complex valued minimizers
are of the form $\eta_\varepsilon e^{i\alpha}$, where $\alpha$ is a
constant). The function $\eta_\varepsilon$ satisfies
\begin{equation}\label{eqlagrange}
-\Delta
\eta_\varepsilon+\frac{1}{\varepsilon^2}\eta_\varepsilon\left(W(\textbf{y})+\eta_\varepsilon^2
\right)=\frac{1}{\varepsilon^2}\lambda_\varepsilon
\eta_\varepsilon,\ \ \eta_\varepsilon>0\ \ \textrm{in}\
\mathbb{R}^2,\  \eta_\varepsilon\to 0\ \ \textrm{as}\ \
|\textbf{y}|\to \infty,
\end{equation}
where $\frac{1}{\varepsilon^2}\lambda_\varepsilon$ is the Lagrange
multiplier, which is also necessarily unique. The point being that
(\ref{eqainfinity}) ensures that minimizing sequences of
$G_\varepsilon$ in $\mathcal{H}$ cannot have their mass escaping at
infinity (see also \cite{rabinowitz}); in fact the imbedding
$\mathcal{H}\hookrightarrow L^2(\mathbb{R}^2,\mathbb{C})$ is compact
(see \cite{ignat}, \cite{zhangZAMP}, or more generally \cite[Lemma
3.1]{bartsch}). (One can also ignore the mass constraint, and
instead minimize the functional
$G_\varepsilon(u)-\frac{\lambda}{2\varepsilon^2}\|u\|^2_{L^2(\mathbb{R})}$,
with $\lambda>\min_{\mathbb{R}^2} W$, in which case the minimizer
would satisfy (\ref{eqlagrange}) with
$\lambda_\varepsilon=\lambda$).

The real issue is the study of the asymptotic behavior of the
minimizer $\eta_\varepsilon$ (or more generally of the critical
points) of $G_\varepsilon$ as the parameter $\varepsilon$ tends to
zero. Following Aftalion and Rivi\`{e}re \cite{aftalionriviere},
letting
\begin{equation}\label{eqA}
 A=\lambda_0-W,
 \end{equation} the functional
$G_\varepsilon$ can be rewritten as
\begin{equation}\label{eqGepsrenorintro}
G_\varepsilon(\eta)=\int_{\mathbb{R}^2}^{} \left\{\frac{1}{2}|\nabla
\eta|^2+\frac{1}{4\varepsilon^2}(\eta^2-A^+)^2+\frac{1}{2\varepsilon^2}A^-\eta^2
\right\}d\textbf{y} + \frac{1}{2\varepsilon^2}\left(
\lambda_0-\frac{1}{2}\int_{\mathbb{R}^2}^{}(A^+)^2d\textbf{y}\right)
\end{equation}
if $\eta \in \mathcal{H}$ is real valued, where
$A^+\equiv\max\{A,0\}$ and $A^-\equiv-\min\{A,0\}$. Let
$G_\varepsilon^1(\eta)$ denote the first integral above. Since
$\eta_\varepsilon$ clearly minimizes $G_\varepsilon^1$ in
$\mathcal{H}$, by constructing a suitable competitor based on
$\sqrt{A^+}$, it is easy to see that
\begin{equation}\label{eqG1intro}
G_\varepsilon^1(\eta_\varepsilon)\leq C |\ln \varepsilon|,
\end{equation}
 for some
constant $C>0$, provided $\varepsilon$ is small (see \cite{alama},
\cite{aftalion-jerrard}, \cite{jerard}, and Remark \ref{remlagrange}
herein). (We remark that the logarithmic term appears because
(\ref{eqD0}), (\ref{eqVnormal}) imply that $\nabla
\left(\sqrt{A^+}\right)$ is not square--integrable near $\partial
\mathcal{D}_0$). Hence, for small $\varepsilon>0$, we have
\begin{equation}\label{eqcompetitor}
\int_{\mathbb{R}^2}^{}
\left\{(\eta_\varepsilon^2-A^+)^2+A^-\eta_\varepsilon^2
\right\}d\textbf{y}\leq C\varepsilon^2|\ln \varepsilon|,
\end{equation}
which suggests that $\eta_\varepsilon^2$ should be close, in some
sense, to $A^+$ as $\varepsilon\to 0$. Indeed, it can be shown that
\begin{equation}\label{eqn2toA+}\eta_\varepsilon\to \sqrt{A^+}\ \
\textrm{uniformly\ in}\  \mathbb{R}^2\  \textrm{as}\ \varepsilon\to
0,
\end{equation}
see the references in Subsection \ref{secknown} below. Therefore,
loosely speaking, the minimizer $\eta_\varepsilon$ develops a steep
\emph{corner layer} along $\partial{D}_0$, as $\varepsilon\to 0$
(recall (\ref{eqD0}), (\ref{eqVnormal})). Note also that
$\sqrt{A^+}$ is the global minimizer of the ``limit'' functional
\begin{equation}\label{eqGminimizer000}
G_\varepsilon^0(\eta)=\int_{\mathbb{R}^2}^{}
\left\{\frac{1}{4\varepsilon^2}\eta^4+\frac{1}{2\varepsilon^2}W(\textbf{y})\eta^2
\right\}d\textbf{y},
\end{equation}
among real functions such that $\|\eta\|_{L^2(\mathbb{R}^2)}=1$ and
$W\eta^2\in L^1(\mathbb{R}^2)$. In the context of Bose-Einstein
condensates, see the following subsection, the function $\sqrt{A^+}$
is known as the \emph{Thomas-Fermi approximation}.

The estimates that are available in the literature  for the
convergence in (\ref{eqn2toA+}), see Subsection \ref{secknown}
below, fail to encapsulate important information which is often
required in interesting applications (see Section
\ref{secmotivation} below). As an illustrative example, let us
mention that a lower bound for $\eta_\varepsilon$, sufficient to
imply that the second variation
$\partial^2G_\varepsilon(\eta_\varepsilon)$ is coercive (this is
easy to prove but hard to estimate), does not seem to be known (in
Remark \ref{remfuscoextension} we will establish a spectral bound
for $\partial^2G_\varepsilon(\eta_\varepsilon)$, and as a matter of
fact one may even calculate sharp constants). Our main goal in this
paper is to provide  crucial details, missing from the known results
that describe the statement (\ref{eqn2toA+}) quantitatively, placing
special emphasis on \emph{how} $\eta_\varepsilon$ converges to the
``singular limit'' $\sqrt{A^+}$ near $\partial \mathcal{D}_0$, as
$\varepsilon\to 0$, and proving that it converges in a self-similar
fashion as conjectured in \cite{aftalionbook}. Although it is a
variational problem, our approach will be based more on partial
differential equation and functional analysis tools. Our treatment
is concise and systematic, and can be used to treat in a unified
manner problems with similar features.

As will be apparent from one glance in the references of this paper
and the following discussion, the current interest in the minimizer
$\eta_\varepsilon$, and in problems that have it on their
background, is phenomenal.
\subsection{Motivation for the current work}\label{secmotivation}
The motivation for the current work is threefold:
\subsubsection{Minimization of a
Gross-Pitaevskii energy describing a Bose-Einstein condensate in a
potential trap} Among the many experiments on Bose-Einstein
condensates (which we abbreviate BEC), one consists in rotating the
trap holding the atoms in order to observe a superfluid behavior:
the appearance of quantized vortices (see the books
\cite{aftalionbook}, \cite{pethick}, \cite{pitaevskibook} and the
references that follow). This takes place for sufficiently large
rotational velocities. On the contrary, at low rotation, no vortex
is detected in the bulk of the condensate. In a BEC, all the atoms
occupy the lowest energy state so that they can be described by the
same complex valued wave function. The latter is at the same time
the macroscopic quantum wave function of the condensate and
minimizes a Gross-Pitaevskii type energy. A vortex corresponds to
zeroes of the wave function with phase around it.
In two dimensions, the Gross-Pitaevskii energy considered in
\cite{aftaliondu}, \cite{aftalion-jerrard}, \cite{ignat},
\cite{ignat2} has the form:
\begin{equation}\label{eqEfunctional}
E_\varepsilon(v)=\int_{\mathbb{R}^2}^{} \left\{\frac{1}{2}|\nabla
v|^2+\frac{1}{4\varepsilon^2}|v|^4+\frac{1}{2\varepsilon^2}W(\textbf{y})|v|^2
-\Omega \textbf{y}^\bot \cdot (iv,\nabla v)\right\}d\textbf{y},\ \
v\in \mathcal{H},
\end{equation}
where $\Omega$ is the angular velocity,
$\textbf{y}=(\textbf{y}_1,\textbf{y}_2)$,
$\textbf{y}^\bot=(-\textbf{y}_2,\textbf{y}_1)$, $\varepsilon>0$ is a
small parameter that corresponds to the Thomas-Fermi approximation
\cite{fermi, thomas}, the trapping potential $W$ belongs in the
class described in the previous subsection, and $(iv,\nabla
v)=iv\nabla v^*-iv^*\nabla v$. It is clear that $\eta_\varepsilon$
is the unique real valued minimizer of $E_\varepsilon$ in
$\mathcal{H}$.

 For mathematical studies in the case where the
condensate has an annular shape, we refer to \cite{alama} and
\cite{corregiCMP}, whilst for studies in a three-dimensional setting
to \cite{alamamontero}, \cite{baldojerrard}, and
\cite{nonlinearanalysisChinese}. For numerics we refer to the review
article \cite{baoREVIEW}.

 The density of the condensate is significant in
$\mathcal{D}_0$ (keep in mind (\ref{eqn2toA+})), which is typically
a disc or an annulus, and gets exponentially small outside of this
domain. The case of harmonic trapping potential
\begin{equation}\label{eqharmonic}W(\textbf{y}_\textbf{1},\textbf{y}_\textbf{2})=\textbf{y}_\textbf{1}^2+\Lambda^2\textbf{y}_\textbf{2}^2,
\end{equation}
for a fixed parameter $0<\Lambda\leq 1$, has been considered in
experiments in \cite{harmonic1, harmonic2}. In recent experiments,
in which a laser beam is superimposed upon the magnetic trap holding
the atoms, the trapping potential $W$ is of a different type
\cite{lazer1, lazer2, lazer3}: \begin{equation}\label{eqlazer}
W(r)=r^2+ae^{-br^2},\ \ \
r^2=\textbf{y}_\textbf{1}^2+\textbf{y}_\textbf{2}^2,\ \ a,b>0.
\end{equation}
(By choosing $a,b$ accordingly, the domain $\mathcal{D}_0$ is either
a disc or an annulus).

The energy $E_\varepsilon$ bears a formal resemblance to the
well-studied Ginzburg-Landau functional
\[
J_\varepsilon(u)=\int_{\mathbb{R}^2}^{} \left\{\frac{1}{2}|\nabla
u|^2+\frac{1}{4\varepsilon^2}\left(|u|^2-1 \right)^2-\Omega
\textbf{y}^\bot \cdot (iu,\nabla u)\right\}d\textbf{y},
\]
 used to model superconductivity \cite{bethuel} (with $\Omega=0$), and superfluidity \cite{alamabronsardmillot}, \cite{serfatyESAIM}.
In their influential monograph \cite{bethuel}, Bethuel, Brezis and
H\'{e}lein have developed the main tools for studying vortices in
``Ginzburg-Landau type'' problems. As we have already pointed out,
the singular behavior of $\sqrt{A^+}$ near $\partial \mathcal{D}_0$
induces a cost of order $|\ln \varepsilon|$ in the energy. This
causes a mathematical difficulty in detecting vortices by energy
methods, since any vortex has precisely the same cost (see
\cite{bethuel}). In other words, the possible presence of vortices
will be hidden by the energetic cost of the corner layer. This
difficulty is common in problems of Ginzburg-Landau type when the
zero Dirichlet boundary condition is imposed (see for instance
\cite{serfatyESAIM}). Fortunately, this difficulty can be surpassed
in an elegant way by an idea that goes back to the work of Lassoued
and Mironescu \cite{lassouedmironescu}, and Andr\'{e} and Shafrir
\cite{andre2}. By a remarkable identity, for any $v$, the energy
$E_\varepsilon(v)$, for any $\Omega$, splits into two parts, the
energy $G_\varepsilon(\eta_\varepsilon)$ of the density profile and
a reduced energy of the complex phase $w = v/\eta_\varepsilon$:
\begin{equation}\label{eqenergysplitting}
E_\varepsilon(v) = G_\varepsilon(\eta_\varepsilon)+F_\varepsilon(w),
\end{equation}
where
\begin{equation}\label{eqFepsilonfuncitonal}
F_\varepsilon(w)=\int_{\mathbb{R}^2}^{}\left\{\frac{\eta_\varepsilon^2}{2}|\nabla
w|^2+\frac{\eta_\varepsilon^4}{4\varepsilon^2}\left(|w|^2-1\right)^2
-\eta^2_\varepsilon\Omega \textbf{y}^\bot \cdot (iw,\nabla w)
\right\}d\textbf{y},
\end{equation}
(see also \cite{ignat}). In particular, the potential $W$ only
appears in $G_\varepsilon$.
 In (\ref{eqenergysplitting}), the term
$G_\varepsilon(\eta_\varepsilon)$ carries the energy of the singular
layer near $\partial{D}_0$, and thus one may detect vortices from
the reduced energy $F_\varepsilon$ by applying the Ginzburg-Landau
techniques to the energy
\[
\tilde{F}_\varepsilon(w)=\int_{\mathbb{R}^2}^{}\left\{\frac{\eta_\varepsilon^2}{2}|\nabla
w|^2+\frac{\eta_\varepsilon^4}{4\varepsilon^2}\left(|w|^2-1\right)^2
\right\}d\textbf{y}.
\]
The difficulty will arise in the small density region of the
condensate, namely $\mathbb{R}^2\backslash\mathcal{D}_0$, where
$\eta_\varepsilon$ is small. This kind of splitting of the energy is
by now standard in the rigorous analysis of functionals such as
$E_\varepsilon$ (see also \cite{lin}). It clearly brings out the
need for the study of the minimizer $\eta_\varepsilon$ of
$G_\varepsilon$, which is the subject of the current work.
  In particular, as will also be apparent from the discussion in
Subsection \ref{secknown} below, estimating $\eta_\varepsilon$ near
$\partial \mathcal{D}_0$ is essential for adapting the powerful
Gamma-convergence techniques, developed for $J_\varepsilon$ (see
\cite{serfatybook} and the references therein), to the study of
$E_\varepsilon$ (concerning issues of vortices, vortex lines
\cite{alamabronsardmillot}, etc). Obtaining these delicate
estimates, \emph{without imposing any symmetry assumptions} on the
trapping potential, is the main contribution of the present paper.

\subsubsection{Semi--classical states of the defocusing nonlinear
Schr\"{o}dinger equation}\label{secscrodmotiv} Elliptic problems of
the form (\ref{eqlagrange})
arise directly when seeking standing wave solutions
\begin{equation}\label{eqphase}
u(\textbf{y},t)=e^{-i\lambda t/\varepsilon}\eta(\textbf{y}),
\end{equation}
for the famous nonlinear Schr\"{o}dinger equation (NLS):
\begin{equation}\label{eqNLS}
i\varepsilon \frac{\partial u}{\partial t}+\varepsilon^2 \Delta
u-W(\textbf{y})u\pm|u|^{q-1}u=0,\ \ \textbf{y}\in \mathbb{R}^N,\
t>0,\ \ u:\mathbb{R}^N\to \mathbb{C},
\end{equation}
where $N\geq 1$ and $q>1$ (in the plus sign case, the potential $W$
may vary from that described previously). (See also (\ref{eqvortex})
below). For small $\varepsilon>0$, these standing-wave solutions are
referred to as semi-classical states. The plus sign in (\ref{eqNLS})
gives rise to the focusing NLS (attractive nonlinearity), while the
minus sign to the defocusing NLS (repulsive nonlinearity) which is
also known as the Gross--Pitaevskii equation (GP). It is also quite
common to use the name of the Gross-Pitaevskii equation if
(\ref{eqNLS}) has a nonzero potential $W$, and the name of the
nonlinear Schr\"{o}dinger equation if $W\equiv 0$. Keep in mind that
potentials of quadratic growth, as $|\textbf{y}|\to \infty$, are the
highest order potentials for local well-posedness of (\ref{eqNLS}),
see \cite{oh}.

At low enough temperature, neglecting the thermal and quantum
fluctuations, a Bose condensate can be represented by a complex wave
function $u(\textbf{y}, t)$ that obeys the dynamics of the NLS
equation, see the excellent review article \cite{kevrekidisREVIEW}
and the references that follow. In particular, solutions of
(\ref{eqlagrange}) provide, via (\ref{eqphase}), standing wave
solutions for the GP equation with $N=2,\ q=3$.  Let us mention that
the minimizer of $G_\varepsilon$, considered in  the entire space or
in  a bounded domain with zero boundary conditions (as in Remark
\ref{remdirichlet} below), also plays an important role in the study
of multi-component BECs (see \cite{karalikevrekidisefr}, \cite{liu}
and the references therein); in the dynamics of vortices confined in
$\mathcal{D}_0$ under the flow of the Gross-Pitaevskii equation (see
\cite{kevrekidisvortices}); in the construction of traveling wave
solutions with a stationary or traveling vortex ring to the
Gross-Pitaevskii equation (see \cite{vortexringsARMA},
\cite{weipitaevskii}, \cite{weivortex-2}); in the study of excited
states of the GP equation (see \cite{kevrekidisexcitedradial},
\cite{pelinovskynonlinear}), and in Bose-Einstein condensates with
weak localized impurities (see \cite{frantzeskakisPurity}).

In considering typical BEC experiments and in exploring the
unprecedented control of the condensates through magnetic and
optical ``knobs'', a mean-field theory is applied to reduce the
quantum many--atom description to a scalar nonlinear Schr\"{o}dinger
equation  (see \cite{lieb}). The NLS equation is a well established
model in optical and plasma physics as well as in fluid mechanics,
where it emerges out of entirely different physical considerations
\cite{ablowitz, sulem}. In particular, for instance in optics, it
emerges due to the so-called Kerr effect, where the material
refractive index depends linearly on the intensity of incident
light. The widespread use of the NLS equation stems from the fact
that it describes, to the lowest order, the nonlinear dynamics of
envelope waves.

 Ground state solutions of the NLS are
standing wave solutions, of the form (\ref{eqphase}), such that
$\eta$ is positive, $\eta \in W^{1,2}(\mathbb{R}^N)$, and satisfies
\begin{equation}\label{eqground}
\varepsilon^2\Delta
\eta-\left(W(\textbf{y})-\lambda\right)\eta\pm|\eta|^{q-1}\eta=0\ \
\textrm{in}\ \mathbb{R}^N,\ \eta\to 0\ \textrm{as}\ |\textbf{y}|\to
\infty.
\end{equation}
The condition $u\in W^{1,2}(\mathbb{R}^N)$ is required to obtain
solutions with physical interest.
 (Sometimes we will refer to
positive solutions $\eta$ of $(\ref{eqground})$ as  ground states of
(\ref{eqNLS})). In the subcritical case where $1<q<\frac{N+2}{N-2}$
if $N\geq 3$, $q>1$ if $N=1,2$, ground states of the defocusing
equation $(\ref{eqground})_-$ correspond to global minimizers of
$\mathcal{G}_-$ (these are nontrivial if $\varepsilon>0$ is
sufficiently small \cite{ignat}, see also \cite[Example
5.11]{malchiodicambridge} and \cite[Lemma 2.1]{delPinoscrew}), while
ground states of the focusing equation $(\ref{eqground})_+$
correspond to mountain passes of $\mathcal{G}_+$, where
\begin{equation}\label{eqmountain}
\mathcal{G}_{\pm}(\eta)=\int_{\mathbb{R}^N}^{}\left\{\frac{\varepsilon^2}{2}|\nabla
\eta|^2+\left(W(\textbf{y})-\lambda \right)\frac{\eta^2}{2}\mp
\frac{|\eta|^{q+1}}{q+1}\right\}d\textbf{y},
\end{equation}
in $W^{1,2}(\mathbb{R}^N)$ with
$\int_{\mathbb{R}^N}^{}W(\textbf{y})\eta^2d\textbf{y}<\infty$ (see
for instance \cite{malchiodibook}, \cite{rabinowitz}).

In the focusing case, following the pioneering work of Floer and
Weinstein \cite{floer}, there have been enormous investigations on
\emph{spike layer} solutions for problem $(\ref{eqground})_+$, for
small $\varepsilon>0$, typically under the conditions
$\lambda<\inf_{\mathbb{R}^N} W(\textbf{y})<\liminf_{|\textbf{y}|\to
\infty}W(\textbf{y})$, and $1<q<\frac{N+2}{N-2}$ if $N\geq 3$, $q>1$
if $N=1,2$. The ``critical'' case, where
$\inf_{\mathbb{R}^N}W(\textbf{y})=\lambda$, has also received
 attention, see \cite{byeon}. Actually, the latter case is related to the discussion following Definition \ref{defmaximal}
below.
 We refer the interested reader to \cite{malchiodibook}, \cite{del
pino cpam}, and the references therein.

In the defocusing case $(\ref{eqground})_-$, assuming that $W$
satisfies the assumptions of the previous subsection with $\lambda$
in place of $\lambda_0$, we see that the corresponding \emph{limit
algebraic equation}, obtained by formally letting $\varepsilon=0$ in
$(\ref{eqground})_-$, has the compactly supported continuous
solution
\begin{equation}\label{eqeta0}
\eta_0=\left[(\lambda-W)^+ \right]^{\frac{1}{q-1}}.
\end{equation}
Obviously, there is also the solution $-\eta_0$, and the trivial
one. In fact, if $q> 2$ then the ``singular limit'' $\eta_0$ is
merely H\"{o}lder continuous with exponent $1/(q-1)$ (recall
(\ref{eqD0}), (\ref{eqVnormal})). In particular, it is easy to see
that if $q\geq 3$ then $\nabla \eta_0$ is not square-integrable near
$\partial \mathcal{D}_0$.
 If $q=2$ and
$\eta>0$, then $(\ref{eqground})_-$ becomes the well known scalar
logistic equation \cite{cantell}, and  $\eta_0$ is Lipschitz
continuous. If $1<q<2$, then $\eta_0$ is at least differentiable. In
the language of bifurcation theory \cite{ioss}, the solution set of
the corresponding limit algebraic equation to $(\ref{eqground})_-$
undergoes a supercritical pitchfork bifurcation at $\partial
\mathcal{D}_0$ if $q>2$; a transcritical bifurcation if $q=2$; a
subcritical pitchfork bifurcation if $1<q<2$. Notice also that
$\eta_0$ is asymptotically stable (as an approximate equilibrium)
with respect to the parabolic dynamics that correspond to
(\ref{eqground}). The question whether $\eta_0$ perturbs, for small
$\varepsilon$, to a solution $\eta_\varepsilon$ of
$(\ref{eqground})_-$, and keeping track of the maximal regularity
available (to be defined in a moment), is a source of current
mathematical interest. Note that, if $q\geq 2$, such a solution
would have a \emph{corner layer} along $\partial\mathcal{D}_0$.
 The bifurcation that
occurs at $\partial\mathcal{D}_0$ takes the problem off from the
classical setting, where the roots of the corresponding algebraic
equation (for fixed $\textbf{y}$) are non-intersecting (see
\cite{delpinoCPDE}).

The following definition is adapted from \cite{cafaroqjofr}:
\begin{definition}\label{defmaximal}
Let $\alpha\in (0,1]$ be the largest number such that
$\|\eta_0\|_{C^{0,\alpha}(\mathbb{R}^N)}<\infty$, we say that a
solution $\eta_\varepsilon$ has maximal H\"{o}lder regularity if
$\|\eta_\varepsilon\|_{C^{0,\alpha}(\mathbb{R}^N)}$ remains
uniformly bounded as $\varepsilon\to 0$.
\end{definition}
If $N=1$, problem $(\ref{eqground})_-$ can be rewritten as a
homoclinic connection problem for a 3-dimensional slow--fast system
of ordinary differential equations \cite{jones}. At the points that
correspond to $\partial \mathcal{D}_0$ we have loss of normal
hyperbolicity of the corresponding slow manifold, due to a pitchfork
or transcritical bifurcation. This fact \emph{prohibits} the use of
standard geometric singular perturbation theory \cite{fenichel,
jones} in order to deduce the persistence of the ``singular
homoclinic orbit'' $\eta_0$, for small $\varepsilon>0$.
 At
present, much work in geometric singular perturbation theory deals
with such situations, often using the ``blowing-up'' construction
\cite{krupaszmolyanMN}. This approach has been successfully applied
recently in \cite{schecter-sourdis} in a heteroclinic connection
problem for a 4-dimensional slow-fast Hamiltonian system, sharing
similar features with $(\ref{eqground})_-$ with $q=3$, arising from
the study of crystalline grain boundaries (see also \cite{alikakos,
fifeunpublished, sourdis-fife}). Let us mention that there is an
abundance of non-hyperbolic points in applications,  see for
instance \cite{krupaszmolyanMN}, that have been successfully treated
using this approach. In particular, singularly perturbed
one-dimensional second-order elliptic systems, involving loss of
normal hyperbolicity, arise in the study of the Dafermos
regularization for singular shocks \cite{dafermosshocks},
\cite{schecterdafermos}. Although elegant, the arguments of this
approach are intrinsically one-dimensional.

 Elliptic systems where the
singular limit has merely H\"{o}lder or Lipschitz regularity
typically describe phase separation or spatial segregation, and have
attracted a lot of current mathematical research, see
\cite{cafaroqjofr, teraccini,weiweth} and the references therein.
These  type of problems have been tackled in the latter references
either by weak convergence arguments, yielding weak convergence in
the Sobolev space $W^{1,2}$ and strong in $L^2$ (see also
\cite{dancer-hilhorst}), or the method of upper and lower solutions,
yielding uniform convergence (see also \cite{butuzov jde2},
\cite{hutson-lou-mischaikow-jde}, \cite{iida}), as $\varepsilon\to
0$.
 The question of maximal
regularity of the convergence to the singular limit profile is then
addressed using a delicate analysis, based on monotonicity
properties, blow-up techniques and Liouville-type theorems (see also
Remark \ref{remCAffarelli} below). \emph{To the best of our knowledge, for
 these systems, the maximal regularity property has only been proven
 in one--dimensional cases, see \cite{berestycki-wei2012}.}
In the case at hand, since $\nabla\eta_0$ is not square-integrable
near $\partial \mathcal{D}_0$ if $q\geq 3$, it is not clear how to
use standard weak convergence arguments. Furthermore, it seems to be
hard to construct a good pair of upper and lower solutions
(especially) near $\partial \mathcal{D}_0$. A motivation for the
current work is to show that the perturbation approach to such
problems, we initiated in
\cite{karalisourdisradial,karalisourdisresonance}, can be adapted to
 treat problem $(\ref{eqground})_-$ with general potential.
 \emph{We emphasize that the perturbation method seems to be the only
 one available at the moment
 that yields the maximal regularity property in higher-dimensional singular perturbation problems.}
 (See the main theorem of \cite{karalisourdisresonance}, and Corollary \ref{cormax}
 herein).
 Among other advantages of the perturbation approach is that it
 provides finer estimates which, as we already stated, imply the maximal regularity
 property, it be applied in the study of non-minimizing solutions in
 systems of equations which in general  lack the maximum principle (see assumption (B4) in \cite{butuzovJDE2}),
 and in supercritical problems that cannot be treated variationally (see for example Remark \ref{remhastingsUniq}). Most importantly, for the problem at hand, it
 applies equally well without the assumption of radial symmetry on the
 equation, see the discussion in the following subsection.

\subsubsection{Applications to related problems}\label{secandre} A strong motivation
behind the current work is  the possibility of adapting our approach
to treat other interesting problems with similar features.

Variational problems of a similar type to (\ref{eqEfunctional}),
with spatially varying coefficients, have also been introduced to
model vortex pinning due to material inhomogeneities in a
superconductor (see the references in \cite{alamaPinning},
\cite[Sec. 6]{Dureview} and \cite[Subsec. 14.1.4,
14.3.5]{serfatybook}). In \cite{andrebaumanphilips}, the authors
considered the case where the corresponding pinning potential
$\mathcal{W}(\textbf{y})$ is nonnegative but vanishes at a finite
number of points (see also \cite{sun}). Minimizers such that
$|v_\varepsilon|\to \sqrt{\mathcal{W}}$ in $W^{1,2}$, as
$\varepsilon\to 0$, were analyzed by variational methods. In their
result, it was important that $\sqrt{\mathcal{W}}$ is in $W^{1,2}$,
which is not the case here (also recall the previous discussion
concerning weak convergence methods). In that context, the minimizer
has corner layer behavior at points rather than curves. It would be
of interest to study this type of problems from the perturbation
viewpoint of the present paper which, in particular, does not
require that the singular limit belongs in $W^{1,2}$.

A singularly perturbed elliptic equation of the form
(\ref{eqEllipticIntro}) below, considered in a bounded domain with
zero boundary conditions, where the corresponding limit algebraic
equation admits a fold bifurcation at the boundary of the domain,
appears in the proof of the Lazer--Mckenna conjecture for a
superlinear elliptic problem of Ambrosetti-Prodi type, see
\cite{dancer-lazer}, \cite{danceryanCrtitic}. In that case, the
corresponding minimizer (without the mass constraint) has a steep
corner layer along the boundary of the domain, see also the old
paper \cite{turcotte}. This situation is qualitatively similar to
the problem considered here. In \cite{dancer-lazer}, by adapting
variational techniques from \cite{danceryanCVPDE}, the behavior of
the minimizer, as $\varepsilon\to 0$, was estimated in compact sets
away from the boundary of the domain. We believe that, employing the
perturbation techniques of the present paper, one can obtain fine
estimates for the minimizer \emph{all the way up to the boundary}.
In turn, these could potentially lead to the construction of new
type of solutions on top of the minimizer, for example solutions
having small peaks near the boundary, or bifurcating from symmetry
(as in \cite{karalisourdisradial,karalisourdisresonance}). Another
reason for developing  perturbation arguments for these problems is
that unstable solutions are hard to find or describe accurately
through purely variational methods, see Remark \ref{remhastingsUniq}
for more details.

\subsection{Known results}\label{secknown} In this
subsection we gather some  known properties of the real valued
minimizer $\eta_\varepsilon$ of $G_\varepsilon$ in $\mathcal{H}$,
under the assumptions described in Subsection \ref{secproblem}.
These have been studied in various contexts (see for instance
\cite{alama,aftalion-jerrard,corregiCMP,ignat,monterohodge}). As we
will see in this paper, some are actually far from optimal.

The corresponding Lagrange multiplier satisfies, for small
$\varepsilon>0$, the estimate:
\begin{equation}\label{eqlambdaepsilon}
|\lambda_\varepsilon-\lambda_0|\leq C |\ln
\varepsilon|^\frac{1}{2}\varepsilon.
\end{equation}

The real valued minimizer $\eta_\varepsilon$ satisfies the following
estimates:
\begin{equation}\label{eqmontero}
0<\eta_\varepsilon\leq \sqrt{A^+}+C\varepsilon^\frac{1}{3}\ \
\textrm{in}\ \mathbb{R}^2,
\end{equation}
\begin{equation}\label{eqJ1} \eta_\varepsilon(\textbf{y})\leq
C\varepsilon^\frac{1}{3}\exp\left\{-c\varepsilon^{-\frac{2}{3}}\textrm{dist}(\textbf{y},\partial\mathcal{D}_0)\right\}\
\ \textrm{in}\ \mathbb{R}^2\backslash\mathcal{D}_0,
\end{equation}
where $\textbf{y}\mapsto \textrm{dist}(\textbf{y},\partial
\mathcal{D}_0)$ denotes the Euclidean distance in $\mathbb{R}^2$
from $\textbf{y}$ to $\partial \mathcal{D}_0$,
\begin{equation}\label{eqJ2}
|\eta_\varepsilon-\sqrt{A^+}|\leq
C\varepsilon^\frac{1}{3}\sqrt{A^+}\ \ \textrm{if}\ \textbf{y}\in
\mathcal{D}_0\ \textrm{and} \
\textrm{dist}(\textbf{y},\partial\mathcal{D}_0)\geq\varepsilon^\frac{1}{3},
\end{equation}
and
\begin{equation}\label{eqJ3}
\|\nabla \eta_\varepsilon\|_{L^\infty(\mathbb{R}^2)}\leq
C\varepsilon^{-1},
\end{equation}
for some constants $c,C>0$, if $\varepsilon$ is sufficiently small.
 Relation (\ref{eqlambdaepsilon}) follows from
(\ref{eqG1intro}), see \cite{aftalion-jerrard}, \cite{ignat}.
Relations (\ref{eqmontero}), (\ref{eqJ1}) have been shown in
\cite{ignat} (see also \cite[Lemma B.1]{monterohodge}) by
constructing a suitable upper-solution of (\ref{eqlagrange}), and
using the uniqueness of positive solutions of the latter equation in
bounded domains with zero boundary conditions \cite{brezis-oswald}.
Relation (\ref{eqJ2}) can be traced back to \cite{alama}, and
follows from lower-- and
 upper--solution arguments in equation (\ref{eqlagrange}) based on
\cite{andre2} (also keep in mind Remark 3 in \cite{alama}). Note
that, in particular, estimates (\ref{eqmontero})--(\ref{eqJ2}) yield
(\ref{eqn2toA+}). Lastly, estimate (\ref{eqJ3}) on the gradient
follows from the equation and a Gagliardo-Nirenberg type inequality
as in \cite{bethuelCVPDE} (see Lemma \ref{lembrezis} herein).

In the special case where the potential trap $W$ is additionally
assumed to be radially symmetric, it follows from its uniqueness
that the real valued minimizer $\eta_\varepsilon>0$ of
$G_\varepsilon$ in $\mathcal{H}$ is also radially symmetric. In
particular, if $\mathcal{D}_0$ is a ball of radius $R$, it has been
shown recently in \cite{aftalion-jerrard} that
\begin{equation}\label{eqJ4}
\eta_\varepsilon'(r)\leq 0\ \ \textrm{in}\ (R-\delta_0,R+\delta_0),
\end{equation}
for some small constant $\delta_0>0$, if $\varepsilon$ is small. The
radial symmetry was used heavily by the authors of
\cite{aftalion-jerrard} in order to establish (\ref{eqJ4}), using a
maximum principle due to Berestycki, Nirenberg, and Varadhan
\cite{berestycki} (see also the discussion following
(\ref{eqberestycki}) herein) together with an
intersection--comparison type of argument, mostly taking advantage
of (\ref{eqVnormal}). The importance of the positivity of
$\eta_\varepsilon$ and the radial symmetry of $W$ in deriving
(\ref{eqJ4}) can be naively seen from the following consideration.
If $W'(r)\geq 0$ for all $r>0$, using that $\eta_\varepsilon>0$ and
the method of moving planes \cite{gidas}, we can infer that
$\eta_\varepsilon'(r) \leq 0$ for all $r>0$ (see also Proposition
2.1 in \cite{lieb} for an approach via a radially-symmetric
rearrangement argument which takes advantage of the minimizing
character of $\eta_\varepsilon$). We point out that
it is not clear how the aforementioned arguments apply in the case
where $W$ is radially symmetric but the set $\{W-\lambda_0<0\}$ is
an annulus, considered in \cite{alama}, \cite{alamamontero}.
Relation (\ref{eqJ4}) was used in an essential way in
\cite{aftalion-jerrard} for estimating uniformly  the auxiliary
function
\begin{equation}\label{eqfepsilon}
f_\varepsilon(r)\equiv\xi_\varepsilon(r)/\eta_\varepsilon^2(r),\
\textrm{where}\
\xi_\varepsilon(r)\equiv\int_{r}^{\infty}s\eta^2_\varepsilon(s)ds,
\end{equation}
near the circle $|\textbf{y}|=R$. The function $f_\varepsilon$ plays
a crucial role in the study of the functional $E_\varepsilon$, see
\cite{aftalion-jerrard} and Section \ref{secfepsilon} below.
(Actually, estimate (\ref{eqJ4}) was needed in a region of the form
$(R-\delta, R+C\varepsilon^\frac{2}{3})$ for some constants $\delta,
C>0$). Making use of the previously mentioned estimates on
$f_\varepsilon$ near the boundary of $\mathcal{D}_0$, and of some
new estimates away from $\mathcal{D}_0$,  the authors of
\cite{aftalion-jerrard} proved that if the angular velocity $\Omega$
is below a critical speed
\[\Omega_c=\omega_0|\ln \varepsilon|-\omega_1\ln|\ln \varepsilon|,\]
for some constants $\omega_0,\ \omega_1>0$, and $\varepsilon$ is
sufficiently small, then the rotation has absolutely no effect on
the minimizer. In other words, all minimizers $v_\varepsilon$ of
$E_\varepsilon$ in $\mathcal{H}$ are of the form
$v_\varepsilon=\eta_\varepsilon e^{i\alpha}$ in $\mathbb{R}^2$,
where $\alpha$ is a constant. In particular, at low velocity, there
are no vortices in the condensate. We remark that this last
assertion was previously known to hold true  only in the bulk of the
condensate, see \cite{ignat}.
It was left as an interesting open problem in
\cite{aftalion-jerrard} to see to what extent their analysis
continues to hold if the assumption of radial symmetry on $W$ is
dropped (see also Open Problem 8.1 in \cite{aftalionbook}, and the
open questions in the presentation \cite{gallo}). Our results on the
minimizer $\eta_\varepsilon$, which hold \emph{without any symmetry
assumption} on $W$, may represent a major step in the answering of
this question.

Let us close this subsection by mentioning that the case where the
potential is homogeneous of some order $s > 0$, i.e., $W(\lambda
\textbf{y}) = \lambda^sW(\textbf{y})$ for all $\lambda>0,\
\textbf{y}\in \mathbb{R}^2$ (see (\ref{eqharmonic}) for an example
with $s=2$), and locally H\"{o}lder continuous has been studied in
the work of E. H. Lieb and his collaborators in \cite{lieb},
\cite{liebBook}. By employing scaling and variational arguments, it
was shown in the latter references that, as $\varepsilon\to 0$, the
minimizer $\eta_\varepsilon$ converges to $\sqrt{A^+}$ in the strong
$L^2(\mathbb{R}^2)$ sense. In the special case of the model harmonic
potential, described by (\ref{eqharmonic}) with $\Lambda=1$, a
complete analysis has been carried in \cite{pelinovsky} (see the
discussion following the statement of our main theorem for more
details). By generalizing the divergence-free WKB method, a
uniformly valid approximation for the condensate density of an
ultra-cold Bose gas confined in a harmonic trap, that extends into
the classically forbidden region (near and outside of $\partial
\mathcal{D}_0$), has been established very recently in
\cite{salman}.

\subsection{Statement of the main result}

 In order to state our main
result, we need some definitions:

Let
\begin{equation}\label{eqaepsilon}
a_\varepsilon(\textbf{y})=\lambda_\varepsilon-W(\textbf{y}),\ \
\textbf{y}\in \mathbb{R}^2.
\end{equation}
By virtue of (\ref{eqD0}), (\ref{eqVnormal}),
(\ref{eqlambdaepsilon}), and the implicit function theorem
\cite{ambrozetti-proddi}, the domain $\mathcal{D}_0$ perturbs
smoothly, for $\varepsilon\geq 0$ small, to a simply connected
domain $\mathcal{D}_\varepsilon$ such that
\begin{equation}\label{eqa>b}
a_\varepsilon>0\ \ \textrm{in}\ \mathcal{D}_\varepsilon,\ \
a_\varepsilon<0\ \ \textrm{in}\
\mathbb{R}^2\backslash\bar{\mathcal{D}_\varepsilon},\ \
\frac{\partial a_\varepsilon}{\partial \nu_\varepsilon}<-c\ \
\textrm{on}\
\partial \mathcal{D}_\varepsilon,
\end{equation}
for some constant $c>0$, where
$\nu_\varepsilon=\nu_\varepsilon(\textbf{y})$ denotes the outward
unit normal to $\partial \mathcal{D}_\varepsilon$. Let
$\Gamma_\varepsilon$ be the simple, smooth closed curve defined by
$\partial \mathcal{D}_\varepsilon$, and let $\ell_\varepsilon=
|\Gamma_\varepsilon|$ denote its total length. Note that
$\Gamma_\varepsilon$ is inside of an $\mathcal{O}(|\ln
\varepsilon|^\frac{1}{2}\varepsilon)$--tubular neighborhood of
$\partial \mathcal{D}_0$.
We consider the natural parametrization
$\gamma_\varepsilon=\gamma_\varepsilon (\theta)$ of
$\Gamma_\varepsilon$ with positive orientation, where $\theta$
denotes an arc length parameter measured from a chosen point of
$\Gamma_\varepsilon$. Slightly abusing notation, we let
$\nu_\varepsilon(\theta)$ denote the outward unit normal to
$\Gamma_\varepsilon$ (as in (\ref{eqa>b})). Points $\textbf{y}$ that
are $\delta_0$-close to $\Gamma_\varepsilon$, for sufficiently small
$\delta_0>0$ (independent of small $\varepsilon$), can be
represented in the form
\begin{equation}\label{eqFermi}\textbf{y}=\gamma_\varepsilon(\theta)+t\nu_\varepsilon(\theta),\ \ |t | < \delta_0,\ \ \theta\in [0,\ell_\varepsilon),
\end{equation}
 where the map
$\textbf{y}\mapsto (t, \theta)$ is a local diffeomorphism (see
\cite[Sec. 14.6]{Gilbarg-Trudinger}). Note that $t<0$ in
$\mathcal{D}_\varepsilon$. Abusing notation, frequently we will
denote points $\textbf{y}$ near $\Gamma_\varepsilon$ plainly by
their image $(t,\theta)$ under the above mapping.
 From (\ref{eqaepsilon}), (\ref{eqa>b}), we have
\begin{equation}\label{eqbt>at}
-(a_\varepsilon)_t(0,\theta)=W_t(0,\theta)\geq c,\ \ \ \theta\in
\left[0,\ell_\varepsilon\right),
\end{equation}
for some constant $c>0$ and small $\varepsilon$.
 We define
\begin{equation}\label{eqbita}
\beta_\varepsilon(\theta)=\left(-a_t(0,\theta)\right)^\frac{1}{3}>0,\
\ \ \theta\in [0,\ell_\varepsilon).
\end{equation}
It might be useful to point out that for the harmonic potential,
described in (\ref{eqharmonic}), one can derive explicitly that
\[
\left[\beta_\varepsilon(\theta)\right]^3=2\sqrt{\lambda_\varepsilon}\Lambda\sqrt{\Lambda^{-2}\cos^2\left(
\frac{2\pi}{\ell_\varepsilon}\theta\right)+\sin^2\left(
\frac{2\pi}{\ell_\varepsilon}\theta\right)},\ \ \ \theta\in
[0,\ell_\varepsilon).
\]

We will also make use of the \emph{Hastings-McLeod solution}
\cite{hastings} of the Painlev\'{e}--II equation \cite{fokas},
namely the unique solution $V$ of the boundary value problem:
\begin{equation}\label{eqpanintro}
v_{xx}-v(v^2+x)=0,\ \ \ x\in \mathbb{R},\ \ v(x)-\sqrt{-x}\to 0,\
x\to -\infty;\ \ v(x)\to 0,\ x\to \infty.
\end{equation}
It is useful, in relation with (\ref{eqJ4}), to point out here that
$V_x<0,\ x\in \mathbb{R}$. The importance of the Hastings-Mcleod
solution is that it will ``lead'' the minimizer $\eta_\varepsilon$
across $\partial \mathcal{D}_0$.

We can now state our main result:

 \begin{thm}\label{thmmain}
If $\varepsilon>0$ is sufficiently small, the unique real valued
minimizer of $G_\varepsilon$ in $\mathcal{H}$ satisfies
\begin{equation}\label{eqestim1}
\eta_{\varepsilon}(\textbf{y})=\varepsilon^\frac{1}{3}\beta_\varepsilon(\theta)V\left(\beta_\varepsilon(\theta)\frac{t}{\varepsilon^\frac{2}{3}}
\right)+\mathcal{O}(\varepsilon+|t|^\frac{3}{2})
\end{equation}
uniformly in $\left\{-d\leq t \leq 0,\ \ \theta\in
[0,\ell_\varepsilon) \right\}$,
\begin{equation}\label{eqestim1+}
\eta_{\varepsilon}(\textbf{y})=\varepsilon^\frac{1}{3}\beta_\varepsilon(\theta)V\left(\beta_\varepsilon(\theta)\frac{t}{\varepsilon^\frac{2}{3}}
\right)+\mathcal{O}(\varepsilon e^{-c\varepsilon^{-\frac{2}{3}}t})
\end{equation}
uniformly in $\left\{0\leq t \leq d,\ \ \theta\in
[0,\ell_\varepsilon) \right\}$, where $c,d>0$ are some small
constants.

Given $D>0$, if $\varepsilon>0$ is sufficiently small, we have
\begin{equation}\label{eqestim2}
\eta_\varepsilon(\textbf{y})
-\sqrt{\lambda_\varepsilon-W(\textbf{y})}=\mathcal{O}\left(\varepsilon^2|t|^{-\frac{5}{2}}
\right)
\end{equation}
uniformly in $\left\{-d \leq t\leq -D \varepsilon^\frac{2}{3},\ \
\theta\in [0,\ell_\varepsilon) \right\}$,
\begin{equation}\label{eqestim3}
\eta_\varepsilon(\textbf{y})-\sqrt{\lambda_\varepsilon-W(\textbf{y})}=\mathcal{O}(\varepsilon^2)
\end{equation}
uniformly in $\mathcal{D}_\varepsilon \backslash \left\{-d < t< 0,\
\ \theta\in [0,\ell_\varepsilon) \right\}$,
 and
\begin{equation}\label{eqestim2+}
0<\eta_\varepsilon(\textbf{y}) \leq C
\varepsilon^\frac{1}{3}\exp\{-c\varepsilon^{-\frac{2}{3}}\textrm{dist}(\textbf{y},\partial\mathcal{D}_0)\}
\end{equation}
in $\mathbb{R}^2 \backslash \mathcal{D}_\varepsilon$, for some
constants $c,C>0$ independent of $\varepsilon$, where
$\mathcal{O}(\cdot)$ denotes Landau's symbol (see Section
\ref{secnotation} for the precise definition).

The potential of the associated linearized operator
\begin{equation}\label{eqLreal}
\textbf{L}_\varepsilon(\varphi)\equiv\varepsilon^2\Delta
\varphi-\left(3\eta_\varepsilon^2(\textbf{y})+W(\textbf{y})-\lambda_\varepsilon
\right)\varphi,
\end{equation}
satisfies
\begin{equation}\label{equappotentialthmmain}
3\eta_\varepsilon^2(\textbf{y})+W(\textbf{y})-\lambda_\varepsilon\geq
\left\{
\begin{array}{ll}
  c \varepsilon^\frac{2}{3}+c|t|, &  \textrm{if} \ \ |t|\leq \delta, \\
    &   \\
  c+c|\textbf{y}|^p,  & \textrm{otherwise}.
\end{array}\right.
\end{equation}

The Lagrange multiplier $\varepsilon^{-2}\lambda_\varepsilon$
satisfies
\begin{equation}\label{eqlambdaepsilonnew}
\lambda_\varepsilon-\lambda_0=\mathcal{O}(|\ln
\varepsilon|\varepsilon^2),
\end{equation}
while the energy of $\eta_\varepsilon$ satisfies
\begin{equation}\label{eqenergysharp}
G_\varepsilon(\eta_\varepsilon)=\left(\frac{\lambda_0}{2}-\frac{1}{4}\int_{\mathbb{R}^2}^{}(A^+)^2d\textbf{y}\right)\varepsilon^{-2}
+\frac{1}{12}\left(\int_{0}^{\ell_0}\beta_0^3(\theta)d\theta
\right)|\ln \varepsilon|+\mathcal{O}(1),
\end{equation}
as $\varepsilon\to 0$.
\end{thm}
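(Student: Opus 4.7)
The plan is a matched-asymptotic perturbation argument. I construct an accurate approximate solution $\eta_{\mathrm{app}}$ by gluing an inner Painlev\'e--II expansion to an outer Thomas--Fermi expansion, then promote it to a genuine solution of (\ref{eqlagrange}) by a contraction-mapping argument based on the linearised operator $\textbf{L}_\varepsilon$ of (\ref{eqLreal}). Since $G_\varepsilon$ admits a unique real-valued minimiser, the solution so obtained must coincide with $\eta_\varepsilon$, and the pointwise estimates (\ref{eqestim1})--(\ref{eqestim2+}) drop out directly from the size of the residual and of the fixed-point perturbation.

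\smallskip

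\noindent\emph{Approximate solution and residual.} In the Fermi coordinates $(t,\theta)$ of (\ref{eqFermi}), Taylor-expand $a_\varepsilon(t,\theta)=-\beta_\varepsilon^{3}(\theta)\,t+O(t^{2})$ and introduce the stretched variable $x=\beta_\varepsilon(\theta)\,t/\varepsilon^{2/3}$. Plugging the inner ansatz
\[
\eta_{\mathrm{in}}(t,\theta)=\varepsilon^{1/3}\beta_\varepsilon(\theta)\,V\!\left(\beta_\varepsilon(\theta)\frac{t}{\varepsilon^{2/3}}\right)
\]
into $\varepsilon^{2}\Delta\eta-\eta(\eta^{2}-a_\varepsilon)$ and invoking (\ref{eqpanintro}), the leading terms cancel identically, leaving a residual of order $\varepsilon+|t|^{3/2}$ coming from the curvature of $\Gamma_\varepsilon$, the $\theta$-variation of $\beta_\varepsilon$, and the cubic correction in the Taylor expansion of $a_\varepsilon$. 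The outer profile $\eta_{\mathrm{out}}=\sqrt{a_\varepsilon^{+}}$ has residual $O(\varepsilon^{2}|t|^{-5/2})$ in its domain, and matches smoothly with $\eta_{\mathrm{in}}$ on the overlap $|t|\sim\varepsilon^{2/3}|\ln\varepsilon|$ because $V(x)=\sqrt{-x}+O(|x|^{-5/2})$ as $x\to-\infty$. Outside $\mathcal{D}_\varepsilon$, the decay $V(x)=O(e^{-cx^{3/2}})$ supplies the exponential tail of (\ref{eqestim2+}). Gluing the three pieces with cutoffs defines $\eta_{\mathrm{app}}$ globally.

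\smallskip

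\noindent\emph{Linearisation and contraction.} A direct computation in the layer gives
\[
3\eta_{\mathrm{app}}^{2}+W-\lambda_\varepsilon=\varepsilon^{2/3}\beta_\varepsilon^{2}(\theta)\bigl[3V^{2}(x)+x\bigr]+O(t^{2}),
\]
and the Hastings--McLeod function satisfies $3V^{2}+x\ge c_{0}>0$ on $\mathbb{R}$ with $3V^{2}+x\ge c_{0}|x|$ for $x\le 0$, as a consequence of $V_{x}<0$ and (\ref{eqpanintro}). This yields the bound (\ref{equappotentialthmmain}) inside the layer; outside, it follows from the Thomas--Fermi behaviour together with (\ref{eqainfinity}). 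Consequently $-\textbf{L}_\varepsilon$ is uniformly coercive in a weighted $L^{2}$-space whose weight is precisely the right-hand side of (\ref{equappotentialthmmain}). Setting $\eta_\varepsilon=\eta_{\mathrm{app}}+\varphi$, the equation for $\varphi$ reads $\textbf{L}_\varepsilon\varphi=R_\varepsilon+3\varepsilon^{-2}\eta_{\mathrm{app}}\varphi^{2}+\varepsilon^{-2}\varphi^{3}$; a contraction argument in a weighted H\"older ball whose radius matches the residual, implemented in the $\varepsilon^{2/3}$-rescaled coordinates so that interior Schauder estimates are uniform, produces $\varphi$ and yields (\ref{eqestim1})--(\ref{eqestim2+}).

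\smallskip

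\noindent\emph{Energy, Lagrange multiplier, and main obstacle.} For (\ref{eqenergysharp}), rewrite $G_\varepsilon$ via (\ref{eqGepsrenorintro}): the explicit prefactor gives the $\varepsilon^{-2}$ leading order, the bulk contribution is $O(1)$, and the layer contribution, after the change of variables $x=\beta_\varepsilon(\theta)t/\varepsilon^{2/3}$, equals
\[
\int_{0}^{\ell_\varepsilon}\beta_\varepsilon^{3}(\theta)\,d\theta\,\int_{-M}^{M}\left[\frac{1}{2}V_{x}^{2}+\frac{1}{4}(V^{2}+x)^{2}\right]dx+O(1),
\]
with $M\sim\varepsilon^{-2/3}$; since $V_{x}^{2}\sim\frac{1}{4|x|}$ as $x\to-\infty$ and $(V^{2}+x)^{2}$ is integrable, the inner integral equals $\tfrac{1}{8}\ln M+O(1)=\tfrac{1}{12}|\ln\varepsilon|+O(1)$, yielding (\ref{eqenergysharp}). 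Estimate (\ref{eqlambdaepsilonnew}) then follows by combining (\ref{eqenergysharp}) with the Pohozaev-type identity $\lambda_\varepsilon=2\varepsilon^{2}G_\varepsilon(\eta_\varepsilon)+\tfrac{1}{2}\int\eta_\varepsilon^{4}\,d\textbf{y}$, obtained by testing (\ref{eqlagrange}) against $\eta_\varepsilon$, together with $\lambda_0=\int(A^{+})^{2}d\textbf{y}+\int A^{+}W\,d\textbf{y}$ and the three-zone expansion applied to $\int\eta_\varepsilon^{4}$. The hardest part is establishing uniform coercivity of $\textbf{L}_\varepsilon$ in the absence of symmetry: the $\theta$-dependence of $\beta_\varepsilon$, the curvature of $\Gamma_\varepsilon$, and the matching of the different zones force function spaces adapted to the Fermi frame that simultaneously resolve the $\varepsilon^{2/3}$ layer scale, the degenerate potential $3V^{2}+x$, and the polynomial weight from (\ref{eqainfinity}); the algebraic positivity $3V^{2}+x\ge c_{0}$ for the Hastings--McLeod solution is what ultimately drives the whole perturbation scheme.
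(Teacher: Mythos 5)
Your overall architecture (inner Painlev\'e--II profile, outer Thomas--Fermi profile, gluing, inversion of the linearized operator via the bound $3V^2+x\geq c_0$, contraction, identification with $\eta_\varepsilon$) is the same as the paper's, but there is a genuine gap at the step you dismiss in one line: the matching. You glue $\eta_{in}=\varepsilon^{1/3}\beta V(\beta t/\varepsilon^{2/3})$ to the \emph{unmodified} outer profile $\sqrt{a_\varepsilon^+}$ with cutoffs on an overlap $|t|\sim\varepsilon^{2/3}|\ln\varepsilon|$, arguing that $V(x)-\sqrt{-x}=O(|x|^{-5/2})$ makes the pieces ``match smoothly''. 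But at $|x|\sim|\ln\varepsilon|$ this mismatch is of size $\varepsilon^{1/3}|\ln\varepsilon|^{-5/2}$, which is only \emph{logarithmically} small relative to the layer amplitude $\varepsilon^{1/3}$ and is far larger than the error budget $O(\varepsilon+|t|^{3/2})$ of (\ref{eqestim1}) or $O(\varepsilon^2|t|^{-5/2})$ of (\ref{eqestim2}); moreover the cutoff derivatives acting on this mismatch, and the inner residual, which grows like $\varepsilon|x|^{5/2}$ away from the curve, prevent any choice of matching radius from producing a residual of size $O(\varepsilon)$ in the weighted norms (balancing $\varepsilon M^{5/2}$ against $\varepsilon^{1/3}M^{-3/2}$ gives at best errors of order $\varepsilon^{7/12}$). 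This is exactly the obstruction the paper isolates (see the discussion in Subsection \ref{secaway} and Remark \ref{remmatching}): the naive gluing yields existence but estimates ``far from optimal''. The fix in the paper is the modified outer approximation $\tilde{u}_{out}=\{a_\varepsilon+\varepsilon^{2/3}\beta^2\chi_\delta(\beta s)[\beta s+V^2(\beta s)]\}^{1/2}$ of (\ref{equouttilda}), which absorbs the algebraically decaying tail of $V^2+x$ into the outer profile so that $|\tilde{u}_{out}-u_{in}|\leq C\varepsilon|s|^{3/2}$ throughout the transition region (Proposition \ref{prououttilda}); without this (or an equivalent device) your contraction produces a correction too large to give (\ref{eqestim1})--(\ref{eqestim3}). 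Also note that the positivity $3V^2+x\geq c_0$ is not ``a consequence of $V_x<0$ and (\ref{eqpanintro})'': it is a nontrivial result of Gallo--Pelinovsky (the paper's alternative, Appendix \ref{ap2}, uses instead the spectral positivity of $-\mathcal{M}$, which \emph{does} follow from $V_x<0$), and the identification of the constructed solution with $\eta_\varepsilon$ requires uniqueness of positive solutions of (\ref{eqlagrange}) for the given $\lambda_\varepsilon$ (Brezis--Oswald/Ignat--Millot), not merely uniqueness of the minimizer.

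There is a second, logical gap in your final step: you propose to deduce (\ref{eqlambdaepsilonnew}) from the energy expansion (\ref{eqenergysharp}) via the identity obtained by testing (\ref{eqlagrange}) with $\eta_\varepsilon$. This is circular: in the rewriting (\ref{eqGepsexpanded}) of the energy, the term $-\tfrac{1}{4}(\lambda_\varepsilon-\lambda_0)^2|\mathcal{D}_0|\varepsilon^{-2}$ appears, and with only the a priori bound (\ref{eqlambdaepsilon}) this term is of size $O(|\ln\varepsilon|)$, so the coefficient $\tfrac{1}{12}\int_0^{\ell_0}\beta_0^3$ in (\ref{eqenergysharp}) cannot be isolated before $\lambda_\varepsilon-\lambda_0$ has been improved. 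The paper proceeds in the opposite order: it first derives (\ref{eqlambdaepsilonnew}) from the mass constraint $\|\eta_\varepsilon\|_{L^2}=1$, expanding $\int(u_\varepsilon^2-A)$ zone by zone and exploiting the exact cancellation $\int_{-\infty}^0[V^2(x)+x]\,dx+\int_0^\infty V^2(x)\,dx=0$, and only then obtains (\ref{eqenergysharp}). Your route could be repaired by performing the zone expansion directly on $\int\eta_\varepsilon^4$ and $\int W\eta_\varepsilon^2$ in the tested identity (which amounts to the same cancellation), but as written the derivation does not close.
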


The main highlight of our result is that we \emph{rigorously} prove
that, close to $\partial \mathcal{D}_0$, the minimizer
$\eta_\varepsilon$ behaves like (\ref{eqestim1}). We emphasize that
 the rigorous derivation of the Pailev\'{e}-II equation from (\ref{eqlagrange}) was an unsettled
open problem, see \cite[pg. 13]{aftalionbook} (see also the
discussion below on the recent paper \cite{pelinovsky}). In turn, as
was noted in Section 8.1.3 of the book \cite{aftalionbook}, this
information is required as a stepping stone towards the open problem
mentioned in Subsection \ref{secknown} (see also Open Problem 8.1 in
\cite{aftalion-jerrard}), in order to obtain a lower bound for
$\eta_\varepsilon$ in $\mathbb{R}^2\backslash \mathcal{D}_0$. In the
current paper, under the additional assumption that $W_{tt}$ is
strictly positive on $\partial \mathcal{D}_0$, we contribute towards
this direction by obtaining a \emph{sharp} lower bound in the
strip-like domain of $\mathbb{R}^2\backslash \mathcal{D}_0$
described by $\textrm{dist}(\mathbf{y},\partial \mathcal{D}_0)\ll
\varepsilon^\frac{2}{5}$ (see Remark \ref{remexpimproved} below).

\emph{We believe that our result opens the road for the rigorous
description of the Painlev\'{e} region in recent experiments on
three-dimensional Bose-Einstein condensates where a laser beam,
modeled by a cylinder along the $z$ direction, is translated in the
$x$ direction along the condensate (see
\cite{aftaliondu-painleve,aftalionpainleve}), and to understand the
superfluid flow around an obstacle (see \cite{aftalionbook}  and the
references therein).}

As we have already mentioned, our proof is based on  perturbation
arguments rather than variational ones or the method of upper and
lower solutions (as was hoped for in \cite[Sec.
8.3.1]{aftalionbook}).
It relies mainly on the existence and asymptotic stability of the
Hastings-McLeod solution (in the usual sense) in order to construct
a solution $\textbf{u}_\varepsilon$ of problem (\ref{eqlagrange}),
``close'' to $\sqrt{A^+}$, for small $\varepsilon>0$ (with  the
Lagrange multiplier $\lambda_\varepsilon$ treated as a known
coefficient). Then, using the fact that the latter problem has a
unique solution which follows from ideas of Brezis and Oswald
\cite{brezis-oswald} (see Remark \ref{remuniq} below), namely
$\eta_\varepsilon$, we infer that $\textbf{u}_\varepsilon\equiv
\eta_\varepsilon$. Actually, with the obvious modifications, an
analogous result  holds true for the minimizer of $G_\varepsilon$ in
arbitrary dimensions. Furthermore, our method of proof can be
adapted to treat the case where
$G_\varepsilon(u)-\frac{\lambda}{2}\|u\|^2_{L^2(\mathbb{R}^2)}$ is
minimized in $W^{1,2}_0(\mathbb{D})$, where $\mathbb{D}$ is a
bounded annular domain such that $W=\lambda$ on the outer or inner
part of its boundary, as in \cite{alama}, \cite{alamamontero}, see
Remark \ref{remdirichlet} below. For further generalizations we
refer to Remarks \ref{remconnected}, \ref{remainfinity} below.

It follows from the above estimates that the convergence of
$\eta_\varepsilon$ to $\sqrt{A^+}$ is the most regular possible (see
Corollary \ref{cormax} below). If we assume that $W_{tt}\geq c>0$ on
$\Gamma$, then we will show in Proposition \ref{prorefine} that
bound (\ref{eqestim1+}) can be replaced by
\[
\eta_{\varepsilon}(\textbf{y})=\varepsilon^\frac{1}{3}\beta_\varepsilon(\theta)V\left(\beta_\varepsilon(\theta)\frac{t}{\varepsilon^\frac{2}{3}}
\right)\left[1+\mathcal{O}(\varepsilon^\frac{2}{3})\left(\frac{t}{\varepsilon^\frac{2}{3}}\right)^\frac{5}{2}
\right].
\]
 Estimate
(\ref{eqlambdaepsilonnew}) improves (\ref{eqlambdaepsilon}), and was
previously established in \cite{ignat} in the special case of the
model harmonic potential (\ref{eqharmonic}) by exploiting a scaling
property of the corresponding equation (\ref{eqlagrange}), see
Remark \ref{remignat} below, which is not available under our
general assumptions.  The above theorem has some other interesting
consequences, which will be explored in Sections \ref{secfurther},
\ref{secfepsilon}: We can prove an analogous monotonicity property
to (\ref{eqJ4}) for $\eta_\varepsilon$ near $\partial \mathcal{D}_0$
\emph{without} the simplifying assumption of radial symmetry  on the
potential $W$. We will see that estimates (\ref{eqJ2}) and
(\ref{eqJ3}) are actually far from optimal. In addition, restricting
ourselves to the radially symmetric case with $\mathcal{D}_0$ a
ball, we can improve and sharpen the new estimates of
\cite{aftalion-jerrard} for the important auxiliary function
$f_\varepsilon$, as described in (\ref{eqfepsilon}). In fact, we
believe that the estimates of Theorem \ref{thmmain} can be utilized
in estimating the function $f_\varepsilon$ in the non-radial case,
through equation (\ref{eqxidiv}) below, which may ultimately lead to
the resolution of the open problem raised in \cite{aftalion-jerrard}
(recall the discussion in Subsection \ref{secknown}).

To further illustrate the importance of our result, we emphasize
that its method of proof can be adapted to produce similar results
for semiclassical standing wave solutions of the defocusing NLS
$(\ref{eqNLS})_-$, with $N\geq 1$ and $q>2$, assuming that $W$ has
the features described Subsection \ref{secproblem} with $\lambda$ in
place of $\lambda_0$ and $\mathcal{D}_0$ a domain with
$(N-1)$--dimensional boundary (see also Remark \ref{remexponent}
below). (Recall that the minimizer $\eta_\varepsilon$ solves
(\ref{eqlagrange}), and the discussion in the second part of
Subsection \ref{secmotivation}). We also emphasize that, when
dealing directly with $(\ref{eqground})_-$, our approach does not
make any use of the techniques of Brezis and Oswald
\cite{brezis-oswald}, which were needed in previous approaches for
establishing (\ref{eqmontero}). In fact, our approach may produce
sign changing solutions of $(\ref{eqground})_-$, satisfying
estimates analogous to those of Theorem \ref{thmmain} (see Remark
\ref{remconnected} and Section \ref{secopen} below).

A rigorous connection between semiclassical ground states of the
defocusing nonlinear Schr\"{o}dinger equation $(\ref{eqNLS})_-$, in
one space dimension, and solutions of the Painlev\'{e}-II equation
(\ref{eqpanintro}) has been established recently in
\cite{schecter-sourdis,sourdis-fife}, for a related Hamiltonian
system. We refer to the physical works \cite{bandmalomed},
\cite{konotopkevrekidis}, \cite{lundh}, \cite{pethick} for formal
expansions in one space dimension or radially symmetric cases, and
to \cite{wu-marg} for higher dimensions (see also
\cite{margetis2012}).
 In the
 case of the model harmonic potential $W(\textbf{y})=|\textbf{y}|^2,\
\textbf{y}\in \mathbb{R}^N$, $N\geq 1$, $\lambda=1$, and $q=3$, the
problem of uniform asymptotic approximations of the ground state of
the defocusing NLS $(\ref{eqNLS})_-$, using the Hastings-McLeod
solution of the Painlev\'{e}-II equation, has been established on a
rigorous level very recently in \cite{pelinovsky}. However, the
approach of \cite{pelinovsky} relies crucially on the specific form
of the model harmonic potential, which allows for a suitable global
change of independent variables in equation $(\ref{eqground})_-$
(see Remark \ref{rempeli} below). The real delicacy of our result is
not that it successfully connects $\eta_\varepsilon$ with $V$, but
that we do so in a way that yields fine estimates, as can already be
seen from (\ref{eqenergysharp}), and the proof of
(\ref{eqlambdaepsilonnew}) (see also the detailed estimates of
\cite{pelinovsky} in the case of the model harmonic potential). The
optimality of estimates (\ref{eqestim1}), (\ref{eqestim2}) is also
suggested by Remark \ref{remyan} below.

The analog of relation (\ref{equappotentialthmmain}) may be used to
study the spectrum of the linearization of the Gross--Pitaevskii
equation $(\ref{eqNLS})_-$ at the corresponding standing wave
solution (\ref{eqphase}), which is defined by the eigenvalue
problem, in $L^2(\mathbb{R}^N)\times L^2(\mathbb{R}^N)$,
\begin{equation}\label{eqgrillakis}
\left\{\begin{array}{lll}
                                     -\varepsilon^2 \Delta \phi+(W-\lambda
+q|\eta_\varepsilon|^{q-1})\phi & = & -\mu \psi \\
                                      &   &   \\
                                     -\varepsilon^2\Delta
\psi+(W-\lambda+|\eta_\varepsilon|^{q-1})\psi & = & \mu \phi.
                                   \end{array}
\right.
\end{equation}
The above eigenvalue problem determines the spectral stability of
the standing wave, with respect to the time evolution of the GP
equation,  and gives preliminary information for nonlinear analysis
of orbital stability or more generally about the flow of
$(\ref{eqNLS})_-$ in a neighborhood of the  standing wave (see
\cite{batesjones}, \cite{jonesindiana}, \cite{grillakis},
\cite{schlagBook}). More complex phenomena, such as those of pinned
vortices (dark solitons) on top of the ground state
\cite{kevrekidisREVIEW}, \cite{pelikevrekidisGryllakis},
 and the construction of traveling wave
solutions with a stationary or traveling vortex ring to the GP
equation \cite{weipitaevskii}, \cite{weivortex-2}, can also be
understood from the analysis of (\ref{eqgrillakis}). In particular,
relation (\ref{equappotentialthmmain}) plays an important role in
the construction of excited states for $(\ref{eqground})_-$ (see
\cite{pelinovskynonlinear} and Section \ref{secopen} herein). It
seems that it was previously known \emph{only} in the special case
of the model harmonic potential (see \cite{pelinovsky}). In fact,
even under the assumption of general radial symmetry, it does not
follow from the recent estimates of \cite{aftalion-jerrard}.

 Observe that the equation in $(\ref{eqground})_-$, with $q=3$, is
equivalent, for $\varepsilon>0$, to
\begin{equation}\label{eqallenCahninhomog}
\Delta u+\left(\lambda-W(\varepsilon y) \right)u-u^3,\ \ y\in
\mathbb{R}^N,
\end{equation}
which when setting $\varepsilon=0$, and re-scaling appropriately (we
assume  that $\lambda>W(0)$),  becomes the well known Allen-Cahn
equation
\begin{equation}\label{eqAllenCahn} \Delta
v-v(v^2-1)=0,\ \ y\in \mathbb{R}^N .\end{equation} The above problem
has received an enormous attention, see for instance
\cite{delpinoAnnals} and the references therein.
 It seems
plausible that our result can be combined with existing ones for the
Allen-Cahn equation, and produce new interesting solutions for the
Gross-Pitaevskii equation $(\ref{eqground})_-$. We will elaborate
more on this in Section \ref{secopen} below.


\begin{rem}\label{remconnected}
Analogous assertions to those of Theorem \ref{thmmain} hold true if
$\mathcal{D}_0$ is assumed to be the union of finitely many bounded
smooth domains. In the latter case, one may construct sign changing
solutions of $(\ref{eqground})_-$, whose absolute value converges
uniformly to $\sqrt{A^+}$, as $\varepsilon\to 0$. The assumption
that $\mathcal{D}_0$ is simply connected plays an important role
only in Section \ref{secfepsilon} below. The degenerate case where
some connected components of $\mathcal{D}_0$ ``touch'' is left as an
open problem is Section \ref{secopen}.
\end{rem}
 \begin{rem}\label{remainfinity}
With only minor modifications in the proof, all the assertions of
Theorem \ref{thmmain} remain  true if (\ref{eqainfinity}) is
replaced by $\frac{1}{C}\leq W(\textbf{y})-\lambda_0\leq
C|\textbf{y}|^p,\ |\textbf{y}|\geq C$, for some $p>0$ (the second
branch in (\ref{equappotentialthmmain}) would have to be replaced by
a constant). The fact that $p\geq 2$ in (\ref{eqainfinity}) plays an
important role only in Proposition \ref{profeps} below.
\end{rem}
\begin{rem}\label{remdist}
Let us keep in mind that $|t|=\textrm{dist}(\textbf{y},
\partial \mathcal{D}_0)+\mathcal{O}(|\ln
\varepsilon|^\frac{1}{2}\varepsilon)$, wherever defined, and
a-posteriori $|t|=\textrm{dist}(\textbf{y},\partial
\mathcal{D}_0)+\mathcal{O}(|\ln \varepsilon|\varepsilon^2)$, as
$\varepsilon\to 0$ (recall (\ref{eqlambdaepsilon}),
(\ref{eqlambdaepsilonnew})).
\end{rem}
\begin{rem}\label{remGeneral}
An analogous result continuous to hold for the singularly perturbed
elliptic equation
\begin{equation}\label{eqEllipticIntro}
\varepsilon^2\Delta u=F(u,\textbf{y}),
\end{equation}
considered in the entire space $\mathbb{R}^N$ or in a bounded
domain, where $F$ is such that the zero set of $F(u,\textbf{y})=0$
undergoes a supercritical pitchfork bifurcation as the variable
$\textbf{y}$ crosses some surface (such conditions on $F$ can be
found in \cite{schecter-sourdis}). In order to bring out clearly the
underline ideas we refrain from any such generalization. In
(\ref{eqestim1+}) the convergence is exponential because, in the
case at hand, zero is a solution of $F=0$. However, there are no
delay phenomena present, as $\textbf{y}$ crosses $\partial
\mathcal{D}_0$, in contrast to the first order equation $\varepsilon
\dot{u}=F(u,t)$, as $t$ crosses the point corresponding to
$\partial\mathcal{D}_0$ (see \cite{benoit}).
\end{rem}

\subsection{Outline of the proof and structure of the
paper}\label{secoutline} The proof of Theorem \ref{thmmain} consists
in showing that there exists a genuine solution of
(\ref{eqlagrange}) ``near'' a suitably constructed smooth
approximate solution, which in turn is ``near'' $\sqrt{A^+}$,
provided the parameter $\varepsilon$ is small enough. \emph{We emphasize
that we will consider the Lagrange multiplier
$\varepsilon^{-2}\lambda_\varepsilon$,
 corresponding to the minimization
of $G_\varepsilon$ in $\mathcal{H}$, as a known coefficient in
(\ref{eqlagrange}) which, as we have already remarked, is known to
satisfy (\ref{eqlambdaepsilon}).}
 Then, by uniqueness
(see also Remark \ref{remuniq} below), we will conclude that the
obtained solution is actually the minimizer $\eta_\varepsilon$.
Actually, we will prefer to work with the equivalent (for
$\varepsilon>0$) problem in stretched variables
$y=\varepsilon^{-\frac{2}{3}}\textbf{y}$, see (\ref{eqEqstretched})
below. The main steps of the proof are the following:
\begin{enumerate}
  \item [(i)]
Firstly, we construct a ``good'' smooth approximate solution for the
(stretched) problem which we call $u_{ap}$. The function $u_{ap}$ is
carefully built, along the lines set in
\cite{karalisourdisresonance},
throughout Subsections \ref{secnearcurve}-\ref{secmatching} in the
following steps: Starting from the Hastings-McLeod solution $V$,
described above, we  construct an inner approximation $u_{in}$ that
is valid \emph{only} in
  a tubular neighborhood of the (stretched)
  curve $\varepsilon^{-\frac{2}{3}}\Gamma_\varepsilon$. Then, we
  obtain the desired (global) approximate solution $u_{ap}$ by
  patching $u_{in}$ with a subtle modification of the outer approximation
  $\sqrt{a_\varepsilon(\varepsilon^\frac{2}{3}y)^+}$.
  We emphasize that the use of this latter modification seems to be a key point in the whole construction.
In fact, the matching of the inner approximation with the outer one
is the major difficulty in the current problem,  mainly due to the
algebraic decay of $V$ to $\sqrt{-x}$ as $x\to -\infty$ (see
(\ref{eqvasympto}) below).  For more details on this point, see the
discussion in
  Subsection \ref{secaway} and Remark \ref{remmatching} below.
  This difficulty was not present in \cite{pelinovsky}, where the
  case of the model harmonic potential was considered, since in that case the
 special form of the equation allowed for the inner solution to be used globally, and thus no matching was necessary.
\item[(ii)] Next, in Subsection \ref{seclinear}, we study the linearized operator
\[
\mathcal{L}_\varepsilon\equiv\Delta
-\varepsilon^{-\frac{2}{3}}\left(3u_{ap}^2-a(\varepsilon^\frac{2}{3}y)\right),
\]
about the approximate solution $u_{ap}$, and invert it in carefully
chosen weighted spaces. We exploit a recent observation, due to
Gallo and Pelinovsky \cite[Lemma 2.2]{pelinovsky}, which says that
the potential $3V^2+x$ of the linearization of (\ref{eqpanintro})
about the Hastings-McLeod solution is bounded below by some positive
constant. In turn, we will show that this latter property implies
that the same assertion holds true for the potential of
$-\mathcal{L}_\varepsilon$, if $\varepsilon>0$ is small. In
particular, for small $\varepsilon$, the operator
$\mathcal{L}_\varepsilon$ is invertible. We point out that the
previously mentioned lemma of \cite{pelinovsky} is of technical
nature and can be bypassed at the expense of a more involved, but
rather standard, analysis (see Remark \ref{rempotential} and
Appendix \ref{ap2} below). Our choice of weighted spaces, a variant
of those considered in \cite{pacard-riviere}, is mainly motivated
from the error term (\ref{eqremuouttilda}) below.

\item[(iii)]
Finally, in Subsection \ref{secexiste}, we look for a genuine
solution of the stretched problem (\ref{eqEqstretched}) in the form
\[
u_\varepsilon = u_{ap} + \varphi,
\]
where $\varphi$ is a correction.
 At this stage, we show that we can rephrase the
problem as a fixed point problem for $\varphi$, which can easily be
solved, if $\varepsilon>0$ is sufficiently small, using the fixed
point theorem for contraction mappings. Then, in Subsection
\ref{secproofofmain}, taking advantage of the recent uniqueness
result of \cite{ignat} for problem (\ref{eqEqstretched})
(see also Remark \ref{remuniq} below), we infer that the unique real
valued minimizer of $G_\varepsilon$ in $\mathcal{H}$ satisfies
$\eta_\varepsilon(\textbf{y})=u_\varepsilon(\varepsilon^{-\frac{2}{3}}\textbf{y})$,
$\textbf{y}\in \mathbb{R}^2$. From this property, and the estimates
derived from the construction of $u_\varepsilon$, we can deduce the
validity of Theorem \ref{thmmain}. In particular, the proof of
estimate (\ref{eqlambdaepsilonnew}) builds on
(\ref{eqlambdaepsilon}), and uses the fact that
$\|\eta_\varepsilon\|_{L^2(\mathbb{R}^2)}=1$.
\end{enumerate}

The outline of the paper is the following: In Section
\ref{secnotation} we will introduce notation and standard concepts
that we will use throughout the paper. In Section \ref{secproofmain}
we will present the proof of our main result, as outlined above. In
Section \ref{secfurther}, as a byproduct of our construction, we
will establish an analogous monotonicity property to (\ref{eqJ4})
for the general case (with improvements), show that
$\eta_\varepsilon$ has the maximal H\"{o}lder regularity available,
improve bound (\ref{eqJ3}), and generalize and considerably improve
estimate (\ref{eqJ2}). In Section \ref{secfepsilon}, assuming that
the potential $W$ is radially symmetric with $\mathcal{D}_0$  a
ball, we will mainly rely on the results of Section \ref{secfurther}
to refine and improve the recent estimates of
\cite{aftalion-jerrard} for the important auxiliary function
$f_\varepsilon$ in (\ref{eqfepsilon}). In Section \ref{secopen}
  we will present some  interesting open   problems that are  related to the current study.
 We
will close the paper with two appendixes. In Appendix \ref{ap2} we
will reprove our main result concerning the linearized operator
$\mathcal{L}_\varepsilon$ based on the asymptotic stability of the
Hastings-McLeod solution (in the usual sense), rather than making
use of the recent lemma of \cite{pelinovsky} which may not hold in
other problems. Lastly, in Appendix \ref{appenpainleve} we will
provide a new more flexible and simple proof of the existence, and
related properties, of the Hastings-McLeod solution through the
study of problem (\ref{eqvdirichlet}) below. The results of the
latter study seem to be new and to have interesting applications
(see Remark \ref{remdirichlet} below).

\section{Notation}\label{secnotation} In the sequel, we will often suppress the obvious dependence on
$\varepsilon$ of various functions and quantities. Furthermore by
$c/C$ we will denote small/large generic constants, independent of
$\varepsilon$, whose value will change from line to line. The value
of $\varepsilon$ will constantly decrease so that all previous
relations hold. The Landau symbol $\mathcal{O}(1), \ \varepsilon\to
0$, will denote quantities that remain uniformly bounded  as
$\varepsilon\to 0$, whereas $o(1)$ will denote quantities that
approach zero as $\varepsilon\to 0$. By $C^{k,\alpha}$ or
equivalently $C^{k+\alpha}$,  $W^{k,p}$, $k,p \in \mathbb{Z}\cup
\{\infty\},\ \alpha\in [0,1)$, we will denote the usual H\"{o}lder,
and Sobolev spaces respectively (see for instance
\cite{Gilbarg-Trudinger}). Frequently, we will denote the minus case
of $(\ref{eqground})$ below plainly by $(\ref{eqground})_-$, e.t.c.

We will identify the tubular neighborhood $
B_\delta(\Gamma_\varepsilon)=\{\textbf{y}\in  \mathbb{R}^2\ :\
\textrm{dist}(\textbf{y},\Gamma_\varepsilon)<\delta\}, $ where
$\delta<\delta_0$ (recall (\ref{eqFermi})), with $
\Omega_\delta(\Gamma_\varepsilon)=\{|t|<\delta,\ \theta\in
[0,\ell_\varepsilon) \}$, denoted simply by $\{|t|<\delta\}$, and
$\textbf{y}$ in $B_\delta(\Gamma_\varepsilon)$ by the corresponding
pair $(t,\theta)\in \Omega_\delta(\Gamma_\varepsilon)$ as determined
via (\ref{eqFermi}).
\section{Proof of the main result}\label{secproofmain}

\subsection{Setup near the Curve}\label{secnearcurve} In this
subsection, suitably blowing up (\ref{eqlagrange}) around the curve
$\Gamma_\varepsilon$, we will construct an inner approximation which
is valid \emph{only} near the curve, for small $\varepsilon>0$,
whose profile normal to the curve will be that of a (scaled)
Hastings-McLeod solution (see also \cite{wu-marg}). To this end, we
will follow the general lines set in \cite{del pino cpam} which
dealt with the focusing case $(\ref{eqground})_+$.

Formally neglecting the term $\varepsilon^2\Delta u$ in
  (\ref{eqlagrange}), we get the outer approximation
  $\sqrt{a_\varepsilon(\textbf{y})^+}$. However, the Laplacian of the latter is not
  even bounded on the boundary of $\mathcal{D}_\varepsilon$. Hence, the outer approximation fails in the vicinity
  of $\partial \mathcal{D}_0$.
   An
  inner approximation is thus needed, playing the role of a
  ``bridge'' as $\textbf{y}$ crosses that boundary.

 In the coordinates $(t,\theta)$, near
$\Gamma_\varepsilon$, the metric can be parameterized as
\[
g_{t,\theta}=dt^2+(1+kt)^2d\theta^2,
\]
and the Laplacian operator becomes
\[
\Delta_{t,\theta}=\frac{\partial^2}{\partial
t^2}+\frac{1}{(1+kt)^2}\frac{\partial^2}{\partial \theta^2} +
\frac{k}{1+kt}\frac{\partial}{\partial
t}-\frac{k't}{(1+kt)^3}\frac{\partial}{\partial \theta},
\]
where $k_\varepsilon(\theta)$ is the curvature of
$\Gamma_\varepsilon$ (see for instance \cite{stefanopoulos},
\cite{fife-arma}). Note that $k_\varepsilon$ and its derivatives
depend smoothly on $\varepsilon\geq 0$.
\begin{rem}\label{remfermi}
$(t,\gamma_\varepsilon(\theta))$ are known in the literature as
Fermi coordinates, see for instance \cite{klidenberg}, and  are
frequently employed in the study of perturbation problems involving
solutions concentrating on manifolds (see for example \cite{del pino
cpam}). Interestingly enough, they owe their name to the physicist
E. Fermi in the title of the current paper!
\end{rem}

In stretched variables \[y=\varepsilon^{-\frac{2}{3}}\textbf{y},\]
problem (\ref{eqlagrange}) becomes
\begin{equation}\label{eqEqstretched}
\Delta
u-\varepsilon^{-\frac{2}{3}}u\left(u^2-a_\varepsilon(\varepsilon^\frac{2}{3}y)
\right)=0,\ \ u>0  \ \textrm{in}\
  \mathbb{R}^2,\ \ \lim_{|{y}|\to \infty}u({y})=0,
\end{equation}
where $a_\varepsilon$ was defined in (\ref{eqaepsilon}).

In the sequel we will denote
\begin{equation}\label{eqDepsilon}\tilde{\mathcal{D}}_\varepsilon=\varepsilon^{-\frac{2}{3}}\mathcal{D}_\varepsilon\
\ \textrm{and}\ \
\tilde{\Gamma}_\varepsilon=\varepsilon^{-\frac{2}{3}}\Gamma_\varepsilon.\end{equation}
Let
\begin{equation}\label{eqszfinal}(s, z) =
\varepsilon^{-\frac{2}{3}}(t, \theta)
\end{equation}
 be natural stretched
coordinates associated to the curve $\tilde{\Gamma}_\varepsilon$,
now defined for
\begin{equation}\label{eqFermi-stretched}
s\in \left(-\delta_0\varepsilon^{-\frac{2}{3}},\
\delta_0\varepsilon^{-\frac{2}{3}} \right),\ \ \ z\in
\left[0,\varepsilon^{-\frac{2}{3}}\ell_\varepsilon\right).
\end{equation}
In the coordinates $(s,z)$, near $\tilde{\Gamma}_\varepsilon$, the
metric can be written as
\begin{equation}\label{eqmetric}
g_{s,z}=ds^2+(1+\varepsilon^\frac{2}{3}ks)^2dz^2,
\end{equation}
and the Laplacian for $u$ expressed in these coordinates becomes
\begin{equation}\label{eqLaplace}
\Delta u=u_{zz}+u_{ss}+B_1(u),
\end{equation}
where
\begin{equation}\label{eqB1}
B_1(u)=-\left[1-\frac{1}{\left(1+\varepsilon^\frac{2}{3}k(\varepsilon^\frac{2}{3}z)s\right)^2}
\right]u_{zz}+\frac{\varepsilon^\frac{2}{3}k(\varepsilon^\frac{2}{3}z)}{1+\varepsilon^\frac{2}{3}k(\varepsilon^\frac{2}{3}z)s}u_s-
\frac{\varepsilon^\frac{4}{3}sk'(\varepsilon^\frac{2}{3}z)}{\left(1+\varepsilon^\frac{2}{3}k(\varepsilon^\frac{2}{3}z)s\right)^3}u_z.
\end{equation}
Hence, equation (\ref{eqEqstretched}) takes the form
\begin{equation}\label{eqEqfermi}
R(u)\equiv
u_{zz}+u_{ss}+B_1(u)-\varepsilon^{-\frac{2}{3}}u\left(u^2-a(\varepsilon^\frac{2}{3}s,\varepsilon^\frac{2}{3}z)
\right)=0,
\end{equation}
in the region (\ref{eqFermi-stretched}). Observe that all terms in
the operator $B_1$ have $\varepsilon^\frac{2}{3}$ as a common
factor, more precisely we can write
\begin{equation}\label{eqB0}
B_1(u)=
\varepsilon^\frac{2}{3}a_0(\varepsilon^\frac{2}{3}s,\varepsilon^\frac{2}{3}z)u_s+
\varepsilon^\frac{4}{3}sa_1(\varepsilon^\frac{2}{3}s,\varepsilon^\frac{2}{3}z)u_z+
\varepsilon^\frac{2}{3}sa_2(\varepsilon^\frac{2}{3}s,\varepsilon^\frac{2}{3}z)u_{zz},
\end{equation}
for certain smooth functions $a_j (t, \theta),\ j = 0, 1, 2$.

We now consider a further change of variables in equation
(\ref{eqEqfermi}) with the property that it (roughly) replaces at
main order the function $a$ by one that has constant gradient along
$\tilde{\Gamma}_\varepsilon$.
Let $\beta$ be as in (\ref{eqbita}), then
we define $v(x, z)$ by the relation
\begin{equation}\label{eqvsz}\left\{\begin{array}{l}
                             u(s,z)=\varepsilon^\frac{1}{3}\beta(\varepsilon^\frac{2}{3}z)v(x,z),
 \\
                              \\
                             x=\beta(\varepsilon^\frac{2}{3}z)s.
                           \end{array}\right.
\end{equation}
Choosing a smaller $\delta_0$, if necessary, we may assume that the
coordinates $(x,z)$ are also defined for $|x|\leq
\delta_0\varepsilon^{-\frac{2}{3}}$, $z\in
[0,\varepsilon^{-\frac{2}{3}}\ell_\varepsilon)$. We want to express
equation (\ref{eqEqfermi}) in terms of these new coordinates. We
compute:
\begin{equation}\label{equs-uss-uz-uzz}
\left\{
\begin{array}{lll}
  u_s &=& \varepsilon^\frac{1}{3}\beta^2v_x, \\
  && \\
  u_{ss}&=&\varepsilon^\frac{1}{3}\beta^3v_{xx}, \\
   &&\\
  u_z &=& \varepsilon\beta'v+\varepsilon^\frac{1}{3}\beta v_z+\varepsilon \beta'x v_x, \\
  && \\
  u_{zz}&=&
\varepsilon^\frac{5}{3}\beta''v+2\varepsilon^\frac{5}{3}(\beta')^2\beta^{-1}xv_x+2\varepsilon\beta'v_z+2\varepsilon\beta'xv_{xz}+\varepsilon^\frac{1}{3}\beta v_{zz}\\
&&\\
&&
+\varepsilon^\frac{5}{3}\beta''xv_x+\varepsilon^\frac{5}{3}(\beta')^2\beta^{-1}x^2v_{xx}.
\end{array}\right.
\end{equation}
In order to write down the equation, it is also convenient to expand
\begin{equation}\label{eqa-taylor}
a(\varepsilon^\frac{2}{3}s,\varepsilon^\frac{2}{3}z)=a_t(0,\varepsilon^\frac{2}{3}z)\varepsilon^\frac{2}{3}s
+a_3(\varepsilon^\frac{2}{3}s,\varepsilon^\frac{2}{3}z)\varepsilon^\frac{4}{3}s^2\stackrel{(\ref{eqbita})}{=}
-\beta^2\varepsilon^\frac{2}{3}x+a_3(\varepsilon^\frac{2}{3}s,\varepsilon^\frac{2}{3}z)\varepsilon^\frac{4}{3}s^2,
\end{equation}
for some bounded function $a_3(t,\theta)$.
 It turns out that
$u$ solves (\ref{eqEqfermi}) if and only if $v$, defined by
(\ref{eqvsz}), solves
\begin{equation}\label{eqS(v)=0}
S(v)\equiv \varepsilon^{-\frac{1}{3}}\beta^{-3}R(u)=
v_{xx}-v(v^2+x)+B_2(v)=0,
\end{equation}
where $B_2(v)$ is a differential operator defined by
\begin{equation}\label{eqB2}
  B_2(v)=
\varepsilon
  ^\frac{2}{3}\tilde{a}_3x^2v+\varepsilon^{-\frac{1}{3}}\beta^{-3}u_{zz}+\varepsilon^{-\frac{1}{3}}\beta^{-3}B_1(u),
\end{equation}
where the bounded function  $\tilde{a}_3 (t, \theta)$ is evaluated
at $(\varepsilon^\frac{2}{3}\beta^{-1}x,\varepsilon^\frac{2}{3}z)$,
and $B_1$ is the differential operator in (\ref{eqB1}) where
derivatives are expressed in terms of formulas
(\ref{equs-uss-uz-uzz}) and $s$ replaced by $\beta^{-1}x$. (Note
that $ B_2(v)=\beta^{-2}v_{zz}+\tilde{B}_2(v), $ where all the terms
in the operator $\tilde{B}_2$ carry $\varepsilon^\frac{2}{3}$ in
front of them).

Our first criterion for choosing an approximate solution $v$ of
(\ref{eqS(v)=0}) is that $S(v)$ is small. In view of
(\ref{eqS(v)=0}), (\ref{eqB2}), it is natural to choose a $v$ that
depends only on $x$, and solves the second--order non--autonomous
ordinary differential equation:
\begin{equation}\label{eqpainleve}
v_{xx}-v(v^2+x)=0,\ \ \ x\in \mathbb{R},
\end{equation}
which is known as the Painlev\'{e}-II equation, a particular case of
the second Painlev\'{e} transcendent \cite{fokas}. Then, keeping in
mind that the inner profile $\varepsilon^\frac{1}{3}\beta v(\beta
s)$ of (\ref{eqEqstretched}) should match \cite[Chpt. 5]{vandyke}
with the outer profile $\sqrt{a(\varepsilon^\frac{2}{3}y)^+}$, as
$s\to \pm \infty$, it is easy to see that the appropriate asymptotic
behavior of
 $v$ should be
\begin{equation}\label{eqvboundary}
v(x)-\sqrt{-x}\to 0,\ x\to -\infty;\ \ v(x)\to 0,\ x\to \infty.
\end{equation}
(Observe that the $s$ variable is defined in
$(-\delta_0\varepsilon^{-\frac{2}{3}},
\delta_0\varepsilon^{-\frac{2}{3}})$, but the natural domain for
this variable is the infinite line). More precisely, recalling
(\ref{eqa-taylor}), we get that
\[
\sqrt{a_\varepsilon(\varepsilon^\frac{2}{3}y)^+}=\varepsilon^\frac{1}{3}\beta
\sqrt{-\beta s}+\mathcal{O}(\varepsilon |s|^\frac{3}{2})
\]
if $-\delta_0\varepsilon^{-\frac{2}{3}}\leq s \leq 0$, as
$\varepsilon\to 0$, while on the other side it holds that
$a_\varepsilon(\varepsilon^\frac{2}{3}y)=0$ if $0\leq s\leq
\delta_0\varepsilon^{-\frac{2}{3}}$.

The following proposition holds:
\begin{pro}\label{prohastings}
Problem (\ref{eqpainleve})--(\ref{eqvboundary}) has a unique
solution $V$, called \emph{Hastings-McLeod solution}. Furthermore,
we have
\begin{equation}\label{eqVx<0}
V_x<0\ \ \textrm{in}\ \ \mathbb{R}.
\end{equation}

The following estimates hold:
\begin{equation}\label{eqvasympto}
-C|x|^{-\frac{5}{2}}<V(x)-\sqrt{-x}<0,\  x\to -\infty;\ \
0<V(x)<Cx^{-\frac{1}{4}}e^{-\frac{2}{3}x^\frac{3}{2}},\ x\geq 1,
\end{equation}
for some constant $C>0$, and
\begin{equation}\label{eqvestims}
\left\{\begin{array}{lll}
  V_x=-\frac{1}{2}(-x)^{-\frac{1}{2}}+\mathcal{O}(|x|^{-\frac{7}{2}}), & V_{xx}=-\frac{1}{4}(-x)^{-\frac{3}{2}}+\mathcal{O}(|x|^{-\frac{9}{2}}), & x\to -\infty, \\
    &   &   \\
  V_xV=-\frac{1}{2}+\mathcal{O}(|x|^{-3}), &  V^2+x=\mathcal{O}(|x|^{-2}), & x\to -\infty, \\
    &   &   \\
|V_x|+|V_{xx}|\leq C e^{-cx^\frac{3}{2}},    &   &  x>0.
\end{array}
\right.
\end{equation}

 The potential of the associated
linearized operator
\begin{equation}\label{eqMcal}
\mathcal{M}(\varphi)\equiv\varphi_{xx}-\left(3V^2(x)+x\right)\varphi
\end{equation}
satisfies
\begin{equation}\label{eqvpeli}
3V^2(x)+x\geq c>0,\ \ x\in \mathbb{R}.
\end{equation}
\end{pro}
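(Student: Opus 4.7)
The plan is to construct $V$ as the $R\to\infty$ limit of minimizers $V_R$ of the energy
\begin{equation*}
\mathcal{E}_R(v)=\int_{-R}^{R}\left\{\tfrac{1}{2}v_x^2+\tfrac{1}{4}(v^2+x)^2\right\}dx
\end{equation*}
over $H^1([-R,R])$ functions with Dirichlet data $v(-R)=\sqrt{R}$, $v(R)=0$ (this is essentially the auxiliary problem the paper calls (\ref{eqvdirichlet})). The Euler--Lagrange equation is precisely (\ref{eqpainleve}). Existence of $V_R$ follows from the direct method since the potential is nonnegative and the boundary data are admissible; positivity is obtained by replacing $v$ by $|v|$, which cannot increase $\mathcal{E}_R$. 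Uniform a priori $C^{2,\alpha}_{\mathrm{loc}}$ bounds, together with the barriers $0\leq V_R\leq \sqrt{(-x)^+}$ (the right--hand side being a pointwise supersolution because $[\sqrt{-x}]_{xx}<0$ on $x<0$ while the nonlinear term $v(v^2+x)$ vanishes along $v=\sqrt{-x}^+$), allow extraction of a limit $V$ solving (\ref{eqpainleve}) and inheriting the boundary behavior (\ref{eqvboundary}).

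Monotonicity $V_x<0$ I would establish first at the level of $V_R$: since the weight $x$ multiplying $v^2$ in the expanded potential $\tfrac{1}{4}(v^2+x)^2=\tfrac{1}{4}v^4+\tfrac{1}{2}xv^2+\tfrac{1}{4}x^2$ is strictly increasing in $x$, the monotone decreasing rearrangement of $V_R$ does not increase the Dirichlet energy and strictly decreases $\int xv^2\,dx$ unless $V_R$ is already monotone; thus the minimizer is monotone nonincreasing. Strictness, $V_{R,x}<0$ on $(-R,R)$, then follows by applying the strong maximum principle/Hopf lemma to $w=V_{R,x}$, which solves $w_{xx}-(3V_R^2+x)w=V_R>0$. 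Passage to the limit preserves $V_x\leq 0$, and the strict inequality for $V$ results from the fact that $V\equiv\sqrt{-x}$ or $V\equiv 0$ on any interval is incompatible with (\ref{eqpainleve}) together with the required behavior at $\pm\infty$. Uniqueness of the Hastings--McLeod solution follows from the same idea: if $V_1,V_2$ are two solutions, then $w=V_1-V_2$ satisfies $w_{xx}=(V_1^2+V_1V_2+V_2^2+x)w$, the coefficient tending to $+\infty$ as $|x|\to\infty$ by (\ref{eqvboundary}); combined with $w\to 0$ at infinity, a maximum principle argument forces $w\equiv 0$.

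The asymptotic estimates (\ref{eqvasympto}) and (\ref{eqvestims}) I would obtain by explicit sub-- and supersolution barriers for the limit equation. Near $-\infty$, writing $V=\sqrt{-x}+\varphi$ and linearizing the equation produces the formal leading correction $\varphi\sim -\tfrac{1}{8}(-x)^{-5/2}$; testing the candidates $\sqrt{-x}$ and $\sqrt{-x}-C(-x)^{-5/2}$ directly against the operator $v\mapsto -v_{xx}+v(v^2+x)$ confirms that they sandwich $V$ for $-x$ sufficiently large, which gives the first half of (\ref{eqvasympto}) and, after differentiating, the expansions for $V_x$, $V_{xx}$ and the identity $V_xV\to -\tfrac{1}{2}$. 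Near $+\infty$, since $V$ is exponentially small the cubic nonlinearity is subdominant and the equation is a small perturbation of the Airy equation $V_{xx}=xV$; comparison with multiples of $\mathrm{Ai}(x)\sim\tfrac{1}{2\sqrt{\pi}}x^{-1/4}e^{-\frac{2}{3}x^{3/2}}$ provides the matching upper and lower bounds, with derivative decay recovered by differentiating the equation and applying the same barrier technique.

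The main obstacle is the spectral inequality (\ref{eqvpeli}), which is the delicate recent lemma of Gallo and Pelinovsky. Since $3V^2(x)+x\to +\infty$ as $|x|\to\infty$ by (\ref{eqvasympto}), its infimum is attained at some finite $x_0$; the first--order condition at $x_0$ gives $6V(x_0)V_x(x_0)+1=0$, hence $V_x(x_0)^2=\tfrac{1}{36V(x_0)^2}$, and combining the second--order condition with $V_{xx}=V(V^2+x)$ yields the algebraic inequality
\begin{equation*}
3V(x_0)^2+x_0\geq 2V(x_0)^2-\tfrac{1}{36V(x_0)^4}.
\end{equation*}
Closing the argument thus requires an independent lower bound on $V(x_0)$, which is extracted from $V_x<0$ and the already established asymptotic profile of $V$, ruling out the critical point being in a region where $V$ is too small. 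As the paper signals, this technical step can alternatively be bypassed by the abstract perturbation argument of Appendix \ref{ap2}, which deduces (\ref{eqvpeli}) from the asymptotic stability of $V$ (in the usual sense) and does not rely on the particular identities above.
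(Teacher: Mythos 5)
Your construction step contains two genuine gaps. First, the claimed barrier $0\leq V_R\leq \sqrt{(-x)^+}$ is not available: the function $\sqrt{(-x)^+}$ is a supersolution only on $x<0$, and at $x=0$ its derivative jumps \emph{upward} (from $-\infty$ to $0$), which is the subsolution-type corner, so it is not a weak supersolution across $x=0$ (it is not even in $H^1$ near $0$); moreover the bound itself is false for $x>0$, since any nonnegative nontrivial solution is strictly positive there by the strong maximum principle (the Hastings--McLeod solution satisfies $V>0=\sqrt{(-x)^+}$ on $[0,\infty)$). Second, and more seriously, with only the trivial lower bound $V_R\geq 0$ nothing in your argument prevents the local limit of $V_R$ from being identically zero, or from losing the prescribed behavior $V-\sqrt{-x}\to 0$ at $-\infty$: to pass the left boundary condition to the limit you need an $R$-independent lower barrier that pins the approximating solutions near $\sqrt{-x}$, and building admissible ordered sub/supersolutions is exactly where the work lies (in the paper's Appendix \ref{appenpainleve} the supersolution requires a smoothing of the corner plus a principal-eigenfunction correction $M\zeta_D\psi_1$, and the lower solution is built from the half-line solution $U_+$ of Proposition \ref{prophalf}, whose asymptotics $U_+\sim x^{1/p}$ needs its own nontrivial argument). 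Your uniqueness argument also does not close: the coefficient $V_1^2+V_1V_2+V_2^2+x$ tends to $+\infty$ at $\pm\infty$ but is not known to be positive on compact sets (its everywhere-positivity is essentially of the same nature as (\ref{eqvpeli})), and for a sign-changing potential the combination ``$w\to0$ at $\pm\infty$ plus maximum principle'' does not force $w\equiv 0$, since $0$ could be an eigenvalue; the actual proofs (Hastings--McLeod's, or the Wronskian/quotient argument for the analogous statement in Appendix \ref{appenpainleve}, which uses ordering between fixed sub/supersolutions, decay of the Wronskian at the decaying end, and the precise growth $V/\sqrt{-x}\to1$ at the other end) are genuinely more involved.

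The treatment of (\ref{eqvpeli}) is not a proof. Your critical-point identity is correct, but it yields positivity only if $V(x_0)^6>1/72$, i.e.\ $V(x_0)\gtrsim 0.49$, at the minimizing point $x_0$, and such a quantitative lower bound in the transition region cannot be ``extracted from $V_x<0$ and the asymptotic profile'': the asymptotics control $V$ only for $|x|$ large, monotonicity gives no lower bound, and in fact $V(0)\approx 0.37$ violates the required inequality, so one must in addition localize $x_0$ away from the region where $V$ is small --- precisely the delicate content of Gallo--Pelinovsky's three-page lemma, which your sketch presupposes rather than reproduces. Your fallback also misreads the paper: Appendix \ref{ap2} does not deduce (\ref{eqvpeli}) from asymptotic stability; it only shows that the linear estimates of Proposition \ref{proL} can be obtained from the softer spectral bound (\ref{eqsofter}) \emph{without} (\ref{eqvpeli}), so the pointwise inequality, which is part of the statement you are proving, remains unproven in your proposal. (For calibration: the paper itself treats this proposition as a sketch, citing Hastings--McLeod for existence and uniqueness, Fokas et al.\ for (\ref{eqvasympto})--(\ref{eqvestims}), and Gallo--Pelinovsky for (\ref{eqvpeli}), with its self-contained existence/uniqueness argument carried out by upper and lower solutions in Appendix \ref{appenpainleve}; your asymptotic barriers near $\pm\infty$ are fine in spirit and close to that sketch, but the existence, uniqueness and spectral-bound steps above are where the proposal breaks down.)
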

\begin{proof}
We will provide a sketch of proof, underlining the main ideas, and
refer the interested reader to the original works.

Existence and uniqueness of a solution $V$ of
(\ref{eqpainleve})--(\ref{eqvboundary}) have been shown by Hastings
and McLeod \cite{hastings}, \cite{hastingsbook} using a ``shooting''
technique (they also mention an unpublished proof of Conley).
Recently, a new proof of existence has been given in \cite{alikakos}
by the method of upper and lower solutions. Motivated from some
problems that we mentioned in Section \ref{secmotivation} (see also
Remark \ref{remdirichlet} below), we will provide in Appendix
\ref{appenpainleve} a new different proof of existence and
uniqueness  which seems to be simpler and more flexible. Another
different  and rather short proof has  been given very recently in
\cite[Lemma 2.4]{weipitaevskii}. Relation (\ref{eqVx<0}) has also
been shown in the aforementioned references, and in Appendix
\ref{appenpainleve} herein. We note that, by looking at the equation
for $Z\equiv(-x)^{-\frac{1}{2}}V$, one can show that $V$ has a
unique inflection point (see \cite{hastings}); notice also that
$V-\sqrt{-x}$ is convex whenever nonnegative.

The asymptotic behavior of $V$, as $x\to \pm \infty$, is described
in great detail in Theorem 11.7 of \cite{fokas} (see also
\cite{itsLNM}). In particular, it follows that relations
(\ref{eqvasympto})--(\ref{eqvestims}) hold true. Let us provide some
intuition behind these, at first sight, complex formulae. The first
relation of (\ref{eqvasympto}) can be formally derived as follows:
Assume that $V-\sqrt{-x}\sim \alpha(-x)^{-\beta}$ as $x\to -\infty$,
for some $\alpha\in \mathbb{R},\ \beta>0$; plugging this ansatz in
(\ref{eqpainleve}), keeping in mind that $V+\sqrt{-x}\sim 2
\sqrt{-x}$ and (the expectation) that $V_{xx}\sim
-\frac{1}{4}(-x)^{-\frac{3}{2}}$ as $x\to -\infty$, we readily find
that $\alpha=-\frac{1}{8},\ \beta=\frac{5}{2}$ (see also \cite[pg.
160]{pethick}). A rigorous derivation can be given by simply writing
down  the equation for the function $V-\sqrt{-x}$ (or better yet for
$V-\sqrt{-x}+\frac{1}{8}(-x)^{-\frac{5}{2}}$), and then applying in
the resulting identity the following simple lemma, which can be
proven by a standard barrier argument:

\textbf{Lemma} (cf. \cite[Lemma 2.1]{pelinovsky} or \cite[Lemma
3.10]{karalisourdisradial}) \emph{Suppose that $\phi\in C^2,\ q\in C$
satisfy
\[
-\phi''+q(x)\phi=\mathcal{O}(|x|^{-\alpha}),
\]
$\phi\to 0$, and $q(x)\geq c|x|$, as $x \to -\infty$, for some
constants $\alpha,\ c>0$. Then we have
\[
\phi=\mathcal{O}(|x|^{-\alpha-1})\ \ \textrm{as}\ x\to -\infty.
\]}
In passing, we note that the above lemma can also be proven by
rewriting the equation in terms of the new independent variable
$\xi=-(-x)^{\frac{3}{2}}$ and then applying a lemma of H\'{e}rve and
H\'{e}rve \cite[pg. 435]{herve}, see also \cite[Prop. 3.1]{helfer}.
 On the other side, because of the second condition in
(\ref{eqvboundary}), the nonlinear term in (\ref{eqpainleve}) can be
(formally) neglected for $x\to \infty$, yielding Airy's equation
\cite[pg. 100]{BenderO}, namely
\begin{equation}\label{eqairy}
y''=xy,
\end{equation}
predicting that
\begin{equation}\label{eqVsimAiry}V(x)\sim \gamma\textrm{Ai}(x)\ \  \textrm{as}\  x\to \infty,
\end{equation}
 for some
constant $\gamma>0$, where $\textrm{Ai}(\cdot)$ denotes the standard
Airy's function (see also (\ref{eqairyasymptotic}) below). For
future reference, we recall from \cite{BenderO} that the two
independent solutions of Airy's equation can be taken to have the
asymptotic behavior
\begin{equation}\label{eqairyasymptotic}
\textrm{Ai}(x)\sim
\frac{1}{2}\pi^{-\frac{1}{2}}x^{-1/4}e^{-2x^{3/2}/3}\ \
\textrm{and}\ \ \textrm{Bi}(x)\sim
\pi^{\frac{1}{2}}x^{-1/4}e^{2x^{3/2}/3},\ \ x\to \infty,
\end{equation}
see also the discussion leading to formulae (\ref{eqBOa}) below.
 Relation (\ref{eqVsimAiry}), which clearly
implies the validity of the second estimate in (\ref{eqvasympto}),
can be established rigorously directly from the variation of
constants formula
\begin{equation}\label{eqvariationofcons}
V(x)=\gamma\textrm{Ai}(x)+2\int_{x}^{\infty}\left\{\textrm{Ai}(x)\textrm{Bi}(t)-\textrm{Bi}(x)\textrm{Ai}(t)
\right\}V^3(t)dt,
\end{equation}
see also \cite[Lemma B.1]{alfimov}.
 It is worthwhile to note that the exact value of $\gamma$ was
determined to be
\begin{equation}\label{eqgamma1}
\gamma=1
\end{equation}
 in \cite{hastings}, using an integral equation
derived by inverse scattering techniques of Ablowitz and Segur. An
implication of this beautiful formula, for the physics of
Bose--Einstein condensation, has been pointed out in
\cite{margetispainleve}. The estimates in (\ref{eqvestims}) can be
established in a similar manner. For example, the asymptotic
behavior of $V_x$ as $x\to -\infty$ follows by writing down the
equation for the function $V_x+\frac{1}{2}(-x)^{-\frac{1}{2}}$ and
then applying the above lemma.

 Finally, the lower bound (\ref{eqvpeli}) has been proven
recently in \cite[Lemma 2.2]{pelinovsky}; despite of its simple
appearance, its proof takes almost three pages! Actually, we were
surprised to find that such an apparently simple relation proves to
be so recalcitrant. We emphasize that, for the purposes of the
current paper, it can be bypassed (something that we did not do) at
the expense of some extra work (see Remark \ref{rempotential}
below). On the other side, it is quite painless to show that the
operator $\mathcal{M}$ still satisfies the maximum principle
\emph{without} knowledge of the latter lemma. In other words:
\begin{equation}\label{eqwWords}\textrm{``whenever}\  w\in
C^2(\mathbb{R}),\ \mathcal{M}(w)\leq 0,\ \textrm{and}\
\liminf_{|x|\to \infty}w(x)\geq 0,\ \textrm{then\ necessarily}\
w\geq0\textrm{''}.\end{equation} For this, recall that the typical
way towards establishing the maximum principle for an elliptic
operator $L$ in an open set $\Omega$
 is to first show that
\begin{equation}\label{eqCabrewords}\textrm{``there\ exists\ a\ positive\ upper--solution}\ \phi\ \textrm{of}\ L(\phi) =
0\ \textrm{in}\ \Omega\textrm{''},\end{equation} see
\cite{saddlecabre3solo} for more details.
 If this holds, then
adding one of various additional assumptions on $\phi$ (the simplest
one being $\phi\geq c$ with $c$ a positive constant), it does
guarantee the maximum principle to hold, see \cite{berestycki}.
Indeed, in bounded domains, (\ref{eqCabrewords}) is a necessary and
almost sufficient condition for the maximum principle to hold, see
Corollary 2.1 of \cite{berestycki}. However, in unbounded domains
one has to be more careful to deal with infinity. Having this in
mind, firstly note that
\begin{equation}\label{eqberestycki}
\mathcal{M}(-V_x)=-V<0;\ \ \ \mathcal{M}(V)=-2V^3<0,
\end{equation}
and recall that $-V_x>0,\ V>0$ in $\mathbb{R}$. So, condition
(\ref{eqCabrewords}), with $L=-\mathcal{M}$, is satisfied by either
one of $-V_x$ or $V$. Nevertheless, observe that one faces a
difficulty when proceeding as in \cite{berestycki}, namely applying
the standard maximum principle in the equation satisfied by
$\frac{w}{-V_x}$ or $\frac{w}{V}$ (where $w$ is as in
(\ref{eqwWords})). This is because $-V_x$ and $V$ vanish as $x\to
\pm \infty$ and $x\to \infty$ respectively, making the signs of
$\liminf_{x\to \pm \infty}\frac{w}{-V_x}$ and $\liminf_{x\to \infty}
\frac{w}{V}$ unclear (in the case where one of $\liminf_{x\to\pm
\infty}w$ equals zero). (This subtle point seems to have been
overlooked in \cite{weipitaevskii}). Instead, we consider the
function
\[
\varphi_\epsilon=\frac{w-\epsilon V_x}{V},\ \ \epsilon>0,
\]
noting that, thanks to (\ref{eqairy})--(\ref{eqairyasymptotic}) (see
also (\ref{eqairy'}) below), we have $\lim_{x\to
-\infty}\varphi_\epsilon\geq 0$ and $\lim_{x\to
\infty}\varphi_\epsilon= \infty$. Using (\ref{eqwWords}),
(\ref{eqberestycki}), and applying the standard maximum principle in
the equation satisfied by $\varphi_\epsilon$, we obtain that
$\varphi_\epsilon>0$ in $\mathbb{R}$, for every $\epsilon>0$. Then,
letting $\epsilon\to 0$ we infer that $w\geq 0$ in $\mathbb{R}$
which is the desired assertion in (\ref{eqwWords}).

The sketch of the proof of the proposition is complete.
\end{proof}

We can now define the inner solution of problem
(\ref{eqEqstretched}), in the neighborhood of
$\tilde{\Gamma}_\varepsilon$ described by $\{|x|\leq \delta_0
\varepsilon^{-\frac{2}{3}},\ \ z\in
[0,\varepsilon^{-\frac{2}{3}}\ell_\varepsilon)\}$, as
\begin{equation}\label{equin}
u_{in}(y)=\varepsilon^\frac{1}{3}\beta(\varepsilon^\frac{2}{3}z)V(x),
\end{equation}
(recall (\ref{eqvsz})).

Let $L>0$ be a large constant to be determined independently of
small $\varepsilon>0$. We consider the neighborhood of
$\tilde{\Gamma}_\varepsilon$ described by
\begin{equation}\label{eqStripepsilon}
U_\varepsilon=\{-2L \leq x\leq \delta_0 \varepsilon^{-\frac{2}{3}},\
\ z\in [0,\varepsilon^{-\frac{2}{3}}\ell_\varepsilon)\}.
\end{equation}

The following proposition contains the main estimate regarding
$u_{in}$.
\begin{pro}\label{prouin}
If $\varepsilon$ is sufficiently small, the inner approximation
$u_{in}$, defined in (\ref{equin}), satisfies
\begin{equation}\label{eqremuin}
\|\Delta
u_{in}-\varepsilon^{-\frac{2}{3}}u_{in}\left(u_{in}^2-a(\varepsilon^\frac{2}{3}y)
\right)\|_{L^\infty(U_\varepsilon)}\leq C\varepsilon.
\end{equation}
\end{pro}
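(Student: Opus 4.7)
The plan is to exploit the reformulation (\ref{eqS(v)=0}) of the residual in the $(x,z)$--variables introduced by the change of unknown (\ref{eqvsz}). Choosing $v(x,z)=V(x)$ makes the principal part $v_{xx}-v(v^2+x)$ of $S(v)$ vanish identically by virtue of the Painlev\'e--II equation (\ref{eqpainleve}), so (\ref{eqS(v)=0}) gives
\[
R(u_{in})\;=\;\varepsilon^{1/3}\beta^{3}(\varepsilon^{2/3}z)\,B_2(V),
\]
and the entire proposition reduces to the pointwise estimate $\|B_2(V)\|_{L^{\infty}(U_\varepsilon)}\le C\varepsilon^{2/3}$. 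By (\ref{eqB2}), the perturbation $B_2(V)$ splits into three pieces: the quadratic Taylor remainder $\varepsilon^{2/3}\tilde a_3 x^{2} V$ coming from (\ref{eqa-taylor}), the geometric/tangential contribution $\varepsilon^{-1/3}\beta^{-3}B_1(u_{in})$ read off from (\ref{eqB1})--(\ref{eqB0}), and the slow $z$--variation piece $\varepsilon^{-1/3}\beta^{-3}(u_{in})_{zz}$.

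To estimate each of these I would first substitute $v=V$ (so that $v_z=v_{zz}=v_{xz}=0$) into the chain-rule identities (\ref{equs-uss-uz-uzz}), obtaining closed expressions for $(u_{in})_s$, $(u_{in})_z$, $(u_{in})_{zz}$ as polynomials in $V,V_x,V_{xx}$ and $x$, with coefficients that are smooth bounded functions of $\beta,\beta',\beta''$ evaluated at $\varepsilon^{2/3}z$. Since $\beta$ is smooth and uniformly bounded away from $0$ by (\ref{eqbita})--(\ref{eqbt>at}), the $\beta$--coefficients play a passive role, and one reduces matters to uniform control on $U_\varepsilon$ of the various weighted monomials $x^{j}V^{(k)}(x)$ that arise, using the precise asymptotics (\ref{eqvasympto})--(\ref{eqvestims}) from Proposition \ref{prohastings}.

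The main obstacle is that on $U_\varepsilon$ the stretched normal variable $x$ ranges up to $\delta_0\varepsilon^{-2/3}$, so no crude $L^\infty$ bound on powers of $x$ is available. Two different mechanisms take over in the two sub-regions: on the bounded left portion $-2L\le x\le 0$ the weights $x^j$ are $O_L(1)$ and the algebraic asymptotics $V\sim\sqrt{-x}$, $V_x\sim-\tfrac12(-x)^{-1/2}$, $V_{xx}\sim-\tfrac14(-x)^{-3/2}$ from (\ref{eqvestims}) keep $x^{2}V$, $xV_x$, $x^{2}V_{xx}$ uniformly bounded in terms of $L$; on the long right portion $0\le x\le \delta_0\varepsilon^{-2/3}$ the super-exponential decay $V,|V_x|,|V_{xx}|\le Ce^{-cx^{3/2}}$ from (\ref{eqvasympto})--(\ref{eqvestims}) dominates any polynomial factor. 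A term-by-term accounting in (\ref{eqB0}), using also $|s|\le\delta_0\varepsilon^{-2/3}/\beta_{\min}$, then yields $B_1(u_{in})=O(\varepsilon)$ and $(u_{in})_{zz}=O(\varepsilon^{5/3})$, while the Taylor piece is directly $O(\varepsilon^{2/3})$. Assembling these three bounds produces $B_2(V)=O(\varepsilon^{2/3})$ uniformly on $U_\varepsilon$, whence $R(u_{in})=O(\varepsilon)$, which is the claimed estimate. I expect the only delicate book-keeping to be matching each $x$--weight to the correct asymptotic of $V$, $V_x$ or $V_{xx}$; no analytic input beyond (\ref{eqvasympto})--(\ref{eqvestims}) should be required.
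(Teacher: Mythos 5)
Your proposal is correct and follows essentially the same route as the paper: both pass to the $(x,z)$ variables where $S(V)=B_2(V)$ because $V$ solves (\ref{eqpainleve}), and then bound the three pieces of $B_2(V)$ term by term using (\ref{equs-uss-uz-uzz}), (\ref{eqB0}), the size $|x|\leq \delta_0\varepsilon^{-\frac{2}{3}}$ of $U_\varepsilon$, and the asymptotics (\ref{eqvasympto})--(\ref{eqvestims}) (boundedness on $-2L\leq x\leq 0$, super-exponential decay for $x>0$). The paper merely compresses this bookkeeping into the single pointwise bound (\ref{equinremexplicit}), which your intermediate estimates reproduce.
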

\begin{proof}
From the calculation leading to (\ref{eqS(v)=0}), and (\ref{eqB2}),
we find that
\begin{equation}\label{equinremexplicit}
\begin{array}{lll}
\left|  \Delta
u_{in}-\varepsilon^{-\frac{2}{3}}u_{in}\left(u_{in}^2-a(\varepsilon^\frac{2}{3}y)
\right)\right| & = &|R(u_{in})|= \varepsilon^\frac{1}{3}\beta^3|S(V)| \\
   &  &  \\
   & \leq &C \varepsilon\left[(x^2+\varepsilon^\frac{2}{3})V+|V_x|+|x||V_{xx}|\right]
\end{array}
\end{equation}
pointwise in $U_\varepsilon$. The desired estimate (\ref{eqremuin})
now follows at once from the above relation, via (\ref{eqvasympto}),
(\ref{eqvestims}) and (\ref{eqStripepsilon}).

The proof of the proposition is complete.
\end{proof}

\begin{rem}\label{remgeometric}
From the geometric singular perturbation viewpoint
\cite{schecter-sourdis}, the Hastings-McLeod solution  corresponds
to a special trajectory of the (de-singularized) blown-up system
that connects two equilibria on a sphere. The fact that this
connection is transverse, which allows for a perturbation argument,
follows from the non-degeneracy of the Hastings-McLeod solution (as
defined in Remark \ref{rempotential} below). In this regard, see
also Remark \ref{remcornerlemma} below.
\end{rem}
\begin{rem}\label{rempainleveApplications}
In the context of singular perturbation problems, the
Hastings-McLeod solution first served as a basis for an inner
solution in plasma physics (see \cite{deboer}). Since then, it has
 been (formally) used to describe layered structures in problems involving crystalline interphase boundaries
\cite{alikakos,schecter-sourdis,sourdis-fife},  patterns of
convection in rectangular platform containers \cite{painleveFluid},
self-similar parabolic optical solitary waves \cite{btnn}, and the
Navier-Stokes and continuity equations for axisymmetric flow
\cite{painlevegotler}.
\end{rem}
\begin{rem}\label{remfisher}
In well known singularly perturbed elliptic problems, such as the
focusing NLS (recall the discussion  in Subsection
\ref{secscrodmotiv}) or the spatially inhomogeneous Allen-Cahn
equation (see for instance \cite{guilayer},
\cite{nakashimaNonradial}, \cite{weicluster}), the corresponding
inner profile is determined by special solutions of
\emph{autonomous} second order elliptic equations (posed in less or
equal dimensions). In the former case the corresponding equation is
\[
\Delta u-u+|u|^{q-1}u=0,
\]
while in the latter it is equation (\ref{eqAllenCahn}). In contrast,
in the problem at hand the corresponding equation (\ref{eqpainleve})
is \emph{non-autonomous}, as was the case in \cite{byeon},
\cite{dancer-lazer}, \cite{danceryanCrtitic},
\cite{karalisourdisradial}, and \cite{karalisourdisresonance}. The
interested reader can verify that  similar situations also occur in
the singularly perturbed Fisher's equation \cite[Chpt. 10,
Exc.3]{henry} (see also \cite{nakashimaFisher}), and in the
spatially inhomogeneous Allen-Cahn equation that we mentioned
previously, treated in \cite{guilayer}, \cite{nakashimaNonradial},
if the spatial inhomogeneity is not strictly positive but vanishes
at certain points (or submanifolds) of the domain (have in mind the
first part of Subsection \ref{secandre}).
\end{rem}
\subsection{Set up away from the curve}\label{secaway} In this
subsection,  adapting an idea of \cite{karalisourdisradial} (see
also \cite{karalisourdisresonance}), we suitably perturb, in
$\tilde{\mathcal{D}}_\varepsilon$,
the outer approximation
\begin{equation}\label{equout}u_{out}\equiv\sqrt{a(\varepsilon^\frac{2}{3}y)^+}
\end{equation}
to an improved outer approximation $\tilde{u}_{out}$, which is
closer to the inner approximation $u_{in}$ near the curve
$\tilde{\Gamma}_\varepsilon$, for small $\varepsilon>0$. We
emphasize that this is a key step in our construction of an
approximate solution for (\ref{eqEqstretched}). Our underlying plan,
carried out in the following subsection, is to smoothly interpolate
between $\tilde{u}_{out}$ and $u_{in}$,
and $u_{in}$ and zero (in $\mathbb{R}^2\backslash
\tilde{\mathcal{D}}_\varepsilon$), near the curve
$\tilde{\Gamma}_\varepsilon$, in order to obtain an approximate
solution $u_{ap}$ of (\ref{eqEqstretched}) that is valid in all of
$\mathbb{R}^2$. Interpolating directly between $u_{out}$, as defined
in (\ref{equout}), and $u_{in}$ in $\tilde{\mathcal{D}}_\varepsilon$
is not standard due to the following obstructions: The inner
approximation $u_{in}$ leaves a remainder in (\ref{eqEqstretched})
that grows with respect to the (negative) distance from
$\tilde{\Gamma}_\varepsilon$, as can be seen from
(\ref{eqvasympto}), (\ref{equinremexplicit}); $V$ converges
algebraically slowly to $\sqrt{-x}$ as $x\to -\infty$, recall
(\ref{eqvasympto}).
(For more details on this subtle point we refer to Remark
\ref{remmatching} below). In the one-dimensional case, an elegant
solution to this can be given by geometric singular perturbation
theory, see \cite{schecter-sourdis} and Remark \ref{remcornerlemma}
below. In contrast, thanks to the (super--) exponentially fast
convergence of $V$ to zero, as $x\to \infty$, interpolating directly
between $u_{in}$ and zero in
$\mathbb{R}^2\backslash\tilde{\mathcal{D}}_\varepsilon$ is rather
standard. (This last situation occurs in the construction of
spike-layered solutions of the focusing $(\ref{eqground})_+$, see
\cite{malchiodibook}, \cite{del pino cpam}, \cite{floer},  and
transition-layered solutions of  Allen-Cahn type equations, see
\cite{delpinoarma}, \cite{fifegreenlee}).

Let $\delta < \frac{\delta_0}{100(1+\max \beta)}$ be a fixed number.
We consider a smooth cutoff function
\begin{equation}\label{eqndelta-cuttoff}
n_\delta(t)=\left\{\begin{array}{ll}
                     1, & |t|\leq \delta, \\
                       &   \\
                     0, & |t|\geq 2\delta.
                   \end{array}
 \right.
\end{equation}
Denote as well
\begin{equation}\label{eqcutoffchi}
\chi_\delta(x)=n_\delta(\varepsilon^\frac{2}{3}x),
\end{equation}
where $x$ is the coordinate in (\ref{eqvsz}).

We define our outer approximation in the domain
$\tilde{\mathcal{D}}_\varepsilon \backslash\{-L<\beta s<0,\ z\in
[0,\varepsilon^{-\frac{2}{3}}\ell_\varepsilon)\}$ to be
\begin{equation}\label{equouttilda}
\tilde{u}_{out}(y)\equiv\left\{a(\varepsilon^\frac{2}{3}y)+\varepsilon^\frac{2}{3}\beta^2
\chi_\delta(\beta s)\left[\beta s+V^2(\beta
s)\right]\right\}^\frac{1}{2}.
\end{equation}
 Note that, thanks to (\ref{eqa>b}), (\ref{eqvestims}), if $L$ is
sufficiently large then $\tilde{u}_{out}$ is well defined for small
$\varepsilon$.

The following proposition makes delicate use of estimates
(\ref{eqvasympto}), (\ref{eqvestims}), and contains the main
properties of $\tilde{u}_{out}$. In some sense, it is the ``heart''
of the present paper.
\begin{pro}\label{prououttilda}
We have
\begin{equation}\label{eqremuouttilda}
\Delta
\tilde{u}_{out}-\varepsilon^{-\frac{2}{3}}\tilde{u}_{out}\left(\tilde{u}_{out}^2-a(\varepsilon^\frac{2}{3}y)\right)=\mathcal{O}(\varepsilon
|s|^{-\frac{1}{2}})
\end{equation}
uniformly in $\{-\delta\varepsilon^{-\frac{2}{3}} \leq \beta s\leq
-L, \ z\in [0,\varepsilon^{-\frac{2}{3}}\ell_\varepsilon)\}$, and
\begin{equation}\label{eqremuouttilda2}
\Delta
\tilde{u}_{out}-\varepsilon^{-\frac{2}{3}}\tilde{u}_{out}\left(\tilde{u}_{out}^2-a(\varepsilon^\frac{2}{3}y)\right)=\mathcal{O}(\varepsilon^{\frac{4}{3}})
\end{equation}
uniformly in $\tilde{\mathcal{D}}_\varepsilon\backslash
\{-\delta\varepsilon^{-\frac{2}{3}} \leq \beta s<0, \ z\in
[0,\varepsilon^{-\frac{2}{3}}\ell_\varepsilon)\}$, as
$\varepsilon\to 0$.
 Moreover, if $\varepsilon>0$ is sufficiently small, we have
\begin{equation}\label{equout-uingradients}
|\tilde{u}_{out}-u_{in}|\leq C\varepsilon |s|^\frac{3}{2},\ \
\left|\nabla(\tilde{u}_{out}-u_{in})\right|\leq C \varepsilon
|s|^\frac{1}{2},\ \ \left|\Delta(\tilde{u}_{out}-u_{in})\right|\leq
C\varepsilon |s|^{-\frac{1}{2}}
\end{equation}
in $\{-\delta\varepsilon^{-\frac{2}{3}}\leq \beta s\leq -L, \ z\in
[0,\varepsilon^{-\frac{2}{3}}\ell_\varepsilon)\}$, and
\begin{equation}\label{equouttilda-a}
\left|\tilde{u}_{out}-\sqrt{a(\varepsilon^\frac{2}{3}y)}\right|\leq
C\varepsilon^\frac{1}{3}|s|^{-\frac{5}{2}}
\end{equation}
in $\{-2\delta\varepsilon^{-\frac{2}{3}}\leq \beta s\leq -L, \ z\in
[0,\varepsilon^{-\frac{2}{3}}\ell_\varepsilon)\}$.
\end{pro}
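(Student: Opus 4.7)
The plan is to work entirely in the stretched Fermi coordinates $(s,z)$ from (\ref{eqszfinal}) and to exploit the fact that $\tilde{u}_{out}$ is engineered so that, wherever $\chi_\delta(\beta s)=1$, the algebraic identity
\[
\tilde{u}_{out}^2 - u_{in}^2 = \bigl[\tilde{u}_{out}^2 - a\bigr] - \bigl[u_{in}^2 - a\bigr] = \mathcal{O}(\varepsilon^{4/3}s^2)
\]
holds: by (\ref{equouttilda}) the first bracket equals $\varepsilon^{2/3}\beta^2[\beta s + V^2(\beta s)]$, while by (\ref{equin}) and the Taylor expansion (\ref{eqa-taylor}) the second bracket equals $\varepsilon^{2/3}\beta^2[\beta s + V^2(\beta s)] + \mathcal{O}(\varepsilon^{4/3}s^2)$. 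This single identity drives every estimate in the proposition. For (\ref{equout-uingradients}): in the overlap region $\{-\delta\varepsilon^{-2/3}\le\beta s\le-L\}$, the asymptotics $V(x)\sim\sqrt{-x}$ from (\ref{eqvasympto}) give $\tilde{u}_{out}+u_{in}\ge c\varepsilon^{1/3}|s|^{1/2}$, so dividing the identity by the sum produces $|\tilde{u}_{out}-u_{in}|\le C\varepsilon|s|^{3/2}$. Writing $\phi=\tilde{u}_{out}-u_{in}$ and $\psi=\tilde{u}_{out}+u_{in}$ and differentiating $\phi\psi=\mathcal{O}(\varepsilon^{4/3}s^2)$ once and then twice in $(s,z)$, the bounds on $\nabla\phi$ and $\Delta\phi$ follow inductively by combining the pointwise bound on $\phi$ with $\nabla u_{in}=\mathcal{O}(\varepsilon^{1/3}|s|^{-1/2})$ and $\Delta u_{in}=\mathcal{O}(\varepsilon^{1/3}|s|^{-3/2})$ (read off (\ref{equs-uss-uz-uzz}) and (\ref{eqvestims})), and with the elementary bounds $\nabla(\varepsilon^{4/3}s^2 a_3)=\mathcal{O}(\varepsilon^{4/3}|s|)$, $\Delta(\varepsilon^{4/3}s^2 a_3)=\mathcal{O}(\varepsilon^{4/3})$.

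For the residual (\ref{eqremuouttilda}) I would invoke the Painlev\'e--II equation (\ref{eqpainleve}) to effect a key cancellation. In the overlap region,
\[
\varepsilon^{-2/3}\tilde{u}_{out}\bigl(\tilde{u}_{out}^2-a\bigr)=\tilde{u}_{out}\beta^2\bigl[\beta s+V^2(\beta s)\bigr]=\tilde{u}_{out}\beta^2\,\frac{V_{xx}(\beta s)}{V(\beta s)},
\]
using $V_{xx}=V(V^2+x)$. Splitting $\tilde{u}_{out}=u_{in}+\phi$ and noting that $\varepsilon^{1/3}\beta^3 V_{xx}(\beta s)=(u_{in})_{ss}$ turns this into $(u_{in})_{ss}+\phi\beta^2V_{xx}/V$. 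On the other hand, (\ref{eqLaplace}) expresses $\Delta\tilde{u}_{out}=(u_{in})_{ss}+(u_{in})_{zz}+B_1(u_{in})+\Delta\phi$, so the two copies of $(u_{in})_{ss}$ cancel and
\[
R(\tilde{u}_{out})=(u_{in})_{zz}+B_1(u_{in})+\Delta\phi-\phi\,\beta^2\,\frac{V_{xx}(\beta s)}{V(\beta s)}.
\]
Each of these four pieces is $\mathcal{O}(\varepsilon|s|^{-1/2})$: the first is read off (\ref{equs-uss-uz-uzz}) as $\mathcal{O}(\varepsilon^{5/3}|s|^{1/2})$; $B_1(u_{in})$ is controlled via (\ref{eqB0}), its dominant contribution being $\varepsilon^{2/3}a_0(u_{in})_s=\mathcal{O}(\varepsilon|s|^{-1/2})$; $\Delta\phi$ was already bounded; and $\phi\beta^2 V_{xx}/V=\mathcal{O}(\varepsilon|s|^{3/2})\cdot\mathcal{O}(|s|^{-2})$ by (\ref{eqvestims}).

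Finally, in the region $\tilde{\mathcal{D}}_\varepsilon\setminus\{-\delta\varepsilon^{-2/3}\le\beta s<0\}$, where $\chi_\delta$ either vanishes identically or lies in its transition layer, $a$ is bounded away from zero and $\sqrt{a(\varepsilon^{2/3}y)}$ is smooth, so its Laplacian in the stretched variable produces the overall factor $\varepsilon^{4/3}$. The correction added to $a$ inside the square root in (\ref{equouttilda}) is $\mathcal{O}(\varepsilon^{2/3}|s|^{-2})$ by (\ref{eqvestims}), and its derivatives---including those falling on $\chi_\delta$, each of which costs a factor $\varepsilon^{2/3}$---are correspondingly small, giving (\ref{eqremuouttilda2}); (\ref{equouttilda-a}) is then immediate from $|\tilde{u}_{out}^2-a|=\mathcal{O}(\varepsilon^{2/3}|s|^{-2})$ combined with $\tilde{u}_{out}+\sqrt{a}\sim 2\varepsilon^{1/3}|s|^{1/2}$ after division. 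The delicate point of the whole proof is the exact cancellation of $(u_{in})_{ss}$ in the residual: the nonstandard correction $\varepsilon^{2/3}\beta^2\chi_\delta[\beta s+V^2(\beta s)]$ appearing inside the square root in (\ref{equouttilda}) is precisely what forces the nonlinearity to reproduce the Painlev\'e--II equation to leading order, and is what allows matching to succeed despite the merely algebraic rate $V-\sqrt{-x}=\mathcal{O}(|x|^{-5/2})$ in (\ref{eqvasympto}); the price paid is a residual of size $\varepsilon|s|^{-1/2}$, which is integrable enough in $|s|$ to be absorbed by the weighted norms introduced in Subsection \ref{seclinear}.
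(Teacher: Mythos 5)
Your argument is correct and reaches all four estimates with the same essential ingredients as the paper -- the Fermi-coordinate Laplacian (\ref{eqLaplace})--(\ref{eqB0}), the asymptotics (\ref{eqvasympto})--(\ref{eqvestims}), and the cancellation forced by the Painlev\'{e}--II equation -- but the bookkeeping is organized differently. The paper differentiates the square-root formula (\ref{equouttilda}) explicitly, records the resulting expressions (\ref{equouttilda-s})--(\ref{equouttilda-zz}), extracts their leading terms in (\ref{equouts1}), and expands the cubic term as in (\ref{equouttildainpro}), so the cancellation of $\varepsilon^{1/3}\beta^3V_{xx}(\beta s)$ between $(\tilde{u}_{out})_{ss}$ and the nonlinearity emerges only after asymptotic expansion and careful power-counting in $|s|$; you instead prove (\ref{equout-uingradients}) first, by implicitly differentiating the exact identity $\tilde{u}_{out}^2-u_{in}^2=a(\varepsilon^{2/3}s,\varepsilon^{2/3}z)-a_t(0,\varepsilon^{2/3}z)\varepsilon^{2/3}s=\mathcal{O}(\varepsilon^{4/3}s^2)$ and dividing by $\tilde{u}_{out}\ge c\varepsilon^{1/3}|s|^{1/2}$, and then write the residual \emph{exactly} as $(u_{in})_{zz}+B_1(u_{in})+\Delta\phi-\beta^2\phi\,\bigl(V^2(\beta s)+\beta s\bigr)$ with each piece $\mathcal{O}(\varepsilon|s|^{-1/2})$. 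This is a genuine streamlining: the key cancellation is exact rather than extracted from expansions, and most of the term-by-term estimation in the paper's proof collapses to a few pointwise bounds. Two caveats. First, when differentiating the right-hand side of your identity, estimate it in the form $a-a_t(0,\theta)t$ (so that only derivatives of $a$ and of $\beta^3=-a_t(0,\cdot)$ appear, exactly as in the paper's own formulas), rather than differentiating $a_3$ itself, since (\ref{eqa-taylor}) only asserts that $a_3$ is bounded. Second, the explicit leading-order derivative formulas (\ref{equouts1}) and the bounds (\ref{equouttildainpro2}) are reused later in the paper (in the energy expansion of Theorem \ref{thmmain} and in Corollaries \ref{cor1} and \ref{corgrad}); your shortcut suffices for Proposition \ref{prououttilda}, and recovers that information, but only after combining your bound on $\nabla(\tilde{u}_{out}-u_{in})$ with the explicit derivatives of $u_{in}$.
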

\begin{proof}
In $\{-\delta\varepsilon^{-\frac{2}{3}} \leq \beta s\leq -L, \ z\in
[0,\varepsilon^{-\frac{2}{3}}\ell_\varepsilon)\}$, we have
$\chi_\delta=1$ and we can compute that

\begin{equation}\label{equouttildainpro}
\begin{array}{rl}
  -\varepsilon^{-\frac{2}{3}}\tilde{u}_{out}\left(\tilde{u}_{out}^2-a(\varepsilon^\frac{2}{3}y) \right) & =
   -\beta^2\left[a(\varepsilon^\frac{2}{3}y)+\varepsilon^\frac{2}{3}\beta^3
s+\varepsilon^\frac{2}{3} \beta^2
V^2\right]^{\frac{1}{2}}\left[\beta s+
V^2(\beta s) \right] \\
   &   \\
\textrm{via}\ (\ref{eqa>b}),\ (\ref{eqbita})\ &=
-\beta^2\left[\mathcal{O}(\varepsilon^\frac{4}{3}s^2)+\varepsilon^\frac{2}{3}
\beta^2 V^2\right]^{\frac{1}{2}}\left[\beta s+ V^2(\beta s) \right]\\
& \\
&=-\varepsilon^\frac{1}{3}\beta^3 V(\beta
s)\left[1+\varepsilon^\frac{2}{3}\mathcal{O}(s^2V^{-2})
\right]^\frac{1}{2}
\left[\beta s+ V^2(\beta s) \right]\\
 & \\
\textrm{via}\ (\ref{eqvasympto})\ &=-\varepsilon^\frac{1}{3}\beta^3
V(\beta s)\left[1+\varepsilon^\frac{2}{3}\mathcal{O}(|s|) \right]
\left[\beta s+ V^2(\beta s) \right]\\
 & \\
\textrm{via}\ (\ref{eqvasympto}),\ (\ref{eqvestims})\ &  =
-\varepsilon^\frac{1}{3}\beta^3 V(\beta s)\left[\beta s+V^2(\beta s)
    \right]+\mathcal{O}(\varepsilon |s|^{-\frac{1}{2}}), \\
\end{array}
\end{equation}
uniformly, as $\varepsilon\to 0$. Moreover, in the same region, we
have:
\begin{equation}\label{equouttilda-s}
(\tilde{u}_{out})_s=\frac{\varepsilon^\frac{2}{3}}{2}\left[a_t(\varepsilon^\frac{2}{3}s,\varepsilon^\frac{2}{3}z)+\beta^3+2\beta^3
V_x(\beta s)V(\beta s)
\right]\left[a(\varepsilon^\frac{2}{3}y)+\varepsilon^\frac{2}{3}\beta^3
s+\varepsilon^\frac{2}{3} \beta^2 V^2(\beta
s)\right]^{-\frac{1}{2}},
\end{equation}
\begin{equation}\label{equouttilda-ss}
\begin{array}{lll}
  (\tilde{u}_{out})_{ss} & = &
\frac{\varepsilon^\frac{2}{3}}{2}\left[\varepsilon^\frac{2}{3}a_{tt}(\varepsilon^\frac{2}{3}s,\varepsilon^\frac{2}{3}z)
    +2\beta^4(V_{xx}V+
V_x^2)
\right]\left[a(\varepsilon^\frac{2}{3}y)+\varepsilon^\frac{2}{3}\beta^3
s+\varepsilon^\frac{2}{3} \beta^2 V^2\right]^{-\frac{1}{2}}

 \\
    &   &   \\
    &   &  -\frac{\varepsilon^\frac{4}{3}}{4}\left[a_t(\varepsilon^\frac{2}{3}s,\varepsilon^\frac{2}{3}z)+\beta^3+2\beta^3
V_xV
\right]^2\left[a(\varepsilon^\frac{2}{3}y)+\varepsilon^\frac{2}{3}\beta^3
s+\varepsilon^\frac{2}{3} \beta^2 V^2\right]^{-\frac{3}{2}},
\end{array}
\end{equation}
\begin{equation}\label{equouttilde-z}
(\tilde{u}_{out})_z=\frac{1}{2}\left[\varepsilon^\frac{2}{3}a_\theta+3\varepsilon^\frac{4}{3}
\beta'\beta^2s+2\varepsilon^\frac{4}{3}\beta'\beta
V^2+2\varepsilon^\frac{4}{3}\beta'\beta^2 sV_xV
\right]\left[a+\varepsilon^\frac{2}{3}\beta^3
s+\varepsilon^\frac{2}{3} \beta^2 V^2\right]^{-\frac{1}{2}},
\end{equation}
and
\begin{equation}\label{equouttilda-zz}
\begin{array}{lll}
  (\tilde{u}_{out})_{zz} & = &
\frac{1}{2}\left[\varepsilon^\frac{4}{3}a_{\theta\theta}
    +6\varepsilon^2(\beta')^2\beta s+3\varepsilon^2\beta''\beta^2s+2\varepsilon^2V^2\left(\beta''\beta
+(\beta')^2\right)+8\varepsilon^2(\beta')^2\beta sV_xV
    \right.\\
    && \\
    && \left.+2\varepsilon^2\beta''\beta^2 s V_xV
+2\varepsilon^2(\beta')^2\beta^2s^2(V_x^2+V_{xx}V)
    \right]\left[a+\varepsilon^\frac{2}{3}\beta^3
s+\varepsilon^\frac{2}{3} \beta^2 V^2\right]^{-\frac{1}{2}}\\
&& \\
  & &-\frac{1}{4}
  \left[\varepsilon^\frac{2}{3}a_\theta+3\varepsilon^\frac{4}{3}\beta' \beta^2s
+2\varepsilon^\frac{4}{3}\beta'\beta( V^2+ \beta sV_xV)
\right]^2\left[a+\varepsilon^\frac{2}{3}\beta^3
s+\varepsilon^\frac{2}{3} \beta^2 V^2\right]^{-\frac{3}{2}},
\end{array}
\end{equation}
where the functions $a,\ a_{\theta}\ a_{\theta\theta}$ are evaluated
at $\varepsilon^\frac{2}{3}y$, $V$ at $\beta s$, and $\beta$ at
$\varepsilon^\frac{2}{3}z$.
 Estimating as in (\ref{equouttildainpro}), we conclude that in the region
described by
 $\{-\delta\varepsilon^{-\frac{2}{3}}\leq \beta s\leq -L,\ z\in [0,\varepsilon^{-\frac{2}{3}}
 \ell_\varepsilon)\}$ we have:
\begin{equation}\label{equouts1}
\begin{array}{ll}
  (\tilde{u}_{out})_s=\varepsilon^\frac{1}{3}\beta^2 V_x(\beta
s)+\mathcal{O}(\varepsilon |s|^\frac{1}{2}), &
 (\tilde{u}_{out})_{ss}=\varepsilon^\frac{1}{3}\beta^3 V_{xx}(\beta s)+\mathcal{O}(\varepsilon |s|^{-\frac{1}{2}}),  \\
   \\
  (\tilde{u}_{out})_z=\mathcal{O}(\varepsilon |s|^\frac{1}{2}), &
  (\tilde{u}_{out})_{zz}=\mathcal{O}(\varepsilon |s|^{-\frac{1}{2}}), \\
\end{array}
\end{equation}
uniformly as $\varepsilon\to 0$. We point out that, when estimating
derivatives in $z$, we also made use of the bound
\[|a_\theta(\varepsilon^\frac{2}{3}s,\varepsilon^\frac{2}{3}z)|\leq
C\varepsilon^\frac{2}{3}|s|,\] which follows directly from
(\ref{eqa>b}).
In order to elucidate the various cancelations of powers of $|s|$
involved, let us carefully present the proof of one of the estimates
in (\ref{equouts1}):
\[
\begin{array}{lll}
  \left|(\tilde{u}_{out})_{zz}\right| & \leq &
C\left(\varepsilon^\frac{4}{3}+ \varepsilon^2 |s|+\varepsilon^2
|s||s|^{-\frac{1}{2}}|s|^{\frac{1}{2}}
+\varepsilon^2s^2(|s|^{-1}+|s|^{-\frac{3}{2}}|s|^\frac{1}{2})
    \right)\varepsilon^{-\frac{1}{3}}|s|^{-\frac{1}{2}}\\
&& \\
  & &+C
  \left(\varepsilon^\frac{4}{3}|s|
+ \varepsilon^\frac{4}{3} |s||s|^{-\frac{1}{2}}|s|^{\frac{1}{2}}
\right)^2\varepsilon^{-1}|s|^{-\frac{3}{2}}\\
&&\\
&\leq& C
\left(\varepsilon^\frac{4}{3}+\varepsilon^2|s|\right)\varepsilon^{-\frac{1}{3}}|s|^{-\frac{1}{2}}+C\varepsilon^\frac{8}{3}|s|^2\varepsilon^{-1}|s|^{-\frac{3}{2}}\\
&&\\
&\leq& C\varepsilon
|s|^{-\frac{1}{2}}+C\varepsilon^\frac{5}{3}|s|^\frac{1}{2}\leq
C\varepsilon
|s|^{-\frac{1}{2}}+C\varepsilon^\frac{5}{3}|s||s|^{-\frac{1}{2}}\leq
C \varepsilon |s|^{-\frac{1}{2}}.
\end{array}
\]

In $\{-2\delta\varepsilon^{-\frac{2}{3}} \leq \beta s\leq
-\delta\varepsilon^{-\frac{2}{3}}, \ z\in
[0,\varepsilon^{-\frac{2}{3}}\ell_\varepsilon)\}$, making again use
of (\ref{eqvasympto}), (\ref{eqvestims}), we can show that
\begin{equation}\label{equouttildainpro2}
\begin{array}{lll}
  \tilde{u}_{out}^2-a(\varepsilon^\frac{2}{3}y)=\mathcal{O}(\varepsilon^2),&(\tilde{u}_{out})_s=\mathcal{O}(\varepsilon^\frac{2}{3}), & (\tilde{u}_{out})_{ss}=\mathcal{O}(\varepsilon^\frac{4}{3}), \\
    &   &\\
  &(\tilde{u}_{out})_{z}=\mathcal{O}(\varepsilon^\frac{2}{3}), &
  (\tilde{u}_{out})_{zz}=\mathcal{O}(\varepsilon^\frac{4}{3}),
   \end{array}
\end{equation}
uniformly as $\varepsilon\to 0$. Estimate (\ref{eqremuouttilda}) now
follows from (\ref{eqLaplace}), (\ref{eqB0}), the fact that $V$
solves (\ref{eqpainleve}), (\ref{eqvestims}),
(\ref{equouttildainpro}), and (\ref{equouts1}); estimate
(\ref{eqremuouttilda2}) follows at once from (\ref{eqLaplace}),
(\ref{eqB0}), (\ref{equouttilda}), and (\ref{equouttildainpro2}).
The first relation in (\ref{equout-uingradients}) can be shown by
working as in (\ref{equouttildainpro}). The remaining estimates in
(\ref{equout-uingradients}) follow  directly from
(\ref{eqvasympto}), (\ref{eqvestims}), (\ref{equin}),
(\ref{equouts1}), and the fact that
\begin{equation}\label{eqsz}
u_{y_1}^2+u_{y_2}^2=u_s^2+(1+\varepsilon^\frac{2}{3}ks)^2u_z^2,
\end{equation}
for $y=(y_1,y_2)\in \{(s,z)\ |\ |s|\leq
\delta_0\varepsilon^{-\frac{2}{3}},\ \ z\in
[0,\varepsilon^{-\frac{2}{3}}\ell_\varepsilon)\}$, and any smooth
function $u$ defined in this region (recall (\ref{eqmetric})).
Finally, estimate (\ref{equouttilda-a}) follows readily from
(\ref{eqa>b}), and (\ref{eqvestims}).

 The proof
of the proposition is complete.
\end{proof}
\subsection{The approximate solution $u_{ap}$}\label{secmatching}
In this subsection we will construct a smooth approximate solution
for problem (\ref{eqEqstretched}) that is valid in all of
$\mathbb{R}^2$. This will be achieved by smoothly interpolating
between $\tilde{u}_{out}$ and $u_{in}$ in
$\tilde{\mathcal{D}}_\varepsilon$, and between $u_{in}$ and zero in
$\mathbb{R}^2\backslash\tilde{\mathcal{D}}_\varepsilon$.

To this end, we need to introduce one more smooth cutoff function:
\begin{equation}\label{eqcutoffrho}
\rho_L(x)=\left\{
\begin{array}{ll}
  0, & x\geq -L, \\
    &   \\
  1, & x\leq -2L.
\end{array}
\right.
\end{equation}

We can now define our approximate solution for (\ref{eqEqstretched})
as
\begin{equation}\label{equap}
u_{ap}= \left\{
\begin{array}{ll}
  \tilde{u}_{out} & \textrm{in}\  \tilde{\mathcal{D}}_\varepsilon\backslash\{-2L<x<0\}, \\
    &   \\
  u_{in}+\rho_L(x)(\tilde{u}_{out}-u_{in}) &\textrm{in}\  \{-2L\leq x \leq \delta \varepsilon^{-\frac{2}{3}}\}, \\
    &   \\
   \chi_{10\delta} (x)u_{in} & \textrm{everywhere\ else},
\end{array}
\right.
\end{equation}
where $u_{in},\ \chi_\delta,\ \tilde{u}_{out}$ were defined in
(\ref{equin}), (\ref{eqcutoffchi}), and (\ref{equouttilda})
respectively.

The following proposition contains the main estimates concerning
$u_{ap}$.

\begin{pro}\label{prouap}
The approximate solution $u_{ap}$ satisfies
\begin{equation}\label{eqremuap}
\Delta
{u}_{ap}-\varepsilon^{-\frac{2}{3}}{u}_{ap}\left({u}_{ap}^2-a(\varepsilon^\frac{2}{3}y)\right)=\mathcal{O}\left(\varepsilon
(|s|+1)^{-\frac{1}{2}}\right)
\end{equation}
uniformly in $\{-\delta\varepsilon^{-\frac{2}{3}} \leq \beta s\leq
0, \ z\in [0,\varepsilon^{-\frac{2}{3}}\ell_\varepsilon)\}$,
\begin{equation}\label{eqremuap2}
\Delta
{u}_{ap}-\varepsilon^{-\frac{2}{3}}{u}_{ap}\left({u}_{ap}^2-a(\varepsilon^\frac{2}{3}y)\right)=\mathcal{O}(\varepsilon^{\frac{4}{3}})
\end{equation}
uniformly in $\tilde{\mathcal{D}}_\varepsilon\backslash
\{-\delta\varepsilon^{-\frac{2}{3}} \leq \beta s<0, \ z\in
[0,\varepsilon^{-\frac{2}{3}}\ell_\varepsilon)\}$,
\begin{equation}\label{eqremuap3}
\Delta
{u}_{ap}-\varepsilon^{-\frac{2}{3}}{u}_{ap}\left({u}_{ap}^2-a(\varepsilon^\frac{2}{3}y)\right)=\mathcal{O}(\varepsilon
e^{-c|s|^\frac{3}{2}})
\end{equation}
uniformly in $\{0 \leq \beta s\leq
2\delta\varepsilon^{-\frac{2}{3}}, \ z\in
[0,\varepsilon^{-\frac{2}{3}}\ell_\varepsilon)\}$, as
$\varepsilon\to 0$, and
\begin{equation}\label{eqremuap4}
{u}_{ap}=0 \ \ \textrm{everywhere else}.
\end{equation}

If $\varepsilon>0$ is sufficiently small, we have
\begin{equation}\label{equappotential}
3u_{ap}^2-a(\varepsilon^\frac{2}{3}y)\geq \left\{
\begin{array}{ll}
  c \varepsilon^\frac{2}{3}(1+|x|), &  \textrm{if} \ \ |x|\leq \delta \varepsilon^{-\frac{2}{3}}, \\
    &   \\
  c+c|\varepsilon^\frac{2}{3}y|^p,  & \textrm{otherwise}.
\end{array}\right.
\end{equation}
\end{pro}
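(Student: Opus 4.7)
The proof is a region-by-region computation that leverages the explicit form (\ref{equap}) of $u_{ap}$ together with Propositions \ref{prouin} and \ref{prououttilda}. I would begin by decomposing the strip $\{|\beta s|\le \delta\varepsilon^{-2/3}\}$ into (a) the ``outer tail'' $\beta s\le -2L$, where $u_{ap}=\tilde u_{out}$ and (\ref{eqremuouttilda}) gives $\mathcal{O}(\varepsilon|s|^{-1/2})$, which on this range is equivalent to $\mathcal{O}(\varepsilon(|s|+1)^{-1/2})$; (b) the interpolation band $-2L\le \beta s\le -L$; (c) the ``core'' $-L\le \beta s\le 2\delta\varepsilon^{-2/3}$, where $\rho_L\equiv 0$ and $\chi_{10\delta}\equiv 1$, so $u_{ap}=u_{in}$ and the pointwise estimate (\ref{equinremexplicit}) applies directly. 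On (a) and (c) the remainder estimates (\ref{eqremuap}) and (\ref{eqremuap3}) are immediate, using (\ref{eqvasympto})--(\ref{eqvestims}) to convert $|R(u_{in})|\le C\varepsilon[(x^2+\varepsilon^{2/3})V+|V_x|+|x||V_{xx}|]$ into $\mathcal{O}(\varepsilon)$ for $x$ bounded away from $+\infty$ and into $\mathcal{O}(\varepsilon e^{-c|s|^{3/2}})$ for $x\ge 0$. The region deep inside $\tilde{\mathcal D}_\varepsilon$ (outside the tubular neighborhood, or where $\chi_\delta=0$) uses (\ref{eqremuouttilda2}) to give (\ref{eqremuap2}), and outside the support of $\chi_{10\delta}$ the function $u_{ap}$ vanishes identically, yielding (\ref{eqremuap4}).

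The delicate region is the interpolation band (b). Here I would expand
\[
\Delta u_{ap}=\Delta u_{in}+\rho_L\,\Delta(\tilde u_{out}-u_{in})+(\tilde u_{out}-u_{in})\Delta\rho_L+2\nabla\rho_L\cdot\nabla(\tilde u_{out}-u_{in}),
\]
and expand $u_{ap}^3$ analogously. Because $|s|$ is bounded on this band, the closeness estimates (\ref{equout-uingradients}) guarantee that every term involving $\tilde u_{out}-u_{in}$ contributes at most $\mathcal{O}(\varepsilon)$; the $x$-derivatives of $\rho_L$ are $O(1)$, but they multiply $\tilde u_{out}-u_{in}$ or its gradient, whose smallness compensates. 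Combined with Propositions \ref{prouin} and \ref{prououttilda} this yields the $\mathcal{O}(\varepsilon)$ bound (\ref{eqremuap}) on (b).

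The main tool for (\ref{equappotential}) inside the tubular neighborhood is the Gallo--Pelinovsky bound $3V^2(x)+x\ge c>0$ from (\ref{eqvpeli}). The key observation is that the definition (\ref{equouttilda}) of $\tilde u_{out}$ was engineered so that, on $\{\chi_\delta=1\}$, $\tilde u_{out}^2=a+\varepsilon^{2/3}\beta^2(x+V^2(x))$, which, combined with the Taylor expansion (\ref{eqa-taylor}), gives
\[
3\tilde u_{out}^2-a=\varepsilon^{2/3}\beta^2\bigl(3V^2(x)+x\bigr)+\mathcal{O}(\varepsilon^{4/3}s^2),
\]
and the same identity holds with $u_{in}$ in place of $\tilde u_{out}$ by direct computation. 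By (\ref{eqvpeli}) the leading term is $\ge c\varepsilon^{2/3}$, and the refined asymptotics $V^2+x=\mathcal{O}(|x|^{-2})$ in (\ref{eqvestims}) for $x\to -\infty$ (together with $V^2\ge 0$ for $x>0$) promote this to $\ge c\varepsilon^{2/3}(1+|x|)$. Choosing $\delta$ small enough absorbs the $\varepsilon^{4/3}s^2$ error, since $\varepsilon^{2/3}|s|\le \delta/\beta$ on the tubular neighborhood gives $\varepsilon^{4/3}s^2\le (\delta/\beta)\,\varepsilon^{2/3}|s|$. On the interpolation band, the cross-terms generated when squaring $u_{ap}=u_{in}+\rho_L(\tilde u_{out}-u_{in})$ are $\mathcal{O}(|u_{in}||\tilde u_{out}-u_{in}|)=\mathcal{O}(\varepsilon^{4/3})$ by (\ref{equout-uingradients}), hence do not destroy the bound.

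Away from the tubular neighborhood the lower bound becomes essentially a compactness statement: inside $\tilde{\mathcal D}_\varepsilon$ but beyond $\{\chi_\delta=1\}$ one has $\tilde u_{out}^2=a$, so $3u_{ap}^2-a=2a\ge c>0$ by (\ref{eqa>b}); outside $\tilde{\mathcal D}_\varepsilon$ and beyond $\operatorname{supp}\chi_{10\delta}$ one has $u_{ap}=0$ and the lower bound $-a\ge c+c|\varepsilon^{2/3}y|^p$ follows from (\ref{eqa>b}) for $|\varepsilon^{2/3}y|$ bounded and from (\ref{eqainfinity}) for $|\varepsilon^{2/3}y|$ large. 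I expect the main obstacle to be the patient bookkeeping in the interpolation band, where several cutoff-induced corrections must be tracked simultaneously; however all such errors are controlled by the gradient estimates already recorded in (\ref{equout-uingradients}), so the proof is conceptually straightforward and ultimately reduces to (\ref{eqvpeli}) plus the matching estimates of Proposition \ref{prououttilda}.
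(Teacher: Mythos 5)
Your argument is correct and follows essentially the same route as the paper: reduce the remainder estimates to the interpolation band $\{-2L\le\beta s\le -L\}$ via Propositions \ref{prouin} and \ref{prououttilda}, expand $\Delta u_{ap}$ and the cubic term there using (\ref{equout-uingradients}) (the $\varepsilon^{-2/3}$ prefactor being compensated by $u_{in}^2,\,a=\mathcal{O}(\varepsilon^{2/3})$ on that band), and derive (\ref{equappotential}) from $3u_{ap}^2-a=\varepsilon^{2/3}\beta^2\bigl(3V^2(x)+x\bigr)+\textrm{error}$ together with (\ref{eqvpeli}), (\ref{eqa>b}) and (\ref{eqainfinity}). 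The only cosmetic deviations are that the paper absorbs the error in $\{-\delta\varepsilon^{-2/3}\le\beta s\le -2L\}$ by writing $3\tilde u_{out}^2-a=2a+\varepsilon^{2/3}\chi_\delta\,\mathcal{O}(|s|^{-2})$ and increasing $L$ rather than shrinking $\delta$, and that on the cutoff transition zone $\delta\varepsilon^{-2/3}<|x|<2\delta\varepsilon^{-2/3}$ one has $\tilde u_{out}^2=a+\varepsilon^{2/3}\beta^2\chi_\delta(\beta s+V^2)$, not exactly $a$, although the extra term is $\mathcal{O}(\varepsilon^{2})$ by (\ref{eqvestims}) and hence harmless.
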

\begin{proof}
We will first consider relations
(\ref{eqremuap})--(\ref{eqremuap4}). In view of estimates
(\ref{eqremuin}), (\ref{equinremexplicit}), (\ref{eqremuouttilda}),
(\ref{eqremuouttilda2}), and recalling the super-exponential decay
of $V$ as $x\to \infty$, it just remains to show the validity of
(\ref{eqremuap}) in the interpolating region described by $\{-2L\leq
\beta s\leq-L, \ z\in
[0,\varepsilon^{-\frac{2}{3}}\ell_\varepsilon)\}$. There, we have
\[
\begin{array}{lll}
  \Delta
u_{ap}-\varepsilon^{-\frac{2}{3}}u_{ap}\left(u_{ap}^2-a(\varepsilon^\frac{2}{3}y)
\right) & = & \Delta
u_{in}-\varepsilon^{-\frac{2}{3}}u_{in}\left(u_{in}^2-a(\varepsilon^\frac{2}{3}y)
\right)+(\Delta \rho_L)(\tilde{u}_{out}-u_{in})
 \\
    &   &   \\
    &   & +2\nabla \rho_L
\nabla(\tilde{u}_{out}-u_{in})+\rho_L\Delta (\tilde{u}_{out}-u_{in})
 \\
    &   &   \\
    &   & -\varepsilon^{-\frac{2}{3}}u_{in}\left[
3\rho_L^2(\tilde{u}_{out}-u_{in})^2+2u_{in}\rho_L(\tilde{u}_{out}-u_{in})\right]
  \\
    &   &  \\
& &
-\varepsilon^{-\frac{2}{3}}\rho_L(\tilde{u}_{out}-u_{in})\left[u_{in}^2-a(\varepsilon^\frac{2}{3}y)
+\rho_L^2(\tilde{u}_{out}-u_{in})^2\right],
\end{array}
\]
and the desired estimate follows via (\ref{eqremuin}),
(\ref{equout-uingradients}), noting that $u_{in}^2,\
a(\varepsilon^\frac{2}{3}y)$ are of order $\varepsilon^\frac{2}{3}$
in this region.

 The proof of lower bound (\ref{equappotential})
proceeds as follows: In the neighborhood described by $\{|x|\leq
2L\}$ of the curve $\tilde{\Gamma}_\varepsilon$  (with the obvious
notation), by virtue of (\ref{eqa>b}), (\ref{eqbita}),
(\ref{equout-uingradients}), (\ref{equap}), we find that
\begin{equation}\label{eq3uap1}
\begin{array}{lll}
  3u_{ap}^2-a(\varepsilon^\frac{2}{3}y) & = & 3\varepsilon^\frac{2}{3}\beta^2V^2(x)+\mathcal{O}(\varepsilon^\frac{4}{3})+
  \varepsilon^\frac{2}{3}\beta^2x+\mathcal{O}(\varepsilon^\frac{4}{3})x^2 \\
    &   &   \\
    & = & \varepsilon^\frac{2}{3}\beta^2\left(3V^2(x)+x
    \right)+\mathcal{O}(\varepsilon^\frac{4}{3})
\end{array}
\end{equation}
uniformly as $\varepsilon\to 0$. In $\tilde{\mathcal{D}}_\varepsilon
\backslash\{-2L<x<0\}$, we have $u_{ap}=\tilde{u}_{out}$ and, by
(\ref{eqvestims}), (\ref{equouttilda}), we infer that
\begin{equation}\label{eq3uap2}
\begin{array}{lll}
  3{u}_{ap}^2-a(\varepsilon^\frac{2}{3}y) & = & 2a(\varepsilon^\frac{2}{3}y)+3\varepsilon^\frac{2}{3}\beta^2
  \chi_\delta(\beta s)
  \left(V^2(\beta s)+\beta s \right) \\
    &   &   \\
    & =  &
    2a(\varepsilon^\frac{2}{3}y)+\varepsilon^\frac{2}{3}\chi_\delta(\beta s)\mathcal{O}(|s|^{-2})
\end{array}
\end{equation}
uniformly as $\varepsilon \to 0$. In points outside of the domain
$\tilde{\mathcal{D}}_\varepsilon \cup \{0\leq x < 2L\}$, we plainly
note that
\begin{equation}\label{eq3uap3}
3u_{ap}^2-a(\varepsilon^\frac{2}{3}y)\geq
-a(\varepsilon^\frac{2}{3}y).
\end{equation}
The desired lower bound (\ref{equappotential}) now follows readily
from the above three relations, via (\ref{eqainfinity}),
(\ref{eqa>b}), and (\ref{eqvpeli}), increasing $L>0$ if necessary.

The proof of the proposition is complete.
\end{proof}
\begin{rem}\label{remcornerlemma}
From the geometric singular perturbation viewpoint, recall Remark
\ref{remgeometric}, matching is accomplished by employing a useful
lemma on the flow past a ``corner equilibrium'' (see
\cite{schecterdafermos}). Manifolds of corner equilibria arise in
blown--up geometric singular perturbation problems precisely where
the inner and outer solutions must be matched. When such equilibria
are normally hyperbolic, as in the one-dimensional case of the
problem at hand (see \cite{schecter-sourdis}), this lemma plays the
same role in tracking the flow past them that the Exchange Lemma
\cite{jones} plays at certain other manifolds of equilibria.
\end{rem}
\begin{rem}\label{remmargetis}
Our construction of $u_{ap}$ should also be applicable to the
homogenized Gross-Pitaevskii equations considered in \cite[Sec.
7]{margetis2012}.
\end{rem}
\subsection{Mapping properties of the linearized operator}\label{seclinear}
In this subsection we will invert the linearized operator
\begin{equation}\label{eqLoper}
\mathcal{L}(\varphi)=\Delta\varphi-\varepsilon^{-\frac{2}{3}}\left(3u_{ap}^2-a(\varepsilon^\frac{2}{3}y)\right)\varphi
\end{equation}
in carefully chosen weighted spaces. The use of weighted spaces is a
powerful technique in elliptic singular perturbation problems, and
in many problems arising from geometry, see \cite{pacard-riviere}.
To the best of our knowledge, they are used here for the first time
in singular perturbation problems involving corner layers. Actually,
the weighted spaces that we will use are a variant of those
considered in \cite{rebai}, and are motivated from relations
(\ref{eqremuouttilda}), (\ref{equappotential}), keeping in mind that
we ultimately wish to find a true solution of (\ref{eqEqstretched})
near $u_{ap}$ via a perturbation argument.

Consider a smooth non-increasing function $g$ such that
\begin{equation}\label{eqg}
g(s)=\left\{
\begin{array}{ll}
 1, & s\geq 0, \\
    &   \\
  \left(\frac{\max \beta}{L} \right)^\frac{3}{2}\left(\frac{L}{\max \beta}-s \right)^\frac{3}{2},  & -\frac{2\delta}{\min \beta}
  \varepsilon^{-\frac{2}{3}}\leq s\leq -\frac{L}{\max \beta}, \\
   &  \\
  \left(\frac{\max \beta}{\min \beta} \right)^\frac{3}{2}L^{-\frac{3}{2}}\varepsilon^{-1}, & s\leq -\frac{3\delta}{\min \beta}
  \varepsilon^{-\frac{2}{3}},
\end{array}
\right.
\end{equation}
and
\begin{equation}\label{eqg1}
0\leq -g'\leq CL^{-1},\ \ |g''|\leq CL^{-2}, \ \
s\in\left[-\frac{L}{\max \beta},0\right],
\end{equation}
\begin{equation}\label{eqg2}
0\leq -g'\leq CL^{-\frac{3}{2}}\varepsilon^{-\frac{1}{3}},\ \
|g''|\leq CL^{-\frac{3}{2}}\varepsilon^\frac{1}{3},\ \
s\in\left[-\frac{3\delta}{\min \beta}
  \varepsilon^{-\frac{2}{3}},-\frac{2\delta}{\min \beta}
  \varepsilon^{-\frac{2}{3}}\right],
\end{equation}
where the constant $C$ is independent of small $\varepsilon$ and
large $L$. Recalling (\ref{equappotential}), it is easy to check
that we can fix an $L_0>0$ such that
\begin{equation}\label{eqgmain1}
\left|\frac{g''}{g} \right|+2\left|\frac{g'}{g} \right|^2\leq
C\varepsilon^\frac{4}{3}+CL^{-2}
\leq\frac{\varepsilon^{-\frac{2}{3}}}{2}\min_{\mathbb{R}^2}\left(3u_{ap}^2-a(\varepsilon^\frac{2}{3}y)\right),
\ \ s\in \mathbb{R},\ \textrm{if}\ L\geq L_0,
\end{equation}
provided $\varepsilon$ is sufficiently small ($C$ in the above
relation is independent of $\varepsilon,L$). Relations
(\ref{eq3uap1})--(\ref{eq3uap3}) imply that, for small
$\varepsilon$, we have
\begin{equation}\label{eqgmain2}
g^\frac{2}{3}(y)\leq
C\varepsilon^{-\frac{2}{3}}\left(3u_{ap}^2-a(\varepsilon^\frac{2}{3}y)\right)\
\ \textrm{in}\ \mathbb{R}^2,
\end{equation}
(here $g$ is viewed as a smooth function of $y$, which close to
$\tilde{\Gamma}_\varepsilon$, in coordinates $(s,z)$, is given by
(\ref{eqg}), and otherwise equals the constants in (\ref{eqg})).

For $\varphi\in L^\infty(\mathbb{R}^2)$, we define the following
weighted norms:
\begin{equation}\label{eqgnorms}
\|\varphi\|_\frac{3}{2}\equiv\|g\varphi\|_{L^\infty(\mathbb{R}^2)}\
\ \textrm{and} \ \
\|\varphi\|_\frac{1}{2}\equiv\|g^\frac{1}{3}\varphi\|_{L^\infty(\mathbb{R}^2)}.
\end{equation}
(We utilized this notation because $g$ behaves qualitatively like
$(-s)^\frac{3}{2}$ for $s<0$).

We also consider the Banach space
\begin{equation}\label{Xspace}
\mathcal{X}\equiv \{\varphi \ : \ \|\varphi\|_\mathcal{X}\equiv
\|e^{|\varepsilon^\frac{2}{3}y|}\varphi\|_{L^\infty(\mathbb{R}^2)}
\}<\infty.
\end{equation}

The following proposition will be used essentially in the sequel.
\begin{pro}\label{proL}
If $\varepsilon$ is sufficiently small, given $f\in \mathcal{X}\cap
C^\alpha(\mathbb{R}^2)$, $0<\alpha<1$, there exists a unique
$\varphi\in \mathcal{X}\cap C^{2+\alpha}(\mathbb{R}^2)$ such that
\begin{equation}\label{eqLphi=f}
\mathcal{L}(\varphi)=f,
\end{equation}
where the linear operator $\mathcal{L}$ was defined in
(\ref{eqLoper}). Furthermore, we have
\begin{equation}\label{eqapriori-}
\|\varphi\|_\mathcal{X}\leq C \|f\|_\mathcal{X},
\end{equation}
and
\begin{equation}\label{eqapriori}
\|\varphi\|_\frac{3}{2}\leq C\|f\|_\frac{1}{2},
\end{equation}
for  some constant $C$ independent of $f,\varepsilon$.
\end{pro}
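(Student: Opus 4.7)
The plan is to proceed in three stages. First I would derive the two weighted a priori estimates (\ref{eqapriori-})--(\ref{eqapriori}) by constructing explicit barriers that exploit the pointwise positivity of the potential $3u_{ap}^2-a(\varepsilon^{2/3}y)$ supplied by (\ref{equappotential}); these estimates automatically yield uniqueness. Next I would produce a solution by solving a sequence of Dirichlet problems $\mathcal{L}(\varphi_R)=f$ on $B_R(0)$ with zero boundary data, each uniquely solvable by the Lax--Milgram lemma since the associated bilinear form is coercive thanks to the strict positivity of the potential. Finally, the same barriers applied on $B_R$ give $R$-uniform bounds, and Schauder interior estimates together with an Arzel\`a--Ascoli diagonal extraction produce the desired $\varphi\in\mathcal{X}\cap C^{2+\alpha}(\mathbb{R}^2)$.

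For (\ref{eqapriori}), I would set $M=\|f\|_{1/2}$, so $|f|\leq Mg^{-1/3}$, and take as barrier the function $\Phi=g^{-1}$, viewed as a smooth function of $y$ that depends only on the signed normal coordinate $s$ in a neighborhood of $\tilde{\Gamma}_\varepsilon$ and is locally constant away from it. Writing the Laplacian in Fermi coordinates via (\ref{eqLaplace}) and observing that tangential and curvature contributions are lower order thanks to (\ref{eqg1})--(\ref{eqg2}), a direct computation yields
\[
\mathcal{L}(\Phi)=g^{-1}\!\left[-\frac{g''}{g}+2\!\left(\frac{g'}{g}\right)^{\!\!2}-\varepsilon^{-\tfrac{2}{3}}\bigl(3u_{ap}^2-a(\varepsilon^{\tfrac{2}{3}}y)\bigr)\right]+O\!\left(\varepsilon^{\tfrac{2}{3}}|g'|\right).
\]
Estimate (\ref{eqgmain1}) allows the first two bracketed terms to be absorbed into half of the third, and then (\ref{eqgmain2}) bounds the remaining half from above by $-cg^{-1/3}$; hence $\mathcal{L}(\Phi)\leq -cg^{-1/3}$ globally, for $L$ fixed sufficiently large and $\varepsilon$ sufficiently small. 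Consequently, for a large absolute constant $K$, one gets $\mathcal{L}(KM\Phi\pm\varphi)\geq 0$, and the maximum principle for $-\mathcal{L}$ (valid thanks to the positivity of the potential and the decay of $\varphi$ at infinity provided by the $\mathcal{X}$-estimate) gives $|\varphi|\leq KM\Phi$, which is precisely (\ref{eqapriori}).

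For (\ref{eqapriori-}), I would take as barrier $\Psi(y)=e^{-|\varepsilon^{2/3}y|}$, mollified near the origin. Outside a large fixed ball $\{|\varepsilon^{2/3}y|\geq R\}$ the estimate $|\Delta\Psi|\leq C\varepsilon^{2/3}\Psi$, combined with $\varepsilon^{-2/3}(3u_{ap}^2-a)\geq c\varepsilon^{-2/3}(1+|\varepsilon^{2/3}y|^p)$ from (\ref{equappotential}), gives $\mathcal{L}(\Psi)\leq -c\varepsilon^{-2/3}\Psi$, while inside the ball the pointwise bound on $\varphi$ is already provided by (\ref{eqapriori}) together with the boundedness of $g^{-1}$ on compact sets. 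A standard comparison of $\varphi$ with a multiple $KN\Psi$, where $N=\|f\|_{\mathcal{X}}$, then yields (\ref{eqapriori-}).

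The main obstacle is the barrier computation underlying (\ref{eqapriori}): the weight $g$ must be tailored so that $|g''/g|+2(g'/g)^2$ remains negligible against the potential uniformly across the three distinct regimes (deep inside $\tilde{\mathcal{D}}_\varepsilon$, across the corner layer near $\tilde{\Gamma}_\varepsilon$, and in the decaying regime outside $\tilde{\mathcal{D}}_\varepsilon$), and it is precisely this balance that conditions (\ref{eqgmain1})--(\ref{eqgmain2}) were engineered to enforce. A secondary subtlety is that $g^{-1}$ does not decay at infinity (it is only bounded), so the weighted maximum-principle argument cannot be closed in isolation; it has to be bootstrapped with the exponential decay estimate (\ref{eqapriori-}), which is why I would establish the two bounds in tandem rather than in isolation.
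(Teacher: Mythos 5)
Your overall architecture is essentially the paper's: existence by solving Dirichlet problems on balls $B_R$ and passing to the limit with elliptic estimates, uniqueness from the maximum principle, and the weighted bound (\ref{eqapriori}) obtained from the weight $g$ through conditions (\ref{eqgmain1})--(\ref{eqgmain2}). Your comparison of $\pm\varphi$ with $K\|f\|_{1/2}\,g^{-1}$ is just the supersolution form of the paper's substitution $\psi=g\varphi$ (equation (\ref{eqpsiinner})); the computation of $\mathcal{L}(g^{-1})$ involves exactly the terms $g''/g$ and $(g'/g)^2$ that appear there, so this part is sound (one slip: as written you claim $\mathcal{L}(KM\Phi\pm\varphi)\geq 0$, whereas the correct statement is $\mathcal{L}(KM\Phi\mp\varphi)\leq 0$, i.e.\ $KM\Phi\mp\varphi$ is a nonnegative-at-infinity supersolution; your conclusion $|\varphi|\leq KM\Phi$ is nevertheless the right one).

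The genuine gap is in your derivation of (\ref{eqapriori-}). Inside the fixed ball $\{|\varepsilon^{2/3}y|\leq R\}$ you propose to bound $\varphi$ by invoking (\ref{eqapriori}) together with the boundedness of $g^{-1}$; this yields $|\varphi|\leq C\|f\|_{1/2}$, not $C\|f\|_{\mathcal{X}}$, and the two norms of $f$ are not comparable uniformly in $\varepsilon$: deep inside $\tilde{\mathcal{D}}_\varepsilon$ the exponential weight $e^{|\varepsilon^{2/3}y|}$ is $O(1)$ while $g^{1/3}$ reaches size of order $\varepsilon^{-1/3}$ by (\ref{eqg}), so in general one only has $\|f\|_{1/2}\leq C\varepsilon^{-1/3}\|f\|_{\mathcal{X}}$. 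Your route therefore gives (\ref{eqapriori-}) with a constant of order $\varepsilon^{-1/3}$, contradicting the requirement that $C$ be independent of $\varepsilon$; and the exterior comparison with $K\|f\|_{\mathcal{X}}\Psi$ needs the interior bound on the sphere $\{|\varepsilon^{2/3}y|=R\}$, so the step cannot be bypassed. The repair is elementary and is what the paper does: by (\ref{equappotential}) the full potential $\varepsilon^{-2/3}\bigl(3u_{ap}^2-a(\varepsilon^{2/3}y)\bigr)$ of $\mathcal{L}$ is bounded below by a positive constant uniformly on $\mathbb{R}^2$ (the factor $\varepsilon^{-2/3}$ cancels the $\varepsilon^{2/3}$ in the layer branch), so the plain unweighted maximum principle already gives $\|\varphi\|_{L^\infty}\leq C\|f\|_{L^\infty}\leq C\|f\|_{\mathcal{X}}$, and the exponential barrier (cf.\ (\ref{eqbarrier})) then upgrades this to (\ref{eqapriori-}) exactly as you propose for the exterior region. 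With that substitution your argument closes; also note that the final ``bootstrap'' you worry about is only needed to show that the limit of the $\varphi_R$ lies in $\mathcal{X}$, since for the a priori estimates the decay of $\varphi$ is already guaranteed by the assumption $\varphi\in\mathcal{X}$.
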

\begin{proof}
The first assertion of the proposition, including estimate
(\ref{eqapriori-}), follows in a standard way: It follows from
(\ref{equappotential}), the maximum principle, and elliptic
regularity theory \cite{Gilbarg-Trudinger},  that there exists a
solution $\varphi \in C^{2+\alpha}(\mathbb{R}^2)$ of
(\ref{eqLphi=f}) such that
\begin{equation}\label{eqphinfinity}
\|\varphi\|_{L^\infty(\mathbb{R}^2)}\leq
C\|f\|_{L^\infty(\mathbb{R}^2)},
\end{equation}
 for  some constant $C$ independent
of $f,\varepsilon$. This is easy to prove, though it is difficult to
find a good reference. (For example, one can first solve equation
(\ref{eqLphi=f}) in a ball $B_R$ with Dirichlet boundary conditions
to obtain a solution $\varphi_R$ such that
$\|\varphi_R\|_{L^\infty(B_R)}\leq C \|f\|_{L^\infty(B_R)}$, for
some $C$ independent of $f,\varepsilon,R$, extend $\varphi_R$ in
$L^\infty(\mathbb{R}^2)$ to be zero outside of $B_R$, and prove that
$\varphi_{R_i}$, for some $R_i\to \infty$, converge uniformly in
compact sets of $\mathbb{R}^2$ to a solution $\varphi$ of
(\ref{eqLphi=f}) that satisfies (\ref{eqphinfinity})). Then, a
standard barrier argument, using as barrier the function
\begin{equation}\label{eqbarrier}\bar{\varphi}_\tau(y)=\tau
e^{|\varepsilon^{2/3}y|}+\|\varphi\|_{L^\infty(\mathbb{R}^2)} e^{
(R-|\varepsilon^{2/3}y|)},\ \ |y|\geq
\frac{R}{\varepsilon^{2/3}},\end{equation} where $R>0$ is chosen
large, $\tau>0$ arbitrary, yields that $|\varphi(y)|\leq
\bar{\varphi}_\tau(y),\ |y|\geq R\varepsilon^{-2/3}$, provided
$\varepsilon$ is small enough so that (\ref{equappotential}) holds.
Lastly, letting $\tau\to 0$, and recalling (\ref{eqphinfinity}), we
conclude that estimate (\ref{eqapriori-}) holds true (see also
\cite[Lemma 7.3]{delpinotoda}).

Let
\begin{equation}\label{eqpsi}
\psi=g\varphi,
\end{equation}
then, thanks to (\ref{eqLaplace}), (\ref{eqB0}), it is easy to see
that, with the obvious notation, we have
\begin{equation}\label{eqpsiinner}
\Delta_{y}\psi-2\frac{g'}{g}\psi_s-\varepsilon^{-\frac{2}{3}}\left(3u_{ap}^2-a(\varepsilon^\frac{2}{3}y)\right)\psi-\frac{g''}{g}\psi+2
\left(\frac{g'}{g}\right)^2\psi-\varepsilon^\frac{2}{3}a_0\frac{g'}{g}\psi=gf,
\end{equation}
$y\in \mathbb{R}^2$. Since $\psi\to 0$ as $|y|\to \infty$, recalling
(\ref{equappotential}), (\ref{eqgmain1}), (\ref{eqgmain2}), we can
apply the maximum principle to show that, for small $\varepsilon$,
\[ |\psi(y)|\leq
C\|g^\frac{1}{3}f\|_{L^\infty(\mathbb{R}^2)},\ \ y\in \mathbb{R}^2,
\]
for some constant $C$ independent of $\varepsilon,f$. We also used
the fact that $\psi_s=0$ whenever $\nabla_{y}\psi=0$, which follows
immediately from relation (\ref{eqsz}). The desired bound
(\ref{eqapriori}) now follows at once from (\ref{eqgnorms}),
(\ref{eqpsi}), and the above relation.

The proof of the proposition is complete.
\end{proof}
\begin{rem}\label{rempotential}
In the above proof, we made essential use of lower bound
(\ref{equappotential}) whose proof, we recall, relied crucially on
lower bound (\ref{eqvpeli}) which was established recently in
\cite{pelinovsky}. However, as we have remarked in the proof of
Proposition \ref{prohastings}, its proof is rather involved and
technical. In Appendix \ref{ap2}, we will provide a more natural and
flexible proof of Proposition \ref{proL} \emph{without} assuming
knowledge of (\ref{eqvpeli}). Instead, we will make use of the
asymptotic stability of the Hastings-McLeod solution $V$, namely the
fact that the principal eigenvalue of the operator $-\mathcal{M}$,
defined in (\ref{eqMcal}), is strictly positive.
This follows immediately by testing the corresponding eigenvalue
problem by $V_x<0$, see \cite{sourdis-fife} and Proposition
\ref{proentire} herein. In other words, we will rely on the lower
bound:
\begin{equation}\label{eqsofter}
-\int_{-\infty}^{\infty}\phi \mathcal{M}(\phi) dx\geq c
\int_{-\infty}^{\infty}\phi^2dx\ \ \ \forall \phi \in
C_0^\infty(\mathbb{R}),
\end{equation}
for some constant $c>0$, which clearly is much ``softer'' than
(\ref{eqvpeli}).
 We point out that the validity of (\ref{eqvpeli})
was not needed (nor known) in references \cite{schecter-sourdis},
\cite{sourdis-fife} which dealt with related one-dimensional
problems. Actually, in one--dimensional or radially symmetric cases,
for Proposition \ref{proL} to hold, it suffices to know that zero is
not in the kernel of $\mathcal{M}$. However, this may \emph{not} be
true in general higher dimensional problems due to a possible
resonance phenomenon (see \cite{del pino cpam},
\cite{karalisourdisresonance}).
\end{rem}
%
%
\subsection{Existence of a solution}\label{secexiste}
Here we will use the contraction mapping principle in order to
capture a genuine solution $u_\varepsilon$ of (\ref{eqEqstretched})
close to the approximate solution $u_{ap}$, provided $\varepsilon$
is sufficiently small.

\begin{pro}\label{thmexistence1}
If $\varepsilon$ is sufficiently small, then there exists a solution
$u_\varepsilon$ of (\ref{eqEqstretched}) such that
\begin{equation}\label{equepsilon-X}
\|u_\varepsilon-u_{ap}\|_\mathcal{X}\leq C\varepsilon,
\end{equation}
\begin{equation}\label{equepsilon-g}
\|u_\varepsilon-u_{ap}\|_\frac{3}{2}\leq C\varepsilon,
\end{equation}
and
\begin{equation}\label{equepsilondecay}
|u_\varepsilon(y)-u_{ap}(y)|\leq C\varepsilon e^{-cs},\ \
y\in\{0\leq \beta s\leq 2\delta\varepsilon^{-\frac{2}{3}},\ z\in
[0,\varepsilon^{-\frac{2}{3}}\ell_\varepsilon)\},
\end{equation}
 where the norms involved were defined in (\ref{eqgnorms}) and
(\ref{Xspace}).
\end{pro}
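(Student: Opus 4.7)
The plan is to write $u_\varepsilon = u_{ap} + \varphi$ and recast (\ref{eqEqstretched}) as a fixed-point problem for the correction $\varphi$. After expanding and using the definition of $\mathcal{L}$ from Proposition \ref{proL}, the equation becomes
\[
\mathcal{L}(\varphi) \;=\; -E + \varepsilon^{-\frac{2}{3}} N(\varphi),\qquad N(\varphi) := 3u_{ap}\varphi^2 + \varphi^3,
\]
where $E := \Delta u_{ap}-\varepsilon^{-\frac{2}{3}}u_{ap}(u_{ap}^2-a(\varepsilon^{2/3}y))$ is the error of the approximate solution controlled by Proposition \ref{prouap}. Since Proposition \ref{proL} furnishes $\mathcal{L}^{-1}$ with uniform a priori bounds in both the $\mathcal{X}$ and the $\|\cdot\|_\frac{3}{2}$ norms, I would seek the fixed point of
\[
T(\varphi) := \mathcal{L}^{-1}\bigl(-E + \varepsilon^{-\frac{2}{3}} N(\varphi)\bigr)
\]
in the closed ball
\[
B_{K\varepsilon} := \bigl\{\varphi \in \mathcal{X}\cap C^\alpha(\mathbb{R}^2) \,:\, \|\varphi\|_\mathcal{X}\leq K\varepsilon,\ \|\varphi\|_\frac{3}{2}\leq K\varepsilon\bigr\},
\]
for a suitably large $K$ and small $\varepsilon$, via the contraction mapping theorem.

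The first step is to verify $\|E\|_\frac{1}{2}\leq C\varepsilon$ and $\|E\|_\mathcal{X}\leq C\varepsilon$. These follow by inspecting the three regional bounds in Proposition \ref{prouap}: the weight $g$ in (\ref{eqg}) was calibrated precisely so that $g^\frac{1}{3}$ absorbs the $|s|^{-\frac{1}{2}}$ growth of (\ref{eqremuap}) on the inner corner layer and the extra factor in (\ref{eqremuap2}) deep inside $\tilde{\mathcal{D}}_\varepsilon$, while in the exterior the super-exponential decay in (\ref{eqremuap3}) beats $g=1$; the support of $E$ lies where $|\varepsilon^{2/3}y|$ is bounded, making the $\mathcal{X}$-bound immediate. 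For the nonlinear term, $|u_{ap}|\leq C$ combined with $\|\varphi\|_{L^\infty}\leq \min\{\|\varphi\|_\mathcal{X},\|\varphi\|_\frac{3}{2}\}$ (valid since $g\geq 1$ and $e^{|\varepsilon^{2/3}y|}\geq 1$) yields
\[
\|N(\varphi)\|_\frac{1}{2}\leq C\|\varphi\|_\frac{3}{2}^2(1+\|\varphi\|_\frac{3}{2}),\qquad \|N(\varphi)\|_\mathcal{X}\leq C\|\varphi\|_\mathcal{X}\|\varphi\|_{L^\infty}(1+\|\varphi\|_{L^\infty}).
\]
On $B_{K\varepsilon}$ one has $\varepsilon^{-\frac{2}{3}}\|N(\varphi)\|\leq CK^2\varepsilon^{\frac{4}{3}}\ll \varepsilon$, so $T$ maps $B_{K\varepsilon}$ into itself once $K$ is large. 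An analogous bilinear estimate gives $\|T\varphi_1-T\varphi_2\|\leq CK\varepsilon^{\frac{1}{3}}\|\varphi_1-\varphi_2\|$ in either norm, so $T$ contracts. The Banach fixed-point theorem produces the unique $\varphi$, and Schauder regularity \cite{Gilbarg-Trudinger} upgrades it to $C^{2+\alpha}$. This yields (\ref{equepsilon-X}) and (\ref{equepsilon-g}).

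The pointwise exponential decay (\ref{equepsilondecay}) in the exterior layer does not drop out of the two weighted norms directly, because there $g=1$ and $|\varepsilon^{2/3}y|=O(1)$, so they only give $|\varphi|\leq C\varepsilon$. I would obtain it by a separate supersolution argument. By the first branch of (\ref{equappotential}), $\varepsilon^{-\frac{2}{3}}(3u_{ap}^2-a)\geq c_0>0$ throughout $\{0\leq \beta s\leq 2\delta\varepsilon^{-2/3}\}$. Choosing $c>0$ with $c^2<c_0/2$ and verifying via (\ref{eqLaplace})-(\ref{eqB0}) that the metric corrections applied to $h(s,z):=A\varepsilon e^{-cs}$ are $O(\varepsilon^{2/3}h)$, one gets $\mathcal{L}(h)\leq -c_1 h$ with $c_1>0$. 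Using the already-established bound $|\varphi|\leq C\varepsilon$ together with $|u_{ap}|\leq C\varepsilon^{\frac{1}{3}}V(x)$ and (\ref{eqvasympto}), the forcing satisfies $|f|\leq C\varepsilon e^{-c|s|^{3/2}}+C\varepsilon^{\frac{7}{3}}$ on the strip. The super-exponential $E$-term is dominated by $Ch$ for $A$ large; the constant error $\varepsilon^{\frac{7}{3}}$ is absorbed by adding $B\varepsilon^{\frac{7}{3}}$ to $h$, and since $\varepsilon^{\frac{7}{3}}\leq \varepsilon e^{-cs}$ for $s\leq (4/3c)|\ln\varepsilon|$ this modification does not spoil the asserted bound on that portion. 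On the outermost part of the strip, $u_\varepsilon$ and $u_{ap}$ are individually super-exponentially tiny by Proposition \ref{prohastings} and the global upper-solution estimate (\ref{eqJ1}) applied to $u_\varepsilon$, so $|\varphi|$ automatically satisfies (\ref{equepsilondecay}) there. The comparison principle applied to $h\pm\varphi$ then delivers (\ref{equepsilondecay}) throughout the strip.

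The main obstacle I anticipate is the careful bookkeeping of weights in the nonlinear step: one must verify that the loss $\varepsilon^{-\frac{2}{3}}$ multiplying $N(\varphi)$ is overcome by the quadratic gain $\|\varphi\|^2=O(\varepsilon^2)$ so that the contraction constant tends to zero with $\varepsilon$. This calibration is exactly what motivated the weighted norms of Proposition \ref{proL} and the weight $g$ in (\ref{eqg}); once those are aligned with the regional error bounds of Proposition \ref{prouap}, the argument reduces to an accounting exercise.
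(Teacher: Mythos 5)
Your fixed-point construction is essentially the paper's own argument: the same decomposition $u=u_{ap}+\varphi$, the same ball defined by the two norms of (\ref{eqgnorms}) and (\ref{Xspace}), the same interplay between the regional error bounds of Proposition \ref{prouap} and the invertibility of $\mathcal{L}$ from Proposition \ref{proL}, and your weight bookkeeping for $E$ and $N(\varphi)$ is correct; this part does deliver (\ref{equepsilon-X}) and (\ref{equepsilon-g}). The gap is in the proof of (\ref{equepsilondecay}). A comparison function of the form $A\varepsilon e^{-cs}$ (even after adding $B\varepsilon^{\frac{7}{3}}$) is, at the outer edge of whatever strip you run the maximum principle on, far smaller than the only available information $|\varphi|\leq C\varepsilon$ coming from (\ref{equepsilon-X}), so the boundary inequality $|\varphi|\leq h$ that the comparison principle needs is simply not known there. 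Your patch for the far region $s\gtrsim \frac{4}{3c}|\ln\varepsilon|$ invokes (\ref{eqJ1}) ``applied to $u_\varepsilon$'', but (\ref{eqJ1}) is an estimate for the minimizer $\eta_\varepsilon$; at this point of the argument $u_\varepsilon$ is only the constructed fixed point, and the identification $u_\varepsilon\equiv\eta_\varepsilon(\varepsilon^{\frac{2}{3}}\cdot)$ is made only later, using this very proposition together with positivity and uniqueness — so the step is circular (and the matching of the decay rates is not checked either). The paper avoids both difficulties at once: it absorbs the nonlinear terms into the potential, writing $N(\varphi_*)=\varepsilon^{-\frac{2}{3}}\left(3u_{ap}\varphi_*+\varphi_*^2\right)\varphi_*$ so that $\varphi_*$ solves $\Delta\varphi_*-P(y)\varphi_*=\mathcal{O}(\varepsilon e^{-\frac{\sqrt{c}}{2}s})$ with $P\geq c$ — hence no non-decaying $\varepsilon^{\frac{7}{3}}$ forcing ever appears — and it runs the comparison on the \emph{larger} strip $\{0\leq\beta s\leq 4\delta\varepsilon^{-\frac{2}{3}}\}$ with the barrier $\bar{M}\varepsilon\left\{e^{-\frac{\sqrt{c}}{2}s}+e^{\frac{\sqrt{c}}{2}(s-4\delta\varepsilon^{-\frac{2}{3}}\beta^{-1})}\right\}$, whose second, reversed exponential is of size $\bar{M}\varepsilon$ at the outer boundary (so the boundary comparison holds) while being super-exponentially small on the smaller strip where (\ref{equepsilondecay}) is asserted. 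Some such device is needed; as written, your maximum-principle step cannot be closed.

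A second omission: problem (\ref{eqEqstretched}) requires $u>0$, and positivity of $u_{ap}+\varphi_*$ is nowhere verified in your proposal; it is also indispensable later, when the uniqueness of positive solutions is used to conclude $u_\varepsilon\equiv\eta_\varepsilon$. The paper checks it region by region: near the curve, $u_\varepsilon=\varepsilon^{\frac{1}{3}}\beta V(x)+\mathcal{O}(\varepsilon)\geq c\varepsilon^{\frac{1}{3}}$; in $\tilde{\mathcal{D}}_\varepsilon\backslash\{-2L<x<0\}$, $u_\varepsilon=\tilde{u}_{out}+\mathcal{O}(\varepsilon)\geq c\varepsilon^{\frac{1}{3}}$ by (\ref{eqa>b}); and outside, $u_\varepsilon$ satisfies a linear equation $\Delta u-p(y)u=0$ with $p\geq c$ and is positive on the curve $\{x=2L\}$, so the maximum principle gives positivity there. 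Supplying this, together with a corrected barrier argument for (\ref{equepsilondecay}), would complete your proof.
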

\begin{proof}
We seek a true solution of problem (\ref{eqEqstretched}) in the form
\[
u=u_{ap}+\varphi.
\]
In order for $u$ to satisfy the equation in (\ref{eqEqstretched}),
we readily find that the correction $\varphi$ has to solve
\begin{equation}\label{eqLphi=E+N} \mathcal{L}(\varphi)=E+N(\varphi),
\end{equation}
where the linear operator $\mathcal{L}$ was defined in
(\ref{eqLoper}),
\begin{equation}\label{eqE}
E=-\Delta
u_{ap}+\varepsilon^{-\frac{2}{3}}u_{ap}\left(u_{ap}^2-a(\varepsilon^\frac{2}{3}y)
\right),
\end{equation}
and
\begin{equation}\label{eqN}
N(\varphi)=3\varepsilon^{-\frac{2}{3}}u_{ap}\varphi^2+\varepsilon^{-\frac{2}{3}}\varphi^3.
\end{equation}

Given $M>0$ to be determined (independently of $\varepsilon$), we
consider the closed bounded subset of $\mathcal{X}$ defined by
\begin{equation}\label{eqY}
\mathcal{Y}=\{\varphi\in \mathcal{X}\ :\ \|\varphi\|_\mathcal{X}\leq
M\varepsilon,\ \|\varphi\|_\frac{3}{2}\leq M\varepsilon\}.
\end{equation}
We will prove that, if $M$ is chosen sufficiently large, the
operator $\mathbb{P}$, defined from $\mathcal{X}\cap
C^{2+\alpha}(\mathbb{R}^2)$ into $\mathcal{X}\cap
C^{2+\alpha}(\mathbb{R}^2)$ by
\[
\mathbb{P}(\varphi)=\mathcal{L}^{-1}\left(E+N(\varphi) \right),
\]
maps $\mathcal{Y}$ into itself, and is a contraction with respect to
the $\mathcal{X}$-norm, provided $\varepsilon$ is sufficiently
small. Note that $\mathcal{L}^{-1}$ is well defined by virtue of
Proposition \ref{proL}. In view of the estimates of Proposition
\ref{prouap}, (\ref{eqg}), (\ref{eqgnorms}), and (\ref{Xspace}), for
small $\varepsilon>0$, we have
\begin{equation}\label{eqEestims}
\|E\|_\mathcal{X}\leq C \varepsilon  \ \ \textrm{and}\ \
\|E\|_\frac{1}{2}\leq C \varepsilon.
\end{equation}
Furthermore, there exists a constant $C>0$ such that, for all
$\varphi_1,\varphi_2, \varphi\in \mathcal{X}$, the following
relations hold pointwise:
\begin{equation}\label{eqNlipsitz}
\begin{array}{l}
  |N(\varphi_1)-N(\varphi_2)|\leq
C\varepsilon^{-\frac{2}{3}}(\varphi_1^2+\varphi_2^2)|\varphi_1-\varphi_2|+C\varepsilon^{-\frac{2}{3}}
|u_{ap}|(|\varphi_1|+|\varphi_2|)|\varphi_1-\varphi_2|,
 \\
   \\
  |g^\frac{1}{3}N(\varphi)|\leq
C\varepsilon^{-\frac{2}{3}}|g\varphi|^3+
C\varepsilon^{-\frac{2}{3}}|u_{ap}||g\varphi|^2,
\end{array}
\end{equation}
for every $y\in \mathbb{R}^2$ (recall that $g\geq 1$). If
$\varphi\in \mathcal{Y}$, by Proposition \ref{proL},
(\ref{eqEestims}), and (\ref{eqNlipsitz}), we obtain that
\[\begin{array}{lll}
    \|\mathbb{P}(\varphi)\|_\mathcal{X} & \leq & C \|E\|_\mathcal{X}+C\|N(\varphi)\|_\mathcal{X} \\
     &  &  \\
      &  \leq & C\varepsilon+C\varepsilon^{-\frac{2}{3}}\|\varphi\|_\mathcal{X}^3+C\varepsilon^{-\frac{2}{3}}\|\varphi\|_\mathcal{X}^2 \\
     &  &  \\
      & \leq  &
      C\varepsilon+CM^3\varepsilon^\frac{7}{3}+CM^2\varepsilon^\frac{4}{3},
  \end{array}
\]
and
\[\begin{array}{lll}
    \|\mathbb{P}(\varphi)\|_{\frac{3}{2}} & \leq & C \|E\|_\frac{1}{2}+C\|N(\varphi)\|_\frac{1}{2} \\
     &  &  \\
      &  \leq & C\varepsilon+C\varepsilon^{-\frac{2}{3}}\|\varphi\|_\frac{3}{2}^3+C\varepsilon^{-\frac{2}{3}}\|\varphi\|_\frac{3}{2}^2 \\
     &  &  \\
      & \leq  &
      C\varepsilon+CM^3\varepsilon^\frac{7}{3}+CM^2\varepsilon^\frac{4}{3},
  \end{array}
\]
 where $C$ is independent of $\varphi, M$ and small
$\varepsilon$. We conclude that, if $M$ is chosen sufficiently
large, the operator $\mathbb{P}$ maps $\mathcal{Y}$ into itself,
provided $\varepsilon$ is sufficiently small. We have to prove that
$\mathbb{P}$ is a contraction from $\mathcal{Y}$ into itself with
respect to the $\mathcal{X}$-norm. Let $\varphi_1,\varphi_2\in
\mathcal{Y}$. As before, we have
\[\begin{array}{lll}
    \|\mathbb{P}(\varphi_1)-\mathbb{P}(\varphi_2)\|_\mathcal{X} & \leq & C \|N(\varphi_1)-N(\varphi_2)\|_\mathcal{X} \\
      &   &   \\
      &  \leq &
      C\varepsilon^\frac{1}{3}\|\varphi_1-\varphi_2\|_\mathcal{X}.
  \end{array}
\]
Hence, for small $\varepsilon>0$, the operator
$\mathbb{P}:\mathcal{Y}\to \mathcal{Y}$ becomes a contraction with
respect to the $\mathcal{X}$-norm. So, recalling that $\mathcal{Y}$
is closed in $\mathcal{X}$, it has a unique fixed point
$\varphi_*\in \mathcal{Y}$, thanks to the contraction mapping
theorem (see for instance \cite{Gilbarg-Trudinger}). It is clear
that the function $u_\varepsilon\equiv u_{ap}+\varphi_*$ satisfies
the elliptic equation in (\ref{eqEqstretched}), and estimates
(\ref{equepsilon-X})--(\ref{equepsilon-g}).

Next, we show that $u_\varepsilon$ is positive, and consequently
solves problem (\ref{eqEqstretched}). In the neighborhood described
by $\{|x|\leq 2L\}$ of the curve $\tilde{\Gamma}_\varepsilon$,
recalling (\ref{equin}), (\ref{equout-uingradients}), (\ref{equap}),
and that $\varphi_*\in \mathcal{Y}$, we have
\[
u_\varepsilon=u_{in}+\mathcal{O}(\varepsilon)=\varepsilon^\frac{1}{3}\beta
V(x)+\mathcal{O}(\varepsilon)\geq c \varepsilon^\frac{1}{3}
\]
uniformly as $\varepsilon\to 0$ . In the domain
$\tilde{\mathcal{D}}_\varepsilon \backslash \{-2L<x<0\}$, thanks to
(\ref{eqvestims}), (\ref{equouttilda}), we have
\[
u_\varepsilon=\tilde{u}_{out}+\mathcal{O}(\varepsilon)=
\left[a(\varepsilon^\frac{2}{3}y)+\mathcal{O}(\varepsilon^\frac{2}{3}L^{-2})
\right]^\frac{1}{2}+\mathcal{O}(\varepsilon)\stackrel{(\ref{eqa>b})}{\geq}
c\varepsilon^\frac{1}{3}
\]
uniformly as $\varepsilon\to 0$ (having increased $L$ if necessary).
It remains to consider points outside of the domain
$\tilde{\mathcal{D}}_\varepsilon \cup \{0\leq x <2L\}$, where $u$
solves an equation of the form
\begin{equation}\label{eqlinearp}
\Delta u-p(y)u=0,\ \ \textrm{where}\ \ p\geq c,
\end{equation}
(recall (\ref{eqainfinity}), (\ref{eqa>b})). The positivity of $u$
in this region follows directly from the maximum principle, and the
fact that we have already shown that $u\geq
c\varepsilon^\frac{1}{3}$ on the boundary described by the closed
curve $\{x=2L\}$.

It remains to establish the validity of
(\ref{equepsilon-X})--(\ref{equepsilondecay}). Since
$u_\varepsilon-u_{ap}=\varphi_*\in \mathcal{Y}$, we see that
(\ref{equepsilon-X}), (\ref{equepsilon-g}) hold. Finally, we will
show (\ref{equepsilondecay}) by suitably modifying the proof of
Lemma 2 in \cite{bethuelCVPDE} (see also \cite[pg. 230]{fife-arma}
and \cite[Lem. 2.2]{ignat}). From (\ref{eqremuap3}),
(\ref{eqremuap4}), (\ref{equappotential}), (\ref{equepsilon-X}), and
(\ref{eqLphi=E+N}), we find that $\varphi_*$ satisfies
\begin{equation}\label{eqexpCrude}
\Delta \varphi_*-P(y)\varphi_*=\mathcal{O}\left(\varepsilon
e^{-\frac{\sqrt{c}}{2}s}\right), \ \textrm{where}\ P(y)\geq c,
\end{equation}
uniformly in $\{0\leq \beta s\leq
4\delta\varepsilon^{-\frac{2}{3}},\ z\in
[0,\varepsilon^{-\frac{2}{3}}\ell_\varepsilon)\}$, as
$\varepsilon\to 0$. The reason for choosing, in the righthand side,
a decay rate strictly less than $\sqrt{c}$ is to facilitate  our
next argument. Let
\[
\bar{\varphi}(y)=\bar{M}\varepsilon \left\{e^{-\frac{\sqrt{c}}{2}s}
+
e^{\frac{\sqrt{c}}{2}(s-4\delta\varepsilon^{-\frac{2}{3}}\beta^{-1})}\right\},
\]
$y\in \{0\leq \beta s\leq 4\delta\varepsilon^{-\frac{2}{3}},\ z\in
[0,\varepsilon^{-\frac{2}{3}}\ell_\varepsilon)\}$, where the value
of the large constant $\bar{M}>0$ will soon be fixed independently
of small $\varepsilon$. By virtue of (\ref{equepsilon-X}),
(\ref{eqexpCrude}), we can choose a large $\bar{M}>0$ such that
\[\left\{\begin{array}{l}
    -\Delta (\bar{\varphi}-\varphi_*)+P(y)(\bar{\varphi}-\varphi_*)>0,\ \ y\in \{0\leq \beta
s\leq 4\delta\varepsilon^{-\frac{2}{3}},\ z\in
[0,\varepsilon^{-\frac{2}{3}}\ell_\varepsilon)\}, \\
     \\
   \bar{\varphi}-\varphi_*>0\ \ \textrm{on}\ \ \{s=0\}\cup \{\beta
s=4\delta\varepsilon^{-\frac{2}{3}}\},
  \end{array}
  \right.
\]
if $\varepsilon$ is sufficiently small. Now, by the second estimate
in (\ref{eqexpCrude}), and the maximum principle, we deduce that
\[
\bar{\varphi}-\varphi_*>0\ \ \textrm{if}\ \ 0\leq \beta s\leq
4\delta\varepsilon^{-\frac{2}{3}}.
\]
In turn, the above estimate readily implies the validity of
(\ref{equepsilondecay}).

The proof of the proposition is complete.
\end{proof}
The following estimates hold:
\begin{cor}\label{corestims}
The solution $u_\varepsilon$ of (\ref{eqEqstretched}), constructed
in Proposition \ref{thmexistence1}, satisfies
\begin{equation}\label{eqcorestim1}
u_\varepsilon=\varepsilon^\frac{1}{3}\beta(\varepsilon^\frac{2}{3}z)
V\left(\beta(\varepsilon^\frac{2}{3}z)s\right)+\mathcal{O}(\varepsilon|s|^\frac{3}{2}+\varepsilon)
\end{equation}
uniformly in $\{-\delta\varepsilon^{-\frac{2}{3}}\leq\beta s\leq 0,
\ z\in [0,\varepsilon^{-\frac{2}{3}}\ell_\varepsilon)\}$,
\begin{equation}\label{eqcorestim1+}
u_\varepsilon=\varepsilon^\frac{1}{3}\beta(\varepsilon^\frac{2}{3}z)
V\left(\beta(\varepsilon^\frac{2}{3}z)s\right)+\mathcal{O}(\varepsilon
e^{-cs})
\end{equation}
uniformly in $\{0\leq\beta s\leq 2\delta\varepsilon^{-\frac{2}{3}},
\ z\in [0,\varepsilon^{-\frac{2}{3}}\ell_\varepsilon)\}$,
\begin{equation}\label{eqcorestim2}
u_\varepsilon=\sqrt{a(\varepsilon^\frac{2}{3}y)}+\mathcal{O}(\varepsilon^\frac{1}{3}|s|^{-\frac{5}{2}})
\end{equation}
uniformly in $\{-\delta\varepsilon^{-\frac{2}{3}}\leq\beta s\leq-2L,
\ z\in [0,\varepsilon^{-\frac{2}{3}}\ell_\varepsilon)\}$,
\begin{equation}\label{eq372+}
u_\varepsilon=\sqrt{a(\varepsilon^\frac{2}{3}y)}+\mathcal{O}(\varepsilon^2)
\end{equation}
uniformly in
$\tilde{\mathcal{D}}_\varepsilon\backslash\{-\delta\varepsilon^{-\frac{2}{3}}<\beta
s<0, \ z\in [0,\varepsilon^{-\frac{2}{3}}\ell_\varepsilon)\}$, and
\begin{equation}\label{eqcorexpdec1}
0<u_\varepsilon \leq
C\varepsilon^\frac{1}{3}\exp\{-c\textrm{dist}(y,\tilde{\mathcal{D}}_\varepsilon)\}
\end{equation}
in $\mathbb{R}^2\backslash \tilde{\mathcal{D}}_\varepsilon$, as
$\varepsilon\to 0$.
\end{cor}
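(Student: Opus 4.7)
The starting point is the decomposition $u_\varepsilon = u_{ap} + \varphi_*$, where $\varphi_*\in\mathcal{Y}$ is the fixed point produced in Proposition \ref{thmexistence1}. From the weighted bound $\|g\varphi_*\|_{L^\infty(\mathbb{R}^2)}\leq C\varepsilon$ and the piecewise formula (\ref{eqg})--(\ref{eqg2}) for $g$, one reads off the pointwise estimates
\[
|\varphi_*(y)|\leq C\varepsilon\ \text{if}\ |x|\leq 2L;\quad |\varphi_*(y)|\leq C\varepsilon|s|^{-3/2}\ \text{if}\ -\tfrac{2\delta}{\min\beta}\varepsilon^{-2/3}\leq s\leq -\tfrac{L}{\max\beta};
\]
\[
|\varphi_*(y)|\leq C\varepsilon^{2}\ \text{if}\ s\leq -\tfrac{3\delta}{\min\beta}\varepsilon^{-2/3}.
\]
The whole proof of the corollary then consists of plugging these pointwise bounds on $\varphi_*$ into the piecewise definition (\ref{equap}) of $u_{ap}$, region by region, and invoking the preparatory estimates from Propositions \ref{prouin}, \ref{prououttilda}, \ref{prouap}.

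To obtain (\ref{eqcorestim1}) on $\{-\delta\varepsilon^{-2/3}\leq\beta s\leq 0\}$, I write $u_\varepsilon-u_{in}=(u_{ap}-u_{in})+\varphi_*$. If $|x|\leq 2L$ then $u_{ap}=u_{in}+\rho_L(\tilde u_{out}-u_{in})$ and the first bound in (\ref{equout-uingradients}) gives $|u_{ap}-u_{in}|\leq C\varepsilon|s|^{3/2}$; if $x\leq -2L$ then $u_{ap}=\tilde u_{out}$ and the same bound applies; combined with $|\varphi_*|\leq C\varepsilon$ near the curve and $|\varphi_*|\leq C\varepsilon|s|^{-3/2}\leq C\varepsilon|s|^{3/2}$ for $|s|\geq 1$, this yields the stated error term. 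Estimate (\ref{eqcorestim1+}) is even more direct: on $\{0\leq\beta s\leq 2\delta\varepsilon^{-2/3}\}$ the cutoff $\chi_{10\delta}$ equals one, so $u_{ap}=u_{in}$, and the refined exponential bound (\ref{equepsilondecay}) on $\varphi_*$ is precisely the announced right-hand side.

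For (\ref{eqcorestim2}), in the range $-\delta\varepsilon^{-2/3}\leq\beta s\leq -2L$ one has $u_{ap}=\tilde u_{out}$, and (\ref{equouttilda-a}) gives $|\tilde u_{out}-\sqrt{a}|\leq C\varepsilon^{1/3}|s|^{-5/2}$; since $|s|\leq \delta(\min\beta)^{-1}\varepsilon^{-2/3}$ throughout this range, we have $\varepsilon|s|^{-3/2}\leq C\varepsilon^{1/3}|s|^{-5/2}$, so the contribution of $\varphi_*$ is absorbed. Estimate (\ref{eq372+}) is the most delicate and illustrates the sharpness of the weighted norms: on $\tilde{\mathcal{D}}_\varepsilon\setminus\{-\delta\varepsilon^{-2/3}<\beta s<0\}$, either $\chi_\delta=0$ (so $\tilde u_{out}=\sqrt{a}$ exactly) or, in the short transition zone, the first relation in (\ref{equouttildainpro2}) together with $\sqrt{a}\geq c>0$ gives $|\tilde u_{out}-\sqrt{a}|=\mathcal{O}(\varepsilon^{2})$; combined with $|\varphi_*|\leq C\varepsilon^2$ (which follows either from $g\asymp\varepsilon^{-1}$ for $s$ very negative, or from $g\asymp |s|^{3/2}\geq c\varepsilon^{-1}$ in the intermediate range where $|s|\geq c\varepsilon^{-2/3}$), (\ref{eq372+}) is obtained at once.

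Finally, for (\ref{eqcorexpdec1}) the super-exponential decay of $V$ in (\ref{eqvasympto}) gives $u_{ap}\leq C\varepsilon^{1/3}e^{-cs^{3/2}}$ on $\{0\leq\beta s\leq 2\delta\varepsilon^{-2/3}\}$ and $u_{ap}=0$ beyond; combined with the exponential estimate (\ref{equepsilondecay}) on $\varphi_*$ close to $\tilde\Gamma_\varepsilon$, this handles a neighborhood of the boundary. Beyond this neighborhood $u_\varepsilon$ satisfies a linear equation of the form $\Delta u_\varepsilon-p(y)u_\varepsilon=0$ with $p\geq c$ (cf.\ (\ref{eqlinearp}) and (\ref{eqainfinity})), and a standard barrier argument, analogous to (\ref{eqbarrier}), propagates the exponential decay to all of $\mathbb{R}^2\setminus\tilde{\mathcal{D}}_\varepsilon$. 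The main conceptual point of the whole proof is that the weighted space $\|\cdot\|_{3/2}$ was designed so that the pointwise size of $\varphi_*$ matches, in each region, exactly the error left by $u_{ap}$; once this matching is observed there is no obstacle, only bookkeeping.
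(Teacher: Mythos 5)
Your argument is correct and is essentially the paper's own proof: the same decomposition $u_\varepsilon=u_{ap}+\varphi_*$, the same region-by-region conversion of the bounds (\ref{equepsilon-X}), (\ref{equepsilon-g}), (\ref{equepsilondecay}) into pointwise estimates on $\varphi_*$, combined with (\ref{equout-uingradients}), (\ref{equouttilda-a}), (\ref{equouttildainpro2}) for $u_{ap}$, and a maximum-principle/barrier argument via (\ref{eqlinearp}) for the exterior decay (\ref{eqcorexpdec1}). The only cosmetic slip is that $u_{ap}$ vanishes only where $\chi_{10\delta}=0$, i.e. for $\beta s\gtrsim 20\delta\varepsilon^{-\frac{2}{3}}$ rather than already at $2\delta\varepsilon^{-\frac{2}{3}}$, which changes nothing since $u_{in}$ is super-exponentially small there.
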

\begin{proof}
Estimates (\ref{eqcorestim1}), (\ref{eqcorestim1+}) follow readily
from (\ref{equin}), the first relation in
(\ref{equout-uingradients}), (\ref{equap}), (\ref{equepsilon-X}),
and (\ref{equepsilondecay}). From (\ref{equouttilda-a}),
(\ref{equap}), (\ref{eqg}), (\ref{equepsilon-g}), for small
$\varepsilon>0$, we have
\[
\left|u_\varepsilon-\sqrt{a(\varepsilon^\frac{2}{3}y)}\right|\leq
C\varepsilon
|s|^{-\frac{3}{2}}+C\varepsilon^\frac{1}{3}|s|^{-\frac{5}{2}}\leq
C\varepsilon^\frac{1}{3}|s|^{-\frac{5}{2}}
\]
in $\{-\delta\varepsilon^{-\frac{2}{3}}\leq\beta s\leq-2L, \ z\in
[0,\varepsilon^{-\frac{2}{3}}\ell_\varepsilon)\}$, and estimate
(\ref{eqcorestim2}) follows immediately. Estimate (\ref{eq372+})
follows at once from (\ref{equouttilda}), (\ref{equap}),
(\ref{eqg}), and (\ref{equepsilon-g}). Finally, estimate
(\ref{eqcorexpdec1}) follows readily from (\ref{eqlinearp}),
(\ref{eqcorestim1+}), arguing as we did for the proof of
(\ref{equepsilondecay}) (but here we need to cover $\mathbb{R}^2$ by
a finite number of disjoint annular domains, surrounding
$\mathcal{D}_0$, and the exterior of a large ball), see also
\cite[pg. 230]{fife-arma}.

The proof of the corollary is complete.
\end{proof}
\begin{rem}\label{remmatching}
Notice that the nonlinear terms in (\ref{eqLphi=E+N}) are of
cubic-like order. Indeed,  as in the proof of Proposition
\ref{proL}, using the maximum principle, (\ref{equappotential}), and
the easily derived bound
\begin{equation}\label{eqrem}
|u_{ap}|\leq
C\varepsilon^{-\frac{1}{3}}\left(3u_{ap}^2-a(\varepsilon^\frac{2}{3}y)\right),
\ \ y\in \mathbb{R}^2,
\end{equation}
  we can show that, if $\varepsilon$ is small, the unique solution
$\varphi\in \mathcal{X}\cap C^{2+\alpha}(\mathbb{R}^2)$ of
\begin{equation}\label{eqL=uapf}
\mathcal{L}(\varphi)=u_{ap}f,\ \ f \in \mathcal{X}\cap
C^{\alpha}(\mathbb{R}^2),\ 0<\alpha<1,
\end{equation}
satisfies
\begin{equation}\label{eqL=uapf(assertion)}
\|\varphi\|_{\mathcal{X}}\leq
C\varepsilon^\frac{1}{3}\|f\|_{\mathcal{X}}.
\end{equation}
 Therefore, in order to successfully
apply the contraction mapping principle, as we did in the proof of
Proposition \ref{thmexistence1}, it is enough to construct an
approximate solution $v_{ap}$ supported in a ball of radius
$\mathcal{O}(\varepsilon^{-\frac{2}{3}})$ such that
\begin{equation}\label{eqEcal}
\mathcal{E}\equiv-\Delta
v_{ap}+\varepsilon^{-\frac{2}{3}}v_{ap}\left(v_{ap}^2-a(\varepsilon^\frac{2}{3}y)
\right)=\mathcal{O}(|\ln
\varepsilon|^{-\gamma}\varepsilon^\frac{1}{3}),\ \
\textrm{uniformly\ in}\ \mathbb{R}^2, \  \textrm{as}\ \varepsilon\to
0,
\end{equation}
for some constant $\gamma>0$, and (\ref{equappotential}),
(\ref{eqrem}) remain true, with $v_{ap}$ in place of $u_{ap}$, for
sufficiently small $\varepsilon$ (the logarithmic term in
(\ref{eqEcal}) is used for convenience purposes only and has nothing
to do with that appearing in (\ref{eqlambdaepsilon})). This
 was the main strategy  followed in \cite{sourdis-fife} for a
related one-dimensional problem. Actually, one can plainly define an
approximate solution for (\ref{eqEqstretched}) as
\begin{equation}\label{eqvap}
v_{ap}= \left\{
\begin{array}{ll}
 {u}_{out} & \textrm{in}\  \tilde{\mathcal{D}}_\varepsilon\backslash\{-2M_\varepsilon<x<0\}, \\
    &   \\
  u_{in}+\rho_{M_\varepsilon}(x)({u}_{out}-u_{in}) &\textrm{in}\  \{-2M_\varepsilon\leq x \leq \delta \varepsilon^{-\frac{2}{3}}\}, \\
    &   \\
   \chi_\delta (x)u_{in} & \textrm{everywhere\ else},
\end{array}
\right.
\end{equation}
where $M_\varepsilon$ is such that $L\leq M_\varepsilon\leq
\frac{\delta}{10}\varepsilon^{-\frac{2}{3}}$, and $u_{in}$,
$u_{out}$, $\rho_M$ as in (\ref{equin}), (\ref{equout}),
(\ref{eqcutoffrho}) respectively. Working as in Proposition
\ref{prououttilda}, we can verify that
\[
  \left|u_{out}-u_{in}\right|+|s| \left|\nabla(u_{out}-u_{in})\right|+s^2\left|\Delta(u_{out}-u_{in})\right|\leq
C\left(\varepsilon|s|^\frac{3}{2}+\varepsilon^\frac{1}{3}|s|^{-\frac{5}{2}}\right),
\]
in the region described by $\{-2\delta\varepsilon^{-\frac{2}{3}}\leq
\beta s \leq -L\}$. Then, as in Proposition \ref{prouap}, we can
show that $\mathcal{E}$, defined in (\ref{eqEcal}), satisfies
\[|\mathcal{E}|\leq C\left\{
\begin{array}{ll}
\varepsilon^\frac{4}{3}, &  \textrm{in}\
\tilde{\mathcal{D}}_\varepsilon\backslash
\{-\delta\varepsilon^{-\frac{2}{3}}<\beta
  s<0\},\\
  & \\
\varepsilon^\frac{1}{3}|s|^{-\frac{3}{2}},  &
-\delta\varepsilon^{-\frac{2}{3}}\leq
\beta s \leq -2M_\varepsilon, \\
    &   \\
  \varepsilon|s|^\frac{5}{2}+\varepsilon^\frac{1}{3}|s|^{-\frac{3}{2}}, &-2M_\varepsilon \leq \beta s \leq -M_\varepsilon, \\
    &   \\
M_\varepsilon ^\frac{5}{2}\varepsilon+M_\varepsilon^\frac{1}{2}\varepsilon^\frac{5}{3}, &-M_\varepsilon \leq \beta s \leq 0,\\
& \\
\varepsilon,& \textrm{everywhere\ else}.
\end{array}
\right.
\]
Consequently, we can achieve bound (\ref{eqEcal}) by plainly
choosing $M_\varepsilon=|\ln\varepsilon|$. Furthermore, the
approximation $v_{ap}$ is sufficiently close to $u_{ap}$ so that the
estimates (\ref{equappotential}) and (\ref{eqrem}) remain true with
$v_{ap}$ in place of $u_{ap}$. However, the corresponding estimates
for the solution of (\ref{eqEqstretched}), obtained using this
approximation, are far from optimal. One can actually check that the
above argument works because the exponent $5/2$ in
(\ref{eqvasympto}) is strictly larger than one. It is worthwhile to
mention that the geometric singular perturbation approach in
\cite{schecter-sourdis} required merely (\ref{eqvboundary}). In
\cite{weipitaevskii}, \cite{weivortex-2}, for a closely related
problem to $(\ref{eqground})_-$, the authors  made the choice
$M_\varepsilon= \frac{\delta}{10}\varepsilon^{-\frac{2}{3}}$
(according to our notation) while at the same time not using any
convergence rate of $V(x)$ to $\sqrt{-x}$ as $x\to -\infty$,
something which is not yet clear to us.
\end{rem}
\subsection{Proof of the main theorem}\label{secproofofmain}
We are now ready for the

\texttt{PROOF OF THEOREM \ref{thmmain}}: It follows from the
definition of $a_\varepsilon$ from (\ref{eqaepsilon}) that
\begin{equation}\label{eqetaepsilon}
\textbf{u}_\varepsilon(\textbf{y})\equiv
u_\varepsilon\left(\frac{\textbf{y}}{\varepsilon^\frac{2}{3}}
\right),\ \ \textbf{y}\in \mathbb{R}^2,
\end{equation}
where $u_\varepsilon$ is the solution of (\ref{eqEqstretched})
 as in Proposition \ref{thmexistence1}, is also a
solution of problem (\ref{eqlagrange}) besides the minimizer
$\eta_\varepsilon$ of $G_\varepsilon$ in $\mathcal{H}$. On the other
hand, we know from Theorem 2.1 in \cite{ignat} that (given
$\lambda_\varepsilon$) problem (\ref{eqlagrange}) has a unique
solution
(see also Remark \ref{remuniq} below). Therefore, we conclude that
$\textbf{u}_\varepsilon\equiv\eta_\varepsilon$. Estimates
(\ref{eqestim1})--(\ref{eqestim2+}) for $\eta_\varepsilon$ follow
readily from the corresponding estimates
(\ref{eqcorestim1})--(\ref{eqcorexpdec1}) for $u_\varepsilon$.
Relation (\ref{equappotentialthmmain}) follows easily from
(\ref{equappotential}) and (\ref{equepsilon-X}).

Next, we will derive estimate (\ref{eqlambdaepsilonnew}) by building
on estimates (\ref{eqlambdaepsilon}),
(\ref{eqcorestim1})--(\ref{eqcorexpdec1}), and using that
$\|\eta_\varepsilon\|_{L^2(\mathbb{R}^2)}=1$. (For a self-contained
proof of (\ref{eqlagrange}) we refer to Remark \ref{remlagrange}
below). We consider the following annular regions of the plane:
 \[
\begin{array}{l}
  S_-=\{-\delta\varepsilon^{-\frac{2}{3}}\leq \beta s\leq -2L,\ 0\leq
z\leq \varepsilon^{-\frac{2}{3}}\ell_\varepsilon\}, \\
  \\
  S_0=\{-2L\leq \beta s\leq 0,\ 0\leq z\leq
\varepsilon^{-\frac{2}{3}}\ell_\varepsilon\}, \\
    \\
 S_+=\{0\leq \beta s\leq \delta\varepsilon^{-\frac{2}{3}},\ 0\leq
z\leq \varepsilon^{-\frac{2}{3}}\ell_\varepsilon\}.
\end{array}
 \]
It follows from (\ref{equap}), and (\ref{equepsilon-g}), that the
solution of the stretched problem (\ref{eqEqstretched}) satisfies
\[
u_\varepsilon=\tilde{u}_{out}+\mathcal{O}(\varepsilon
|s|^{-\frac{3}{2}}),\ \textrm{uniformly\ in}\ S_-,\ \textrm{as}\
\varepsilon\to 0.
\]
Furthermore, from (\ref{eqa>b}), (\ref{eqvestims}), and
(\ref{equouttilda}), if $\varepsilon$ is small, we have
\[
c\varepsilon^\frac{1}{3}|s|^\frac{1}{2}\leq \tilde{u}_{out}\leq
C\varepsilon^\frac{1}{3}|s|^\frac{1}{2}\ \ \textrm{in}\ S_-.
\]
So, from (\ref{eqA}), (\ref{eqaepsilon}), (\ref{equouttilda}), and
the above two relations, we find that
\[
u_\varepsilon^2-A(\varepsilon^\frac{2}{3}y)=\lambda_\varepsilon-\lambda_0+\varepsilon^\frac{2}{3}\beta^2[\beta
s+V^2(\beta s)]+\mathcal{O}(\varepsilon^\frac{4}{3}|s|^{-1}),
\]
uniformly in $S_-$, as $\varepsilon\to 0$. Thus, via
 the identity
\[
\int_{S_-}^{}f(y)dy=\int_{0}^{\varepsilon^{-\frac{2}{3}}\ell_\varepsilon}
\int_{-\delta\varepsilon^{-\frac{2}{3}}\beta^{-1}}^{-2L\beta^{-1}}f(\varepsilon^\frac{2}{3}s,\varepsilon^\frac{2}{3}z)\left(1+\varepsilon^\frac{2}{3}
k_\varepsilon(\varepsilon^\frac{2}{3}z)s \right)dsdz\ \ \ \forall
f\in C(\bar{S}_-),
\]
we obtain that
\begin{equation}\label{eqpro1}
\begin{array}{lll}
  \int_{S_-}^{}\left(
u_\varepsilon^2-A(\varepsilon^\frac{2}{3}y)\right)dy&= &
(\lambda_\varepsilon-\lambda_0)\int_{S_-}^{}1dy+
\int_{0}^{\ell_\varepsilon}\beta_\varepsilon(\theta) d\theta
\int_{-\delta\varepsilon^{-\frac{2}{3}}}^{-2L}[V^2(x)+x]dx\\
& &+\int_{S_-}^{}\mathcal{O}(\varepsilon^\frac{4}{3}|s|^{-1})dsdz\\
  & &   \\
  &\stackrel{(\ref{eqvestims})}{=} & (\lambda_\varepsilon-\lambda_0)\int_{S_-}^{}1dy+
\int_{0}^{\ell_\varepsilon}\beta_\varepsilon(\theta) d\theta
\int_{-\infty}^{-2L}[V^2(x)+x]dx+\mathcal{O}(|\ln
\varepsilon|\varepsilon^\frac{2}{3}),
\end{array}
\end{equation}
as $\varepsilon\to 0$. Similarly, recalling (\ref{eqbita}),
(\ref{eqvasympto}), (\ref{eqcorestim1}), and (\ref{eqcorestim1+}),
we have
\begin{equation}\label{eqpro2}
\int_{S_0}^{}\left(
u_\varepsilon^2-A(\varepsilon^\frac{2}{3}y)\right)dy=
(\lambda_\varepsilon-\lambda_0)\int_{S_0}^{}1dy+
\int_{0}^{\ell_\varepsilon}\beta_\varepsilon(\theta) d\theta
\int_{-2L}^{0}[V^2(x)+x]dx+\mathcal{O}(\varepsilon^\frac{2}{3}),
\end{equation}
and
\begin{equation}\label{eqpro3}
\int_{S_+}^{} u_\varepsilon^2dy=
\int_{0}^{\ell_\varepsilon}\beta_\varepsilon(\theta) d\theta
\int_{0}^{\infty}V^2(x)dx+\mathcal{O}(\varepsilon^\frac{2}{3}),
\end{equation}
 as $\varepsilon\to 0$. Moreover, thanks to (\ref{eq372+}),
 (\ref{eqcorexpdec1}), we have
\begin{equation}\label{eqpro4}
\int_{\tilde{\mathcal{D}}_\varepsilon\backslash(S_0\cup
S_-)}^{}\left( u_\varepsilon^2-A(\varepsilon^\frac{2}{3}y)\right)dy=
(\lambda_\varepsilon-\lambda_0)\int_{\tilde{\mathcal{D}}_\varepsilon\backslash(S_0\cup
S_-)}^{}1dy+\mathcal{O}(\varepsilon^\frac{2}{3}),
\end{equation}
and
\begin{equation}\label{eqpro5}
\int_{\mathbb{R}^2\setminus(\tilde{\mathcal{D}}_\varepsilon\cup
S_+)}^{} u_\varepsilon^2dy=
\mathcal{O}(e^{-c\varepsilon^{-\frac{2}{3}}}),
\end{equation}
as $\varepsilon \to 0$. Let us keep in mind that
\begin{equation}\label{eqpro6}
\int_{\mathbb{R}^2}^{}u_\varepsilon^2(y)dy=
\varepsilon^{-\frac{4}{3}}\int_{\mathbb{R}^2}^{}\eta_\varepsilon^2(\textbf{y})d\textbf{y}=\varepsilon^{-\frac{4}{3}}.
\end{equation}
Furthermore, recalling  (\ref{eqD0}), (\ref{eqVnormal}),
(\ref{eqA}),
 and (\ref{eqlambdaepsilon}), for small
$\varepsilon$, we can write
\[
\int_{\mathcal{D}_\varepsilon}^{}A^+(\textbf{y})d\textbf{y}=\int_{\mathcal{D}_0}^{}A^+(\textbf{y})d\textbf{y}
-\int_{\mathcal{U}_\varepsilon}^{}A^+(\textbf{y})d\textbf{y}
 +\int_{\mathcal{V}_\varepsilon}^{}A^+(\textbf{y})d\textbf{y},
\]
where 
$\mathcal{U}_\varepsilon\subseteq \mathcal{D}_0$, $
\mathcal{V}_\varepsilon \cap \mathcal{D}_0=\emptyset$,
$|\mathcal{U}_\varepsilon|+|\mathcal{V}_\varepsilon|\leq C|\ln
\varepsilon|^\frac{1}{2}\varepsilon$, and $A^+(\textbf{y})\leq C|\ln
\varepsilon|^\frac{1}{2}\varepsilon$ if
$\textbf{y}\in\mathcal{U}_\varepsilon\cup\mathcal{V}_\varepsilon$.
Thus, via (\ref{eqlambda0}), (\ref{eqDepsilon}), we infer that
\begin{equation}\label{eqpro7}
\int_{\tilde{\mathcal{D}}_\varepsilon}^{}A^+(\varepsilon^\frac{2}{3}y)dy=\varepsilon^{-\frac{4}{3}}+\mathcal{O}(|\ln\varepsilon|\varepsilon^\frac{2}{3})
\ \ \textrm{as}\ \varepsilon\to 0.
\end{equation}
By combining (\ref{eqpro1})--(\ref{eqpro7}), we deduce that
\[
(\lambda_\varepsilon-\lambda_0)|\tilde{\mathcal{D}}_\varepsilon|+
\left(\int_{0}^{\ell_\varepsilon}\beta_\varepsilon(\theta)d\theta
\right)\left(\int_{-\infty}^{0}[V^2(x)+x]dx+\int_{0}^{\infty}V^2(x)dx
\right)=\mathcal{O}(|\ln \varepsilon|\varepsilon^\frac{2}{3})
\]
as $\varepsilon\to 0$. Now, the validity of estimate
(\ref{eqlambdaepsilonnew}) follows readily by noting that the sum of
the above two integral involving $V$ is zero. This can be seen by
multiplying (\ref{eqpainleve}) by $V_x$, integrating the resulting
identity by parts over $(-\infty,0)$ and $(0,\infty)$ respectively,
and recalling (\ref{eqvasympto}).

To finish, utilizing all the above, we will establish the validity
of estimate (\ref{eqenergysharp}) for the energy of
$\eta_\varepsilon$. It is straightforward to see that
\[
G_\varepsilon(\eta_\varepsilon)=\int_{\mathbb{R}^2}^{}\left\{
\frac{1}{2}|\nabla
u|^2+\frac{\varepsilon^{-\frac{2}{3}}}{4}u^4+\frac{\varepsilon^{-\frac{2}{3}}}{2}W(\varepsilon^\frac{2}{3}y)u^2
\right\}dy,
\]
where $u(y)=\eta_\varepsilon(\varepsilon^\frac{2}{3}y)$ is the
solutions of the stretched problem (\ref{eqEqstretched}). Motivated
from (\ref{eqGepsrenorintro}), and recalling (\ref{eqlambda0}), it
is easy to check that we can rewrite the above relation as
\begin{equation}\label{eqGepsexpanded}
\begin{array}{lll}
  G_\varepsilon(\eta_\varepsilon) & = & \frac{1}{2}\int_{\mathbb{R}^2}^{} |\nabla
u|^2dy+\frac{\varepsilon^{-\frac{2}{3}}}{4}\int_{\tilde{\mathcal{D}}_\varepsilon}^{}\left(u^2-a(\varepsilon^{\frac{2}{3}}y)\right)^2dy
-\frac{1}{4}\left(\int_{\mathbb{R}^2}^{}(A^+)^2d\textbf{y}\right)\varepsilon^{-2}
+\frac{\lambda_0}{2}\varepsilon^{-2} \\
   &    &   \\
    &   & -\frac{(\lambda_\varepsilon-\lambda_0)^2}{4}|\mathcal{D}_0|\varepsilon^{-2}
+\frac{\varepsilon^{-\frac{2}{3}}}{4}\int_{\mathbb{R}^2\backslash
\tilde{\mathcal{D}}_\varepsilon}^{}\left(u^2-a(\varepsilon^{\frac{2}{3}}y)\right)^2dy
    -\frac{\varepsilon^{-\frac{2}{3}}}{4}\int_{\mathbb{R}^2\backslash(\varepsilon^{-\frac{2}{3}}
\mathcal{D}_0)}^{}a^2(\varepsilon^\frac{2}{3}y)dy.
\end{array}
\end{equation}
Similarly to the above proof of (\ref{eqlambdaepsilonnew}), keeping
in mind the proof of Proposition \ref{thmexistence1},
(\ref{eqlambdaepsilonnew}), (\ref{eqvasympto}), (\ref{equouts1}),
(\ref{eqsz}), (\ref{eqcorestim1}), and relations (\ref{eqnambla}),
(\ref{eqcorgrad2}) below (whose proofs do not require
(\ref{eqenergysharp})), we get
 \[\begin{array}{lll}
     \int_{\mathbb{R}^2}^{}|\nabla
u|^2dy & = &
\int_{S_-}^{}\left(\varepsilon^\frac{2}{3}\beta^4V_x^2(\beta
s)+\mathcal{O}(\varepsilon^\frac{4}{3})\right)(1+\varepsilon^\frac{2}{3}ks)dsdz+\mathcal{O}(1) \\
      &  &  \\
       &  = & \left(\int_{0}^{\ell_\varepsilon}
\beta^3_\varepsilon(\theta)d\theta\right)\int_{-\delta\varepsilon^{-\frac{2}{3}}}^{-2L}V_x^2(x)dx+\mathcal{O}(1)
 \\
       &   &   \\
      &  =& \frac{1}{6}\left(\int_{0}^{\ell_0}
\beta^3_0(\theta)d\theta\right)|\ln \varepsilon|+\mathcal{O}(1),
   \end{array}
\]
as $\varepsilon\to 0$. Furthermore, as in the above proof of
(\ref{eqlambdaepsilonnew}), we have
\[
\frac{\varepsilon^{-\frac{2}{3}}}{4}\int_{\tilde{\mathcal{D}}_\varepsilon}^{}\left(u^2-a(\varepsilon^{\frac{2}{3}}y)\right)^2dy=\mathcal{O}(1),
\]
and
\[
\frac{\varepsilon^{-\frac{2}{3}}}{4}\int_{\mathbb{R}^2\backslash
\tilde{\mathcal{D}}_\varepsilon}^{}\left(u^2-a(\varepsilon^{\frac{2}{3}}y)\right)^2dy
    -\frac{\varepsilon^{-\frac{2}{3}}}{4}\int_{\mathbb{R}^2\backslash(\varepsilon^{-\frac{2}{3}}
\mathcal{D}_0)}^{}a^2(\varepsilon^\frac{2}{3}y)dy=\mathcal{O}(\varepsilon^\frac{2}{3}),
\]
as $\varepsilon\to 0$. Now, the validity of (\ref{eqenergysharp})
follows at once from (\ref{eqlambdaepsilonnew}),
(\ref{eqGepsexpanded}), and the above three relations.
Alternatively, we could have used the formula
\[
G_\varepsilon(\eta_\varepsilon)=\frac{1}{2\varepsilon^2}\lambda_\varepsilon
-\frac{1}{4\varepsilon^2}\int_{\mathbb{R}^2}^{}\eta_\varepsilon^4d\textbf{y},
\]
which follows easily by testing equation (\ref{eqlagrange}) with
$\eta_\varepsilon$.

 The proof of Theorem \ref{thmmain} is complete.\
\ \ $\Box$

We now outline a few remarks.
\begin{rem}\label{remarkupperbound}
The maximum principle yields the upper bound:
\[
\eta_\varepsilon(\textbf{y})\leq
\max_{\mathbb{R}^2}\sqrt{a_\varepsilon^+},\ \ \textbf{y}\in
\mathbb{R}^2,
\]
(see also (\ref{eqyan}) below).
\end{rem}
\begin{rem}\label{remyan}
As in \cite{ignat}, where the authors refer to an idea of Shafrir,
we can rewrite (\ref{eqlagrange}) in the form
\[
-\varepsilon^2\Delta
(\sqrt{a_\varepsilon}-\eta_\varepsilon)+\eta_\varepsilon
(\eta_\varepsilon+\sqrt{a_\varepsilon})(\eta_\varepsilon-\sqrt{a_\varepsilon})=-\varepsilon^2\Delta(\sqrt{a_\varepsilon})\
\ \textrm{in}\ \mathcal{D}_\varepsilon.
\]
The above relation suggests the following, which can be proven
similar to \cite{yanedinburg}: We have
\begin{equation}\label{eqyan}
\eta_\varepsilon(\textbf{y})=\sqrt{a_\varepsilon}+\varepsilon^2\frac{\Delta(\sqrt{a_\varepsilon})}{2a_\varepsilon}+o(\varepsilon^2),
\end{equation}
where $\varepsilon^{-2}o(\varepsilon^2)\to 0$ uniformly on any
compact subset of $\mathcal{D}_0$ as $\varepsilon\to 0$. Keeping in
mind (\ref{eqVnormal}) which implies that
\[
\varepsilon^2\frac{\Delta(\sqrt{a_\varepsilon})}{2a_\varepsilon}=\mathcal{O}\left(\varepsilon^2|t|^{-\frac{5}{2}}\right)\
\textrm{uniformly \ in}\ \mathcal{D}_0\ \textrm{as}\ \varepsilon\to
0,
\]
and (\ref{eqestim2}), we are tempted to believe that (\ref{eqyan})
can be extended to hold  uniformly in the domain
$\mathcal{D}_\varepsilon\backslash
\{-K\varepsilon^\frac{2}{3}<t<0\}$, with $K$ large, if
$\varepsilon\to 0$. A possible approach could be by seeking a more
refined inner solution with $V+\varepsilon\phi$ in place of $V$ in
(\ref{equin}), where $\phi$ is determined by solving a linear
equation of the form $\mathcal{M}(\phi)=f(x,z),\ x\in \mathbb{R}, \
z \in [0,\varepsilon^{-\frac{2}{3}}\ell_\varepsilon)$ with $f$ known
(in terms of the curvature $k_\varepsilon$, $V$, $a_\varepsilon$ and
their derivatives) and $\mathcal{M}$ as in (\ref{eqMcal}); we refer
to \cite{karalisourdisresonance} for a related problem. Actually, we
have computed that in the radially symmetric case, in $N\geq 1$
dimensions, we have
\[
\mathcal{M}(\phi)=\frac{N-1}{R_\varepsilon}
\beta^{-1}V_x-\frac{1}{2}a_{rr}(R_\varepsilon) \beta^{-4}x^2V,
\]
where $R_\varepsilon$ is the radius of $\mathcal{D}_\varepsilon$ and
$\beta=[-a_r(R_\varepsilon)]^\frac{1}{3}$. Moreover, due to matching
conditions with
$a_\varepsilon(R_\varepsilon+\varepsilon^\frac{2}{3}\beta^{-1}x)$,
we need that
\[
\phi(x)+
\frac{1}{4}a_{rr}(R_\varepsilon)\beta^{-4}(-x)^\frac{3}{2}\to 0,\
x\to -\infty;\ \ \phi \to 0,\ x\to \infty.
\]
In the special case of the model harmonic potential,  this lower
order term in the inner solution has been formally derived in
\cite{fetterPhys} and rigorously in \cite{pelinovsky}. It might also
be useful for the reader to take a look at (\ref{eqpsiupperWtt})
below.
\end{rem}
\begin{rem}\label{remC1}
As in \cite[Prop. 2.1 e)]{ignat} , it follows that
\[
\|\eta_\varepsilon-\sqrt{a_\varepsilon}\|_{C^1(\mathcal{K})}\leq
C_\mathcal{K}\varepsilon^2\ \ \textrm{for\ any\ compact \ subset}\
\mathcal{K}\subset \mathcal{D}_0,
\]
if $\varepsilon$ is small.
\end{rem}

\begin{rem}\label{remuniq}
 Given $\lambda>\min_{\mathbb{R}^N}W$, problem $(\ref{eqground})_-$, in $N=2$ dimensions, has a
unique positive solution, for small $\varepsilon$, as has been
proven recently in \cite{ignat} by combining ideas of Brezis and
Oswald \cite{brezis-oswald} (see also the uniqueness part of
Proposition \ref{prophalf} herein) with those used in the proof of
De Giorgi's conjecture in low dimensions \cite{gui}. This fact
allows us to work exclusively with equation (\ref{eqlagrange}),
since positive solutions of the latter coincide with the unique real
valued minimizer of $G_\varepsilon$ in $\mathcal{H}$. On the other
hand, using (\ref{eqJ1}), which holds for every positive solution of
$(\ref{eqground})_-$, it is easy to see that the method of
\cite{ignat} can be extended to the case of arbitrary $N \geq 1$
dimensions. Hence, our Theorem \ref{thmmain} can be extended
naturally to treat the case where the functional $G_\varepsilon$ is
considered in arbitrary dimensions, with the analogous conditions on
the potential. In the radially symmetric case, uniqueness results
which allow the case where $\lambda=\inf_{\mathbb{R}^N} W$ may be
found in \cite{brownJmaa} and \cite{pelinovsky}.
\end{rem}
\begin{rem}\label{remdirichlet}
One can also prove an analogous result to Theorem \ref{thmmain} for
the real valued minimizer of $G_\varepsilon$ in
\[
\mathcal{J}\equiv\left\{u \in W_0^{1,2}(\mathcal{D}_0; \mathbb{C})\
:\ \int_{\mathcal{D}_0}^{}|u|^2 dy=1\right\}.
\]
This problem has been studied in \cite{jerard}, in a
three-dimensional setting, with potentials of the form
(\ref{eqharmonic}). The special property that $\Delta W>0$ in
$\mathcal{D}_0$ was used in an essential way in the latter reference
for estimating the minimizer near the surface
$\partial\mathcal{D}_0$, along which it has a steep corner layer. We
also refer the interested reader to \cite{alama} and
\cite{alamamontero} for the case where $\mathcal{D}_0$ has annular
shape. For a numerical treatment of the problem we refer to
\cite{numerics}. As in Subsection \ref{secnearcurve} below, it is
not hard to see that in this case the layer profile near $\partial
\mathcal{D}_0$ should be determined by the unique solution of
\begin{equation}\label{eqvdirichlet}
\left\{ \begin{array}{ll}
          v_{xx}-v(v^2+x)=0, & x<0, \\
           &  \\
          v(x)-\sqrt{-x}\to 0\  \textrm{as}\ x\to -\infty; &
          v(0)=0.
        \end{array}
 \right.
\end{equation}
We refer to Appendix \ref{appenpainleve} below for a treatment of
the above problem in relation with (\ref{eqpanintro}).

The minimization of the functional $G_\varepsilon$ in
$W^{1,2}(\mathcal{D}_0)$, subject to the mass constraint, leads to
the equation in $(\ref{eqground})_-$ with Neumann boundary
conditions. The latter singular perturbation problem may be treated
by using in place of $V$, in (\ref{equin}), the (reflection of the)
solution described in Remark \ref{rempainleveNeumann} below.
\end{rem}
\begin{rem}\label{remlagrange}
Identical estimates to (\ref{eqestim1})-(\ref{eqestim2+}) (with $t$
replaced by the signed distance from $\partial \mathcal{D}_0$) hold
for the solution $\tilde{\eta}_\varepsilon$ of $(\ref{eqground})_-$
(with $\lambda=\lambda_0$, $q=3$). One can use the function
$\frac{\tilde{\eta}_\varepsilon}{\|\tilde{\eta}_\varepsilon\|_{L^2(\mathbb{R}^2)}}\in
\mathcal{H}$ as a competitor in order to give a self-contained proof
of (\ref{eqG1intro}), from which (\ref{eqlambdaepsilon}) follows
readily (see \cite{aftalion-jerrard}, \cite{ignat}). (As in the
proof of (\ref{eqenergysharp}), keeping in mind that
$\|\tilde{\eta}_\varepsilon\|^2_{L^2(\mathbb{R}^2)}\to
\|A^+\|_{L^1(\mathbb{R}^2)}$, the main contribution would be from
the gradient term).
\end{rem}
\begin{rem}\label{remignat}
In the  case where the potential $W$ is of harmonic type, as in
(\ref{eqharmonic}), it was observed by the authors of \cite{ignat}
that
\[
\eta_\varepsilon(\textbf{y})=\frac{\sqrt{\lambda_0+\lambda_\varepsilon}}{\sqrt{\lambda_0}}\tilde{\eta}_{\tilde{\varepsilon}}
\left(
\frac{\sqrt{\lambda_0}\textbf{y}}{\sqrt{\lambda_0+\lambda_\varepsilon}}
\right)\ \ \textrm{with}\ \ \tilde{\varepsilon}=\frac{\lambda_0
\varepsilon}{\lambda_0+\lambda_\varepsilon},
\]
where $\tilde{\eta}_{\tilde{\varepsilon}}$ was defined in Remark
\ref{remlagrange}. This identity and a technique of Struwe
\cite[Lemma 2.3]{ignat} were used essentially in their proof of
(\ref{eqlambdaepsilonnew}) for this special class of potentials.
\end{rem}
\begin{rem}\label{remsmoothness}
Relation (\ref{equappotentialthmmain}) implies that, if
$\varepsilon$ is small, the linearized operator
$\mathcal{L}_\varepsilon$ about $u_\varepsilon$ is invertible.
Hence, the implicit function theorem (see for instance
\cite{ambrozetti-proddi}) implies that there exists a small number
$\varepsilon_0>0$ such that, in addition to being isolated (for each
$\varepsilon$), the minimizers
$\eta(\varepsilon)\equiv\eta_\varepsilon$ depend smoothly on
$\varepsilon \in (0,\varepsilon_0)$ (in all the usual function
spaces). In particular, we have $\eta:(0,\varepsilon_0)\to
W^{1,2}(\mathbb{R}^2)$ is $C^1$. This last property yields at once
the first part of Lemma 2.3 in \cite{ignat}, mentioned in Remark
\ref{remignat} above.
\end{rem}
\begin{rem}\label{rempeli}
In the special case of the model harmonic potential
$W(\textbf{y})=|\textbf{y}|^2$, since $\eta_\varepsilon$ is radially
symmetric \cite{gidas}, we can define $\nu_\varepsilon:
(-\infty,\lambda_\varepsilon\varepsilon^{-\frac{2}{3}}]\to
\mathbb{R}$ by
\[
\eta_\varepsilon(\textbf{y})=\varepsilon^\frac{1}{3}\nu_\varepsilon\left(\frac{\lambda_\varepsilon-|\textbf{y}|^2}{\varepsilon^\frac{2}{3}}
\right),\ \ \textbf{y}\in \mathbb{R}^2.
\]
Letting $\xi=\left(\lambda_\varepsilon-|\textbf{y}|^2
\right)/\varepsilon^\frac{2}{3}$, then the equation in
(\ref{eqlagrange}) becomes equivalent to
\[
4(\lambda_\varepsilon-\varepsilon^\frac{2}{3}\xi)\partial_{\xi\xi}\nu_\varepsilon-4\varepsilon^\frac{2}{3}\partial_{\xi}\nu_\varepsilon+\xi
\nu_\varepsilon-\nu_\varepsilon^3=0,\ \ \xi\in
(-\infty,\lambda_\varepsilon\varepsilon^{-\frac{2}{3}}].
\]
At first glance this might look rather counterintuitive but this
strategy, already used in \cite{pelinovsky}, allows one to use
directly
$(2\lambda_\varepsilon)^\frac{1}{3}V\left(-\frac{\xi}{(2\lambda_\varepsilon)^\frac{2}{3}}
\right)$ as a global approximation (recall (\ref{eqpainleve})--
(\ref{eqvestims})).
\end{rem}
\begin{rem}\label{remdalfovo}
In the radially symmetric case, estimate (\ref{eqenergysharp}) was
formally predicted in \cite{dalfovofirst,dalfovokinetic}, and
rigorously proven and extended very recently, for the case of the
harmonic potential, in \cite{gallonew} at the same time that the
current paper was written.
\end{rem}

\section{Further properties of the ground state
$\eta_\varepsilon$}\label{secfurther} As a byproduct of our
construction of the ground state $\eta_\varepsilon$, we can extend
to the non-radial case relation (\ref{eqJ4}), improve relation
(\ref{eqJ3}), show that $\eta_\varepsilon$ has maximal H\"{o}lder
regularity, and improve relation (\ref{eqJ2}). Finally, under an
additional but natural non-degeneracy assumption on the potential,
we can refine bound (\ref{eqestim1+}).

 \begin{cor}\label{cor1}
There exist small constants $c,\ d'>0$ such that, given $D>0$, we
have
\begin{equation}\label{eqetat<0}
(\eta_\varepsilon)_t\leq
-c(|t|+\varepsilon^\frac{2}{3})^{-\frac{1}{2}}
\end{equation}
in $\left\{-d'\leq t\leq D\varepsilon^\frac{2}{3},\ \ \theta\in
[0,\ell_\varepsilon) \right\}$, provided $\varepsilon$ is
sufficiently small.
 \end{cor}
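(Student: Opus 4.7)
My plan is to differentiate in $t$ the decomposition $\eta_\varepsilon(\mathbf{y})=u_{ap}(\mathbf{y}/\varepsilon^{2/3})+\varphi_\ast(\mathbf{y}/\varepsilon^{2/3})$ coming from Proposition \ref{thmexistence1} and Section \ref{secproofofmain}. Using $\partial_t=\varepsilon^{-2/3}\partial_s$, the leading contribution is
\[
\partial_t u_{in}=\varepsilon^{-1/3}\beta^2(\varepsilon^{2/3}z)\,V_x\!\left(\beta(\varepsilon^{2/3}z)\tfrac{t}{\varepsilon^{2/3}}\right).
\]
The first step is to extract from Proposition \ref{prohastings} the pointwise inequality
\[
V_x(x)\leq -c_0\,(1+|x|)^{-1/2}\qquad\text{for all }x\in(-\infty,D],
\]
for some $c_0>0$ depending on $D$. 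This follows by combining (\ref{eqVx<0}) and continuity of $V_x$ on any compact interval $[-K,D]$ with the asymptotics $V_x(x)=-\tfrac{1}{2}(-x)^{-1/2}+\mathcal{O}(|x|^{-7/2})$ from (\ref{eqvestims}). Inserting $x=\beta t/\varepsilon^{2/3}$ and using that $\beta$ is bounded from below and above by positive constants gives immediately $\partial_t u_{in}\leq -c_1(|t|+\varepsilon^{2/3})^{-1/2}$ throughout the strip $\{-d'\leq t\leq D\varepsilon^{2/3}\}$.

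Next I would control the remaining pieces of $\partial_t u_{ap}$. In the interpolation layer $-2L\leq\beta s\leq 0$, where $u_{ap}=u_{in}+\rho_L(\tilde u_{out}-u_{in})$, the extra terms $(\partial_t\rho_L)(\tilde u_{out}-u_{in})$ and $\rho_L\partial_t(\tilde u_{out}-u_{in})$ are estimated with (\ref{equout-uingradients}) and $|\rho'_L|\leq C/L$; they contribute at most $C\varepsilon^{1/3}L^{1/2}$, which is $o(\varepsilon^{-1/3})$ after fixing $L$ and taking $\varepsilon$ small. In the outer zone $-d'\leq\beta t\leq -2L\varepsilon^{2/3}$ where $u_{ap}=\tilde u_{out}$, formulas (\ref{equouttilda-s})--(\ref{equouts1}) yield
\[
\partial_t\tilde u_{out}=\varepsilon^{-1/3}\beta^2 V_x(\beta s)+\mathcal{O}(|t|^{1/2}),
\]
and the $\mathcal{O}(|t|^{1/2})$ error is absorbed by a fraction of $-c|t|^{-1/2}$ as soon as $d'\ll 1$. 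On the side $0\leq\beta s\leq D$, $V_x(\beta s)$ stays bounded away from $0$ by the exponential estimate in (\ref{eqvestims}) together with continuity, so this zone reduces to the inner case.

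Finally, I would estimate $\partial_t\varphi_\ast$. By Proposition \ref{thmexistence1} one has $\|\varphi_\ast\|_{3/2}+\|\varphi_\ast\|_\mathcal{X}\leq C\varepsilon$, and $\varphi_\ast$ solves $\mathcal{L}(\varphi_\ast)=E+N(\varphi_\ast)$ with $|E|\leq C\varepsilon g^{-1/3}$ by (\ref{eqEestims}) and (\ref{eqgnorms}). Applying interior Schauder estimates on unit balls $B_1(y_0)$ in the stretched variable and noting that in the whole strip the potential $\varepsilon^{-2/3}(3u_{ap}^2-a)$ is either uniformly bounded (on the inner side, by (\ref{eq3uap1})) or grows like $|s_0|$ (on the outer side, by (\ref{eq3uap2})), I obtain
\[
|\nabla_y\varphi_\ast(y)|\leq C\varepsilon\,(1+|s|)^{-1/2},
\]
hence $|\partial_t\varphi_\ast|\leq C\varepsilon^{2/3}(|t|+\varepsilon^{2/3})^{-1/2}$, which is of strictly smaller order than the main term. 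Combining the three steps and choosing $d'$ and $\varepsilon$ small produces the claimed bound.

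The main obstacle I anticipate is precisely the last step in the outer region: the zero-order coefficient of $\mathcal{L}$ is not bounded there, so one cannot apply Schauder naively. The way around it is to notice that both quantities that appear in the right-hand side of Schauder, namely $\|E\|_{L^\infty(B_1(y_0))}$ and $\|V_{pot}\|_{L^\infty(B_1(y_0))}\|\varphi_\ast\|_{L^\infty(B_1(y_0))}$, scale like $\varepsilon|s_0|^{-1/2}$ (using $|\varphi_\ast|\leq C\varepsilon|s|^{-3/2}$ from $\|\varphi_\ast\|_{3/2}\leq C\varepsilon$, together with $|V_{pot}|\leq C|s_0|$). This matching of weights delivers exactly the bound $|\nabla_y\varphi_\ast|\leq C\varepsilon|s_0|^{-1/2}$ needed to beat the leading $|t|^{-1/2}$ by a factor $\varepsilon^{2/3}$.
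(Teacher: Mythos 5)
Your proposal is correct and follows essentially the same route as the paper's proof: decompose $\eta_\varepsilon=u_{ap}+\varphi_*$ in the stretched variables, obtain the leading negative term from $V_x<0$ together with the asymptotics in (\ref{eqvestims}) and the outer-layer formulas (\ref{equouttilda-s}), (\ref{equouts1}), and control $\nabla_y\varphi_*$ by interior elliptic estimates on unit balls applied to (\ref{eqLphi=E+N}), absorbing the $\mathcal{O}(|t|^{1/2})$ error for $|t|\leq d'$ small. The only minor differences are that the paper settles for the unweighted bound $|\nabla_y\varphi_*|\leq C\varepsilon$ of (\ref{eqnambla}) (which already suffices, since for $|s|\geq L$ this is dominated by the $C\varepsilon|s|^{1/2}$ error that gets absorbed anyway), and that your weighted gradient bound should be justified via the interpolation inequality of Lemma \ref{lembrezis} (as the paper does) rather than literal Schauder estimates, since only sup-norm, not H\"{o}lder, control of the right-hand side is available -- with that substitution your weight-matching argument goes through as stated.
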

\begin{proof} We will present it for the
 solution $u_\varepsilon$ of the stretched problem (\ref{eqEqstretched}). It follows
 from
(\ref{eqLphi=E+N})
  that, if $\varepsilon$ is small, the
function $\varphi_*=u_\varepsilon-u_{ap}$ satisfies
\begin{equation}\label{eqDelta}
|\Delta_y \varphi_*|\leq \left\{
\begin{array}{ll}
  C\varepsilon (|s|+1)^{-\frac{1}{2}}&\textrm{if} \ \ -10\delta\varepsilon^{-\frac{2}{3}}\leq \beta s\leq 0, \\
   &  \\
C\varepsilon e^{-c|s|} &\textrm{if} \ \ 0\leq \beta s\leq
10\delta\varepsilon^{-\frac{2}{3}}.
\end{array}
\right.
\end{equation}
 In establishing the above estimate, the term that needed extra
care was
\[\varepsilon^{-\frac{2}{3}}\left(3u_{ap}^2-a(\varepsilon^\frac{2}{3}y)
\right)\varphi_*,\] which can be estimated by noting that
\begin{equation}\label{eqDelta2}
u_{ap}^2+|a(\varepsilon^\frac{2}{3}y)|\leq C
\varepsilon^\frac{2}{3}(|s|+1)\ \ \textrm{and}\ \ |\varphi_*|\leq
C\varepsilon (|s|+1)^{-\frac{1}{2}}\ \ \textrm{if}\ \{|\beta s|\leq
10\delta \varepsilon^{-\frac{2}{3}}\},
\end{equation}
(recall (\ref{eqa>b}), (\ref{eq3uap1}), (\ref{eq3uap2}), and
(\ref{equepsilon-g})). Making (mild) use of (\ref{eqDelta}) and the
second estimate in (\ref{eqDelta2}), via standard interior elliptic
regularity estimates \cite{Gilbarg-Trudinger} (applied on balls of
radius one) or the interpolation-type inequality of Lemma A.1 in
\cite{bethuelCVPDE}, the statement of which is included below as
Lemma \ref{lembrezis} for the reader's convenience, we obtain that
\begin{equation}\label{eqnambla}
|\nabla_y \varphi_*|\leq C\varepsilon
\end{equation}
in the neighborhood of $\tilde{\Gamma}_\varepsilon$ described by
$\{|\beta s|\leq 5\delta \varepsilon^{-\frac{2}{3}}\}$, provided
$\varepsilon$ is sufficiently small.

By  (\ref{eqVx<0}), (\ref{equin}), (\ref{eqsz}), (\ref{equap}), and
(\ref{eqnambla}), we deduce that, given $\tilde{D}>0$,
\begin{equation}\label{eqnambla2}
(u_\varepsilon)_s=\varepsilon^\frac{1}{3}\beta^2V_x(\beta
s)+\mathcal{O}(\varepsilon)<-c\varepsilon^\frac{1}{3},
\end{equation}
uniformly in the neighborhood of $\tilde{\Gamma}_\varepsilon$
described by $\{-L \leq x \leq \tilde{D}\}$, as $\varepsilon\to 0$.
In the same manner, recalling (\ref{eqvestims}) and the second
estimate in (\ref{equout-uingradients}), we find that
\begin{equation}\label{eqnambla3}
\begin{array}{lll}
  (u_\varepsilon)_s & = & \varepsilon^\frac{1}{3}\beta^2V_x(\beta s)+\mathcal{O}(\varepsilon
|s|^\frac{1}{2})
 \\
    &   &   \\
    &  \leq &
 -c\varepsilon^\frac{1}{3}
|s|^{-\frac{1}{2}}+C\varepsilon |s|^\frac{1}{2}\\
&&\\
 &\leq& -\frac{c}{2}\varepsilon^\frac{1}{3}|s|^{-\frac{1}{2}},
\end{array}
\end{equation}
as long as $-\tilde{d}\varepsilon^{-\frac{2}{3}}<\beta s<-L$, for
some small constant $\tilde{d}$, provided $\varepsilon$ is
sufficiently small. The corresponding assertion of the corollary,
for the solution of the equivalent stretched problem
(\ref{eqEqstretched}),  follows readily from (\ref{eqnambla2}) and
(\ref{eqnambla3}).

 The proof of the corollary
is complete.\end{proof}


The following is Lemma A.1 in \cite{bethuelCVPDE}:
\begin{lem}\label{lembrezis}
Assume that $u$ satisfies
\[
-\Delta u=f\ \ \textrm{in}\ \ \Omega \subset \mathbb{R}^N.
\]
Then
\[
\left|\nabla u(\textbf{y}) \right|^2\leq
C\left\{\|f\|_{L^\infty(\Omega)}\|u\|_{L^\infty(\Omega)}+\frac{1}{\textrm{dist}^2(\textbf{y},\partial
\Omega)} \|u\|^2_{L^\infty(\Omega)} \right\}\ \ \forall \
\textbf{y}\in \Omega,
\]
where $C$ is some constant depending only on $N$.
\end{lem}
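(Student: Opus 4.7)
The plan is to prove this pointwise gradient bound by a standard rescaling argument combined with the classical interior gradient estimate for Poisson's equation on the unit ball. Fix $\textbf{y} \in \Omega$ and set $R = \textrm{dist}(\textbf{y}, \partial \Omega)$, so that $B_R(\textbf{y}) \subset \Omega$. For each $r \in (0, R]$ I would introduce the rescaled function $v(z) := u(\textbf{y} + r z)$ on the unit ball $B_1(0) \subset \mathbb{R}^N$, which satisfies $-\Delta v(z) = r^2 f(\textbf{y} + r z)$ in $B_1(0)$. The idea is to bound $|\nabla v(0)|$ on the unit scale, undo the scaling to obtain a one-parameter family of inequalities for $|\nabla u(\textbf{y})|$, and then optimize in $r$.

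The unit-ball ingredient I will use is the classical estimate
\begin{equation*}
|\nabla v(0)| \leq C_N \bigl( \|v\|_{L^\infty(B_1)} + \|\Delta v\|_{L^\infty(B_1)} \bigr),
\end{equation*}
where $C_N$ depends only on $N$. The cleanest derivation is through the representation formula on $B_1$, namely $v(x) = \int_{\partial B_1} P(x,y)\, v(y)\, dS(y) - \int_{B_1} G(x,y)\, \Delta v(y)\, dy$, and differentiating under the integral at $x=0$: the Poisson-kernel term is handled by the pointwise bound on $\nabla_x P(0,\cdot)$ against $\|v\|_{L^\infty(\partial B_1)}$, while the Newtonian-potential term is handled by the fact that $\int_{B_1}|\nabla_x G(0,y)|\,dy$ is finite. (Alternatively, one can split $v = v_1 + v_2$ with $v_1$ harmonic and $v_2$ having zero boundary trace and apply mean-value derivative bounds for harmonic functions together with $L^\infty$ bounds for the Newtonian potential with $L^\infty$ source.) Applied to our rescaled $v$ this yields, after tracking factors of $r$,
\begin{equation*}
|\nabla u(\textbf{y})| \leq C_N \bigl( r\, \|f\|_{L^\infty(\Omega)} + r^{-1} \|u\|_{L^\infty(\Omega)} \bigr), \qquad r \in (0, R].
\end{equation*}

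Finally I will optimize the right-hand side over admissible radii. The unconstrained minimizer is $r_\ast = \sqrt{\|u\|_{L^\infty(\Omega)}/\|f\|_{L^\infty(\Omega)}}$ (interpreted as $+\infty$ when $f \equiv 0$), and at $r_\ast$ the bound becomes $2 C_N \sqrt{\|u\|_{L^\infty}\|f\|_{L^\infty}}$. If $r_\ast \leq R$ I choose $r = r_\ast$; otherwise $R^2 \|f\|_{L^\infty} < \|u\|_{L^\infty}$, and choosing $r = R$ gives $|\nabla u(\textbf{y})| \leq 2 C_N R^{-1} \|u\|_{L^\infty}$. In either case, squaring yields exactly the stated inequality with $C = 4 C_N^2$. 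There is no real obstacle here beyond bookkeeping, since the only nontrivial input is the universal unit-ball estimate; the scaling and the optimization are automatic. The mildly delicate point, should one wish to be fully rigorous about pointwise differentiation, is the regularity of $u$: since $f \in L^\infty_{\mathrm{loc}}$ implies $u \in W^{2,p}_{\mathrm{loc}} \subset C^{1,\alpha}_{\mathrm{loc}}$ for every $p < \infty$ and $\alpha < 1$ by standard elliptic regularity, the gradient $\nabla u(\textbf{y})$ is well defined and the above manipulations are justified.
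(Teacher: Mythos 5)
Your argument is correct. Note that the paper does not prove this lemma at all: it is quoted verbatim as Lemma A.1 of \cite{bethuelCVPDE}, so there is no in-paper proof to compare against. Your route — rescaling to the unit ball, the interior estimate $|\nabla v(0)|\leq C_N\left(\|v\|_{L^\infty(B_1)}+\|\Delta v\|_{L^\infty(B_1)}\right)$ obtained from the Green/Poisson representation, and optimization over $r\in(0,R]$ with the dichotomy $r_\ast\leq R$ versus $r_\ast>R$ — is the standard proof of this interpolation-type inequality and is essentially the argument of the cited reference; the regularity point you flag ($f\in L^\infty$ gives $u\in W^{2,p}_{\mathrm{loc}}\subset C^{1,\alpha}_{\mathrm{loc}}$, so the pointwise gradient and the representation formula are legitimate) is handled adequately.
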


We can also improve the bound (\ref{eqJ3}):
\begin{cor}\label{corgrad}
If $\varepsilon>0$ is sufficiently small, we have
\[
\|\nabla\eta_\varepsilon\|_{L^\infty(\mathbb{R}^2)}\leq
C\varepsilon^{-\frac{1}{3}}.
\]
\end{cor}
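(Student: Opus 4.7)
The plan is to apply Lemma \ref{lembrezis} to $\eta_\varepsilon$ on balls of radius chosen optimally for each region, using the equation $\Delta\eta_\varepsilon = \varepsilon^{-2}\eta_\varepsilon(\eta_\varepsilon^2 - a_\varepsilon)$ (cf.\ (\ref{eqlagrange})) to convert local estimates on $\eta_\varepsilon$ and $\eta_\varepsilon^2 - a_\varepsilon$ into estimates on $\Delta\eta_\varepsilon$. The key observation is that both $|\eta_\varepsilon|$ and $|\eta_\varepsilon^2 - a_\varepsilon|$ are small in the corner layer, which is exactly where $|\nabla\eta_\varepsilon|$ is largest, so the scaling of the layer (width $\varepsilon^{2/3}$) naturally dictates the choice of ball radius.

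First, in the layer region $\{|t| \leq K\varepsilon^{2/3}\}$ for a fixed large $K>0$, estimates (\ref{eqestim1}) and (\ref{eqestim1+}) yield $|\eta_\varepsilon| \leq C\varepsilon^{1/3}$; combining the Taylor expansion (\ref{eqa-taylor}) for $a_\varepsilon$ with $\eta_\varepsilon^2 = \varepsilon^{2/3}\beta^2 V^2(x) + \mathcal{O}(\varepsilon)$ gives $|\eta_\varepsilon^2 - a_\varepsilon| \leq C\varepsilon^{2/3}$, hence $|\Delta\eta_\varepsilon| \leq C\varepsilon^{-1}$. Applying Lemma \ref{lembrezis} on a ball of radius $r = c\varepsilon^{2/3}$ (small enough to remain inside a slightly larger tubular neighborhood of $\partial\mathcal{D}_0$ where the same bounds hold),
\[
|\nabla\eta_\varepsilon|^2 \leq C\bigl(\varepsilon^{-1}\cdot\varepsilon^{1/3} + \varepsilon^{-4/3}\cdot\varepsilon^{2/3}\bigr) = C\varepsilon^{-2/3},
\]
so $|\nabla\eta_\varepsilon| \leq C\varepsilon^{-1/3}$ throughout the layer. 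The choice $r \sim \varepsilon^{2/3}$ is precisely the one that balances the two contributions in Lemma \ref{lembrezis}, which is why no better bound can be extracted by this type of argument.

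Second, in the intermediate bulk $\mathcal{D}_\varepsilon \cap \{-d \leq t \leq -K\varepsilon^{2/3}\}$, estimate (\ref{eqestim2}) gives $|\eta_\varepsilon - \sqrt{a_\varepsilon}| \leq C\varepsilon^2|t|^{-5/2}$; since $\sqrt{a_\varepsilon} \leq C|t|^{1/2}$ by (\ref{eqa>b}), this yields $|\eta_\varepsilon^2 - a_\varepsilon| \leq C\varepsilon^2 |t|^{-2}$ and therefore $|\Delta\eta_\varepsilon| \leq C|t|^{-3/2}$. Applying Lemma \ref{lembrezis} with $r = |t|/2$ produces $|\nabla\eta_\varepsilon| \leq C|t|^{-1/2} \leq C\varepsilon^{-1/3}$ (since $|t| \geq K\varepsilon^{2/3}$). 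In the deep bulk $\{t \leq -d\}$, estimate (\ref{eqestim3}) gives $|\eta_\varepsilon^2 - a_\varepsilon| \leq C\varepsilon^2$ so $|\Delta\eta_\varepsilon| \leq C$, and Lemma \ref{lembrezis} with a fixed radius gives $|\nabla\eta_\varepsilon| \leq C$.

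Third, in the exterior $\mathbb{R}^2 \setminus \mathcal{D}_\varepsilon$, the exponential decay (\ref{eqestim2+}) together with the fact that $a_\varepsilon \leq -c < 0$ (or better) outside a neighborhood of $\partial\mathcal{D}_0$ makes $\eta_\varepsilon$ and $\Delta\eta_\varepsilon$ so small that Lemma \ref{lembrezis} again produces $|\nabla\eta_\varepsilon| \leq C\varepsilon^{-1/3}$ (in fact, much better outside a $\mathcal{O}(\varepsilon^{2/3})$-neighborhood of $\partial\mathcal{D}_0$, where $\eta_\varepsilon$ is super-exponentially small). Assembling the three regions gives the claim. The only subtle step is the balancing in the layer; everything else is a routine application of the interpolation inequality.
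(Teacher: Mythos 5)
Your proof is correct, and it is worth noting that it does not follow the paper's route in the crucial layer region. The paper works in the stretched variables and, near $\Gamma_\varepsilon$, bounds $\nabla u_\varepsilon$ by splitting $u_\varepsilon=u_{ap}+\varphi_*$: the gradient of the approximate solution is read off from the explicit derivative estimates (\ref{eqvestims}), (\ref{equouts1}), (\ref{equouttildainpro2}), while the gradient of the correction is controlled by (\ref{eqnambla}), itself obtained from interior elliptic estimates (or Lemma \ref{lembrezis}) applied to $\varphi_*$ on unit balls; Lemma \ref{lembrezis} applied to $u_\varepsilon$ directly is only used in the bulk and exterior, as in (\ref{eqcorgrad2})--(\ref{eqcorgrad3}). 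You instead apply Lemma \ref{lembrezis} to $\eta_\varepsilon$ itself everywhere, with ball radius $\sim\varepsilon^{2/3}$ in the collar and $\sim|t|$ in the intermediate bulk, feeding in only the \emph{conclusions} of Theorem \ref{thmmain} (namely (\ref{eqestim1})--(\ref{eqestim2+})) and the equation (\ref{eqlagrange}); your radius-$\varepsilon^{2/3}$ balls are just the unit balls of the stretched problem, so the scaling is the same, but your argument is more self-contained and transferable, since it never touches the internals of the construction. What it gives up is the finer, spatially resolved information the paper's route provides (e.g.\ the pointwise bounds $|(\eta_\varepsilon)_t|\le C(|t|+\varepsilon^{2/3})^{-1/2}$, $|(\eta_\varepsilon)_\theta|\le C(|t|+\varepsilon^{2/3})^{1/2}$ in (\ref{eqmaximal1}) and the monotonicity of Corollary \ref{cor1}), which are needed later for the H\"older bound and for Section \ref{secfepsilon}. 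One small point to tighten: in the outer collar $\{K\varepsilon^{2/3}\le t\le d\}$ the potential $a_\varepsilon$ is \emph{not} bounded away from zero (it vanishes linearly at $\Gamma_\varepsilon$), so "exponential smallness" alone is not quite the reason the estimate holds there; rather one uses $|a_\varepsilon|\le Ct$ together with (\ref{eqestim2+}) and $t\,e^{-ct\varepsilon^{-2/3}}\le C\varepsilon^{2/3}$ to get $|\Delta\eta_\varepsilon|\le C\varepsilon^{-1}$ and then the same radius-$\varepsilon^{2/3}$ balancing as in the layer; far from $\mathcal{D}_0$ the super-exponential decay of (\ref{eqestim2+}) indeed beats the polynomial growth of $W$ from (\ref{eqainfinity}). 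With that adjustment the argument is complete.
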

\begin{proof}
We will prove the corresponding assertion for the solution
$u_\varepsilon$ of the stretched problem (\ref{eqEqstretched}). From
the proof of Proposition \ref{thmexistence1}, recalling
(\ref{eqvestims}), (\ref{equouts1}), (\ref{equouttildainpro2}),
(\ref{eqsz}), and (\ref{eqnambla}), we find that
\begin{equation}\label{eqcorgrad1}
|\nabla u(y)|\leq C\varepsilon^\frac{1}{3}\ \ \textrm{if}\
y\in\{|x|\leq 3\delta \varepsilon^{-\frac{2}{3}},\ z\in
[0,\varepsilon^{-\frac{2}{3}}\ell_\varepsilon)\},
\end{equation}
if $\varepsilon$ is small. Furthermore, recalling
(\ref{equepsilon-g}), we get
\[
  u^2-a(\varepsilon^\frac{2}{3}y)  =
 2\varphi_*\sqrt{a(\varepsilon^\frac{2}{3}y)}+\varphi_*^2= \mathcal{O}(\varepsilon^2),
\]
uniformly in $\tilde{\mathcal{D}}_\varepsilon \backslash \{-2\delta
\varepsilon^{-\frac{2}{3}}\leq x\leq 0\}$, as $\varepsilon\to 0$.
Hence, by the equation (\ref{eqEqstretched}), we obtain that
\[
|\Delta u|\leq C\varepsilon^\frac{4}{3}\ \ \textrm{in}\
\tilde{\mathcal{D}}_\varepsilon \backslash \{-2\delta
\varepsilon^{-\frac{2}{3}}\leq x\leq 0\}.
\]
Consequently, by  the interpolation-type inequality of Lemma A.1 in
\cite{bethuelCVPDE} (see Lemma \ref{lembrezis} above), we infer that
\begin{equation}\label{eqcorgrad2}
|\nabla u|\leq C\varepsilon^\frac{2}{3}\ \ \textrm{in}\
\tilde{\mathcal{D}}_\varepsilon \backslash \{-3\delta
\varepsilon^{-\frac{2}{3}}\leq x\leq 0\}.
\end{equation}
Similarly, recalling (\ref{eqainfinity}) and (\ref{eqestim3}), we
have
\begin{equation}\label{eqcorgrad3}
|\nabla u|\leq Ce^{-c\varepsilon^{-\frac{2}{3}}}\ \ \textrm{outside\
of}\ \tilde{\mathcal{D}}_\varepsilon \cup \{0\leq x\leq 3\delta
\varepsilon^{-\frac{2}{3}}\}.
\end{equation}
 The corresponding assertion of
the corollary for $u$ follows readily from (\ref{eqcorgrad1})--
(\ref{eqcorgrad3}).

The proof of the corollary is complete.
\end{proof}

In the following corollary, we will show that $\eta_\varepsilon$ has
the maximal H\"{o}lder regularity available (recall Definition
\ref{defmaximal} from Subsection \ref{secmotivation}).
\begin{cor}\label{cormax}
If $\varepsilon$ is sufficiently small, we have
\begin{equation}\label{eqmax}
\|\eta_\varepsilon\|_{C^{{1}/{2}}(\mathbb{R}^2)}\leq C.
\end{equation}
\end{cor}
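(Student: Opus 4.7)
The plan is to establish the $1/2$-Hölder bound by interpolating between the global gradient estimate of Corollary \ref{corgrad}, which scales like $\varepsilon^{-1/3}$, and a uniform control on $\eta_\varepsilon-\sqrt{A^+}$, balancing them at the natural layer width $\varepsilon^{2/3}$.

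First, I would record that $\sqrt{A^+}$ is itself uniformly $1/2$-Hölder on $\mathbb{R}^2$: since $W\in C^1$ and $A^+$ vanishes outside a large ball by (\ref{eqainfinity}), the function $A^+=(\lambda_0-W)^+$ is globally Lipschitz, so the elementary inequality $|\sqrt{a}-\sqrt{b}|\leq \sqrt{|a-b|}$ gives $|\sqrt{A^+(\textbf{y}_1)}-\sqrt{A^+(\textbf{y}_2)}|\leq C|\textbf{y}_1-\textbf{y}_2|^{1/2}$ for all $\textbf{y}_1,\textbf{y}_2\in\mathbb{R}^2$. Next I would upgrade the uniform convergence $\eta_\varepsilon\to\sqrt{A^+}$ to the quantitative estimate
\[
\|\eta_\varepsilon-\sqrt{A^+}\|_{L^\infty(\mathbb{R}^2)}\leq C\varepsilon^{1/3},
\]
by stitching together the pieces of Theorem \ref{thmmain}. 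In the inner tubular neighbourhood $|t|\leq C\varepsilon^{2/3}$ of $\partial\mathcal{D}_0$, estimates (\ref{eqestim1})--(\ref{eqestim1+}), the asymptotics of $V$ in Proposition \ref{prohastings}, and the Taylor expansion $A=\beta_0^3(-t)+O(t^2)$ show that both $\eta_\varepsilon$ and $\sqrt{A^+}$ are $O(\varepsilon^{1/3})$ there. For $-d\leq t\leq -D\varepsilon^{2/3}$, estimate (\ref{eqestim2}) gives $|\eta_\varepsilon-\sqrt{\lambda_\varepsilon-W}|=O(\varepsilon^2|t|^{-5/2})=O(\varepsilon^{1/3})$, while (\ref{eqlambdaepsilonnew}) combined with $|\sqrt{\lambda_\varepsilon-W}-\sqrt{A^+}|=O(|\lambda_\varepsilon-\lambda_0|/|t|^{1/2})$ absorbs the Lagrange-multiplier shift. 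Deeper in $\mathcal{D}_\varepsilon$, (\ref{eqestim3}) gives an even smaller error, and in $\mathbb{R}^2\setminus\mathcal{D}_\varepsilon$ the exponential bound (\ref{eqestim2+}) together with Remark \ref{remdist} forces $\eta_\varepsilon\leq C\varepsilon^{1/3}$ while $\sqrt{A^+}$ vanishes there up to an $O(|\ln\varepsilon|^{1/2}\varepsilon)$ shift of the boundary.

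With these two ingredients in hand, the proof is an interpolation. Given $\textbf{y}_1,\textbf{y}_2\in\mathbb{R}^2$, if $|\textbf{y}_1-\textbf{y}_2|\leq \varepsilon^{2/3}$ the mean value inequality and Corollary \ref{corgrad} give
\[
|\eta_\varepsilon(\textbf{y}_1)-\eta_\varepsilon(\textbf{y}_2)|\leq C\varepsilon^{-1/3}|\textbf{y}_1-\textbf{y}_2|\leq C\varepsilon^{-1/3}\varepsilon^{1/3}|\textbf{y}_1-\textbf{y}_2|^{1/2}=C|\textbf{y}_1-\textbf{y}_2|^{1/2},
\]
using $|\textbf{y}_1-\textbf{y}_2|^{1/2}\leq \varepsilon^{1/3}$. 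If instead $|\textbf{y}_1-\textbf{y}_2|\geq \varepsilon^{2/3}$, the triangle inequality with $\sqrt{A^+}$ together with the two preceding estimates yields
\[
|\eta_\varepsilon(\textbf{y}_1)-\eta_\varepsilon(\textbf{y}_2)|\leq 2\|\eta_\varepsilon-\sqrt{A^+}\|_{L^\infty(\mathbb{R}^2)}+|\sqrt{A^+(\textbf{y}_1)}-\sqrt{A^+(\textbf{y}_2)}|\leq C\varepsilon^{1/3}+C|\textbf{y}_1-\textbf{y}_2|^{1/2}\leq C'|\textbf{y}_1-\textbf{y}_2|^{1/2},
\]
since $\varepsilon^{1/3}\leq |\textbf{y}_1-\textbf{y}_2|^{1/2}$ in this regime. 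Combined with the a priori $L^\infty$ bound of Remark \ref{remarkupperbound}, this proves (\ref{eqmax}). The only potentially delicate point is the region-by-region bookkeeping in the $L^\infty$ bound of the second step; however, it is routine given the estimates already assembled in Theorem \ref{thmmain}, and no genuinely new input is needed.
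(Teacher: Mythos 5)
Your argument is correct, and it reaches (\ref{eqmax}) by a genuinely different route than the paper. The paper works directly in the Fermi coordinates $(t,\theta)$ near $\Gamma_\varepsilon$: it first records the two-sided bound $c(|t|+\varepsilon^{2/3})^{1/2}\leq\eta_\varepsilon\leq C(|t|+\varepsilon^{2/3})^{1/2}$ together with the anisotropic derivative bounds $|(\eta_\varepsilon)_t|\leq C(|t|+\varepsilon^{2/3})^{-1/2}$, $|(\eta_\varepsilon)_\theta|\leq C(|t|+\varepsilon^{2/3})^{1/2}$, and then estimates the H\"older quotient by passing through $\eta_\varepsilon^2$ (which is uniformly Lipschitz in the layer) and dividing by the lower bound on $\eta_\varepsilon$, while away from the layer it shows the family is uniformly Lipschitz via (\ref{eqcorgrad2})--(\ref{eqcorgrad3}). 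You instead interpolate: pairs of points closer than $\varepsilon^{2/3}$ are handled by the global gradient bound $\|\nabla\eta_\varepsilon\|_{L^\infty}\leq C\varepsilon^{-1/3}$ of Corollary \ref{corgrad}, and farther pairs by comparing with $\sqrt{A^+}$, which is uniformly $C^{1/2}$, through the sup bound $\|\eta_\varepsilon-\sqrt{A^+}\|_{L^\infty}\leq C\varepsilon^{1/3}$ — a bound the paper itself records only later (Remark \ref{remestimglobal}), but whose proof, as you show, uses only Theorem \ref{thmmain} and (\ref{eqlambdaepsilonnew}) and so involves no circularity with the corollary. Your route is softer and shorter, avoids the $\eta^2$ trick and the pointwise lower bound on $\eta_\varepsilon$, and is a reusable device (H\"older bound from gradient blow-up rate times sup-distance to a H\"older limit, split at the layer scale); the paper's route is more computational but yields, as a by-product, the finer pointwise information (\ref{eqmaximal1}) (in particular the matching lower bound $\eta_\varepsilon\geq c(|t|+\varepsilon^{2/3})^{1/2}$ and the sharp directional derivative bounds), which is of independent use elsewhere in the paper. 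Both arguments rely on the same underlying inputs from Theorem \ref{thmmain}, so neither is logically weaker; the choice is one of economy versus extra pointwise output.
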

\begin{proof}
From (\ref{eqestim1}), (\ref{eqestim1+}), (\ref{eqvasympto}),
(\ref{eqvestims}), (\ref{eqsz}), and (\ref{eqnambla}), abusing
notation, it follows that
\begin{equation}\label{eqmaximal1}
\left\{\begin{array}{c}
  c\left(|t|+\varepsilon^\frac{2}{3}\right)^\frac{1}{2}\leq\eta_\varepsilon\leq C\left(|t|+\varepsilon^\frac{2}{3}\right)^\frac{1}{2}, \\
    \\
    \left|(\eta_\varepsilon)_t \right|\leq C\left(|t|+\varepsilon^{\frac{2}{3}} \right)^{-\frac{1}{2}},\\
   \\
\left|(\eta_\varepsilon)_\theta \right|\leq
C\left(|t|+\varepsilon^{\frac{2}{3}} \right)^{\frac{1}{2}},
\end{array}
\right.
\end{equation}
in the region described by $\{|t|\leq 3\delta\}$, if $\varepsilon$
is small, having further decreased the value of $\delta$ if
necessary. Abusing notation once more, let
$\textbf{y}_i=(t_i,\theta_i)$, $i=1,2$, with $ |t_i|\leq 3\delta$
and $\theta_i\in [0,\ell_\varepsilon)$, be any two points in that
region. 
We write
\begin{equation}\label{eqnu12}
\eta_\varepsilon(\textbf{y}_1)-\eta_\varepsilon(\textbf{y}_2)=
\eta_\varepsilon(t_1,\theta_1)-\eta_\varepsilon(t_2,\theta_1)+
\eta_\varepsilon(t_2,\theta_1)-\eta_\varepsilon(t_2,\theta_2).
\end{equation}
 Now, instead of considering the difference
$\eta_\varepsilon(t_1,\theta_1)-\eta_\varepsilon(t_2,\theta_1)$, we
will first consider the difference
$\eta^2_\varepsilon(t_1,\theta_1)-\eta^2_\varepsilon(t_2,\theta_1)$.
We have
\[
\eta^2_\varepsilon(t_1,\theta_1)-\eta^2_\varepsilon(t_2,\theta_1)=(t_1-t_2)\int_{0}^{1}2\eta\eta_t\left(t_1+r(t_2-t_1),\theta_1\right)dr.
\]
So, thanks to (\ref{eqmaximal1}), we find that
\[
\left|\eta^2_\varepsilon(t_1,\theta_1)-\eta^2_\varepsilon(t_2,\theta_1)\right|\leq
C|t_1-t_2|.
\]
In turn, via the lower bound in (\ref{eqmaximal1}), the above
relation yields that
\begin{equation}\label{eqmaximal21}
\left|\eta_\varepsilon(t_1,\theta_1)-\eta_\varepsilon(t_2,\theta_1)\right|\leq
C\frac{|t_1-t_2|^\frac{1}{2}}{|t_1|^\frac{1}{2}+|t_2|^\frac{1}{2}}|t_1-t_2|^\frac{1}{2}\leq
C\left(|t_1-t_2|+|\theta_1-\theta_2|\right)^\frac{1}{2}.
\end{equation}
Similarly, we obtain that
\[
\left|\eta^2_\varepsilon(t_2,\theta_1)-\eta^2_\varepsilon(t_2,\theta_2)\right|\leq
C\left(|t_2|+\varepsilon^\frac{2}{3} \right)|\theta_1-\theta_2|,
\]
which, as before, implies that
\begin{equation}\label{eqmaximal22}
\left|\eta_\varepsilon(t_2,\theta_1)-\eta_\varepsilon(t_2,\theta_2)\right|\leq
C\left(|t_2|+\varepsilon^\frac{2}{3}
\right)^\frac{1}{2}|\theta_1-\theta_2|\leq
C\left(|t_1-t_2|+|\theta_1-\theta_2|\right)^\frac{1}{2}.
\end{equation}
Hence, by (\ref{eqnu12}), (\ref{eqmaximal21}), and
(\ref{eqmaximal22}) (also keeping in mind Remark
(\ref{remarkupperbound})), we deduce that
\[
\|\eta_\varepsilon\|_{C^{1/2}\left(|t|\leq3\delta\right)}\leq C,
\]
if $\varepsilon$ is small. That was the hard part. In the  remaining
regions of the plane, by virtue of (\ref{eqcorgrad2}) and
(\ref{eqcorgrad3}), we see that  $|\nabla\eta_\varepsilon|\leq C$
which implies that, in those regions, the family $\eta_\varepsilon$
is in fact uniformly Lipschitz continuous, as $\varepsilon\to 0$.

The proof of the corollary is complete.
\end{proof}
\begin{rem}\label{remCAffarelli}
Uniform H\"{o}lder $C^{0,\alpha},\ 0<\alpha<1$, bounds for
Gross-Pitaevskii systems, where the singular limit functions have
Lipschitz regularity ($\alpha=1$), have been proven in
\cite{terraciniCPAM} by blow-up techniques and the monotonicity
formulae of Almgren and Alt, Caffarelli, and Friedman (see also
\cite{caffarelli-lin}). \emph{The importance of our result lies in
the fact that the H\"{o}lder exponent $1/2$ in (\ref{eqmax}) equals
to the \emph{exact} maximal H\"{o}lder regularity of the singular
limit profile.} To the best of our knowledge, this property has been
proven, in singular perturbation problems, only in one-dimensional
problems, see \cite{berestycki-wei2012}.
\end{rem}

The following corollary answers a question posed to one of us by A.
Tertikas in relation with \cite{sourdis-fife}.
\begin{cor}\label{cortertikas}
Given $\alpha\in [0,\frac{1}{2})$, we have
\begin{equation}\label{eqtertikas1}
\eta_\varepsilon\to \sqrt{A^+}\ \ \textrm{in}\ \
C^\alpha(\mathbb{R}^2)\ \  \textrm{as}\ \ \varepsilon\to 0,
\end{equation}
but $\eta_\varepsilon$ does \emph{not} converge to $\sqrt{A^+}$ in
$C^\frac{1}{2}(\mathbb{R}^2)$ as $\varepsilon\to 0$.
\end{cor}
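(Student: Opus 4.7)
The strategy is to obtain (\ref{eqtertikas1}) by interpolating between $C^0$ and $C^{1/2}$, and to establish the non-convergence in $C^{1/2}$ by evaluating $\eta_\varepsilon-\sqrt{A^+}$ at a pair of points separated by the characteristic corner-layer scale $\varepsilon^{2/3}$.

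For (\ref{eqtertikas1}), set $f_\varepsilon:=\eta_\varepsilon-\sqrt{A^+}$. Because $A\in C^1(\mathbb{R}^2)$ and $\sqrt{A^+}$ is compactly supported, the elementary inequality $|\sqrt{a}-\sqrt{b}|\leq \sqrt{|a-b|}$ yields $\sqrt{A^+}\in C^{1/2}(\mathbb{R}^2)$; combined with Corollary~\ref{cormax}, this makes $\{f_\varepsilon\}$ uniformly bounded in $C^{1/2}(\mathbb{R}^2)$. A standard splitting of the difference quotient (treating pairs with $|\textbf{y}_1-\textbf{y}_2|$ smaller vs.\ larger than a threshold $\delta$, and optimizing over $\delta$) then gives
$$ [f_\varepsilon]_{C^\alpha(\mathbb{R}^2)}\;\leq\; C\,\|f_\varepsilon\|_{L^\infty(\mathbb{R}^2)}^{1-2\alpha}\,[f_\varepsilon]_{C^{1/2}(\mathbb{R}^2)}^{2\alpha},\qquad \alpha\in(0,1/2). $$
Since $\|f_\varepsilon\|_{L^\infty}\to 0$ by (\ref{eqn2toA+}), the claim (\ref{eqtertikas1}) follows (the case $\alpha=0$ being (\ref{eqn2toA+}) itself).

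For the non-convergence in $C^{1/2}$, pick any $\textbf{y}_0\in\partial\mathcal{D}_0$, let $\textbf{n}_0$ denote the outward unit normal at $\textbf{y}_0$, and set $\textbf{y}_1^\varepsilon:=\textbf{y}_0$, $\textbf{y}_2^\varepsilon:=\textbf{y}_0+\varepsilon^{2/3}\textbf{n}_0$. By (\ref{eqVnormal}) both points lie on or outside $\mathcal{D}_0$ for $\varepsilon$ small, so $\sqrt{A^+}$ vanishes at each, and
$$ f_\varepsilon(\textbf{y}_1^\varepsilon)-f_\varepsilon(\textbf{y}_2^\varepsilon)\;=\;\eta_\varepsilon(\textbf{y}_1^\varepsilon)-\eta_\varepsilon(\textbf{y}_2^\varepsilon). $$
Their $t$-coordinates satisfy $t_1=O(|\ln\varepsilon|^{1/2}\varepsilon)$ and $t_2=\varepsilon^{2/3}+O(|\ln\varepsilon|^{1/2}\varepsilon)$ by Remark~\ref{remdist}. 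Substituting into the inner expansion (\ref{eqestim1})--(\ref{eqestim1+}) and using $V_x<0$ (Proposition~\ref{prohastings}) together with the smooth $\varepsilon$-dependence of $\beta_\varepsilon$, one obtains
$$ \eta_\varepsilon(\textbf{y}_1^\varepsilon)-\eta_\varepsilon(\textbf{y}_2^\varepsilon)\;=\;\varepsilon^{1/3}\beta_0(\theta_0)\bigl[V(0)-V(\beta_0(\theta_0))\bigr]+o(\varepsilon^{1/3})\;\geq\; c_0\,\varepsilon^{1/3}, $$
with $c_0>0$ independent of small $\varepsilon$. Dividing by $|\textbf{y}_1^\varepsilon-\textbf{y}_2^\varepsilon|^{1/2}=\varepsilon^{1/3}$ gives $[f_\varepsilon]_{C^{1/2}(\mathbb{R}^2)}\geq c_0$ for all small $\varepsilon>0$, which rules out convergence in $C^{1/2}$.

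The main obstacle is purely bookkeeping: one must verify that the $O(|\ln\varepsilon|^{1/2}\varepsilon)$ shift between $\partial\mathcal{D}_\varepsilon$ and $\partial\mathcal{D}_0$ is negligible on the $\varepsilon^{2/3}$ scale, so that the leading inner-expansion term dominates the error. Alternatively, the same lower bound for $\eta_\varepsilon(\textbf{y}_1^\varepsilon)-\eta_\varepsilon(\textbf{y}_2^\varepsilon)$ can be obtained by integrating the monotonicity estimate (\ref{eqetat<0}) of Corollary~\ref{cor1} along the segment joining the two test points, which yields $\int_0^{\varepsilon^{2/3}}c(\tau+\varepsilon^{2/3})^{-1/2}\,d\tau\geq c'\varepsilon^{1/3}$.
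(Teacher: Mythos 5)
Your proof is correct, but it follows a genuinely different route from the paper's in both halves, so a comparison is worthwhile. For (\ref{eqtertikas1}), the paper invokes (\ref{eqn2toA+}), Corollary \ref{cormax} and the \emph{compactness} of the embedding $C^{1/2}(\bar{2\mathcal{D}_0})\hookrightarrow C^{\alpha}(\bar{2\mathcal{D}_0})$, and then handles the exterior region separately via (\ref{eqestim2+}) and (\ref{eqcorgrad3}); you instead use the elementary interpolation bound $[f]_{C^\alpha}\leq C\|f\|_{L^\infty}^{1-2\alpha}[f]_{C^{1/2}}^{2\alpha}$ applied globally to $f_\varepsilon=\eta_\varepsilon-\sqrt{A^+}$, which needs only the global uniform $C^{1/2}$ bounds (Corollary \ref{cormax} plus the observation $\sqrt{A^+}\in C^{1/2}(\mathbb{R}^2)$) and (\ref{eqn2toA+}); this is cleaner, avoids treating the exterior separately, and in fact yields a quantitative rate of the type anticipated in Remark \ref{remholderalpha} once one inserts the bound of Remark \ref{remestimglobal}. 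For the negative statement, the paper proves that the sequence $\{\eta_{\varepsilon_n}\}$ fails to be Cauchy in $C^{1/2}$, comparing two \emph{different} members of the family at two scale-dependent points; you instead bound $[\eta_\varepsilon-\sqrt{A^+}]_{C^{1/2}}$ from below by a fixed constant, using two points straddling the layer at distance $\varepsilon^{2/3}$ \emph{outside} $\mathcal{D}_0$ (where $\sqrt{A^+}\equiv 0$), the expansions (\ref{eqestim1})--(\ref{eqestim1+}) with $t_i/\varepsilon^{2/3}\to 0$ resp.\ $\to 1$, and the strict monotonicity of $V$; this is more direct and gives the slightly stronger quantitative conclusion $\liminf_{\varepsilon\to 0}\|\eta_\varepsilon-\sqrt{A^+}\|_{C^{1/2}(\mathbb{R}^2)}>0$. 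Two minor points you should make explicit if you write this up: only uniform continuity of $\beta_\varepsilon(\theta)$ (available since $W\in C^1$) is needed, not smoothness; and in your alternative argument via (\ref{eqetat<0}), the segment runs along the normal to $\partial\mathcal{D}_0$ rather than along the Fermi $t$-lines of $\Gamma_\varepsilon$, so one should control the tangential contribution, e.g.\ by the bound $|(\eta_\varepsilon)_\theta|\leq C(|t|+\varepsilon^{2/3})^{1/2}$ from (\ref{eqmaximal1}), before integrating.
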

\begin{proof}
Given $\alpha\in [0,\frac{1}{2})$, in view of (\ref{eqn2toA+}),
Corollary \ref{cormax}, and the compactness of the embedding
$C^{\frac{1}{2}}(\bar{2\mathcal{D}_0}) \hookrightarrow
C^\alpha(\bar{2\mathcal{D}_0})$ (see \cite{Gilbarg-Trudinger}),  we
find that $\eta_\varepsilon\to \sqrt{A^+}$ in
$C^\alpha(\bar{2\mathcal{D}_0})$ as $\varepsilon\to 0$. Now, the
desired relation (\ref{eqtertikas1}) follows via (\ref{eqestim2+})
and (\ref{eqcorgrad3}).

On the other hand, the following simple argument shows that we do
not have convergence in $C^\frac{1}{2}$. Let $\{\varepsilon_n\}$ be
a decreasing sequence such that $\varepsilon_n\to 0$, and, abusing
notation, consider the points
$\textbf{y}_n=(t_n,\theta_n)=(-\varepsilon_n^\frac{2}{3},0)$. If
$n<m$, thanks to (\ref{eqbita}), (\ref{eqestim1}), and
(\ref{eqnambla}), we get
\[
\|\eta_{\varepsilon_n}-\eta_{\varepsilon_m}\|_{C^\frac{1}{2}(\mathbb{R}^2)}\geq
\frac{\left|\eta_{\varepsilon_n}(\textbf{y}_n)-\eta_{\varepsilon_m}(\textbf{y}_m)\right|}{|\textbf{y}_n-\textbf{y}_m|^\frac{1}{2}}
\geq c
\left(\frac{\varepsilon_n^\frac{1}{3}-\varepsilon_m^\frac{1}{3}}{\varepsilon_n^\frac{1}{3}+\varepsilon_m^\frac{1}{3}}\right)^\frac{1}{2}-C\varepsilon_n^\frac{2}{3},
\]
where $c,C$ are independent of $n,m$. Now, choosing for example
$\varepsilon_n=(\frac{1}{n})^3$ and $m=2n$, we conclude that
$\{\eta_{\varepsilon_n}\}$ is not Cauchy in $C^\frac{1}{2}$.

The proof of the corollary is complete.
\end{proof}
\begin{rem}\label{remholderalpha}
We expect that there exists some constant $C>0$ such that, for small
$\varepsilon>0$, we have
\[
\|\eta_\varepsilon-\sqrt{A^+}\|_{C^{\alpha}(\mathbb{R}^2)}\leq C
\varepsilon^{\frac{2}{3}(\frac{1}{2}-\alpha)},\ \  0\leq\alpha\leq
\frac{1}{2},
\]
see also Remark \ref{remestimglobal} below.
\end{rem}

The following result is motivated from (\ref{eqJ2}):
\begin{cor}\label{cor2}
Given $0<\alpha\leq 1$, if $\varepsilon$ is sufficiently small, and
$A$ as in (\ref{eqA}), there exists a constant $C>0$ such that
\begin{equation}\label{eqJ2improved}
|\eta_\varepsilon-\sqrt{A^+}|\leq
C\left(\varepsilon^{2-\frac{5}{3}\alpha}+\varepsilon^{-\frac{1}{3}\alpha}|\lambda_\varepsilon-\lambda_0|\right)\leq
C(\varepsilon^{2-2\alpha}+\varepsilon^{\frac{4}{3}-\frac{2}{3}\alpha})\sqrt{A^+},
\end{equation}
at points in $\mathcal{D}_0$ whose distance from $\partial
\mathcal{D}_0$ is greater than $\varepsilon^{\frac{2}{3}\alpha}$.
\end{cor}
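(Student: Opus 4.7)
The plan is to derive both inequalities by combining the already-established estimates (\ref{eqestim2}), (\ref{eqestim3}), (\ref{eqlambdaepsilonnew}) with the non-degeneracy bound for $A$ near $\partial \mathcal{D}_0$ implied by (\ref{eqVnormal}). The strategy is transparent: control $\eta_\varepsilon-\sqrt{a_\varepsilon^+}$ from Theorem \ref{thmmain}, then convert $\sqrt{a_\varepsilon^+}$ into $\sqrt{A^+}$ via an elementary algebraic identity.

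First I would split the set of points under consideration into two regions, using Remark \ref{remdist} which gives $|t|=\mathrm{dist}(\mathbf{y},\partial\mathcal{D}_0)+\mathcal{O}(|\ln\varepsilon|\varepsilon^2)$. In the ``shallow'' zone $\{-d\le t\le -\tfrac{1}{2}\varepsilon^{2\alpha/3}\}$, estimate (\ref{eqestim2}) applies once $\varepsilon$ is small (since $\varepsilon^{2\alpha/3}\ge D\varepsilon^{2/3}$ for $\alpha\le1$), and gives
\[
|\eta_\varepsilon-\sqrt{a_\varepsilon}|\le C\varepsilon^{2}|t|^{-5/2}\le C\varepsilon^{2-5\alpha/3}.
\]
In the ``deep'' zone $\mathcal{D}_\varepsilon\setminus\{-d<t<0\}$, estimate (\ref{eqestim3}) gives the still stronger bound $|\eta_\varepsilon-\sqrt{a_\varepsilon}|\le C\varepsilon^2$.

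Next, for any $\mathbf{y}\in\mathcal{D}_0$ with $\mathrm{dist}(\mathbf{y},\partial\mathcal{D}_0)\ge \varepsilon^{2\alpha/3}$, the non-degeneracy (\ref{eqVnormal}) yields $A(\mathbf{y})=\lambda_0-W(\mathbf{y})\ge c\,\varepsilon^{2\alpha/3}$, whence $\sqrt{A^+}\ge c\,\varepsilon^{\alpha/3}$; in particular $a_\varepsilon(\mathbf{y})>0$ for small $\varepsilon$ by (\ref{eqlambdaepsilonnew}). Writing
\[
\sqrt{a_\varepsilon}-\sqrt{A^+}=\frac{\lambda_\varepsilon-\lambda_0}{\sqrt{a_\varepsilon}+\sqrt{A^+}}
=\mathcal{O}\!\left(\varepsilon^{-\alpha/3}|\lambda_\varepsilon-\lambda_0|\right),
\]
and adding this to the previous bound yields the first inequality
\[
|\eta_\varepsilon-\sqrt{A^+}|\le C\bigl(\varepsilon^{2-5\alpha/3}+\varepsilon^{-\alpha/3}|\lambda_\varepsilon-\lambda_0|\bigr).
\]

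Finally, to obtain the second inequality I would factor out $\sqrt{A^+}\ge c\,\varepsilon^{\alpha/3}$ from the right-hand side, producing the exponent $2-5\alpha/3-\alpha/3=2-2\alpha$ for the first term and $\varepsilon^{-2\alpha/3}|\lambda_\varepsilon-\lambda_0|$ for the second. Inserting the improved Lagrange-multiplier estimate (\ref{eqlambdaepsilonnew}) converts the second term into $C|\ln\varepsilon|\varepsilon^{2-2\alpha/3}$, which is majorized by $C\varepsilon^{4/3-2\alpha/3}$ once $\varepsilon$ is small (since $|\ln\varepsilon|\varepsilon^{2/3}\to 0$). This completes the proof. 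No step presents a real obstacle; the only point requiring care is to verify that the distance hypothesis $\mathrm{dist}(\mathbf{y},\partial\mathcal{D}_0)\ge \varepsilon^{2\alpha/3}$ places the point inside the region of validity of (\ref{eqestim2})--(\ref{eqestim3}), which is immediate from Remark \ref{remdist} once $\varepsilon$ is taken small enough depending on $\alpha\in(0,1]$.
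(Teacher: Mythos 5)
Your argument is correct and follows essentially the same route as the paper's proof: you bound $\eta_\varepsilon-\sqrt{a_\varepsilon}$ by (\ref{eqestim2})--(\ref{eqestim3}) in the region $|t|\gtrsim\varepsilon^{2\alpha/3}$, convert $\sqrt{a_\varepsilon}$ to $\sqrt{A^+}$ via the quotient $(\lambda_\varepsilon-\lambda_0)/(\sqrt{a_\varepsilon}+\sqrt{A^+})$ together with the lower bound $\sqrt{A^+}\geq c\,\varepsilon^{\alpha/3}$ from (\ref{eqVnormal}), and then invoke (\ref{eqlambdaepsilonnew}) and Remark \ref{remdist}, exactly as in the paper. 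The only cosmetic point is that for $\alpha=1$ you should simply take the constant $D$ in (\ref{eqestim2}) to be at most $1/2$ (which is allowed, since (\ref{eqestim2}) holds for any given $D>0$) rather than claiming $\varepsilon^{2\alpha/3}\geq D\varepsilon^{2/3}$ outright.
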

\begin{proof} From (\ref{eqD0}), (\ref{eqVnormal}), (\ref{eqA}),
(\ref{eqlambdaepsilon}),  and (\ref{eqestim2}), it follows that in
the region $
\mathcal{D}_0\backslash\{-\frac{1}{2}\varepsilon^{\frac{2}{3}\alpha}<t<0\}$
we have:
\begin{equation}\label{eqA2}
\sqrt{A(\textbf{y})}\geq c \varepsilon^\frac{\alpha}{3},
\end{equation}
\begin{equation}\label{eqA-a}
\sqrt{A(\textbf{y})}-\sqrt{\lambda_\varepsilon-W(\textbf{y})}=\mathcal{O}(|\lambda_\varepsilon-\lambda_0|\varepsilon
^{-\frac{\alpha}{3}}),
\end{equation}
and
\begin{equation}\label{eqA3}
\eta_\varepsilon(\textbf{y})-\sqrt{\lambda_\varepsilon-W(\textbf{y})}=\mathcal{O}(\varepsilon^{2-\frac{5}{3}\alpha}),
\end{equation}
uniformly, as $\varepsilon\to 0$. Now, the assertion of the
corollary follows readily by combining (\ref{eqlambdaepsilonnew}),
Remark \ref{remdist}, and the above three relations.

The proof is complete.
\end{proof}

\begin{rem}\label{remimprove13}
Note that estimate (\ref{eqJ2improved}), when $\alpha=\frac{1}{2}$,
considerably improves estimate (\ref{eqJ2}), which was originally
proven in \cite{alama} (for solutions of (\ref{eqlagrange}) with
$\lambda_\varepsilon=\lambda_0$) and in the sequel used in
\cite{aftalion-jerrard}, \cite{alamamontero}, and \cite{ignat}.
\end{rem}
\begin{rem}\label{remestimglobal}
By (\ref{eqestim1})--(\ref{eqestim1+}), and (\ref{eqJ2improved})
with $\alpha=1$, for small $\varepsilon>0$, we obtain that
\[
\|\eta_\varepsilon-\sqrt{A^+}\|_{L^\infty(\mathbb{R}^2)}\leq
C\varepsilon^\frac{1}{3}.
\]
\end{rem}

Under an additional but natural non-degeneracy condition, satisfied
by most potentials used in physical applications (recall the
discussion in Subsection \ref{secmotivation}), we can improve
estimate (\ref{eqestim1+}).
\begin{pro}\label{prorefine}
If we assume that $W\in C^2$ and
\begin{equation}\label{eqsecondW}
W_{tt}(0,\theta)\geq c>0,\ \ \theta\in [0,\ell),
\end{equation}
then
\begin{equation}\label{eqestim1+improved}
\eta_{\varepsilon}(\textbf{y})=\varepsilon^\frac{1}{3}\beta_\varepsilon(\theta)V\left(\beta_\varepsilon(\theta)\frac{t}{\varepsilon^\frac{2}{3}}
\right)\left[1+\mathcal{O}(\varepsilon^\frac{2}{3})\left(\frac{t}{\varepsilon^\frac{2}{3}}\right)^\frac{5}{2}
\right],
\end{equation}
uniformly in $\left\{0\leq t \leq d,\ \ \theta\in
[0,\ell_\varepsilon) \right\}$, as $\varepsilon\to 0$, where $d>0$
is some small constant.
\end{pro}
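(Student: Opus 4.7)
The plan is to sharpen the decay of $\varphi_\ast = u_\varepsilon - u_{ap}$ in the outer region $\{s\geq 0\}$ beyond the crude bound (\ref{equepsilondecay}) by constructing a supersolution tailored to the Painlev\'e-type decay of $V$. Hypothesis (\ref{eqsecondW}) enters to supply a \emph{negative} quadratic correction to $-a_\varepsilon$ that reinforces the confining character of the linearized potential of $\mathcal{L}$.

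Working in the stretched coordinates $(s,z)$ of Section \ref{secproofmain} on $\Sigma := \{0\leq \beta s \leq 2\delta\varepsilon^{-2/3},\ z\in[0,\varepsilon^{-2/3}\ell_\varepsilon)\}$, where $u_{ap}=u_{in}=\varepsilon^{1/3}\beta V(\beta s)$, the Taylor expansion under (\ref{eqsecondW}) reads
\[
-a_\varepsilon(\varepsilon^{2/3}s,\varepsilon^{2/3}z)=\varepsilon^{2/3}\beta^3 s + \tfrac{1}{2}W_{tt}(0,\varepsilon^{2/3}z)\,\varepsilon^{4/3}s^2 + O(\varepsilon^2 s^3),
\]
with both leading coefficients strictly positive. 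The forcing, controlled by (\ref{equinremexplicit}) together with (\ref{eqvestims}), satisfies $|R(u_{in})|\leq C\varepsilon\bigl[(\beta^2 s^2+\varepsilon^{2/3})V + |V_x| + \beta s\,|V_{xx}|\bigr]$, which is $O(\varepsilon)$ for $s$ bounded and $\sim \varepsilon V\beta^2 s^2$ as $s\to\infty$. Take as trial function $\Phi(s,z):=V(\beta s)(s^{5/2}+1)$. Using the Painlev\'e identity $V_{xx}=V(V^2+x)$ at $x=\beta s$, a direct computation gives
\[
\Phi_{ss} - \beta^2(3V^2+\beta s)\Phi = -2\beta^2 V^3(s^{5/2}+1) + 5\beta V_x\,s^{3/2} + \tfrac{15}{4}V s^{1/2}.
\]
The remaining contributions to $\mathcal{L}(\Phi)$---from $\Phi_{zz}$, from $B_1(\Phi)$ in (\ref{eqB0}), and from the $\tfrac{1}{2}W_{tt}\varepsilon^{2/3}s^2\Phi$ correction to the potential---each either carry an extra factor of $\varepsilon^{2/3}$ or possess the favorable sign (the latter being precisely the role of (\ref{eqsecondW})). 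Invoking the Airy-type asymptotic $V_x(\beta s)\sim -\sqrt{\beta s}\,V(\beta s)$ as $s\to\infty$ (from (\ref{eqvestims}) combined with (\ref{eqairyasymptotic})), the term $5\beta V_x s^{3/2}\sim -5\beta^{3/2}V s^2$ is negative and dominates both $\tfrac{15}{4}V s^{1/2}$ and the forcing bound $C\varepsilon V\beta^2 s^2$; for $s\lesssim 1$, the $-2\beta^2 V^3$ piece arising from the ``$+1$'' summand is strictly negative and of order one, dominating the $O(\varepsilon)$ forcing once one scales by a sufficiently large constant.

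Setting $\bar\varphi := K\varepsilon V(\beta s)(s^{5/2}+1)$ for $K$ large and independent of $\varepsilon$, one verifies $\mathcal{L}(\bar\varphi)+|R(u_{in})|+|N(\varphi_\ast)|\leq 0$ in $\Sigma$, absorbing the nonlinear remainder $N(\varphi_\ast)=3\varepsilon^{-2/3}u_{in}\varphi_\ast^2+\varepsilon^{-2/3}\varphi_\ast^3 = O(\varepsilon^{5/3})$ via the preliminary bound (\ref{equepsilon-X}). On the parabolic boundary of $\Sigma$: at $s=0$, $\bar\varphi(0,z)=K\varepsilon V(0)$ majorizes $|\varphi_\ast(0,z)|\leq C\varepsilon$ once $K$ is chosen large; at $s=2\delta\varepsilon^{-2/3}/\beta$, estimate (\ref{eqcorexpdec1}) renders $|\varphi_\ast|$ negligible. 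The comparison principle---available thanks to the lower bound (\ref{equappotential}) on the potential of $\mathcal{L}$---then yields $|\varphi_\ast|\leq \bar\varphi$ in $\Sigma$. Reverting to $t=\varepsilon^{2/3}s$ and dividing by $u_{in}$ produces (\ref{eqestim1+improved}), the constant ``$+1$'' in the bracket being absorbed into the $\mathcal{O}(\varepsilon^{2/3})$ prefactor of $s^{5/2}$ (since both terms are of the same order $\varepsilon^{2/3}$ in the range $s\lesssim 1$).

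The main obstacle will be the verification of the supersolution inequality uniformly across the transition regime $s\sim 1$, where neither the Airy asymptotic $V_x\sim -\sqrt{x}V$ nor the near-origin behavior is dominant, so that one must carefully balance the two mechanisms encoded in the summands of $(s^{5/2}+1)$. A secondary subtlety is controlling the stray positive contributions from $\Phi_{zz}$ and $B_1(\Phi)$, which bring in the curvature $k_\varepsilon$ and the derivative $\beta_\varepsilon'$ and are \emph{not} automatically negligible at the outer end $s\sim \varepsilon^{-2/3}$; it is precisely here that the $-\tfrac{1}{2}W_{tt}\varepsilon^{2/3}V s^{9/2}$ contribution produced by (\ref{eqsecondW}) becomes indispensable, since it absorbs these leftover terms without further smallness.
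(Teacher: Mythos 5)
Your core mechanism is the right one and matches the paper's: a comparison function of size $\varepsilon\, x^{5/2}V(x)$ (the paper uses $\varepsilon\Psi$ with $-\Psi''+x\Psi=x^2\textrm{Ai}$, $\Psi\sim\frac15 x^{5/2}\textrm{Ai}$, which is your $V(\beta s)s^{5/2}$ in disguise), with the extra potential term $c\varepsilon^{2/3}x^2$ produced by (\ref{eqsecondW}) absorbing the curvature/metric errors coming from $\Phi_{zz}$ and $B_1$. The genuine gap is the closure of the comparison argument in the far field. Your barrier $K\varepsilon V(\beta s)(s^{5/2}+1)$ decays super-exponentially, like $e^{-\frac23(\beta s)^{3/2}}$, whereas the only a priori information on $\varphi_*$ at the outer edge $x=2\delta\varepsilon^{-2/3}$ — namely (\ref{equepsilondecay}) or (\ref{eqcorexpdec1}) — is merely exponential in $s$, of order $\varepsilon e^{-c\varepsilon^{-2/3}}$. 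Since $e^{-c\varepsilon^{-2/3}}\gg e^{-C\varepsilon^{-1}}$, the required boundary inequality $|\varphi_*|\le\bar\varphi$ on $\{x=2\delta\varepsilon^{-2/3}\}$ cannot be verified: "(\ref{eqcorexpdec1}) renders $|\varphi_*|$ negligible" is false relative to the size of your barrier there, and moving the outer boundary anywhere with $x\to\infty$ does not help. The same mismatch ruins your pointwise absorption of the nonlinearity: the cubic piece $\varepsilon^{-2/3}\varphi_*^3$ carries no factor of $V$ and is only exponentially small, so it cannot be dominated pointwise by $-\mathcal{L}(\bar\varphi)\lesssim K\varepsilon\,\mathrm{poly}(s)V(\beta s)$ for large $s$.

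The paper repairs exactly these two points. First, it adds to the barrier the term $\textrm{Bi}(x-2\delta\varepsilon^{-2/3})$, which is of order one at the outer boundary (so that $M\varepsilon$ times it dominates the exponentially small $\varphi_*$ there) and is itself an admissible supersolution piece by (\ref{eqL2}); the price is paid only at the very end, where the conclusion is restricted to $x\le\delta\varepsilon^{-2/3}$ (i.e.\ $0\le t\le d$ with $d$ smaller than $2\delta$), on which range $\textrm{Bi}(x-2\delta\varepsilon^{-2/3})\le 2\textrm{Ai}(x)\lesssim V(x)$, so the extra term does not degrade the $\varepsilon\,x^{5/2}V$ bound. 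Second, instead of dominating $N(\varphi_*)$ pointwise, the paper runs the maximum principle on the difference $\Phi-\varphi_*$, using the Lipschitz-type bound $|N(\Phi)-N(\varphi_*)|\le C\varepsilon^{2/3}|\Phi-\varphi_*|$ together with (\ref{equappotential}), so the nonlinearity never has to be beaten by the (super-exponentially small) barrier output in the far region. Unless you introduce an analogous outer-boundary component in your supersolution and reformulate the nonlinear comparison in this way, your argument does not close; the intermediate-regime balancing at $s\sim 1$ that you flag is, by contrast, a manageable technicality (the paper avoids it altogether by starting the comparison at a large fixed $x_M$ and invoking (\ref{eqestim1+}) on $0\le x\le x_M$).
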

\begin{proof}
Once again, we will work with the equivalent problem in stretched
variables. Our aim is to estimate $\varphi_*=u_\varepsilon-u_{ap}$
using equation (\ref{eqLphi=E+N}), as we did for estimate
(\ref{equepsilondecay}).
By virtue of (\ref{eqpainleve}), (\ref{eqVsimAiry}),
(\ref{eqairyasymptotic}), (\ref{equinremexplicit}), and
(\ref{equap}), for small $\varepsilon$, we get that the remainder in
(\ref{eqE}) satisfies
\begin{equation}\label{eqEbound}
|E|\leq C\varepsilon x^2\textrm{Ai}\ \ \textrm{if}\ x\in
[1,2\delta\varepsilon^{-\frac{2}{3}}).
\end{equation}
In view of (\ref{eqaepsilon}), (\ref{eq3uap1}), and
(\ref{eqsecondW}), it is easy to see that, decreasing $\delta$ if
necessary, we have
\begin{equation}\label{eqpotentialNEW}
\varepsilon^{-\frac{2}{3}}\left(3u_{ap}^2-a(\varepsilon^\frac{2}{3}y)\right)\geq
\beta^2 x+c\varepsilon^\frac{2}{3}x^2\ \ \textrm{if}\  x\in
[1,2\delta\varepsilon^{-\frac{2}{3}}),
\end{equation}
provided $\varepsilon$ is sufficiently small. We point out that in
the above relation the constant $c$ is independent of small
$\delta,\varepsilon$. Let $\Psi>0$ be determined from
\begin{equation}\label{eqpsiupperWtt}
-\Psi''+x\Psi=x^2\textrm{Ai};\ \ \Psi(1)=1,\ \Psi(\infty)=0.
\end{equation}
In order to proceed, we need some estimates for $\Psi$. A short
calculation shows that
\[
\Psi=(\textrm{Ai})h\ \ \textrm{with}\ \
h'=\frac{\int_{x}^{\infty}t^2(\textrm{Ai})^2dt}{(\textrm{Ai})^2}.
\]
Note that, from (\ref{eqairy}), (\ref{eqairyasymptotic}), we have
\begin{equation}\label{eqairy'}
\textrm{Ai}'\sim -x^\frac{1}{2}\textrm{Ai}\ \  \textrm{as}\ x\to
\infty.
\end{equation}
By the way, a neat way to show the above relation is to use
L'hospital's rule to find that
\[
\lim_{x\to \infty}\frac{x^{-1}(\textrm{Ai}')^2}{(\textrm{Ai})^2}=1.
\]
 Now, refereing  to L'hospital's rule once more, we get \[
  h'\sim
\frac{1}{2}x^\frac{3}{2}\ \ \textrm{and}\ \
h\sim\frac{1}{5}x^\frac{5}{2}\ \ \textrm{as} \ x\to \infty.
\]
 Hence, we find that
\begin{equation}\label{eqh}
\Psi\sim \frac{1}{5}x^\frac{5}{2}\textrm{Ai}\ \ \textrm{and}\ \
\Psi'\sim -\frac{1}{5}x^3\textrm{Ai}\ \ \textrm{as}\ \ x\to \infty.
\end{equation}
Keeping in mind that $x=\beta(\varepsilon^\frac{2}{3}z) s$, via
formulas (\ref{eqLaplace})--(\ref{eqB1}), we get that
\begin{equation}\label{eqLaplacePsi}
\Delta_y \Psi=\beta^2
\Psi''+\mathcal{O}(\varepsilon^\frac{4}{3})x\Psi'+\mathcal{O}(\varepsilon^\frac{4}{3})x^2\Psi''+\mathcal{O}(\varepsilon^\frac{2}{3})\Psi',
\end{equation}
uniformly in $\left\{1\leq x \leq
2\delta\varepsilon^{-\frac{2}{3}},\ \ z\in
[0,\ell_\varepsilon\varepsilon^{-\frac{2}{3}}) \right\}$, as
$\varepsilon\to 0$ (note that here $\mathcal{O}(\cdot)$ is bounded
uniformly in small $\delta$). Using (\ref{eqVnormal}),
(\ref{eqLoper}), (\ref{eqpotentialNEW})--
(\ref{eqLaplacePsi}), and further decreasing $\delta$, we readily
find that
\begin{equation}\label{eqL1}
\begin{array}{lll}
  -\mathcal{L}_\varepsilon(\Psi)
& \geq & \beta^2x^2\textrm{Ai}+c\varepsilon^\frac{2}{3}x^2\Psi
+\mathcal{O}(\varepsilon^\frac{4}{3})x^3\Psi+\mathcal{O}(\varepsilon^\frac{4}{3})x^4\textrm{Ai}+\mathcal{O}(\varepsilon^\frac{2}{3})x^3\textrm{Ai} \\
    &   &   \\
    & \geq & \frac{\beta^2}{2}x^2\textrm{Ai}+c\varepsilon^\frac{2}{3}x^2\Psi
\end{array}
\end{equation}
if $x\in [1,2\delta\varepsilon^{-\frac{2}{3}})$, provided
$\varepsilon$ is sufficiently small ($c,\ \mathcal{O}(\cdot)$
independent of small $\delta$). It is nice to note that, in the
above calculation, the cubic power  in the second asymptotic
relation of (\ref{eqh}) was ``the most appropriate'' one in order to
absorb the last term of (\ref{eqLaplacePsi}) into the term
$\beta^2x^2\textrm{Ai}$, by decreasing $\delta$. Similarly, keeping
in mind (\ref{eqairy})--(\ref{eqairyasymptotic}), we find that the
function
\begin{equation}\label{eqBfunction}
B(y)\equiv\textrm{Bi}(x-2\delta\varepsilon^{-\frac{2}{3}}),\ \ \
y\in \{x\in [1,2\delta\varepsilon^{-\frac{2}{3}}),\ z\in
[0,\varepsilon^{-\frac{2}{3}}\ell_\varepsilon)\},
\end{equation}
satisfies
\begin{equation}\label{eqL2}
\begin{array}{lll}
  -\mathcal{L}_\varepsilon(B)&\geq& -\beta^2B''+\beta^2xB+c\varepsilon^\frac{2}{3}x^2B+\mathcal{O}(\varepsilon^\frac{4}{3})x^2(x-2\delta\varepsilon^{-\frac{2}{3}})B
  +\mathcal{O}(\varepsilon^\frac{2}{3})B'
   \\
  &&\\
  & \geq & 2\delta\beta^2\varepsilon^{-\frac{2}{3}}B+c\varepsilon^\frac{2}{3}x^2B+\delta^3\mathcal{O}(\varepsilon^{-\frac{2}{3}})B
  +\mathcal{O}(\varepsilon^\frac{2}{3})\left(|x|^\frac{1}{2}+\delta^\frac{1}{2}\varepsilon^{-\frac{1}{3}} \right)B
   \\
   &  &  \\
    & \geq &
    \delta\beta^2\varepsilon^{-\frac{2}{3}}B+c\varepsilon^\frac{2}{3}x^2B,
\end{array}
\end{equation}
if $\varepsilon$ is sufficiently small, having further decreased
$\delta$ if necessary. From now on we will fix $\delta$. In view of
(\ref{eqN}), (\ref{eqEbound}),
 (\ref{eqL1}), and (\ref{eqL2}), given $M>1$, the
function
\[
\Phi(\textbf{y})\equiv M\varepsilon\left\{
\Psi(x)+\textrm{Bi}(x-2\delta\varepsilon^{-\frac{2}{3}})\right\}
\]
  satisfies
  \[\begin{array}{lll}
      -\mathcal{L}_\varepsilon(\Phi)+N(\Phi)+E & \geq & M\frac{\beta^2}{2}\varepsilon x^2\textrm{Ai}+cM\varepsilon^\frac{5}{3}x^2\Psi
      +M\delta\beta^2 \varepsilon^\frac{1}{3}B+cM\varepsilon^\frac{5}{3}x^2B \\
        &   & -C\varepsilon x^2\textrm{Ai}
        -CM^2\varepsilon^\frac{5}{3}x^5V(\textrm{Ai})^2-CM^2\varepsilon^\frac{5}{3}x^\frac{5}{2}V\textrm{Ai}B\\
        &   &  -CM^2\varepsilon^\frac{5}{3}VB^2
        -CM^3\varepsilon^\frac{7}{3}x^\frac{5}{2}\textrm{Ai}-CM^3\varepsilon^\frac{7}{3}B
\\
        &   &   \\
        &  \geq & M\frac{\beta^2}{4}\varepsilon x^2\textrm{Ai}+M\delta \beta^2\varepsilon^\frac{1}{3}B
        +c_1M\varepsilon^\frac{5}{3}x^\frac{9}{2}\textrm{Ai}-C_2M^2\varepsilon^\frac{5}{3}x^5V(\textrm{Ai})^2,
    \end{array}
  \]
for some constants $c_1,C_2>0$ (independent of $\varepsilon$),  if
$\varepsilon<\varepsilon(M)$ is sufficiently small. Note that the
seventh, eighth, and tenth term in the first inequality's righthand
side were absorbed into the corresponding third term, whereas the
ninth into the first. What we want to do next is to somehow ``get
rid'' of the last term in the above relation, and end up with a
positive righthand side. We will achieve this by absorbing that term
into the one that proceeds it. By virtue of (\ref{eqVsimAiry}),
(\ref{eqairyasymptotic}), and (\ref{eqairy'}), if $M$ is
sufficiently large, there exists $x_M>0$ such that
\begin{equation}\label{eqxm}
C_2 M x^\frac{1}{2}V\textrm{Ai}<c_1 \ \ \textrm{if}\ x>x_M.
\end{equation}
(Note that $x_M\to \infty$ as $M\to \infty$). Hence, it follows
that
\begin{equation}\label{eqcomparisonEq}
 -\mathcal{L}_\varepsilon(\Phi)+N(\Phi)+E\geq M\delta \beta^2\varepsilon^\frac{1}{3}B+M\frac{\beta^2}{4}\varepsilon
 x^2\textrm{Ai}>0
\end{equation}
in the strip-like domain described by
\[\mathcal{S}_\varepsilon=\left\{x_M\leq x\leq
2\delta\varepsilon^{-\frac{2}{3}},\ \ z\in
[0,\ell_\varepsilon\varepsilon^{-\frac{2}{3}}) \right\}.\]
 Now, in view of
(\ref{equepsilondecay}), (\ref{eqh}), and (\ref{eqxm}), we can fix a
large
  $M>0$ such that
\begin{equation}\label{eqcomparisonbdry}
\varphi_*<\Phi\ \ \textrm{on}\ \ \{x=x_M\};\ \ \varphi_*<\Phi\ \
\textrm{on}\ \ \{x=2\delta\varepsilon^{-\frac{2}{3}}\},
\end{equation}
if $\varepsilon$ is small. By (\ref{eqcomparisonEq}),
(\ref{eqcomparisonbdry}), and a standard maximum principle argument,
making use of (\ref{equappotential}) and the property that
\[
\left|N(\Phi)-N(\varphi_*) \right|\leq
C\varepsilon^\frac{2}{3}|\Phi-\varphi_*|\ \ \textrm{in}\
\mathcal{S}_\varepsilon,
\]
 we deduce
that $\varphi_*\leq \Phi$ in $\mathcal{S}_\varepsilon$, if
$\varepsilon$ is small.
 Similarly we can show that
$\varphi_*\leq -\Phi$ in $\mathcal{S}_\varepsilon$, if $\varepsilon$
is small. The desired assertion of the proposition (for the
equivalent stretched problem) now follows via (\ref{eqestim1+}),
(\ref{eqVsimAiry}), (\ref{eqh}), and noting that
\[
\textrm{Bi}(x-2\delta\varepsilon^{-\frac{2}{3}})\leq \textrm{Bi}(-
\delta\varepsilon^{-\frac{2}{3}})\stackrel{(\ref{eqairyasymptotic})}{\leq}
2 \textrm{Ai}( \delta\varepsilon^{-\frac{2}{3}})\leq 2
\textrm{Ai}(x)
\]
if $x\in [x_M,\delta\varepsilon^{-\frac{2}{3}})$ with $\varepsilon$
sufficiently small.

The proof of the proposition is complete.
\end{proof}
\begin{rem}\label{remexpimproved}
Note that the first term in the righthand side of
(\ref{eqestim1+improved}) dominates for $0\leq t \ll
\varepsilon^\frac{2}{5}$.
\end{rem}

Assuming additionally that $W$ is radial and convex (outside of
$\mathcal{D}_\varepsilon$), we can derive an explicit global upper
bound on the minimizer.

\begin{pro}\label{proRadial}
Assume that the potential trap $W$ is radially symmetric with
\[W_{rr}(R_\varepsilon)>0, \ \ \textrm{and}\ \  W_{rr}(r)\geq 0 \ \ \textrm{if}\ \  r>
R_\varepsilon,\] where $R_\varepsilon$ denotes the radius of
$\mathcal{D}_\varepsilon$. Then, we have
\begin{equation}\label{eqcorupperboundrs}
\eta_\varepsilon(s)\leq \frac{\textrm{Ai}
\left(\beta_\varepsilon\frac{s-R_\varepsilon}{\varepsilon^\frac{2}{3}}\right)}
{\textrm{Ai}\left(\beta_\varepsilon\frac{r-R_\varepsilon}{\varepsilon^\frac{2}{3}}\right)}
\eta_\varepsilon(r)\ \ \ \forall\ s\geq r\geq R_\varepsilon.
\end{equation}
In particular, it holds that
\begin{equation}\label{eqcorupperbounds}
\eta_\varepsilon(s)\leq
\varepsilon^\frac{1}{3}\left(\beta_\varepsilon+o(1)
\right)\textrm{Ai}
\left(\beta_\varepsilon\frac{s-R_\varepsilon}{\varepsilon^\frac{2}{3}}\right)
\ \ \textrm{if}\ s-R_\varepsilon \gg \varepsilon^\frac{2}{3},
\end{equation}
as $\varepsilon\to 0$.
\end{pro}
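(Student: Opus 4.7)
\smallskip

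\noindent\textbf{Proof proposal.} My plan is to exploit the linear structure obtained by dropping the positive cubic term and replacing $W-\lambda_\varepsilon$ with its linear (tangent) bound provided by the convexity assumption, thereby comparing $\eta_\varepsilon$ on $[R_\varepsilon,\infty)$ with a scaled Airy function via a Sturm-type Wronskian argument. The first preliminary step is to establish the radial monotonicity $\eta_\varepsilon'(r)\le 0$ for $r\ge R_\varepsilon$ (we cannot invoke global moving planes here, since the assumptions permit $W'(r)$ to change sign for $r<R_\varepsilon$). Differentiating the radial ODE once gives
\[
\varepsilon^2\!\left(\phi_{rr}+\tfrac{1}{r}\phi_r-\tfrac{1}{r^2}\phi\right)=\phi\bigl(W-\lambda_\varepsilon+3\eta_\varepsilon^{2}\bigr)+\eta_\varepsilon W',\qquad \phi:=(\eta_\varepsilon)_r.
\]
On $[R_\varepsilon,\infty)$ one has $W-\lambda_\varepsilon\ge 0$, $W'(r)\ge W'(R_\varepsilon)=\beta_\varepsilon^{3}>0$, $\eta_\varepsilon>0$, and $\phi(R_\varepsilon)<0$ by Corollary~\ref{cor1}; also $\phi\to 0$ at infinity by the exponential decay of Theorem~\ref{thmmain}. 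A standard interior maximum principle argument at a hypothetical positive maximum $r^{*}>R_\varepsilon$ then produces a contradiction, yielding $\eta_\varepsilon'\le 0$ on $[R_\varepsilon,\infty)$.

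\smallskip

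With monotonicity in hand, the convexity of $W$ on $[R_\varepsilon,\infty)$ together with $W(R_\varepsilon)=\lambda_\varepsilon$ and $W'(R_\varepsilon)=\beta_\varepsilon^{3}$ yields $W(r)-\lambda_\varepsilon\ge\beta_\varepsilon^{3}(r-R_\varepsilon)$. Using $-\frac{1}{r}\eta_\varepsilon'\ge 0$ from Step 1 and discarding the nonnegative cubic term, the radial equation becomes the differential inequality
\[
\varepsilon^{2}(\eta_\varepsilon)_{rr}\ge \beta_\varepsilon^{3}(r-R_\varepsilon)\,\eta_\varepsilon\qquad\text{for }r\ge R_\varepsilon.
\]
Passing to the Painlevé scale $x=\beta_\varepsilon(r-R_\varepsilon)/\varepsilon^{2/3}$ and setting $\tilde\eta(x):=\eta_\varepsilon(r)$ transforms this into the clean Airy-type inequality $\tilde\eta_{xx}\ge x\tilde\eta$ for $x\ge 0$.

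\smallskip

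Now compare with $\mathrm{Ai}(x)$, which is strictly positive and solves $\mathrm{Ai}''=x\,\mathrm{Ai}$ on $[0,\infty)$. The Wronskian $\mathcal{W}(x):=\tilde\eta_x(x)\mathrm{Ai}(x)-\tilde\eta(x)\mathrm{Ai}'(x)$ satisfies $\mathcal{W}'(x)=(\tilde\eta_{xx}-x\tilde\eta)\mathrm{Ai}(x)\ge 0$, so $\mathcal{W}$ is non-decreasing. From the exponential decay of $\eta_\varepsilon$ and its radial derivative (Theorem~\ref{thmmain} and Corollary~\ref{corgrad}), together with $\mathrm{Ai}(x),\mathrm{Ai}'(x)\to 0$ at infinity as recalled in (\ref{eqairyasymptotic}), (\ref{eqairy'}), one gets $\mathcal{W}(\infty)=0$, hence $\mathcal{W}\le 0$ on $[0,\infty)$. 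This is precisely $(\tilde\eta/\mathrm{Ai})'\mathrm{Ai}^{2}\le 0$, so $x\mapsto \tilde\eta(x)/\mathrm{Ai}(x)$ is non-increasing, which gives (\ref{eqcorupperboundrs}).

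\smallskip

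Finally, to obtain the explicit bound (\ref{eqcorupperbounds}), I would choose a base point $r_\varepsilon:=R_\varepsilon+K_\varepsilon\varepsilon^{2/3}/\beta_\varepsilon$ with $K_\varepsilon\to\infty$ sufficiently slowly (e.g.\ $K_\varepsilon=|\ln\varepsilon|$) and apply estimate (\ref{eqestim1+}) of Theorem~\ref{thmmain} to get
\[
\eta_\varepsilon(r_\varepsilon)=\varepsilon^{1/3}\beta_\varepsilon V(K_\varepsilon)+O\bigl(\varepsilon\,e^{-cK_\varepsilon}\bigr).
\]
Combining with the identity $V(x)/\mathrm{Ai}(x)\to 1$ as $x\to\infty$ (which comes from the variation-of-constants representation (\ref{eqvariationofcons}) together with $\gamma=1$ as in (\ref{eqgamma1})), applying (\ref{eqcorupperboundrs}) with $r=r_\varepsilon$, and letting $K_\varepsilon\to\infty$ yields the desired pointwise estimate for all $s$ with $s-R_\varepsilon\gg\varepsilon^{2/3}$. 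The main obstacle I anticipate is the monotonicity step and the verification that $\mathcal{W}(\infty)=0$ under only the stated growth hypotheses on $W$; both are handled by combining the already-established Theorem~\ref{thmmain} with the interior maximum principle as indicated.
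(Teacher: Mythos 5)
Your argument is correct in substance but follows a genuinely different route from the paper, in both halves. For (\ref{eqcorupperboundrs}) the paper never proves monotonicity of $\eta_\varepsilon$ on all of $[R_\varepsilon,\infty)$: it keeps the first-order term $\tfrac1r\partial_r$ in the comparison operator $-\varepsilon^2(\partial_{rr}+\tfrac1r\partial_r)+\beta_\varepsilon^3(r-R_\varepsilon)$, observes that $\eta_\varepsilon$ is a positive lower-solution (drop the cubic term and use convexity, exactly as you do) while $\textrm{Ai}\bigl(\beta_\varepsilon(r-R_\varepsilon)/\varepsilon^{2/3}\bigr)$ is an upper-solution \emph{because} $\textrm{Ai}'<0$, and concludes by the maximum principle. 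You instead put the first-order term on the $\eta$ side, which forces you to prove $\eta_\varepsilon'\le 0$ on $[R_\varepsilon,\infty)$ first (your differentiated-equation/maximum-principle lemma is fine, and is even a small by-product of independent interest beyond Corollary \ref{cor1}, though note it implicitly uses the two-fold differentiability of $W$ beyond $R_\varepsilon$ that the hypothesis $W_{rr}\ge 0$ presupposes), and then run a Sturm/Wronskian comparison with the pure Airy equation; the Wronskian boundary term at infinity is handled correctly since $\textrm{Ai},\textrm{Ai}'\to 0$ and $\tilde\eta,\tilde\eta_x$ are bounded. The paper's version is shorter and needs no monotonicity; yours isolates the comparison as a clean ODE inequality. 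For (\ref{eqcorupperbounds}) the paper invokes Proposition \ref{prorefine} (this is where the hypothesis $W_{rr}(R_\varepsilon)>0$ enters for them), whereas you only use (\ref{eqestim1+}) at a moving base point, which is a nice economy.

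One concrete quantitative slip in that last step: with the base point $x=K_\varepsilon$, the additive error $O(\varepsilon e^{-cK_\varepsilon})$ of (\ref{eqestim1+}) must be negligible against the main term $\varepsilon^{1/3}\beta_\varepsilon V(K_\varepsilon)\sim \varepsilon^{1/3}\beta_\varepsilon\,\textrm{Ai}(K_\varepsilon)\sim \varepsilon^{1/3}K_\varepsilon^{-1/4}e^{-\frac23 K_\varepsilon^{3/2}}$, i.e.\ you need $\varepsilon^{2/3}e^{\frac23K_\varepsilon^{3/2}}\to 0$, hence $K_\varepsilon=o\bigl(|\ln\varepsilon|^{2/3}\bigr)$. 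Your suggested choice $K_\varepsilon=|\ln\varepsilon|$ violates this ($\varepsilon^{2/3}e^{\frac23|\ln\varepsilon|^{3/2}}\to\infty$), so the error would swamp the main term after division by $\textrm{Ai}(K_\varepsilon)$; taking e.g.\ $K_\varepsilon=|\ln\varepsilon|^{1/2}$ repairs it, consistent with your phrase ``sufficiently slowly''. Finally, the ratio bound from the base point only covers $s$ with $x_s\ge K_\varepsilon$; for the intermediate range $\varepsilon^{2/3}\ll s-R_\varepsilon\le K_\varepsilon\varepsilon^{2/3}/\beta_\varepsilon$ you should either apply (\ref{eqestim1+}) together with $V\sim\textrm{Ai}$ directly (the same smallness condition on $K_\varepsilon$ makes the error harmless there), or choose the base point adapted to $s$, e.g.\ $K_\varepsilon=\min\{x_s,|\ln\varepsilon|^{1/2}\}$. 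With these adjustments your proof is complete.
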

\begin{proof}
Since
\[
-a_\varepsilon(r)=W(r)-W(R_\varepsilon)\geq
W_r(R_\varepsilon)(r-R_\varepsilon),\ \ \ r\geq R_\varepsilon,
\]
we see that the minimizer $\eta_\varepsilon$ is a positive
lower-solution of the linear equation
\begin{equation}\label{eqairyremark}
-\varepsilon^2\left(\eta_{rr}+\frac{1}{r}\eta_r\right)
+\beta_\varepsilon^3(r-R_\varepsilon)\eta=0,
\end{equation}
if $r\geq R_\varepsilon$ (recall that
$\beta_\varepsilon=\left[W_r(R_\varepsilon) \right]^\frac{1}{3}>0$).
On the other side, making use  of (\ref{eqairy}) and the fact that
$(\textrm{Ai})'<0$, we readily find that
$\textrm{Ai}\left(\beta_\varepsilon\frac{r-R_\varepsilon}{\varepsilon^\frac{2}{3}}\right)$
is a positive upper-solution of (\ref{eqairyremark}) if $r\geq
R_\varepsilon$. Hence, by the maximum principle, we deduce that
relation (\ref{eqcorupperboundrs}) holds true. In turn, via
(\ref{eqestim1+}), (\ref{eqVsimAiry}), (\ref{eqgamma1}), and
Proposition \ref{prorefine}, relation (\ref{eqcorupperboundrs})
implies the validity of relation (\ref{eqcorupperbounds}).

The proof of the proposition is complete.
\end{proof}

\begin{rem}\label{remXinfu}
It seems plausible that the techniques of the very recent paper
\cite{chenXinfuJDE12} can be extended to derive a WKB
(Wentzel-Kramers-Brillouin) type estimate, in the region
$(R,\infty)$ (with the obvious notation), for the ground state
solution of (\ref{eqground}) with $q=2$ and $N=1$.
\end{rem}

\section{Refined estimates for the auxiliary functions $\xi_\varepsilon,\
f_\varepsilon$ in the case of radial symmetry}\label{secfepsilon} In
this section, restricting ourselves to radial potentials with
$\mathcal{D}_0$ a ball, building on our previous results for the
ground state $\eta_\varepsilon$, we will improve upon the estimates
obtained recently in \cite{aftalion-jerrard} for the auxiliary
function $f_\varepsilon$ in (\ref{eqfepsilon}). As we have already
discussed in Subsection \ref{secknown}, the latter estimates were
essential for the analysis of \cite{aftalion-jerrard} regarding the
functional $E_\varepsilon$, defined in (\ref{eqEfunctional}). We
believe that the improved estimates herein
may provide important intuition for the treatment of the general
case, which may ultimately lead to the resolution of the open
problem raised in \cite{aftalion-jerrard} (recall the discussion in
Subsection \ref{secknown}).

In the general case, for potentials as described in Subsection
\ref{secproblem}, we define $\xi_\varepsilon$ to be the solution of
\begin{equation}\label{eqxidiv}
\textrm{div}\left( \frac{1}{\eta_\varepsilon^2}\nabla \xi\right)=-2,
\ \textbf{y}\in\mathbb{R}^2,\ \ \xi_\varepsilon(\textbf{y})\to 0,\
|\textbf{y}|\to \infty,
\end{equation}
so that $\nabla^\bot \xi_\varepsilon=x^\bot \eta_\varepsilon^2$. An
integration by parts in (\ref{eqFepsilonfuncitonal}) yields
\begin{equation}\label{eqEepsintegrbparts}
F_\varepsilon(w)=\int_{\mathbb{R}^2}^{}\left\{
\frac{\eta_\varepsilon^2}{2}\left(|\nabla w|^2-\frac{4\Omega
\xi_\varepsilon}{\eta_\varepsilon^2}Jw\right)+\frac{\eta_\varepsilon^4}{4\varepsilon^2}\left(|w|^2-1
\right)^2\right\}d\textbf{y},
\end{equation}
where $Jw=\frac{1}{2}\nabla \times (iw,\nabla w)=(iw_{\textbf{y}_1},
w_{\textbf{y}_2})$ is the Jacobian.

We recall that the function $f_\varepsilon:=
\xi_\varepsilon/\eta_\varepsilon^2$, appearing in the functional
$F_\varepsilon$, is important since it is well known that vortices
in the interior of $\mathcal{D}_0$ first appear near where this
function attains a local maximum
\cite{alama,aftalionbook,ignat,ignat2}; its importance is also clear
from (\ref{eqEepsintegrbparts}), since it controls the relative
strength of the positive and negative contributions to
$F_\varepsilon$.

In the case where the potential $W$ is radially symmetric,  one can
solve problem (\ref{eqxidiv}) explicitly to find that the functions
$\xi_\varepsilon, \ f_\varepsilon$ are given by relation
(\ref{eqfepsilon}). In particular, if the domain $\mathcal{D}_0$ is
a ball, it has been shown in \cite{aftalion-jerrard} that, for small
$\varepsilon$, we have
\begin{equation}\label{eqaftalion}
f_\varepsilon(|\textbf{y}|)\leq \left\{
\begin{array}{ll}
  C\textrm{dist}(\textbf{y},\partial \mathcal{D}_0)+C\varepsilon^\frac{2}{3} & \textrm{if}\ \textbf{y}\in \mathcal{D}_0, \\
   &   \\
  C\varepsilon^\frac{2}{3} & \textrm{if\ not},\end{array}
\right. \ \ \textrm{and}\ \ \
\|f_\varepsilon-f_0\|_{L^\infty(\mathbb{R})}\leq
C\varepsilon^\frac{1}{3},
\end{equation}
where $f_0$ is the function in (\ref{eqf0}) below, which solves the
``limiting" problem corresponding to (\ref{eqxidiv}):
\begin{equation}\label{eqxi0limitequation}
\textrm{div}\left( \frac{1}{A}\nabla \xi\right)=-2 \ \textrm{in}\
\mathcal{D}_0,\ \ \xi_\varepsilon=0 \ \textrm{on}\
\partial\mathcal{D}_0,
\end{equation}
with $A$ as in (\ref{eqA}). Existence and properties of a positive
solution $\xi_0$ of (\ref{eqxi0limitequation}) have been established
in \cite{alamaPinning}.


The following proposition refines and improves relation
(\ref{eqaftalion}).
\begin{pro}\label{profeps}
If the potential $W$ is radially symmetric with
$\mathcal{D}_0=\{\textbf{y}\in \mathbb{R}^2\ :\ r=|\textbf{y}|<R
\}$, then the function $f_\varepsilon$, defined in
(\ref{eqfepsilon}), satisfies
\begin{equation}\label{eqfepsprostatement}
f_\varepsilon(r)=R_\varepsilon\beta_\varepsilon^{-1}\varepsilon^\frac{2}{3}V^{-2}\left(\beta_\varepsilon\frac{r-R_\varepsilon}{\varepsilon^\frac{2}{3}}
\right)\int_{\beta_\varepsilon\frac{r-R_\varepsilon}{\varepsilon^\frac{2}{3}}}^{\infty}V^2(\sigma)d\sigma+o(\varepsilon^\frac{2}{3}),
\end{equation}
uniformly in $[R-o(\varepsilon^\frac{1}{3}),\infty)$, as
$\varepsilon\to 0$, where $R_\varepsilon$ denotes  the radius of the
ball $\mathcal{D}_\varepsilon$, satisfying (\ref{eqRepsilon}) below,
and $\beta_\varepsilon=[W'(R_\varepsilon)]^\frac{1}{3}$ (recall
(\ref{eqbita})).

Moreover, if $\varepsilon$ is small, it holds that
\begin{equation}\label{eqfeps-f0state}
\|f_\varepsilon-f_0\|_{L^\infty(\mathbb{R})}\leq
C\varepsilon^\frac{1}{2},
\end{equation}
where
\begin{equation}\label{eqf0}
f_0(r)=\left\{\begin{array}{ll}
             \frac{1}{A(r)}\int_{r}^{R}sA(s)ds, & 0\leq r <R, \\
               &   \\
             0, & r\geq R.
           \end{array}
 \right.
\end{equation}
(In view of (\ref{eqD0}), and (\ref{eqVnormal}), an application of
L'hospital's rule shows that $f_0'(R^-)=-\frac{R}{2}$).
\end{pro}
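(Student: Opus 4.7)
The plan is to exploit the explicit formula $\xi_\varepsilon(r)=\int_r^\infty s\eta_\varepsilon^2(s)\,ds$ from (\ref{eqfepsilon}), valid in the radial case, together with the sharp pointwise estimates for $\eta_\varepsilon$ provided by Theorem \ref{thmmain}. I will split the integration into three pieces using the signed distance $t=s-R_\varepsilon$: the Painlev\'e boundary layer $\{|t|\le K\varepsilon^{2/3}\}$, the Thomas--Fermi interior $\{t\le -K\varepsilon^{2/3}\}$, and the exponentially-decaying exterior $\{t\ge K\varepsilon^{2/3}\}$, where $K=K_\varepsilon\to\infty$ will be chosen to diverge slowly (for instance with $K_\varepsilon=o(\varepsilon^{-1/3})$) so matching errors are negligible. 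In each region the appropriate estimate from Theorem \ref{thmmain} (respectively (\ref{eqestim1})--(\ref{eqestim1+}), (\ref{eqestim2}), and (\ref{eqestim2+})) is combined with the change of variables $\sigma=\beta_\varepsilon t/\varepsilon^{2/3}$.

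For (\ref{eqfepsprostatement}): since $r\ge R-o(\varepsilon^{1/3})$, we have $|r-R_\varepsilon|=o(\varepsilon^{1/3})$ and hence $x_r=\beta_\varepsilon(r-R_\varepsilon)/\varepsilon^{2/3}=o(\varepsilon^{-1/3})$. The exterior contribution is exponentially small. In the boundary layer, (\ref{eqestim1})--(\ref{eqestim1+}) give $s\eta_\varepsilon^2(s)\,ds=(R_\varepsilon+\varepsilon^{2/3}\sigma/\beta_\varepsilon)\varepsilon^{4/3}\beta_\varepsilon V^2(\sigma)\,d\sigma+\text{err}$, whose leading term is $R_\varepsilon\varepsilon^{4/3}\beta_\varepsilon V^2(\sigma)\,d\sigma$. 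In the interior, (\ref{eqestim2}) gives $\eta_\varepsilon^2=\lambda_\varepsilon-W(s)+O(\varepsilon^2|t|^{-2})$; after change of variables this integral matches $R_\varepsilon\varepsilon^{4/3}\beta_\varepsilon\int(-\sigma)\,d\sigma$, and the gap $R_\varepsilon\varepsilon^{4/3}\beta_\varepsilon\int(V^2(\sigma)-(-\sigma))\,d\sigma$ is controlled by the fine tail bound $V^2(\sigma)+\sigma=O(|\sigma|^{-2})$ from (\ref{eqvestims}), contributing only $O(\varepsilon^{4/3}K^{-1})$. Assembling the pieces yields $\xi_\varepsilon(r)=R_\varepsilon\varepsilon^{4/3}\beta_\varepsilon\int_{x_r}^\infty V^2(\sigma)\,d\sigma+o\bigl(\varepsilon^{4/3}(1+|x_r|)\bigr)$; dividing by $\eta_\varepsilon^2(r)=\varepsilon^{2/3}\beta_\varepsilon^2 V^2(x_r)(1+o(1))$ then produces (\ref{eqfepsprostatement}).

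For (\ref{eqfeps-f0state}), I will split by the distance of $r$ from $R$. In the near-boundary region $\{R-r\le C\varepsilon^{2/3}\}$, apply (\ref{eqfepsprostatement}); the asymptotic $V^{-2}(x)\int_x^\infty V^2\,d\sigma=|x|/2+O(|x|^{-1})$ as $x\to-\infty$ (again from (\ref{eqvestims})), combined with the Taylor expansion $f_0(r)=R(R-r)/2+O((R-r)^2)$ and $|R-R_\varepsilon|=O(|\ln\varepsilon|\varepsilon^2)$, yields $|f_\varepsilon-f_0|=O(\varepsilon^{2/3})$. In the interior $\{R-r\ge C\varepsilon^{2/3}\}$, estimates (\ref{eqestim2}) and (\ref{eqlambdaepsilonnew}) give $\eta_\varepsilon^2-A=O(\varepsilon^2(R-r)^{-2}+|\ln\varepsilon|\varepsilon^2)$, and a computation as in the previous paragraph shows $|\xi_\varepsilon(r)-\int_r^R sA\,ds|=O(\varepsilon^{4/3})$; dividing gives $|f_\varepsilon-f_0|\le O\bigl(\varepsilon^{4/3}/(R-r)+\varepsilon^2(R-r)^{-2}\bigr)$, which is uniformly $O(\varepsilon^{2/3})\le C\varepsilon^{1/2}$. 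The main obstacle will be uniform matching between the two regimes: the Painlev\'e error $O(\varepsilon+|t|^{3/2})$ in (\ref{eqestim1}) is only useful for $|t|\ll 1$, while the interior error in (\ref{eqestim2}) needs $|t|\gg\varepsilon^{2/3}$; the choice of a slowly diverging $K_\varepsilon$, combined with the crucial use of the refined asymptotic $V^2(\sigma)+\sigma=O(|\sigma|^{-2})$, is the device that lets the two regimes be seamlessly joined.
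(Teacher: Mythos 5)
Your computation of the numerator $\xi_\varepsilon(r)=\int_r^\infty s\eta_\varepsilon^2\,ds$ in the layer and in the Thomas--Fermi interior is sound (and your use of $V^2(\sigma)+\sigma=\mathcal{O}(\sigma^{-2})$ to match the two regimes is essentially the same cancellation the paper exploits), but there is a genuine gap in how you treat the region to the right of the layer, and it is precisely the region where the statement is delicate. First, the assertion ``since $r\ge R-o(\varepsilon^{1/3})$ we have $|r-R_\varepsilon|=o(\varepsilon^{1/3})$'' is false: the interval in the Proposition is $[R-o(\varepsilon^{1/3}),\infty)$, unbounded above, so $x_r=\beta_\varepsilon(r-R_\varepsilon)/\varepsilon^{2/3}$ can be arbitrarily large. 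Second, and more seriously, your scheme is ``estimate the numerator, then divide by $\eta_\varepsilon^2(r)=\varepsilon^{2/3}\beta_\varepsilon^2V^2(x_r)(1+o(1))$''; this relative form of the denominator is not available from Theorem \ref{thmmain} once $x_r\gtrsim|\ln\varepsilon|^{2/3}$, because (\ref{eqestim1+}) only gives the \emph{additive} error $\mathcal{O}(\varepsilon e^{-c\varepsilon^{-2/3}t})$, which dominates the super-exponentially small main term $\varepsilon^{1/3}\beta_\varepsilon V(x_r)\sim\varepsilon^{1/3}x_r^{-1/4}e^{-\frac23x_r^{3/2}}$ there, while (\ref{eqestim2+}) gives no lower bound on $\eta_\varepsilon(r)$ at all. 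Concretely, your ``exponentially small exterior contribution'' $\mathcal{O}(\varepsilon^{4/3}e^{-cK})$ for $\int_{R+K\varepsilon^{2/3}}^\infty s\eta_\varepsilon^2\,ds$, once divided by $\eta_\varepsilon^2(r)\approx\varepsilon^{2/3}e^{-\frac43x_r^{3/2}}$ with $x_r$ comparable to $K$, is of size $\varepsilon^{2/3}e^{\frac43K^{3/2}-cK}\gg\varepsilon^{2/3}$, so the error budget does not close near and beyond the right edge of the layer, and $f_\varepsilon(r)$ is left completely uncontrolled for $r\ge R+K\varepsilon^{2/3}$.

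The missing ingredient is an estimate on the \emph{ratio} $\eta_\varepsilon(s)/\eta_\varepsilon(r)$ rather than on $\eta_\varepsilon$ itself. The paper gets this from the equation (\ref{eqlagrange}) and the potential bound $\eta_\varepsilon^2+W-\lambda_\varepsilon\ge c(r-R)^2+cK\varepsilon^{2/3}$ for $r\ge R+K\varepsilon^{2/3}$: a barrier (Gronwall-type) argument gives $\eta_\varepsilon(s)\le\eta_\varepsilon(r)\exp\{-K^{1/3}\varepsilon^{-2/3}(s^2-r^2)\}$ for $s\ge r\ge R+K\varepsilon^{2/3}$, whence $f_\varepsilon(r)\le\frac14K^{-1/3}\varepsilon^{2/3}$ there; since the main term of (\ref{eqfepsprostatement}) is itself $\mathcal{O}(\varepsilon^{2/3}K^{-1/2})$ in that range, the asymptotics holds trivially. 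For $r$ in the layer or slightly inside, the paper then splits $f_\varepsilon(r)$ at $R+K\varepsilon^{2/3}$ and uses the monotonicity of $\eta_\varepsilon$ near $\partial\mathcal{D}_0$ (Corollary \ref{cor1}) to bound the tail term $\frac{\eta_\varepsilon^2(R+K\varepsilon^{2/3})}{\eta_\varepsilon^2(r)}f_\varepsilon(R+K\varepsilon^{2/3})\le CK^{-1/3}\varepsilon^{2/3}$, avoiding any division of an absolute tail bound by a possibly tiny $\eta_\varepsilon^2(r)$. You need both of these devices (or substitutes for them); with them your decomposition for the interior and layer goes through, and your argument for (\ref{eqfeps-f0state}) — which is otherwise reasonable and would even suggest a bound $\mathcal{O}(\varepsilon^{2/3})$ sharper than the stated $C\varepsilon^{1/2}$ — also relies on part one in the regime $r\ge R$, so the gap propagates to the second estimate as well.
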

\begin{proof}
First of all note that, thanks to (\ref{eqlambdaepsilonnew}), we
have
\begin{equation}\label{eqRepsilon}
R_\varepsilon=R+\mathcal{O}(\varepsilon^\frac{4}{3})\ \ \textrm{as}\
\varepsilon\to 0.
\end{equation}

 By virtue of (\ref{eqainfinity}), (\ref{eqD0}),
(\ref{eqVnormal}),
 and (\ref{eqlambdaepsilonnew}), there exists a constant $c>0$ such that, given $K\gg 1$,
 we have
\[
\eta_\varepsilon^2+W(r)-\lambda_\varepsilon\geq
W(r)-\lambda_\varepsilon \geq c (r-R)^2+cK\varepsilon^\frac{2}{3}, \
\ r\geq R+K\varepsilon^\frac{2}{3},
\]
provided that $\varepsilon$ is sufficiently small. (We note that $K$
is considered fixed in the corresponding relation in
\cite{aftalion-jerrard}). Then, by a standard barrier argument in
equation (\ref{eqlagrange}), we deduce that
\[
\eta_\varepsilon(s)\leq
\eta_\varepsilon(r)e^{-K^\frac{1}{3}\varepsilon^{-\frac{2}{3}}(s^2-r^2)},\
\ s\geq r\geq R+K\varepsilon^\frac{2}{3},
\]
if $K$ is sufficiently large and $\varepsilon$ sufficiently small.
As a result, we get
\begin{equation}\label{eqappend0}
f_\varepsilon(r)\leq \int_{r}^{\infty}
se^{-2K^\frac{1}{3}\varepsilon^{-\frac{2}{3}}(s^2-r^2)}ds=
\frac{1}{4}K^{-\frac{1}{3}}\varepsilon^\frac{2}{3},\ \ r\geq
R+K\varepsilon^\frac{2}{3},
\end{equation}
if $\varepsilon$ is small.

If $r\in [R-d', R+K\varepsilon^\frac{2}{3}]$, in view of Corollary
\ref{cor1} and (\ref{eqappend0}), for small $\varepsilon$, we have
\begin{equation}\label{eq1}
\begin{array}{lll}
  f_\varepsilon(r) & = & \frac{1}{\eta^2_\varepsilon(r)}\int_{r}^{R+K\varepsilon^\frac{2}{3}}s
\eta_\varepsilon^2(s)ds+\frac{\eta_\varepsilon^2(R+K\varepsilon^\frac{2}{3})}{\eta_\varepsilon^2(r)}f_\varepsilon(R+K\varepsilon^\frac{2}{3}) \\
    &   &   \\
    & = & \frac{1}{\eta^2_\varepsilon(r)}\int_{r}^{R+K\varepsilon^\frac{2}{3}}s
\eta_\varepsilon^2(s)ds+\mathcal{O}(K^{-\frac{1}{3}}\varepsilon^\frac{2}{3}),
\end{array}
\end{equation}
uniformly as $\varepsilon\to 0$. If $r\in [R,
R+K\varepsilon^\frac{2}{3}]$, from (\ref{eqestim1+}),
(\ref{eqvasympto}), and (\ref{eqRepsilon}), it follows readily that
\begin{equation}\label{eq1+}
\begin{array}{lll}
  \frac{1}{\eta^2_\varepsilon(r)}\int_{r}^{R+K\varepsilon^\frac{2}{3}}s
\eta_\varepsilon^2(s)ds & = & R_\varepsilon
\beta_\varepsilon^{-1}\varepsilon^\frac{2}{3}
V^{-2}\left(\beta_\varepsilon\frac{r-R_\varepsilon}{\varepsilon^\frac{2}{3}}
\right)\int_{\beta_\varepsilon\frac{r-R_\varepsilon}{\varepsilon^\frac{2}{3}}}^{\infty}V^2(\sigma)d\sigma \\
    &   &   \\
    &   & +\mathcal{O}\left(\varepsilon^\frac{2}{3}V^{-2}(K)\int_{K}^{\infty}V^2(\sigma)d\sigma\right)
+\mathcal{O}_K(\varepsilon^\frac{4}{3}),
\end{array}
\end{equation}
uniformly, as $\varepsilon\to 0$ (the constant $\mathcal{O}_K(1)$
may diverge as $K\to \infty$). The second term in the righthand side
of the above relation can be estimated as before, by noting that,
thanks to (\ref{eqpainleve}), we find that
\[ V(\sigma)\leq
V(K)e^{-K^\frac{1}{3}(\sigma-K)},\ \ \sigma\geq K\ \ \textrm{if}\ K
\ \textrm{is\ large}.
\]
Thus, by (\ref{eq1}), (\ref{eq1+}), we infer that relation
(\ref{eqfepsprostatement}) holds true in
$[R,R+K\varepsilon^\frac{2}{3}]$. In fact, by (\ref{eqappend0}) and
the above relation, we deduce that (\ref{eqfepsprostatement}) holds
true in $[R+K\varepsilon^\frac{2}{3}, \infty)$ as well. If $r\in
[R-d,R]$, similarly as before, but this time using (\ref{eqestim1})
instead of (\ref{eqestim1+}), we arrive at
\begin{equation}\label{eq1++}
\begin{array}{lll}
  f_\varepsilon(r) & = & R_\varepsilon
\beta_\varepsilon^{-1}\varepsilon^\frac{2}{3}
V^{-2}\left(\beta_\varepsilon\frac{r-R_\varepsilon}{\varepsilon^\frac{2}{3}}
\right)\int_{\beta_\varepsilon\frac{r-R_\varepsilon}{\varepsilon^\frac{2}{3}}}^{\infty}V^2(\sigma)d\sigma \\
    &   &   \\
    &   &
    +\mathcal{O}\left(|r-R_\varepsilon|^2+\varepsilon|r-R_\varepsilon|^\frac{1}{2}+\varepsilon^\frac{1}{3}|r-R_\varepsilon|^\frac{3}{2}\right)
    +o(\varepsilon^\frac{2}{3}),
\end{array}
\end{equation}
uniformly, as $\varepsilon\to 0$. The above relation implies at once
the validity of (\ref{eqfepsprostatement}) in
$[R-o(\varepsilon^\frac{1}{3}), R]$ as $\varepsilon\to 0$.
Consequently, we have established the validity of
(\ref{eqfepsprostatement}).

Next, we will show the validity of estimate (\ref{eqfeps-f0state}).
If $r\in [R-\varepsilon^{\frac{2\alpha}{3}},R]$, with
$\frac{1}{2}<\alpha\leq 1$, recalling (\ref{eqVnormal}), we have
$c(R-r)\leq A(r)\leq C(R-r)$. So, as in \cite{aftalion-jerrard}, we
obtain that
\begin{equation}\label{eqf0in}
f_0(r)\leq \frac{C}{R-r}\int_{r}^{R}s(R-s)ds\leq C(R-r)\leq
C\varepsilon^\frac{2\alpha}{3}.
\end{equation}
Furthermore, thanks to (\ref{eqvasympto}),
(\ref{eqfepsprostatement}), if $\varepsilon$ is small, we find that
\begin{equation}\label{eq512+}
f_\varepsilon(r)\leq C\varepsilon^\frac{2\alpha}{3},\ \ r\in
[R-\varepsilon^{\frac{2\alpha}{3}},R].
\end{equation}
If $r\in[0,R-\varepsilon^\frac{2\alpha}{3}]$, following
\cite{aftalion-jerrard}, we write
\begin{equation}\label{eqfeps-f0inproof}
\begin{array}{lll}
  f_\varepsilon(r)-f_0(r) & = & \left\{\frac{1}{\eta_\varepsilon^2(r)}\int_{r}^{R-\varepsilon^\frac{2\alpha}{3}}s\eta_\varepsilon^2(s)ds-
\frac{1}{A(r)}\int_{r}^{R-\varepsilon^\frac{2\alpha}{3}}sA(s)ds
\right\}
 \\
    &   &   \\
   &  &
   +\frac{\eta_\varepsilon^2(R-\varepsilon^\frac{2\alpha}{3})}{\eta_\varepsilon^2(r)}f_\varepsilon(R-\varepsilon^\frac{2\alpha}{3})-
   \frac{A(R-\varepsilon^\frac{2\alpha}{3})}{A(r)}f_0(R-\varepsilon^\frac{2\alpha}{3})\\
   & & \\
   &=&I+II-III.
\end{array}
\end{equation}
Using (\ref{eqD0}), (\ref{eqVnormal}), (\ref{eqestim2}), Corollary
\ref{cor1}, and our earlier estimates on $f_\varepsilon,\ f_0$ for
$r\geq R-\varepsilon^\frac{2\alpha}{3}$, we see that
\[
|II|\leq Cf_\varepsilon(R-\varepsilon^\frac{2\alpha}{3})\leq
C\varepsilon^\frac{2\alpha}{3}\ \ \textrm{and}\ \ |III|\leq
Cf_0(R-\varepsilon^\frac{2\alpha}{3})\leq
C\varepsilon^\frac{2\alpha}{3}.
\]
We further decompose the remaining term as
\[
I=\left(\frac{1}{\eta_\varepsilon^2(r)}-\frac{1}{A(r)}
\right)\int_{r}^{R-\varepsilon^\frac{2\alpha}{3}}s\eta_\varepsilon^2(s)ds
+\frac{1}{A(r)}
\int_{r}^{R-\varepsilon^\frac{2\alpha}{3}}s\left(\eta_\varepsilon^2(s)-A(s)\right)ds.
\]
Using Corollary \ref{cor2}, if $\frac{1}{2}<\alpha<1$, for small
$\varepsilon$, it follows that
\[
|I|\leq
C\varepsilon^{2-2\alpha}\int_{r}^{R-\varepsilon^\frac{2\alpha}{3}}s\frac{\eta_\varepsilon^2(s)}{\eta_\varepsilon^2(r)}ds+
C\varepsilon^{2-2\alpha}\int_{r}^{R-\varepsilon^\frac{2\alpha}{3}}s\frac{A(s)}{A(r)}ds.
\]
Due to Corollary \ref{cor1}, we have
$\frac{\eta_\varepsilon^2(s)}{\eta_\varepsilon^2(r)}\leq 1$ if
$R-d'\leq r \leq s \leq R-\varepsilon^\frac{2\alpha}{3}$. If $0\leq
r \leq R-d'$, then $\eta_\varepsilon^2(r)\geq c$, and so
$\frac{\eta_\varepsilon^2(s)}{\eta_\varepsilon^2(r)}\leq C$. Thus,
the first integral in the above relation is bounded by
$C\varepsilon^{2-2\alpha}$. The second integral is estimated
similarly, using (\ref{eqVnormal}) instead of Corollary \ref{cor1}.
Therefore, relation (\ref{eqfeps-f0inproof}) implies that
\[|f_\varepsilon(r)-f_0(r)|\leq C\varepsilon^{2-2\alpha},\ \
r\in[0,R-\varepsilon^\frac{2\alpha}{3}],
\]
provided that $\varepsilon$ is sufficiently small. The validity of
estimate (\ref{eqfeps-f0state}) follows at once by combining
(\ref{eqf0in}), (\ref{eq512+}), the above relation, and choosing
$\alpha=\frac{3}{4}$.

The proof of the proposition is complete.
\end{proof}

\section{Open problems and future directions}\label{secopen}
What follows is a list of questions which are currently unresolved.
These are presented as an illustration of where our interests lie.
No attempt is being made to be precise in their formulation.

 A  question that comes naturally to mind is to examine whether the estimates of Theorem
 \ref{thmmain},
 for the minimizer $\eta_\varepsilon$ of $G_\varepsilon$ in $\mathcal{H}$,
can be used to answer the interesting open problem posed recently in
\cite{aftalion-jerrard}. As we have already mentioned in Subsection
\ref{secknown}, the latter is to see to what extend the analysis of
\cite{aftalion-jerrard}, for the functional $E_\varepsilon$ in
(\ref{eqEfunctional}), continues to hold if one drops the assumption
of radial symmetry on the potential $W$. Hopefully, our estimates
for $\eta_\varepsilon$ can be used in estimating the corresponding
auxiliary functions $\xi_\varepsilon,\ f_\varepsilon$, arising in
the functional $E_\varepsilon$ as in (\ref{eqEepsintegrbparts}),
which for the radial case were given by (\ref{eqfepsilon}). It seems
that the elliptic problem (\ref{eqxidiv}), which determines
$\xi_\varepsilon$, seems to be a singular perturbation problem of
its own independent interest.

In the special case of the model harmonic potential, an approximate
solution for $(\ref{eqground})_-$, ``close'' to
$\sqrt{(\lambda-W)^+}$, of \emph{arbitrary order accuracy} was
constructed in \cite{pelinovsky} (keep in  mind Remark
\ref{rempeli}). We feel that it would be very interesting if one can
do the same thing for the case of general potential. A major
difficulty (or problem) is that each term of the inner expansion
diverges polynomially in a complicated manner as the distance from
$\Gamma$ increases (recall Remark \ref{remyan}), see also the
appendix in \cite{calnigap}. The construction of arbitrary order
approximations is especially important in the treatment of
singularly perturbed elliptic problems involving \emph{resonance},
where the order of accuracy of the approximation is dictated by the
space dimension, see for instance \cite{malchiodi-fife}. Problems of
these type which feature the presence of a corner layer (similar to
the problem at hand) have been studied recently in
\cite{karalisourdisresonance} (in two dimensions), see also Remark
\cite{hastingsTroy} herein. It should be noticed that in Allen--Cahn
or  (focusing) Schr\"{o}dinger type equations, where its possible to
construct arbitrary order approximations (see \cite{malchiodi-fife}
and the references therein), the phenomenon is exponentially
localized, i.e, the corresponding terms approach certain constants
exponentially fast.

Relation (\ref{equappotentialthmmain}) implies that the spectrum, in
$L^2(\mathbb{R}^2)$, of the operator $\textbf{L}_\varepsilon$,
defined in (\ref{eqLreal}), is bounded above by
$-c\varepsilon^\frac{2}{3}$, for some constant $c>0$, as
$\varepsilon\to 0$. We expect that, making further use of the
estimates of Theorem \ref{thmmain}, one can
rigorously ``link'' the spectrum of $\textbf{L}_\varepsilon$ to that
of the one--dimensional ``limit'' operator $\mathcal{M}$, defined in
(\ref{eqMcal}), as $\varepsilon\to 0$ (see also relations
(\ref{eqopenev1})--(\ref{eqopenev2}) below), and thus provide a
valid asymptotic approximation for the eigenvalues of
$\textbf{L}_\varepsilon$. In particular, the difference between the
first two eigenvalues, called the fundamental gap, is of importance
since it determines the rate at which positive solutions of the
nonlinear heat equation, corresponding to $(\ref{eqground})_-$,
approach the first eigenspace of $\textbf{L}_\varepsilon$ (see
\cite{batesjones}, \cite{henry}, and especially
\cite{brunovskyfiedler}). In the case where $W$ is the harmonic
potential, a rigorous connection between the spectrum of
$\textbf{L}_\varepsilon$ and that of $\mathcal{M}$ (see the
discussion following relation (\ref{eqmu1}) below), as
$\varepsilon\to 0$, has been made recently in \cite{pelinovsky}
 (see also \cite{karalisourdisradial} for a related
radially symmetric problem).
 A possible approach for the general case, where the potential is
as in the present paper, could be by mixing techniques found in the
aforementioned references with those developed in
\cite{stefanopoulos, chenxinfu} for the study of the spectrum of
multi--dimensional Allen-Cahn and related phase-field operators for
generic interfaces.
One could  even carry out an analogous program for the spectrum of
the linearization of the defocusing nonlinear Schr\"{o}dinger
equation $(\ref{eqNLS})_-$ at the corresponding ground state, recall
(\ref{eqgrillakis}). The latter problem is often referred to as the
Bogolyubov-de Gennes problem in the context of Bose--Einstein
condensates, see \cite{gallo,kevrekidis-pelinovsky} for recent
studies specializing on the model harmonic potential. In the latter
references, for reducing the complexity of the problem, the authors
linearized at $\eta_0$ (recall (\ref{eqeta0})) instead of the ground
state $\eta_\varepsilon$. In this case, the linear operator defined
by the lefthand side of the first equation in (\ref{eqgrillakis}),
with $\eta_0$ in place of $\eta_\varepsilon$, has also been studied
in \cite{fusco}, in relation with \cite{sourdis-fife}.

Excited states are solutions of $(\ref{eqground})_-$ with zero set
inside the domain $\mathcal{D}_\lambda\equiv\{\textbf{y}\ :\
W(\textbf{y})<\lambda\}$.
In the Thomas--Fermi limit, $\varepsilon\to 0$, the Bose--Einstein
condensate is a nearly compact cloud, which may contain localized
dips of the atomic density. The nearly compact cloud is modeled by
the ground state of the defocusing nonlinear Schr\"{o}dinger
equation $(\ref{eqNLS})_-$, whereas the localized dips are modeled
by the excited states.
In the one--dimensional case, with $W$ the harmonic potential,
excited states of $(\ref{eqground})_-$ which are approximated, as
$\varepsilon\to 0$, by a product of  the ground state and $m$ dark
solitons (localized waves of the defocusing NLS equation with
nonzero boundary conditions at respective infinities, which after a
re-scaling solve the one-dimensional (\ref{eqAllenCahn})) were
constructed in \cite{pelinovskynonlinear} by a finite--dimensional
Lyapunov--Schmidt reduction (for the latter see for instance the
book \cite{malchiodibook}).
 Loosely speaking, these solutions have a  corner
layer at the points corresponding to $\partial \mathcal{D}_\lambda$,
and $m$ (clustering)
 transition layers in $\left(-C|\ln \varepsilon|\varepsilon,C|\ln \varepsilon|\varepsilon\right)$, as $\varepsilon\to
 0$ (see also \cite{coles}).
Studies in the case of radial symmetry have been conducted in
\cite{kevrekidisexcitedradial}.
 We believe that, at least in two space dimensions, analogous excited states can still be
constructed without any symmetry assumptions on the potential, by
employing the estimates of Theorem \ref{thmmain} (in particular
(\ref{equappotentialthmmain})) and the infinite dimensional
Lyapunov-Schmidt reduction of \cite{delpinoarma} (see also
\cite{weicluster}). In this context, the dimension $N=2$ plays an
important role  for the solvability of a \emph{Toda system},
periodic orbits of which
 determine, up to principal order, the location of $m$ closed curves in $\mathcal{D}_\lambda$ where the excited state
changes sign.
 These  curves should collapse, as $\varepsilon\to 0$, to a
closed curve in $\mathcal{D}_\lambda$ that may be determined by the
arguments in \cite{guilayer}, \cite{nakashimaNonradial},
\cite{sakamoto} (if $N=1$, the interfaces collapse at critical
points of $W$). We expect that, in the case at hand, the reduction
procedure is more delicate than \cite{delpinoarma} because the
corresponding linear operator $\textbf{L}_\varepsilon$ has small
eigenvalues (see also \cite{dancer-lazer} for a related
finite-dimensional reduction).  If $N=1$ or $W$ is radial, one could
also try to construct ``high energy'' excited states of
$(\ref{eqground})_-$, having an increasing number of layers of order
 $1/\varepsilon$, as $\varepsilon\to 0$, in the spirit of
 \cite{felmermartinezJDE}, \cite{felmertanaka} and the references therein. We remark that the  result of \cite{felmertanaka} relied
on ODE techniques, but it is expectable that a similar result could
be proven for higher--dimensional problems. On the other hand, in
the one-dimensional case, solutions of $(\ref{eqground})_-$
bifurcating from the trivial branch have been studied in
\cite{kurth}, \cite{selemNodal}, and \cite{tsai} (see also
\cite{kirrCommPhysics} and the references therein). Let us make a
formal connection between these two different types of solutions
(layered and small amplitude respectively).
 Consider the one-dimensional case with
potential $W$ having a global minimum which is attained at a unique
point, say at $\textbf{y}=0$, and satisfying
\[
W(\textbf{y})=W(0)+c|\textbf{y}|^\alpha+o(|\textbf{y}|^\alpha)\ \
\textrm{as}\ \ \textbf{y}\to 0,
\]
for some constants $\alpha,c>0$. Arguing as in \cite[Prop.
3.25]{karalisourdisradial}, it is not hard to establish that, given
$m\in \mathbb{N}$, the first $m$ eigenvalues of the linear operator
\begin{equation}\label{eqopenev1}
-\varepsilon^2\partial_{\textbf{yy}}+\left(W(\textbf{y})-\lambda
\right)I,
\end{equation}
which corresponds to the linearization of $(\ref{eqground})$ about
the trivial solution, are of the form
\begin{equation}\label{eqopenev2}
W(0)-\lambda+\mu_i\varepsilon^\frac{2\alpha}{\alpha+1}+o\left(\varepsilon^\frac{2\alpha}{\alpha+1}\right)\
\ \textrm{as}\ \ \varepsilon\to 0,\ i=1,\cdots,m,
\end{equation}
where $\{\mu_i\}$ are the eigenvalues of the ``limit'' operator
\[
-\partial_{\textbf{yy}}+c|\textbf{y}|^\alpha I,
\]
(these exist by \cite[Thm. 10.7]{hislop}, and $\mu_i\to \infty$ as
$i\to \infty$). In passing, we not that formulas (\ref{eqopenev2})
improve the corresponding lower bounds found in Theorem 1.4 of
\cite{fusco}.
 Hence, we see that the number of negative
eigenvalues (counting multiplicities), namely the Morse index of the
trivial solution of $(\ref{eqground})_-$ (see
\cite{karalisourdisradial} for the precise definition), diverges as
$\varepsilon\to 0$ (recall that $\lambda>W(0)$). From a variant of
Weyl's asymptotic formula, see for example \cite[pg. 521]{BenderO},
it turns out that one has
\[
\mu_i\sim ci^{\frac{2\alpha}{\alpha+2}}\ \ \textrm{as}\  i\to
\infty,
\]
for some  constant $c>0$. We expect that, by refining the above
argument, one can prove that the Morse index of the trivial solution
is of order greater than or equal to $1/\varepsilon$, as
$\varepsilon\to 0$ (keep in mind that Landau's symbol in
(\ref{eqopenev2}) may depend on $m\gg 1$). On the other side, it
seems plausible that the operator in (\ref{eqopenev1}) does not have
any negative eigenvalues if $\varepsilon$ is sufficiently large (by
Poincar\'{e}'s inequality, this is certainly true when considered in
a fixed interval with Dirichlet boundary conditions). Consequently,
since the eigenvalues are smooth functions of $\varepsilon$ (by
virtue of their simplicity \cite[Th. 3.1, p. 482]{chowHale}, see
also \cite{malchiodi-fife}), we expect that there exists a sequence
$\{\varepsilon_i\}$ with
$\varepsilon_1>\varepsilon_2>\cdots>\varepsilon_i\to 0$ as $i\to
\infty$ such that, for each $\varepsilon=\varepsilon_i$, zero is an
eigenvalue of the linearized operator described in
(\ref{eqopenev1}). This suggests that the aforementioned local
bifurcation of solutions of $(\ref{eqground})_-$, from the trivial
branch, takes place at each $\varepsilon=\varepsilon_i$. We further
expect that, using global bifurcation techniques \cite{rabinowitz}
(see also \cite{karachalios}), one can show that these solution
branches reach, as $\varepsilon\to 0$, the layered solutions of
$(\ref{eqground})_-$  that we discussed previously. (We point out
that solutions belonging to the $i$-th branch have exactly $i-1$
zeros). We note that analogous eigenvalues of the form
(\ref{eqopenev2}), with the obvious modifications, also exist in the
multi-dimensional case, and existence of many solutions for the
nonlinear problem may follow by adapting Theorem 10.22 in
\cite{malchiodicambridge}. Moreover, in the ``flat'' case (motivated
from a definition in \cite{byeon}, see also \cite{kurata}), where
the potential $W$ attains its minimum value over a domain
$\Omega_0$, we expect that the multi-dimensional operator,
corresponding to (\ref{eqopenev1}), has eigenvalues of the form
\[
W(0)-\lambda+\mu_i(\Omega_0)\varepsilon^2+o(\varepsilon^2)\ \
\textrm{as}\ \varepsilon\to 0,
\]
where $\{\mu_i(\Omega_0)\}$ are the Dirichlet eigenvalues of
$-\Delta$ in $\Omega_0$. We believe that the eigenfunctions
associated to the above eigenvalues and the corresponding
eigenfunctions of the Laplacian share the topology of their level
sets, as in \cite{felmermayorgaNonl}. In any case, motivated from
results in \cite{alamaTarantelo}, we believe that the existence and
multiplicity of solutions to $(\ref{eqground})_-$, is strongly
associated to the number of negative eigenvalues (counting
multiplicities) of  the corresponding linearized operator about the
trivial solution.
%
 In the radially symmetric case, the topological approaches of
\cite{grillakisnodal}, \cite{selemNodal}, for constructing nodal
standing wave solutions of the focusing NLS, should also be
applicable to the defocusing case with a trapping potential, see
also a related remark in \cite{margetis2012}.

It would also be interesting if one can find an asymptotic
expansion, as $\varepsilon\to 0$, of the energy $E_1$ of the first
excited state (with least energy), as we did for the energy $E_g$ of
the ground state in Theorem \ref{thmmain}. The difference $E_1-E_g$
is of importance since it represents the ``excitation energy''
required to reach the first excited state from the ground state; it
thus determines in some sense the stability of the ground state. (In
the case of a convex bounded domain with Dirichlet boundary
conditions, with the obvious modifications, this would provide
evidence  on the validity of a ``nonlinear fundamental gap
conjecture'', see \cite{andrews}, \cite{baoACMAC}).

If in addition the potential $W$ is assumed to be even with respect
to the coordinate axis, we have observed that one can construct a
sign changing solution of $(\ref{eqground})_-$, whose nodal set is
the union of the coordinate axis, using the following strategy:
Firstly, by minimizing the functional $\mathcal{G}_-$, described in
(\ref{eqmountain}), over $\eta\in W_0^{1,2}(\mathbb{R}^2_+)$ such
that $W\eta^2 \in L^1(\mathbb{R}^2_+)$, where $\mathbb{R}^2_+\equiv
\{\textbf{y}=(\textbf{y}_1,\textbf{y}_2)\ :\ \textbf{y}_1>0,\
\textbf{y}_2>0 \}$, for small $\varepsilon>0$, we obtain a positive
solution in $\mathbb{R}^2_+$ of the equation in $(\ref{eqground})_-$
which is zero on the coordinate axis and approaches zero as
$|\textbf{y}|\to \infty$ (we can see that the minimizer is
nontrivial, if $\varepsilon$ is small, by adapting Example 5.11 in
\cite{malchiodicambridge} or Lemma 2.1 in \cite{delPinoscrew}). A
solution $\textbf{u}_2$ defined in the entire space is then obtained
using odd reflections through the lines $\textbf{y}_1=0$ and
$\textbf{y}_2=0$. The function $\textbf{u}_2$ is a solution of
$(\ref{eqground})_-$, whose 0-level set is the union of the two
axis. Our construction parallels that of the well known \emph{saddle
solution} of the Allen-Cahn equation (\ref{eqAllenCahn}), see
\cite{saddleFife}. The problem of existence and qualitative
properties of saddle type solutions for the Allen-Cahn equation (not
necessarily in two dimensions) has received a considerable amount of
attention in recent years, see \cite{saddleCabreJEMS1},
\cite{saddleCabreCPDE2}, \cite{saddlecabre3solo},
\cite{saddleKowal}. We wonder if an analogous study can be conducted
for the saddle type solutions of $(\ref{eqground})_-$ that we just
described. Can one rigorously verify the formal prediction that
\[\textbf{u}_2\to
\textrm{sign}\{\textbf{y}_1\textbf{y}_2\}\sqrt{(\lambda-W)^+},\] say
in $L^2(\mathbb{R}^2)$, as $\varepsilon\to 0$? The finer structure
at the  \emph{junction points} on the axis, where $W=\lambda$, may
be demonstrated by a solution $v$ of the following elliptic problem:
\begin{equation}\label{eqjunction}
\left\{\begin{array}{l}
  v_{xx}+v_{zz}-(x+v^2)v=0,\ \ x\in\mathbb{R},\ z>0, \\
    \\
  v-\sqrt{-x}\to 0\ \ \textrm{as}\ \ x\to -\infty;\ \ v\to 0\ \
\textrm{as}\ \ x\to \infty, \\
   \\
  v=0\ \ \textrm{if}\ \ z=0;\ v-V(x)\to 0\ \ \textrm{as}\ \ z\to \infty,
\end{array}\right.
\end{equation}
where $V$ denotes the Hastings-McLeod solution as usual, which seems
to be of independent interest. The above can be generalized to the
case of arbitrary even space dimensions.
 Let us also note that, if the potential trap $W$ is radial and two-dimensional, our construction can
easily be generalized to obtain solutions $\textbf{u}_k$ of
$(\ref{eqground})_-$ with $N=2$, for small $\varepsilon>0$, whose
zero level set has the symmetry of a regular $2k$-polygon and
consists of $k$ straight lines passing through the origin (see
\cite{saddlealessio} for the corresponding solutions of
(\ref{eqAllenCahn})).

In the case where the potential $W$ is, say,  two-dimensional and
symmetric with respect to the coordinate axis (as in the above
paragraph) but the equation of $(\ref{eqground})_-$ is posed in
$\mathbb{R}^3$, motivated from a definition in \cite{fuscoTrans}, we
can also consider ``tick'' saddle solutions: As before, minimizing
the functional $\mathcal{G}_-$
 over $\eta \in W^{1,2}_0(\Omega)$
such that $W\eta^2 \in L^1(\Omega)$, where $\Omega\equiv
\{\textbf{y}_1>0,\ \textbf{y}_2> 0,\ -D<\textbf{y}_3<D) \}$, $D>0$,
for $\varepsilon<\varepsilon(D)$ sufficiently small, yields a
positive solution of the equation which vanishes on $\partial
\Omega=\{\textbf{y}_1=0,\ \textbf{y}_2=0,\ \textbf{y}_3=\pm D\}$. By
odd reflection with respect to $\{\textbf{y}_1=0,\ \textbf{y}_2=0,\
\textbf{y}_3 \in(-D,D)\}$, and then with respect to the planes
$\textbf{y}_3 = (2k + 1)D,\ k \in \mathbb{Z}$, that solution can be
extended to the whole of $\mathbb{R}^3$, yielding an entire solution
of the equation in $(\ref{eqground})_-$ which has a saddle structure
on each plane $\textbf{y}_3= \textrm{constant}$ and is periodic of
period $4D$ in the $\textbf{y}_3$ variable.

Very recently, del Pino, Musso, and Pacard \cite{delPinoscrew}
studied entire solutions of the Allen-Cahn equation
(\ref{eqAllenCahn}) which are defined in 3--dimensional Euclidean
space and which are invariant under screw-motion. In particular,
their nodal set is a helico\"{i}d of $\mathbb{R}^3$. We believe
that, for sufficiently small $\varepsilon>0$, similar solutions
exist for the 3-dimensional defocusing Gross-Pitaevskii equation in
$(\ref{eqground})_-$ with a 2-dimensional radial potential $W$
($\lambda, \ W$ satisfying our usual assumptions). What is the
asymptotic behavior of these solutions as $\varepsilon\to 0$? Can
some results of \cite{delPinoscrew} be generalized in our context?


We believe that, if the potential $W$ is restricted to the radial
class, the approach of the current paper can also be applied to the
study of the $\varepsilon\to 0$ limiting behavior of \emph{vortex
solutions} of the NLS equation $(\ref{eqNLS})_-$, see \cite{pego} or
\cite{seiringer}, namely solutions of the form
\begin{equation}\label{eqvortex}\left\{\begin{array}{l}
    u_n(\textbf{y},t)=U_n(r)e^{i(n\theta-\lambda t/\varepsilon)},\ \
n=\pm1,\pm2,\cdots,
 \\
      \\
    U_n(0)=0, \ \ \ U_n(\infty)=0,
  \end{array}
  \right.
\end{equation}
where $(r,\theta)$ denote the polar coordinates in $\mathbb{R}^2$.
Hopefully, the obtained estimates could be used to prove the,
indicated by numerical evidence \cite{kevrekidis-pelinovsky} (for
the case of the model harmonic potential), orbital stability of
$u_1$ in the time evolution of the Gross-Pitaevskii equation, and
thus answering the question raised in the end of the recent paper
\cite{pelinovskyBifvortex}.

Non-degeneracy conditions of the form (\ref{eqVnormal}) are common
in the study of transition layered solutions of elliptic equations
with bistable nonlinearity, see \cite{fifegreenlee},
\cite{weicluster}. In that context, the surface
$\partial\mathcal{D}_0$ represents the interface of the layer. It
turns out that, in some cases, the aforementioned conditions can be
removed completely (see \cite{danceryanCVPDE}, \cite{delpinoCPDE}).
In particular, the interface may be non-smooth or intersect the
boundary of the domain. Motivated from this, we believe that one can
show that $\eta_\varepsilon\to \sqrt{A^+}$ uniformly in
$\mathbb{R}^2$, or at least in compact subsets of
$\mathbb{R}^2\backslash
\partial\mathcal{D}_0$, as $\varepsilon\to 0$, without assuming
condition (\ref{eqVnormal}). (Here $\eta_\varepsilon$ denotes the
minimizer of $G_\varepsilon$  or the ground state of
$(\ref{eqNLS})_-$). In this regard, we refer to \cite[Prop.
3.16]{cantell} for a related result (for $(\ref{eqground})_-$ with
$q=2$).

Is there a ``$\Gamma$-Convergence'' theory \cite{kohnstrenberg} for
(\ref{eqGminimizer}), relating local minimizers of the limit
functional (\ref{eqGminimizer000})  to local minimizers of
(\ref{eqGminimizer}), as $\varepsilon\to 0$?

We wonder if, besides  the one-dimensional  profile $V(x)$, there is
a (genuine) two-dimensional one $v(x,z)$ that could be used in
(\ref{eqvsz}). In view of (\ref{equs-uss-uz-uzz}), (\ref{eqS(v)=0}),
(\ref{eqB2}), and the matching conditions with $\sqrt{A^+}$, the
profile $v$ should satisfy
\[
\left\{\begin{array}{lll}
  \beta^{-2}v_{zz}+v_{xx}-v(v^2+x)=0,  & (x,z)\in \mathbb{R}^2, \\
   &   \\
 v-\sqrt{-x}\to 0 \ \textrm{as}\ x\to
  -\infty,  &  v\to 0\ \textrm{as}\  x\to \infty,
\end{array}
\right.
\]
with $v$ being $\ell_\varepsilon/\varepsilon^\frac{2}{3}$-periodic
in $z$.
As in \cite{del pino cpam} (see also \cite{karalisourdisresonance}),
after a simple transformation of the $z$ independent variable (the
$x$ variable remains unchanged), abusing notation, we are led to the
problem:
\begin{equation}\label{eqdegiorgi1}
  v_{zz}+v_{xx}-v(v^2+x)=0,  \ \  (x,z)\in \mathbb{R}^2,
  \end{equation}
  \begin{equation}\label{eqdegiorgi1+}
 v-\sqrt{-x}\to 0 \ \textrm{as}\ x\to
  -\infty,  \ \  v\to 0\ \textrm{as}\  x\to \infty,
\end{equation}
with $v$ being
$\hat{\ell}_\varepsilon/\varepsilon^\frac{2}{3}$-periodic in $z$,
where
$\hat{\ell}_\varepsilon=\int_{0}^{\ell_\varepsilon}\beta_\varepsilon(\theta)d\theta$.
Uniqueness of positive solutions to the above problem does not seem
to follow from the approach of Brezis and Oswald, as in
\cite{ignat}, since solutions are unbounded (compare with
(\ref{eqjunction})).  In fact, we believe that entire solutions of
the equation in (\ref{eqdegiorgi1}) should satisfy the growth
estimate $v(x,z)= O(|x|^{\frac{1}{2}})$. Moreover, it is not clear
how to adapt the uniqueness result of Brezis \cite{brezis}. On the
other hand, motivated from (\ref{eqJ4}), it is natural to seek
solutions such that
\begin{equation}\label{eqdegiorgi2}
v_x<0,\ \ (x,z)\in \mathbb{R}^2.
\end{equation}
It is irresistible to compare problem (\ref{eqdegiorgi1}),
(\ref{eqdegiorgi2}) with the famous De Giorgi conjecture for the
Allen-Cahn equation (\ref{eqAllenCahn}), see for instance
\cite{farina}, \cite{gui}. In this regard, it is interesting to
investigate whether there are genuine two-dimensional solutions $v$
of problem (\ref{eqdegiorgi1}), (\ref{eqdegiorgi2}) or not. Note
that solutions of the latter problem could be unbounded and, in
particular, so could be $v_z$ (see \cite{berestycki-wei2012,
berestycki-wei2012-2} where a similar difficulty arises). We point
out that the space dimension usually plays a very important role in
these type of problems. Another direction could be to investigate
the same question for stable solutions of (\ref{eqdegiorgi1}), in
the sense of (\ref{eqsofter}),
 see
also a remark in pg. 79 of the review article \cite{dancer-fields}.
Actually, using the method of \cite{farina}, one can show that any
solution of (\ref{eqdegiorgi1}), (\ref{eqdegiorgi2}) is stable. A
variation of these questions could be to consider problem
(\ref{eqdegiorgi1})-(\ref{eqdegiorgi1+}), with the asymptotic
behavior  in (\ref{eqdegiorgi1+}) being uniform  in $z\in
\mathbb{R}$, along the lines of the so called Gibbons conjecture
(see also \cite{berestycki-wei2012}). We remark that in this case,
as in \cite{gui}, the method of moving planes \cite{gidas} can be
applied to show that (\ref{eqdegiorgi2}) holds.

Many recent papers deal with the study of semiclassical ground
states for the focusing $(\ref{eqground})_+$, where the potential
$W(\textbf{y})-\lambda$ is positive but decays to zero, as
$|\textbf{y}|\to \infty$, at most like $|\textbf{y}|^{-2}$ (see for
instance \cite{decaypot} and the references therein). Can one study
the defocusing case under analogous conditions? For $\varepsilon$
fixed, a related existence result may be found in \cite{brownJmaa}.

Suppose, for simplicity purposes, that $W$ is an even, double-well,
one-dimensional potential (for example as in \cite{doublewell} or
\cite{kirrCommPhysics}), say $W(\textbf{y})=(\textbf{y}^2-1)^2$.
What happens in the ``degenerate case'' when $\lambda$ equals the
local maximum of $W$? Assuming that $W''(0)<0$, suitably blowing up
at the origin, we expect that the fine behavior of solutions, as
$\varepsilon\to 0$, near the origin should be determined by a
solution of the problem:
\[
\left\{
\begin{array}{l}
  v''=v(v^2+\frac{W''(0)}{2}x^2)=0,\ \ x\in\mathbb{R}, \\
    \\
  v-\sqrt{\frac{-W''(0)}{2}}|x|\to 0\ \ \textrm{as}\ \ |x|\to \infty.
\end{array}
\right.
\]
Notice the similarities of the above problem with
(\ref{eqkaralisoudisPain}) below. Note also that in the case of a
\emph{symmetric} double-well potential (for any $\lambda$) formulas
(\ref{eqopenev2}) do not hold due to \emph{tunneling} effects, see
for instance \cite{pelinovsky}, \cite{hislop}.

Finally, we believe that similar studies can be conducted in the
case of the ``exterior'' problem, where $\lambda>W$ outside of a
bounded domain and $\lambda<W$ in its interior. It is natural to
assume that  $\lambda-W\to c>0$ as $|y|\to \infty$, and consider the
Gross-Pitaevskii equation $(\ref{eqNLS})_-$ with boundary conditions
$|u(\textbf{y},t)|\to c^\frac{1}{q-1}$ as $|\textbf{y}|\to \infty$.
The approach of \cite{brezis-oswald} does not yield uniqueness of
bounded ground states, namely solutions of the equation in
$(\ref{eqground})_-$ coupled with the aforementioned boundary
conditions, if $N\ge 3$ (compare with Remark \ref{remuniq}), and one
has to apply a sophisticated ``squeezing'' argument (see
\cite{duma}). Let us mention that the stability of standing wave
solutions of Gross-Pitaevskii equations, considered with nonzero
boundary conditions at infinity, is a very active field of current
research, see for instance \cite{bethuelsaut}.

\begin{rem}\label{remkening}
It is worthwhile to mention that if $0\leq W(\textbf{y})-\lambda\leq
 c(1+|\textbf{y}|)^{2+d},\ \textbf{y}\in \mathbb{R}^N$, $N\geq 3$, for some positive constants $c,d$,
  then $(\ref{eqground})_-$ has infinitely many bounded solutions  with positive lower bounds (see \cite{kenig}).
\end{rem}

\appendix\section{A-priori estimates for the linearized operator based on the non-degeneracy of the inner profile}\label{ap2}
Here we will provide an alternative, more natural, proof of the
important Proposition \ref{proL} that does not require knowledge of
lower bound (\ref{eqvpeli}), whose proof is rather technical (recall
Remark \ref{rempotential}), but instead relies merely on the
non-degeneracy of the Hastings-McLeod solution $V$. This proof has
the flexibility to deal with problems where the corresponding inner
profile $V$ is non-degenerate but the corresponding lower bound
(\ref{eqvpeli}) may be hard to establish or fails (see Remark
\ref{rempotentialhalfapenc} below for an example where the latter
case occurs). The latter situation certainly occurs when trying to
construct unstable solutions (with respect to the parabolic
dynamics) in related problems, see \cite{karalisourdisradial,
karalisourdisresonance}. Let us also point out that it is not clear
to us how to generalize the last part of the proof of
(\ref{eqvpeli}) in \cite{pelinovsky} to the case of arbitrary power
nonlinearity, as in Proposition \ref{proentire} below (see also
Remark \ref{remexponent} below).

\texttt{PROOF OF PROPOSITION \ref{proL}}: Observe that it suffices
to show the following a-priori estimate: There exists a constant $C$
such that if $\varepsilon$ is sufficiently small, $\varphi\in
\mathcal{X}\cap C^{2+\alpha}(\mathbb{R}^2)$, and $f\in
\mathcal{X}\cap C^{\alpha}(\mathbb{R}^2),\ 0<\alpha<1,$ satisfy
\begin{equation}\label{eqaprAp--}
\mathcal{L}(\varphi)=f,
\end{equation}
then
\begin{equation}\label{eqaprAp}
\|\varphi\|_{L^\infty(\mathbb{R}^2)}\leq
C\|f\|_{L^\infty(\mathbb{R}^2)}.
\end{equation}
To this end, as in the one-dimensional related problem treated in
\cite[Prop. 5.2]{sourdis-fife}, we will argue by contradiction. We
remark that this indirect method has been used extensively in the
study of elliptic singular perturbation problems involving
transition and spike layers, see \cite{malchiodibook}.

Firstly, note that without knowledge of the validity of
(\ref{eqvpeli}), relation (\ref{equappotential}) would be
\begin{equation}\label{equappotentialappendix}
3u_{ap}^2-a(\varepsilon^\frac{2}{3}y)\geq \left\{
\begin{array}{ll}
  c \varepsilon^\frac{2}{3}|x|, &  \textrm{if} \ \ L\leq|x|\leq \delta \varepsilon^{-\frac{2}{3}}, \\
    &   \\
  c+c|\varepsilon^\frac{2}{3}y|^p,  & \textrm{otherwise},
\end{array}\right.
\end{equation}
for small $\varepsilon>0$, having increased the value of $L$ if
necessary.

 Suppose now that there exist sequences $\varepsilon_n\to
0,\ \varphi_n\in \mathcal{X}\cap C^{2+\alpha}(\mathbb{R}^2)$,
$f_n\in \mathcal{X}\cap C^{\alpha}(\mathbb{R}^2)$ such that
\begin{equation}\label{eqapBEQ}
\left\{
\begin{array}{c}
  \mathcal{L}(\varphi_n)=\Delta \varphi_n-\varepsilon_n^{-\frac{2}{3}}\left( 3u_{ap}^2-a(\varepsilon_n^\frac{2}{3}y)\right)\varphi_n=f_n \\
   \\
  \|\varphi_n\|_{L^\infty(\mathbb{R}^2)}=1\ \ \textrm{and}\ \
  \|f_n\|_{L^\infty(\mathbb{R}^2)}\to 0.
\end{array}
\right.
\end{equation}
Keeping in mind that $\varphi_n\to 0$ as $|y|\to \infty$, we may
assume that there exist $y_n\in \mathbb{R}^2$ such that, without
loss of generality, we have
\begin{equation}\label{eqapBEQ1}
\varphi_n(y_n)=1,\ \ \nabla\varphi_n(y_n)=0,\ \ \Delta
\varphi_n(y_n)\leq 0,\ \ n\geq 1.
\end{equation}
From (\ref{equappotentialappendix})--(\ref{eqapBEQ}), we obtain that
\begin{equation}\label{eqapdif}\textrm{dist}(y_n,\tilde{\Gamma}_{\varepsilon_n})\leq
C,\ \ n\geq 1,
\end{equation}for some  (generic) constant $C$  independent of
$n\geq 1$.
 Thus, abusing notation, we can write
$y_n=\left(\beta^{-1}(\varepsilon_n^\frac{2}{3}z_n)x_n,z_n \right)$
with $|x_n|\leq C$, $z_n\in
[0,\varepsilon_n^{-\frac{2}{3}}\ell_{\varepsilon_n})$. Therefore,
passing to a subsequence, we may assume that
\begin{equation}\label{eqapBEQ2}
x_n\to x_*\ \ \textrm{and}\ \ \varepsilon_n^\frac{2}{3}z_n\to z_*\in
[0,{\ell}_0].
\end{equation}
 Recalling (\ref{eqLaplace}), (\ref{eqvsz}), in terms of coordinates
$(x,z)$, the equation in (\ref{eqapBEQ}) takes the form
\begin{equation}\label{eqapregion}
(\varphi_n)_{zz}+\beta^2(\varepsilon_n^\frac{2}{3}z)(\varphi_n)_{xx}+\tilde{B}_1(\varphi_n)-\varepsilon_n^{-\frac{2}{3}}\left(
3u_{ap}^2-a(\varepsilon_n^\frac{2}{3}y)\right)\varphi_n=f_n,
\end{equation}
in the neighborhood of the curve $\tilde{\Gamma}_{\varepsilon_n}$
described by $\left\{|x|\leq \delta_0\varepsilon_n^{-\frac{2}{3}},\
\ z\in
\left[0,\varepsilon_n^{-\frac{2}{3}}\ell_{\varepsilon_n}\right)
\right\}$, where $\tilde{B}_1$ is the differential operator:
\begin{equation}\label{eqB6}
\tilde{B}_1(\varphi)=\varepsilon^\frac{4}{3}\beta''\beta^{-1}x\varphi_x+\varepsilon^\frac{4}{3}
(\beta')^2\beta^{-2}x^2\varphi_{xx}+2\varepsilon^\frac{2}{3}\beta'\beta^{-1}x\varphi_{xz}+B_1(\varphi),
\end{equation}
and $B_1$ is the differential operator in (\ref{eqB1}) where
derivatives are expressed in terms of formulas
(\ref{equs-uss-uz-uzz}) and $s$ replaced by $\beta^{-1}x$. By
(\ref{eqvasympto}), the first relation in
(\ref{equout-uingradients}), and working as in (\ref{eq3uap1}), we
obtain that
\begin{equation}\label{eqB7}
3u_{ap}^2-a(\varepsilon_n^\frac{2}{3}y)=\varepsilon_n^\frac{2}{3}\beta^2(\varepsilon_n^\frac{2}{3}z)\left(
3V^2(x)+x\right)+\mathcal{O}(\varepsilon_n^\frac{4}{3})(x^2+1)
\end{equation}
uniformly in the region described below (\ref{eqapregion}), as $n\to
\infty$. Making use of (\ref{eqapBEQ})--(\ref{eqB7}), and a standard
compactness argument, passing to a subsequence, we find that
\[
\varphi_n\to \phi\ \ \textrm{in} \ \ C^2_{loc}(\mathbb{R}^2),
\]
where $\phi$ satisfies
\begin{equation}\label{eqphilimit}
\phi_{zz}+\beta_0^2(z_*)\phi_{xx}-\beta_0^2(z_*)\left(
3V^2(x)+x\right)\phi=0,\ \ (x,z)\in \mathbb{R}^2,
\end{equation}
\begin{equation}\label{eqcontra}
\|\phi\|_{L^\infty(\mathbb{R}^2)}= 1 \ \ \
\left(\phi(x_*,z_*)=1\right).
\end{equation}
Since
\begin{equation}\label{eqB7+-}
3V^2(x)+x\to \infty\  \textrm{as}\  x\to \pm \infty,
\end{equation}
a standard barrier argument, as in (\ref{eqbarrier}), and elliptic
estimates \cite{Gilbarg-Trudinger}, yield that there exists a
constant $C$ such that
\begin{equation}\label{eqB7+}
\left|\nabla\phi(x,z)\right|+|\phi(x,z)|\leq Ce^{-|x|},\ \ (x,z)\in
\mathbb{R}^2,
\end{equation}
(see also Lemma 7.3 in \cite{delpinotoda}). Let $(\mu_1,\psi_1)$
denote the principal eigenvalue-eigenfunction pair of
\begin{equation}\label{eqtoda}
-\mathcal{M}(\psi)=-\psi''+\left(3V^2(x)+x\right)\psi=\mu \psi,\ \
\psi(\pm \infty)=0.
\end{equation}
Without loss of generality, we may assume that $\psi_1$ is positive.
 Furthermore, we have
 \begin{equation}\label{eqmu1}\mu_1>0,
 \end{equation} as testing against $V_x<0$ readily
shows (see \cite{sourdis-fife}). (By virtue of (\ref{eqB7+-}) and
Theorem 10.7 in \cite{hislop}, the spectrum of $-\mathcal{M}$, in
$L^2(\mathbb{R})$, consists of simple eigenvalues
$\mu_1<\mu_2<\cdots$ with $\mu_i\to\infty$). Now let
\[
\Phi(z)=\int_{-\infty}^{\infty}\phi(x,z)\psi_1(x)dx,
\]
where $\phi$ is as in (\ref{eqphilimit}). From (\ref{eqphilimit}),
(\ref{eqB7+}), and (\ref{eqtoda}) with $\psi=\psi_1,\ \mu=\mu_1$, we
calculate that
\[
\begin{array}{lll}
  \Phi'' & = & \int_{-\infty}^{\infty}\phi_{zz}(x,z)\psi_1(x)dx \\
    &   &   \\
    &  = & \beta^2_0(z_*)\int_{-\infty}^{\infty}\left[-\phi_{xx}+\left(3V^2(x)+x\right)\phi\right]\psi_1(x)dx \\
    &   &   \\
    &  = & \beta^2_0(z_*)\int_{-\infty}^{\infty}\phi\left[-\psi_1''+\left(3V^2(x)+x\right)\psi_1\right]dx \\
    &   &   \\
    &  = & \mu_1 \beta_0^2(z_*)\Phi,
\end{array}
\]
and
\[
|\Phi|\leq C,\ \ z\in \mathbb{R}.
\]
 From (\ref{eqmu1}), and the above two relations, it follows at once that $\Phi$ is identically zero, which contradicts the previous
relation (\ref{eqcontra}). Consequently, we have established the
validity of the desired a-priori estimate (\ref{eqaprAp}).

The proof of the proposition is complete. \ \ \ \ $\Box$

\begin{rem}\label{remfuscoextension}
By adapting  Lemma 5.3 of \cite{sourdis-fife}, we can show that
relation (\ref{eqaprAp--}) implies that
$\|\varphi\|_{L^2(\mathbb{R}^2)}\leq C\|f\|_{L^2(\mathbb{R}^2)}$ for
some constant that is independent of $\varphi,\ f$ and small
$\varepsilon>0.$ In fact, as in \cite[Thm. 1.2]{fusco}, we expect
that more general a-priori estimates of the form
$\|\varphi\|_{L^p(\mathbb{R}^2)}\leq C_{p,q}
\varepsilon^{\alpha_{p,q}}\|f\|_{L^q(\mathbb{R}^2)}$ hold
true.\end{rem}
\section{Around the Hastings-McLeod solution of the Painlev\'{e}-II
equation}\label{appenpainleve} In this appendix we will provide a
new proof of the existence  of the Hastings-McLeod solution $V$ of
the Painlev\'{e}-II equation (\ref{eqpainleve}). Moreover, we will
establish various qualitative properties of the solution that are
required for the singular perturbation analysis. In contrast to the
original proof of Hastings and Mcleod \cite{hastings} (see also
\cite{hastingsbook}), where a shooting argument was employed, here
we will use an upper and lower solution argument, which in principle
is not restricted to ODE problems. Even though such an approach was
successfully applied to this problem recently in \cite{alikakos},
and very recently in \cite{weipitaevskii}, in our opinion our
construction is more flexible and intuitive. The main advantage of
our proof, compared to those of the latter references, is that, in
the process, we also establish existence and uniqueness of a
solution of problem (\ref{eqvdirichlet}), which seems to be a new
and useful result (recall Remark \ref{remdirichlet}). Although a
sizable literature has been devoted to the study of the Painlev\'{e}
equation (see \cite{fokas,itsLNM,painleve-levi}), we understand that
the solution of this problem was not previously known.

Our choice of lower--solution is motivated from \cite{dancer-lazer}
where, in particular, the authors treat the problem
\begin{equation}\label{eqdancerPainle}
u''=|u|^p-x,\ x>0,\ u(0)=0,\ x^{-\frac{1}{p}}u(x)\to 1\ \textrm{as}\
x\to \infty,
\end{equation}
where $p>1$ (recall the discussion in the third part of Subsection
\ref{secmotivation}), see also \cite[Sec. 3.2]{hastingsbook} and
\cite{holmespainleve} for the case $p=2$ which is the Painlev\'{e}-I
equation. On the other hand, our choice of upper--solution is
motivated from \cite{schecter-sourdis} where, in particular, the
authors treat the problem
\begin{equation}\label{eqkaralisoudisPain}u''=u^2-x^2,\ x\in
\mathbb{R},\ u(x)-|x|\to 0\ \textrm{as}\ |x|\to \infty,
\end{equation} see also Remark \ref{remlastprofile}
below.

The notation in this appendix is independent of the rest of the
paper.

As in \cite{hastings}, see also \cite[pg. 200]{BenderO}, we will
prove the following more general result:

\begin{pro}\label{proentire}
Given $p>1$, there exists a unique nonnegative solution $U$ of
\begin{equation}\label{eqEqappenC}
-u''-xu+|u|^{p}u=0,\ x\in \mathbb{R},
\end{equation}
such that
\begin{equation}\label{eqEqappenCbdry}
u(x)\to 0\ \textrm{as}\ x\to -\infty;\ u(x)-x^\frac{1}{p}\to 0\
\textrm{as}\ x\to \infty. \end{equation} Furthermore, we have that
$U'>0$ in $\mathbb{R}$, and
\begin{equation}\label{eqsuperexpappenC}
U(x)=\mathcal{O}\left(|x|^{-\frac{1}{4}}e^{-\frac{2}{3}|x|^\frac{3}{2}}
\right) \textrm{as}\ x\to -\infty; \ \
U(x)-x^\frac{1}{p}=\mathcal{O}\left(x^{\frac{1-3p}{p}} \right) \
\textrm{as}\ x\to \infty.
\end{equation}

The solution $U$ is non-degenerate in the sense that there are no
nontrivial  bounded solutions of the problem
\[
\phi''-\left[(p+1)U^p-x\right]\phi=0,\  \ x\in \mathbb{R}.
\]
\end{pro}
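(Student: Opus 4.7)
\textbf{Proof plan for Proposition \ref{proentire}.} The plan is to obtain $U$ as a monotone limit of solutions $U_R$ to boundary value problems on intervals $(-R,R)$, wedged between carefully chosen sub- and supersolutions. For the supersolution I would take $\bar u(x)=(x+c)^{1/p}$ on $(-c,\infty)$ (extended by $0$ to the left as a weak supersolution), since a direct computation gives
\[
-\bar u''-x\bar u+\bar u^{p+1}=\tfrac{1}{p}\bigl(1-\tfrac{1}{p}\bigr)(x+c)^{1/p-2}+c(x+c)^{1/p}\ge 0,
\]
which is the motivation coming from \cite{schecter-sourdis}. For the subsolution I would take $\underline u\equiv 0$ on $(-\infty,x_1]$ glued with $\epsilon(x-x_1)^{1/p}$ (suitably cut off for moderate $x$) for $x\ge x_1$, with $x_1\gg 1$ large and $\epsilon>0$ small; the same algebraic identity, now with $-c$ replaced by $-x_1+\epsilon^p(\,\cdot\,)$, shows this is a subsolution in the distributional sense once $x_1$ is large and $\epsilon$ is small, and one has $\underline u\le \bar u$ globally. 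On each $(-R,R)$ with boundary data $\underline u(\pm R)\le U_R(\pm R)\le \bar u(\pm R)$, ordered sub-/supersolutions deliver a classical solution with $\underline u\le U_R\le \bar u$. Interior Schauder estimates and a diagonal extraction then produce a nonnegative limit $U$ solving (\ref{eqEqappenC}); the asymptotic boundary conditions (\ref{eqEqappenCbdry}) are forced by the barriers.

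The decay rates in (\ref{eqsuperexpappenC}) I would read off from the equation $U''=-xU+U^{p+1}$. At $-\infty$ the problem is Airy-like ($U''\sim -xU$ with $-x\to+\infty$), so comparison with an Airy upper solution $\gamma\mathrm{Ai}(-x)^{-?}$ (or, more flexibly, the lemma of H\'erv\'e--H\'erv\'e recalled in the proof of Proposition \ref{prohastings}) yields $U=\mathcal{O}(|x|^{-1/4}e^{-\frac{2}{3}|x|^{3/2}})$. At $+\infty$ I would substitute $U=x^{1/p}+w$ and linearize to obtain
\[
-w''+pxw=-\tfrac{p-1}{p^2}x^{1/p-2}+\mathcal{O}(w^2),
\]
so the same type of barrier lemma gives $w=\mathcal{O}(x^{1/p-3})=\mathcal{O}(x^{(1-3p)/p})$. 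For monotonicity $U'>0$, differentiating the equation gives $\mathcal{L}(U')=U$, where
\[
\mathcal{L}\varphi:=-\varphi''+\bigl[(p+1)U^p-x\bigr]\varphi .
\]
Hence at any hypothetical zero $x_0$ of $U'$ one has $(U')''(x_0)=-U(x_0)<0$, so $U'$ attains there a local maximum of value zero, forcing $U'\le 0$ on a neighborhood. Since the asymptotics impose $U'>0$ for $x$ sufficiently large, a connectivity argument rules out such $x_0$, yielding $U'>0$ on $\mathbb{R}$.

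For uniqueness of the nonnegative solution I would adapt the Brezis--Oswald technique of \cite{brezis-oswald} to this unbounded setting: the nonlinearity $f(x,u)=xu-u^{p+1}$ is such that $u\mapsto f(x,u)/u=x-u^p$ is strictly decreasing in $u>0$, so a Picone-type identity applied to two hypothetical positive solutions $U_1,U_2$, first on $(-R,R)$ and then passing to the limit, gives $U_1\equiv U_2$; the common asymptotics $U_i\to 0$ at $-\infty$ and $U_i-x^{1/p}\to 0$ at $+\infty$ are used to kill the boundary terms as $R\to\infty$. Finally, for non-degeneracy, I observe that the potential $(p+1)U^p-x$ diverges to $+\infty$ at both ends, so by standard spectral theory $\mathcal{L}$ has a principal eigenvalue $\mu_1$ with a positive decaying eigenfunction $\phi_1$; pairing $\mathcal{L}(U')=U$ against $\phi_1$ by integration by parts (all boundary contributions vanish because $U'$ and $\phi_1$ decay) yields
\[
\mu_1\int_{\mathbb{R}}\phi_1 U'\,dx=\int_{\mathbb{R}}\phi_1 U\,dx,
\]
and the strict positivity of $U$, $U'$ and $\phi_1$ forces $\mu_1>0$, so $0$ is not in the spectrum of $\mathcal{L}$. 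Any bounded solution of $\mathcal{L}\phi=0$ automatically decays (since $q\to+\infty$ at both ends forces the maximum/minimum of $\phi$ to be attained in a compact set via a standard barrier argument as in (\ref{eqbarrier})) and hence must vanish. The main obstacles I anticipate are (a) engineering a truly \emph{global} subsolution that is compatible with the degenerate behavior $U\to 0$ at $-\infty$, and (b) implementing Brezis--Oswald uniqueness at the growing end $U\sim x^{1/p}$, where the standard compactly supported test functions must be replaced by a careful truncation scheme.
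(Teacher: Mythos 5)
Your existence scheme has two genuine gaps, both at exactly the points the paper's construction is engineered to handle. First, the glued barrier $(x+c)_+^{1/p}$ is \emph{not} a weak supersolution: at the junction $x=-c$ the one-sided derivatives are $0$ from the left and $+\infty$ from the right, so the corner carries a Dirac mass of the wrong sign (a piecewise-smooth supersolution needs $\bar u'(x_0^-)\ge \bar u'(x_0^+)$; your corner satisfies the subsolution inequality instead). This is precisely why the paper smooths $u_0=x_+^{1/p}$ near the origin and then compensates with the term $M\zeta_D\psi_1$ built from the principal eigenfunction of the linearization at $u_0$ -- that correction cannot be waved away. Symmetrically, $\epsilon(x-x_1)^{1/p}$ is not a subsolution near $x_1$, since there $-v''=\epsilon\frac1p\bigl(1-\frac1p\bigr)(x-x_1)^{1/p-2}>0$ blows up while the favorable terms vanish; and even after a cut-off this lower barrier is far too weak: the sandwich $\epsilon(x-x_1)^{1/p}\le U\le (x+c)^{1/p}$ does \emph{not} force the second condition in (\ref{eqEqappenCbdry}). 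Proving $x^{-1/p}U(x)\to 1$ is one of the delicate steps: in the paper it is done on the half line (relation (\ref{eqlim}) in Proposition \ref{prophalf}), using $u'(\infty)=0$, a sequence $x_i$ with $x_i-u^p(x_i)\to0$, a modified lower solution and the uniqueness of (\ref{eqhalf}); the full-line solution then inherits the asymptotics because the lower solution chosen there is $U_+\chi_{\{x\ge0\}}$, which already satisfies them together with the rate (\ref{eqhalfestim}). Your subsequent linearization $U=x^{1/p}+w$ presupposes exactly this missing fact: with only $\epsilon x^{1/p}\le U$, the effective potential $(p+1)\xi^p-x$ (with $\xi$ between $U$ and $x^{1/p}$) need not be positive for large $x$, so the barrier lemma does not apply and the rate $\mathcal{O}\bigl(x^{(1-3p)/p}\bigr)$ does not follow.

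Your monotonicity step is also flawed as written: at a zero $x_0$ of $U'$ you correctly get $(U')''(x_0)=-U(x_0)<0$, but this makes $x_0$ a local maximum of $U'$ only if in addition $U''(x_0)=0$; a transversal zero is not excluded, and at a negative interior minimum $\xi$ of $U'$ the identity $-(U')''+\bigl[(p+1)U^p-x\bigr]U'=U$ yields no contradiction when $(p+1)U^p-x<0$ at $\xi$ -- and the positivity of this potential is not free (for $p=2$ it is, after the reflection $x\mapsto -x$, the nontrivial bound (\ref{eqvpeli}) of Gallo--Pelinovsky). The paper avoids this by dividing the equation by $u$ before differentiating, which gives $v''-\frac1u vv'-pu^pv=-u$ for $v=U'$ with a manifestly positive zeroth-order coefficient $pu^p$, so the interior-minimum argument closes (alternatively: maximum principle for $U'/U$, or moving planes). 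By contrast, your uniqueness route (Brezis--Oswald/Picone with truncation) and your non-degeneracy argument (pairing $\mathcal{L}(U')=U$ with the positive principal eigenfunction to get $\mu_1>0$, then excluding bounded kernel elements by decay) are sound in outline and close in spirit to the paper's quotient argument and to Proposition \ref{prohalfnondeg}/(\ref{eqmu1}); but both rely on the refined asymptotics at $+\infty$ and on $U'>0$, so the gaps above must be repaired first.
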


Note that (\ref{eqpanintro}) falls in the above class of problems by
means of the transformation $x\to -x$.

\begin{rem}\label{remexponent}
The results of this appendix can be used in extending the results of
the current paper, and treat  the defocusing $(\ref{eqground})_-$
with arbitrary nonlinearity exponent $q>2$. In particular, the
considered model of the latter equation,
 with
nonlinearity exponents $7/3 < q < 3$ (in one space dimension and the
model harmonic potential), is particularly relevant to the physics
of BEC-BCS (Bardeen-Cooper-Schrieffer) transition in ultracold
Fermi gases (see \cite{yankevrekidis}).
\end{rem}

As a stepping stone towards the proof of the above proposition, we
will first prove the following result which, as we have already
mentioned, is of interest in its own right. In particular, the
solution $U_+$ below will form the basis for our construction of a
lower-solution to problem
(\ref{eqEqappenC})--(\ref{eqEqappenCbdry}).

\begin{pro}\label{prophalf}
Given $p>1$, there exists a unique solution $U_+$ of the problem
\begin{equation}\label{eqhalf}
\left\{\begin{array}{ll}
  -u''=xu-u^{p+1}, & x>0, \\
   &  \\
  u(0)=0, &  \\
   &  \\
  0\leq u(x)\leq x^\frac{1}{p}, &x\geq0.
\end{array}
\right.
\end{equation}
Furthermore, we have that $U_+'(x)>0,$ $x\geq 0$, and
\begin{equation}\label{eqhalfestim}
U_+(x)-x^\frac{1}{p}=\mathcal{O}\left(x^{\frac{1-3p}{p}} \right)\
\textrm{as}\ x\to \infty.
\end{equation}
\end{pro}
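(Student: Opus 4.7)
The plan is to combine a classical sub/super-solution construction on bounded intervals with a Brezis--Oswald-type uniqueness argument and a barrier argument for the asymptotic behavior. An obvious upper-solution is $\bar u(x) = x^{1/p}$: since $(x^{1/p})'' = \frac{1-p}{p^2}x^{1/p-2} < 0$ for $p > 1$ while $x\bar u - \bar u^{p+1} = 0$, one has $-\bar u'' > x\bar u - \bar u^{p+1}$. The trivial $\underline u \equiv 0$ is a lower-solution. Solving on $[0,R]$ with boundary data $u(0)=0$ and $u(R) = R^{1/p}$ by monotone iteration between the ordered pair $(\underline u, \bar u)$ produces a solution $U_R$ with $0 \le U_R \le x^{1/p}$. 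Standard $C^2$ bounds on compacts (from the ODE itself) and a diagonal extraction then give $U_R \to U_+$ in $C^2_{\mathrm{loc}}([0,\infty))$, with $U_+$ solving the equation in (\ref{eqhalf}) on the half-line and satisfying $0 \le U_+ \le x^{1/p}$.

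Monotonicity $U_+' > 0$ then falls out of a ``concavity'' observation once basic positivity and strict inequality are in place. First, if $U_+(x_0) = 0$ at some $x_0 > 0$ then $U_+'(x_0) = 0$ as well (since $U_+ \ge 0$), and ODE uniqueness at $(u,u')=(0,0)$ would force $U_+ \equiv 0$, incompatible with the asymptotic to be established at infinity. Second, $U_+(x) < x^{1/p}$ strictly on $(0,\infty)$: at any putative touching point $w(x_0) = 0$ (with $w := x^{1/p} - U_+$), a direct computation using the equation gives $w''(x_0) = \frac{1-p}{p^2}x_0^{1/p-2} < 0$, contradicting the local minimum of $w \ge 0$ there. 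Hence on $(0,\infty)$ one has $-U_+'' = U_+(x - U_+^p) > 0$, so $U_+'$ is strictly decreasing; combined with $U_+'(0) > 0$ (same ODE uniqueness) and $U_+'(x) \to 0$ at infinity (a consequence of (\ref{eqhalfestim}) via integration of the ODE), this forces $U_+' > 0$ throughout $[0,\infty)$.

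The genuinely delicate step is the asymptotic rate (\ref{eqhalfestim}). Setting $w = x^{1/p} - U_+$ and writing $x - U_+^p = p\,\xi^{p-1}\,w$ with $\xi \in (U_+, x^{1/p})$, one obtains
\begin{equation*}
-w'' + p\,\xi^{p-1}\,U_+\,w \;=\; \frac{p-1}{p^2}\,x^{1/p-2}.
\end{equation*}
A two-step bootstrap is needed: crude barriers of the form $x^{1/p} - M x^{\alpha}$ with $0 < \alpha < 1/p$ first yield $U_+/x^{1/p} \to 1$, after which $p\,\xi^{p-1}\,U_+ \ge c\,x$ for $x \ge R_0$ large. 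Then the ansatz $\psi(x) = C\,x^{(1-3p)/p}$, whose second derivative is of strictly lower order than $x\,\psi$, serves as an upper-solution of the above linear equation for $C > (p-1)/p^3$ and $R_0$ sufficiently large; comparing $\psi$ and $w$ on $[R_0,\infty)$ (both vanishing at infinity) yields $0 \le w \le \psi$, which is exactly (\ref{eqhalfestim}). I expect this bootstrap-plus-barrier step, which must moreover be completed before the monotonicity argument can be closed, to be the main technical obstacle.

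Uniqueness on $[0,\infty)$ is then obtained globally by a Brezis--Oswald identity. For two solutions $u_1, u_2$ of (\ref{eqhalf}) (both strictly positive on $(0,\infty)$ by the arguments above), testing the equation for $u_i$ against $(u_i^2 - u_j^2)/u_i$ and integrating by parts gives
\begin{equation*}
\int_0^\infty \left[\Bigl(u_1' - \frac{u_1 u_2'}{u_2}\Bigr)^{\!2} + \Bigl(u_2' - \frac{u_2 u_1'}{u_1}\Bigr)^{\!2}\right] dx \;=\; \int_0^\infty (u_2^p - u_1^p)(u_1^2 - u_2^2)\,dx \;\le\; 0,
\end{equation*}
with boundary contributions vanishing thanks to $u_i(x) \sim u_i'(0)\,x$ near $0$ (with $u_i'(0) > 0$) and $u_i \to x^{1/p}$, $u_i' \to 0$ at infinity from (\ref{eqhalfestim}). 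Since $t \mapsto t^p$ is strictly increasing, equality forces $u_1 \equiv u_2$, which completes the plan.
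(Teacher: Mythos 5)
Your overall architecture (upper barrier $x^{1/p}$, linearization in $w=x^{1/p}-u$ for the rate, a Brezis--Oswald flavored uniqueness) is close in spirit to the paper, but the logical ordering you chose creates a genuine circularity at precisely the hard step. The claim that ``crude barriers of the form $x^{1/p}-Mx^{\alpha}$ yield $U_+/x^{1/p}\to 1$'' cannot be closed as stated: $x^{1/p}-Mx^{\alpha}$ is indeed a lower-solution for large $x$, but to compare it with $U_+$ on $[x_0,R]$ or $[x_0,\infty)$ you must control $U_+$ from below at the far end, and a priori you only know $0\le U_+\le x^{1/p}$ (concavity of $U_+$ gives upper, not lower, bounds at the right endpoint). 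This is exactly where the paper invests its main effort: it first integrates the equation to get $\int_0^\infty u(x-u^p)\,dx=u'(0)<\infty$, extracts a sequence $x_i$ with $x_i-u^p(x_i)\to 0$, and then combines this with \emph{uniqueness} (proved beforehand, with no asymptotics, via monotonicity of the quotient $u_2/u_1$ and L'Hospital at the origin) and a $\max\bigl(u,(1-\theta)x^{1/p}\bigr)$ lower-solution plus Ni's existence theorem to force $u\ge(1-\theta)x^{1/p}$ eventually. Your ordering (asymptotics before uniqueness) removes access to that mechanism, and your uniqueness step in turn \emph{needs} the asymptotics: the boundary term $u_1'(u_1^2-u_2^2)/u_1$ at infinity in your integral identity only vanishes, and the integrals only converge, if \emph{both} solutions are already known to approach $x^{1/p}$ with a rate; a competing solution of (\ref{eqhalf}) comes only with $0\le u\le x^{1/p}$, so you may not invoke (\ref{eqhalfestim}) for it. Even granting your first bootstrap step, ``both vanishing at infinity'' in the second step is unjustified, since $U_+/x^{1/p}\to1$ only gives $w=o(x^{1/p})$; the paper handles this by adding the term $-\delta x^{1/p}$ to the barrier, applying the maximum principle, and letting $\delta\to0$.

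A secondary gap is nontriviality of the constructed limit. With the trivial lower-solution and the boundary data $u(R)=R^{1/p}$, the local limit of the $U_R$ could a priori degenerate to $u\equiv 0$, which does satisfy (\ref{eqhalf}); you exclude this by appealing to (\ref{eqhalfestim}) ``to be established'', which is again circular because the rate is false for the trivial solution and its proof needs nontriviality. The paper avoids this by using the nontrivial weak lower-solution $\delta\chi_{[\pi,2\pi]}\sin(x-\pi)$, which lies below $x^{1/p}$ and hence below the solution it produces. Your monotonicity argument, by contrast, is essentially the paper's and does not actually need (\ref{eqhalfestim}): $u''\le 0$ gives $u'\downarrow a$, $a<0$ contradicts $u\ge0$, $a>0$ contradicts $u\le x^{1/p}$ with $p>1$, so $a=0$ and $u'>0$; decouple it from the rate. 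The workable repair is the paper's ordering: existence with a nontrivial lower barrier, then $u'>0$, then uniqueness by the quotient argument (which uses only $u'>0$, $u'(0)>0$ and $u\le x^{1/p}$), then $u/x^{1/p}\to1$ via the integral identity and uniqueness, and only then your barrier of size $x^{(1-3p)/p}$ for the rate.
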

\begin{proof}
It is easy to check that $x^\frac{1}{p}$ is an upper-solution of
(\ref{eqhalf}), while $\delta \chi_{[K,K+\pi]}\sin(x-K)$ is a (weak)
lower-solution provided that $K\geq 2$  and $0<\delta\leq 1$ (here
$\chi$ denotes the characteristic function). (We refer the reader to
\cite{Berestyckilion} for more information on piecewise smooth weak
upper/lower-solutions). From now on we fix such a $K$, say $K=\pi$.
Then, by a well known theorem \cite{Berestyckilion}, for every
$0<\delta\leq 1$, there exist solutions $u_1$, $u_2$ of
(\ref{eqhalf}) such that $\delta \chi_{[\pi,2\pi]}\sin(x-\pi)\leq
u_1\leq u_2\leq x^\frac{1}{p}$, with the property that any solution
 of (\ref{eqhalf}) such that $\delta
\chi_{[\pi,2\pi]}\sin(x-\pi)\leq u \leq x^\frac{1}{p}$, satisfies  $
u_1\leq u \leq u_2$. (Note that $u_1,\ u_2$ depend on $\delta$).

For any nontrivial solution of (\ref{eqhalf}) we have
$u''=u(u^p-x)\leq 0,\ x>0$. Thus $u'$ is non-increasing. As a
result, $u'\to a$ as $x \to \infty$. Here $a$ may be minus infinity.
We claim that $a=0$. In fact, if $a<0$, then $u(x)<0$ for $x$ large,
which is a contradiction to $u\geq 0$. If $a>0$, then $u(x)\geq
\frac{a}{2}x$ for $x>0$ large, which is a contradiction to $u(x)\leq
x^\frac{1}{p}$. Thus $a=0$. Consequently $u'\geq 0$. Actually, since
$u'$ cannot be constant over a nontrivial interval, we find that
$u'$ is decreasing and thus $u'>0$.

In order to proceed further, we will show that problem
(\ref{eqhalf}) has a unique solution. Inspired by the work by H.
Brezis and L. Oswald in \cite{brezis-oswald} (see also \cite[Sec.
8.5.2]{pacard-riviere}), we will study the quotient of two solutions
(for a different approach see Remark \ref{remSerrin} below). Suppose
that (\ref{eqhalf}) has two solutions $\tilde{u}_1, \tilde{u}_2$.
Then, we can find $0<\delta\leq 1$ small such that $\delta
\chi_{[\pi,2\pi]}\sin(x-\pi)\leq \tilde{u}_i\leq x^\frac{1}{p}$,
$i=1,2$. Thus, from the previous discussion, we infer that $u_1\leq
\tilde{u}_i \leq u_2,\ i=1,2$. We only need to prove that $u_1=u_2$.
From (\ref{eqhalf}), we find that
\[
\frac{u_2''}{u_2}\geq \frac{u_1''}{u_1}.
\]
The above inequality implies that the function $u_2'u_1-u_2u_1'$ is
non-decreasing, and so
\[
(u_2'u_1-u_2u_1')(x)\geq (u_2'u_1-u_2u_1')(0)=0, \ x>0,
\]
which in turn implies that the function $\frac{u_2}{u_1}$ is
non-decreasing.
Therefore, we get
\[
\frac{u_2(x)}{u_1(x)}\geq \lim_{x\to
0^+}\frac{u_2(x)}{u_1(x)}=\frac{u_2'(0)}{u_1'(0)},\ x>0,
\]
by L'Hospital's rule (recall that $u_1'(0)>0$). From $u_2\geq u_1$
and $u_1(0)=u_2(0)=0$, we know that $u_2'(0)\geq u_1'(0)$. Suppose
that $u_2'(0)> u_1'(0)$. We have
\[
u_1\leq \frac{u_1'(0)}{u_2'(0)}u_2\leq
\frac{u_1'(0)}{u_2'(0)}x^\frac{1}{p},\ x>0.
\]
So
\[
u_1''=u_1(u_1^p-x)\leq u_1\left[\left(\frac{u_1'(0)}{u_2'(0)}
\right)^p-1 \right]x\leq -cx
\]
for $x>0$ large, and some constant $c>0$, since $u_1'(0)< u_2'(0)$
and $u_1'> 0$. It follows that $u_1'<0$ for $x>0$ large. This is a
contradiction to $u_1'(x)>0,\ x\geq 0$. We conclude that
$u_1'(0)=u_2'(0)$, which gives $u_1=u_2$.

Let $u$ denote the unique solution of (\ref{eqhalf}). Adapting an
argument from \cite{dancer-lazer}, we will show that
\begin{equation}\label{eqlim}
x^{-\frac{1}{p}}u(x)\to 1 \ \textrm{as} \ x\to \infty.
\end{equation}
From $u'(\infty)=0$, we obtain
\[
\int_{0}^{\infty}u(x)\left(x-u^p(x)\right)dx=-\int_{0}^{\infty}u''(x)dx=u'(0).
\]
Hence, we can choose $x_i\to \infty$ such that $
u(x_i)\left(x_i-u^p(x_i)\right)\to 0\ \textrm{as}\ i\to \infty$, and
recalling that $u'> 0$, we find that
\begin{equation}\label{eqxi}
x_i-u^p(x_i)\to 0\ \textrm{as}\ i\to \infty.
\end{equation}
Now, for any small $\theta>0$, we claim that
\[
u(x)\geq (1-\theta)x^\frac{1}{p},
\]
for $x>0$ large. Suppose that the claim is not true. Then, there are
$0<\theta<1$ and $\tilde{x}_i\to \infty$ such that
\begin{equation}\label{eqxitilda}
u(\tilde{x}_i)< (1-\theta)\tilde{x}_i^\frac{1}{p}.
\end{equation}
It is easy to check that there is a $T>0$ large such that, for
$x>T$, we have
\begin{equation}\label{eqsubdancer}
-\left((1-\theta)x^\frac{1}{p}
\right)''<x(1-\theta)x^\frac{1}{p}-\left((1-\theta)x^\frac{1}{p}
\right)^{p+1}.
\end{equation}
By (\ref{eqxi}), we can choose a constant $\bar{T}>T$, such that
$u(\bar{T})>(1-\theta)\bar{T}^\frac{1}{p}$. Define $v(x)=u(x)$ if $x
\in [0,\bar{T}]$;
$v(x)=\max\left(u(x),(1-\theta)x^\frac{1}{p}\right)$ if $x\in
[\bar{T},\infty)$. Then $v$ is continuous, $0\leq v\leq
x^\frac{1}{p}$, and is a (weak) lower-solution of (\ref{eqhalf}), in
view of (\ref{eqsubdancer}) and \cite{Berestyckilion}. As a result,
from \cite[Thm. 2.10]{niIndiana}, problem (\ref{eqhalf}) has a
solution $u^*$ with $v\leq u^*\leq x^\frac{1}{p}$. On the other
hand, since $v\geq u$, and $u\neq v$ (by (\ref{eqxitilda})), we find
that $u^*\neq u$. This contradicts the uniqueness of the solutions
of (\ref{eqhalf}).

It remains to show the validity of estimate (\ref{eqhalfestim}). We
have
\begin{equation}\label{equq}
u''=u\frac{\left(u^p-x
\right)}{u-x^\frac{1}{p}}(u-x^\frac{1}{p})=q(x)(u-x^\frac{1}{p}),\
x>0,
\end{equation}
with
\begin{equation}\label{eqqlim}
\frac{q(x)}{x} \to p \ \textrm{as}\ x\to \infty,
\end{equation}
by (\ref{eqlim}). (Note that, by the maximum principle, we get
$u(x)<x^\frac{1}{p},\ x>0$). Let
\[w=u-x^\frac{1}{p},\ x>0. \]
Then, from (\ref{equq}), we obtain that
\begin{equation}\label{eqqw}
-w''+q(x)w-\frac{1}{p}\left(\frac{1}{p}-1
\right)x^{\frac{1}{p}-2}=0,\ x>0.
\end{equation}
We claim that there exist $L,\ M>0$ sufficiently large such that,
for every $\delta \in (0,1)$, the function
\[
\underline{w}(x)=-Mx^{\frac{1}{p}-3}-\delta x^\frac{1}{p},
\]
is a lower-solution of (\ref{eqqw}) in $[L,\infty)$. Indeed, thanks
to (\ref{eqqlim}), for every $\delta \in (0,1)$, we find that
\[
\begin{array}{l}
  -\underline{w}''+q(x)\underline{w}-\frac{1}{p}\left(\frac{1}{p}-1
\right)x^{\frac{1}{p}-2}\leq \\
   \\
  M\left(\frac{1}{p}-3 \right)\left(\frac{1}{p}-4 \right)x^{\frac{1}{p}-5}+\delta \frac{1}{p}\left(\frac{1}{p}-1 \right)x^{\frac{1}{p}-2}
  -\frac{p}{2}Mx^{\frac{1}{p}-2}-\frac{p}{2}\delta x^{\frac{1}{p}+1}-\frac{1}{p}\left(\frac{1}{p}-1 \right)x^{\frac{1}{p}-2}\leq \\
   \\
  Mx^{\frac{1}{p}-5}\left[\left(\frac{1}{p}-3\right)\left(\frac{1}{p}-4\right)+\left(\frac{1}{M}-\frac{p}{2}\right)x^3
  \right]<0,
\end{array}
\]
for $x>L$,  provided $M>\frac{4}{p}$ and $L$ is sufficiently large
(independently of $M$). We fix such an $L>0$, and
choose $M>\frac{4}{p}$ large, depending only on $L$, such that
$\underline{w}(L)<w(L)$ for every $\delta \in (0,1)$. In view of
(\ref{eqlim}), for every $\delta \in (0,1)$, we have
\[
\underline{w}(x)-w(x)=-Mx^{\frac{1}{p}-3}-\delta
x^\frac{1}{p}-(u-x^\frac{1}{p})\leq x^\frac{1}{p}
\left(-\delta-(x^{-\frac{1}{p}}u-1)\right)\to -\infty
\]
as $x\to \infty$. Thus, by the maximum principle, we deduce that
$\underline{w}(x)\leq w(x),\ x\geq L$, i.e.,
\[
u(x)-x^\frac{1}{p}\geq -Mx^{\frac{1}{p}-3}-\delta x^\frac{1}{p},\
x\geq L,\ 0<\delta<1.
\]
By letting $\delta \to 0$, we deduce that $u(x)-x^\frac{1}{p}\geq
-Mx^{\frac{1}{p}-3},\ x\geq L$. The validity of (\ref{eqhalfestim})
follows at once from this lower bound and the upper bound $u\leq
x^\frac{1}{p}$.

The proof of the proposition is complete.
\end{proof}
\begin{rem}\label{remSerrin}
An alternative way to establish uniqueness for (\ref{eqhalf}) is the
following: Suppose that $V_+$ solves (\ref{eqhalf}). Then, it is
easy to see that $\lambda V_+$, $\lambda\geq 1$,  is a family of
upper solutions of (\ref{eqhalf}) such that $\lambda
V_+(0)=U_+(0)=0$, and $\lambda V_+-U_+\to \infty$ as $x\to \infty$
if $\lambda>1$. Moreover, since $V_+'(0)>0$, we have that $\lambda
V_+>U_+$ in $(0,\infty)$ if $\lambda\gg 1$. Therefore, by
\emph{Serrin's sweeping technique} (see \cite[pg. 40]{sattinger}),
we get $V_+\geq U_+$ in $[0,\infty)$. Similarly, we can show that
$V_+\leq U_+$ in $[0,\infty)$. Consequently, we get that $V_+\equiv
U_+$.
\end{rem}
\begin{rem}\label{remvoritcesmine}
In relation with the problems mentioned in the third part of
Subsection \ref{secmotivation}, it would be of interest to
generalize Proposition \ref{prophalf} in the following direction:
Study solutions $u:\mathbb{R}^2\to \mathbb{C}$ (if they exist) of
the problem
\[\left\{
\begin{array}{lll}
  \Delta u+\left(|\textbf{y}|-|u|^p \right)u=0, &\textbf{y}\in \mathbb{R}^2,  \\
    &    \\
  u(\textbf{0})=0;  & |\textbf{y}|^{-\frac{1}{p}}|u|\to 1 \
  \textrm{as}\ |\textbf{y}|\to \infty.
\end{array}
\right.
\]

We cannot resist to compare the above problem with the well known
\[\left\{
\begin{array}{lll}
  \Delta u+\left(1-|u|^2 \right)u=0, &\textbf{y}\in \mathbb{R}^2,  \\
    &    \\
  u(\textbf{0})=0;  & |u|\to 1 \
  \textrm{as}\ |\textbf{y}|\to \infty,
\end{array}
\right.
\]
see \cite{herve}, \cite{sigalOvch1}, and the references in the
research monographs \cite{bethuel}, \cite{pacard-riviere},
\cite{serfatybook}.
\end{rem}

The following proposition plays an important role in relation with
Remark \ref{remdirichlet}.
\begin{pro}\label{prohalfnondeg}
The solution $U_+$ of (\ref{eqhalf}) is non-degenerate in the sense
that there are no nontrivial  bounded solutions of the problem
\begin{equation}\label{eqlinearappC}
\phi''-\left[(p+1)U_+^p-x\right]\phi=0,\  x>0,\ \ \phi(0)=0.
\end{equation}
\end{pro}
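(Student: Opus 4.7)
I would mimic, at the linearized level, the Brezis--Oswald-type quotient argument used to establish uniqueness in Proposition~\ref{prophalf}. Any solution of the homogeneous equation $L\phi:=\phi''-[(p+1)U_+^p-x]\phi=0$ satisfying $\phi(0)=0$ is determined by $\phi'(0)$, and if $\phi'(0)=0$ then uniqueness for the linear Cauchy problem gives $\phi\equiv 0$. So I assume $\phi'(0)\neq 0$ and, changing signs if necessary, $\phi'(0)>0$. Set $\psi:=\phi/U_+$ on $(0,\infty)$. From $U_+(0)=0$, $U_+'(0)>0$ and $U_+''(0)=(U_+^p(0)-0)U_+(0)=0$ (and similarly $\phi''(0)=0$), Taylor's formula gives $U_+(x)=U_+'(0)x+O(x^3)$ and $\phi(x)=\phi'(0)x+O(x^3)$ as $x\to 0^+$. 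Thus $\psi$ extends continuously to $[0,\infty)$ with $\psi_0:=\psi(0)=\phi'(0)/U_+'(0)>0$, and the combination $U_+^2\psi'=U_+\phi'-\phi U_+'=O(x^3)$ vanishes at $x=0$.

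Substituting $\phi=U_+\psi$ into $L\phi=0$ and using $U_+''=(U_+^p-x)U_+$, the cross terms cancel exactly and I obtain the identity
\begin{equation}\label{eqProposalPsi}
(U_+^2\,\psi')'=p\,U_+^{p+2}\,\psi,\qquad x>0,
\end{equation}
which is the linearised counterpart of the monotonicity relation $(u_2'u_1-u_2u_1')'\geq 0$ exploited in the proof of Proposition~\ref{prophalf}. I next claim that $\psi\geq \psi_0>0$ on all of $[0,\infty)$. If not, let $x_1:=\inf\{x>0 : \psi(x)=0\}\in(0,\infty]$; then $\psi>0$ on $[0,x_1)$, so by \eqref{eqProposalPsi} the function $U_+^2\psi'$ is strictly increasing on $(0,x_1)$. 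Combined with $U_+^2\psi'(0^+)=0$, this forces $U_+^2\psi'>0$, hence $\psi'>0$, on $(0,x_1)$. Consequently $\psi$ is strictly increasing on $(0,x_1)$, contradicting $\psi(x_1)=0<\psi_0=\psi(0)$. Therefore $x_1=\infty$ and $\psi\geq\psi_0>0$ throughout.

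It follows that $\phi(x)=U_+(x)\psi(x)\geq \psi_0\,U_+(x)$, and by \eqref{eqhalfestim} we have $U_+(x)\sim x^{1/p}\to\infty$ as $x\to\infty$, so $\phi$ is unbounded, contradicting the hypothesis. Hence no nontrivial bounded solution of \eqref{eqlinearappC} exists. The only genuinely delicate step is the boundary analysis at $x=0$, where $U_+$ vanishes and the quotient $\phi/U_+$ must be shown to extend regularly with $U_+^2\psi'(0^+)=0$; once this is secured, the rest is a clean one-line integration of \eqref{eqProposalPsi}. I expect no other obstacle, since the relation \eqref{eqProposalPsi} performs for the linearised problem the same monotonicity role that underpinned the uniqueness statement in Proposition~\ref{prophalf}.
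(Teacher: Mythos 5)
Your proof is correct, and it takes a genuinely different route from the one in the paper. The paper argues at the far end: using that $(p+1)U_+^p-x\to\infty$, it first deduces that a bounded solution $\phi$ of (\ref{eqlinearappC}) has finitely many simple zeros and decays super-exponentially together with $\phi'$, and then tests against $U_+'$ (which solves the differentiated equation with right-hand side $-U_+$) on the interval $(r_0,\infty)$ beyond the largest zero $r_0$; an integration by parts gives $U_+'(r_0)\phi'(r_0)=-\int_{r_0}^{\infty}U_+\phi\,dx$, whose two sides have opposite signs. You instead anchor the argument at the Dirichlet point $x=0$ and compare with $U_+$ itself, in the spirit of the Brezis--Oswald quotient argument already used for Proposition \ref{prophalf}: writing $W=U_+\phi'-\phi U_+'=U_+^2\left(\phi/U_+\right)'$, one has $W(0)=0$ directly (both $U_+$ and $\phi$ vanish at $0$, so your Taylor expansions, while correct, are not even needed for this point) and $W'=pU_+^{p+1}\phi$, so the quotient $\phi/U_+$ increases as long as $\phi>0$; with $\phi'(0)>0$ this excludes a first zero of $\phi$ and forces $\phi\geq c\,U_+$ with $c=\phi'(0)/U_+'(0)>0$, contradicting boundedness since $U_+\sim x^{1/p}\to\infty$ by (\ref{eqhalfestim}). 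What your version buys is economy: you need neither the decay of bounded solutions at infinity nor any control of their zero set, only positivity, monotonicity of the quotient, and unboundedness of $U_+$. What the paper's version buys is portability: being anchored at the last zero and at infinity rather than at the boundary point, it transfers verbatim to the nondegeneracy of the entire-line solution $U$ in Proposition \ref{proentire}, where there is no Dirichlet point from which to launch your quotient and one would instead have to compare decay rates of $\phi$ and $U$ as $x\to-\infty$.
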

\begin{proof}
Suppose that there exists a nontrivial bounded solution $\phi$ of
(\ref{eqlinearappC}). The fact that $(p+1)U_+^p-x\to \infty$ as
$x\to \infty$, easily implies that $\phi$ has a finite number of
zeros in $[0,\infty)$, all of them simple, and that $|\phi|,\
|\phi'|$ decay to zero super--exponentially as $x\to \infty$. Let
$r_0\geq 0$ be the largest zero of $\phi$. Without loss of
generality, we may assume that $\phi'(r_0)>0$. Differentiating
(\ref{eqhalf}),  multiplying the resulting identity by $\phi$, then
multiplying (\ref{eqlinearappC}) by $U_+'$, subtracting, and
integrating by parts over $(r_0,\infty)$, we readily arrive at
\[
U_+'(r_0)\phi'(r_0)=-\int_{r_0}^{\infty}U_+\phi dx.
\]
However, this is a contradiction to $U_+'>0,\ \phi'(r_0)>0$, and
$\phi(x)>0,\ x>r_0$.

The proof of the proposition is complete.
\end{proof}
\begin{rem}\label{rempotentialhalfapenc}
Note that, since $U_+(0)=0$ and $p>1$, the potential $(p+1)U_+^p-x$
of the linear  operator in the righthand side of
(\ref{eqlinearappC}) is negative for small $x>0$.
\end{rem}

We can now give the

\texttt{PROOF OF PROPOSITION \ref{proentire}}: Let $U_+$ be as in
Proposition \ref{prophalf}, it is clear that
\begin{equation}\label{equlower}
\underline{u}(x)=\left\{
\begin{array}{ll}
  U_+(x), & x\geq 0,\\
   &  \\
  0, & x\leq 0,
\end{array}
\right.
\end{equation}
is a (weak) lower-solution of
(\ref{eqEqappenC})-(\ref{eqEqappenCbdry}).

Next we will construct an upper-solution of
(\ref{eqEqappenC})--(\ref{eqEqappenCbdry}), for a different
construction we refer the interested reader to Remark
\ref{remuppersolNew} below. Let
\begin{equation}\label{equ0appendix}
u_0(x)=\left\{
\begin{array}{ll}
  x^\frac{1}{p}, & x\geq 0,\\
   &  \\
  0, & x\leq 0.
\end{array}
\right.
\end{equation}
Then, fix a continuous function $\phi \geq 0$ such that $u_0+\phi
\in C^2(\mathbb{R})$ and $\phi(x)=0$ if $|x|\geq 1$. Let $\mu_1>0,\
\psi_1>0$ denote the principal eigenvalue and the corresponding
$L^\infty$-normalized eigenfunction of
\[
-\psi''+\left((p+1)u_0^p-x\right)\psi=\mu\psi,\ \ \psi\in
L^2(\mathbb{R}).
\]
Such $\mu_1,\ \psi_1$ exist, since the potential
\begin{equation}\label{eqpotentialAppendix}
Q(x)\equiv(p+1)u_0^p-x=\left\{
\begin{array}{ll}
  px, & x\geq 0,\\
   &  \\
  -x, & x\leq 0,
\end{array}
\right.
\end{equation}
clearly satisfies $\inf_{x\in \mathbb{R}}Q(x)<\liminf_{x\to
\pm\infty}Q(x)$, see \cite{rabinowitz} (see also \cite[Thm.
10.7]{hislop}). Furthermore, $\psi_1,\ |\psi'_1|,\ |\psi_1''|$ decay
to zero super-exponentially as $|x|\to +\infty$. More precisely,
there exist constants $c_\pm$ and $x_-<0,\ x_+>0$ such that
\begin{equation}\label{eqBOa}
\psi_1(x)\sim c_\pm
\frac{\exp\left\{-\left|\int_{x_\pm}^{x}\sqrt{Q(t)-\mu_1}dt\right|\right\}}{\left[Q(x)-\mu_1\right]^\frac{1}{4}}\
\ \ \textrm{as}\ x\to \pm \infty,
\end{equation}
see \cite[Chap. IV, Thm. 14]{coppel} and \cite[Appx. A]{alfimov}. We
can now define our upper solution for
(\ref{eqEqappenC})--(\ref{eqEqappenCbdry}) as
\[
\bar{u}=u_0+\phi+M\zeta_D\psi_1
\]
with $M,\ D>1$  large constants to be chosen, and
$\zeta_D(x)=z(x-D)$, where $z\in C^\infty(\mathbb{R})$ is such that
$z=1,\ t\leq 0;\ z=0,\ t\geq 1;\ z'<0,\ t\in (0,1)$ . We proceed in
verifying that $\bar{u}$ is indeed an upper solution. In $[0,1]$,
$\bar{u}=(u_0+\phi)+M\psi_1$ and
\[
-\bar{u}''-x\bar{u}+\bar{u}^{p+1}\geq -CM+cM^{p+1}>0,
\]
provided $M>0$ is sufficiently large ($C,c>0$ are independent of
large $M$). We fix such an $M>0$. In $[1,D]$, we have
$\bar{u}=u_0+M\psi_1$ and
\begin{eqnarray}
  -\bar{u}''-x\bar{u}+\bar{u}^{p+1} &=& -u_0''-M\psi_1''-xu_0-Mx\psi_1+(u_0+M\psi_1)^{p+1} \nonumber\\
   &&  \nonumber\\
   &\geq& -M\psi_1''-xu_0-Mx\psi_1+u_0^{p+1}+(p+1)u_0^pM\psi_1=M\mu_1\psi_1>0.\nonumber
\end{eqnarray}
In $[D,D+1]$, we have $\bar{u}=u_0+M\zeta_D\psi_1$ and
\begin{eqnarray}
  -\bar{u}''-x\bar{u}+\bar{u}^{p+1} &=& -u_0''-M\left(\zeta_D\psi_1\right)''-xu_0-M x \zeta_D\psi_1+\left(u_0+M \zeta_D\psi_1\right)^{p+1} \nonumber\\
   &&  \nonumber\\
   &\geq& -\frac{1}{p}\left(\frac{1}{p}-1
   \right)x^{\frac{1}{p}-2}-Ce^{-x}-xu_0+u_0^{p+1}\nonumber\\
&&\nonumber\\
&=&-\frac{1}{p}\left(\frac{1}{p}-1
   \right)x^{\frac{1}{p}-2}-Ce^{-x}>0,
\end{eqnarray}
provided $D$ is chosen large ($C>0$ is independent of $D$). In
$[D+1,+\infty)$, we plainly have $\bar{u}=x^\frac{1}{p}$.
 Analogous calculations also hold in $(-\infty,0]$.
Consequently, the function $\bar{u}$ is an upper-solution of
(\ref{eqEqappenC})-(\ref{eqEqappenCbdry}).

It follows from \cite[Thm. 2.10]{niIndiana} as before that there
exists a solution of (\ref{eqEqappenC}) such that $\underline{u}\leq
u\leq \bar{u}$. The second estimate in (\ref{eqsuperexpappenC})
follows at once from (\ref{eqhalfestim}) and (\ref{equlower}); the
first one follows from the fact that, for every solution of
(\ref{eqEqappenC}) that tends to zero as $x\to -\infty$, there
exists some constant $c>0$ such that
\begin{equation}\label{eqasymptoticappend}
u(x)\sim c\textrm{Ai}(-x)\ \ \textrm{as}\ x\to -\infty,
\end{equation}
(recall the discussion leading to (\ref{eqvariationofcons})), and
(\ref{eqairyasymptotic}).

 We will show that $u'>0$. We follow \cite{alikakos}. Since
$0\leq u \leq x^\frac{1}{p}$ for $x \geq D+1$, as in the proof of
Proposition \ref{prophalf}, we obtain that $u'\to 0$ as $x\to
\infty$. Moreover, it is easy to show that $u'\to 0$
super-exponentially as $x\to -\infty$. Since $u$ is strictly
positive (by the maximum principle), in view of (\ref{eqEqappenC}),
we can write
\[
\left(\frac{u''}{u}-u^p \right)'=-1,
\]
i.e.,
\begin{equation}\label{eqvu}
v''-\frac{1}{u}vv'-pu^pv=-u, \ \ \textrm{where}\ \ v=u'.
\end{equation}
Since $v\to 0 $ as $|x|\to \infty$, it follows that if $v$ is not
strictly positive, then there exists $x_0$ such that
\[
v(x_0)\leq0,\ v'(x_0)=0,\ v''(x_0)\geq 0,
\]
but this is impossible because (\ref{eqvu}) implies
\[
v''(x_0)-pu^p(x_0)v(x_0)=-u(x_0)<0,
\]
and therefore we conclude that $u'$ is strictly positive. The same
conclusion can also be derived by adapting an argument from
\cite{weipitaevskii}, i.e., applying the maximum principle to the
function $\frac{u'}{u}$ (keep in mind the second identity in
(\ref{eqberestycki})). A more PDE approach is to apply the moving
plane method \cite{gidas}, starting from $-\infty$, as in
\cite{gui}.

Uniqueness (of nonnegative solutions) for the problem
(\ref{eqEqappenC})--(\ref{eqEqappenCbdry}) can be established in a
similar manner as we did in Proposition \ref{prophalf} for the
problem (\ref{eqhalf}): Again we suppose that there exist two
distinct non-negative solutions $u_1,u_2$ of
(\ref{eqEqappenC})--(\ref{eqEqappenCbdry}). By the strong maximum
principle, we deduce that both are strictly positive. Hence, there
is some small $\delta>0$ such that ${u}_i(x)\geq \delta
\chi_{[\pi,2\pi]}\sin(x-\pi)$, $i=1,2$, for every $x\in \mathbb{R}$.
Recall that the function in the righthand side is a lower-solution
of (\ref{eqEqappenC})--(\ref{eqEqappenCbdry}). Moreover, both $u_i,\
i=1,2,$  will eventually lie below the graph of $x^\frac{1}{p}$
(note that $u-x^\frac{1}{p}$ is strictly convex as long as it is
nonnegative). Thus, by virtue of (\ref{eqairyasymptotic}),
(\ref{eqpotentialAppendix}), (\ref{eqBOa}) and
(\ref{eqasymptoticappend}), we can choose  sufficiently large
numbers $D, M$ such that $u_i(x)\leq \bar{u}(x)$, $i=1,2$, for every
$x\in \mathbb{R}$. (This is a fine point that was not present in the
uniqueness proof for (\ref{eqhalf})). Recall that the function in
the righthand side is an upper-solution of
(\ref{eqEqappenC})--(\ref{eqEqappenCbdry}). Consequently, we may
assume that $u_1(x)\leq u_2(x)$, $x\in \mathbb{R}$, and
it is easy to see
that $\frac{u_2}{u_1}$ is non-decreasing in $\mathbb{R}$. On the
other hand, it follows from the second relation in
(\ref{eqEqappenCbdry}) that $\frac{u_2}{u_1}\to 1$ as $x\to \infty$.
So, we get that $u_2\leq u_1$ in $\mathbb{R}$ which is a
contradiction.

 Finally, the
non-degeneracy of $U$ can be derived as in Proposition
\ref{prohalfnondeg} for the non-degeneracy of $U_+$.

The proof of the proposition is complete.\ \ \ $\Box$
\begin{rem}\label{remhastingsUniq}
The fact that problem (\ref{eqEqappenC})--(\ref{eqEqappenCbdry}) has
a unique solution, which is a stronger result, has been proven in
\cite{hastings}.

In contrast, problem (\ref{eqdancerPainle}) with $p=2$, has exactly
two solutions. Existence of two solutions has been established by
Holmes and Spence \cite{holmespainleve} by a shooting argument (and
in \cite{dancer-lazer} for any $p>1$, via the method of upper/ lower
solutions and variational arguments, perhaps unaware of
\cite{holmespainleve}), where the authors also conjectured that
these solutions were indeed the only ones. Their conjecture was
settled, to the affirmative, by Hastings and Troy
\cite{hastingsTroy}. However, their proof was, as we discover now
(\emph{almost 25 years later!}), much more complicated than
necessary, and relied on some four decimal point numerical
calculations. Motivated by an idea of ours from
\cite{karalisourdisresonance}, where problem
(\ref{eqkaralisoudisPain}) was shown to have exactly two solutions,
we can give a truly simple proof of the uniqueness result of
\cite{hastingsTroy} as follows. We know from \cite{dancer-lazer},
\cite{holmespainleve} that problem (\ref{eqdancerPainle}) with $p=2$
has a unique increasing solution $\mathcal{U}_+$. Let
$\tilde{\mathcal{U}}$ be any other solution, and let
$\eta=\mathcal{U}_+-\tilde{\mathcal{U}}$. By an easy calculation,
and the maximum principle, we find that $\eta$ has to be a positive
solution of
\begin{equation}\label{eqeta}
\eta''-2\mathcal{U}_+(x)\eta+\eta^2=0,\ x>0,\ \ \eta(0)=0,\
\eta(x)\to 0\ \textrm{as}\ x\to \infty.
\end{equation}
The key observation now is that the solution $\eta$ furnishes an odd
standing wave solution of a focusing NLS equation of the form
$(\ref{eqNLS})_+$, with $N=1,\ q=2$. Thankfully, in the last years a
lot of research and efforts of many authors have been devoted to the
uniqueness of ground states of radially symmetric focusing nonlinear
Schr\"{o}dinger equations with non-decreasing potential (in $r>0$)
and power nonlinearity, considered in the whole space, in a ball, or
an annulus (see \cite{byeonOshita} for the state of the art). The
problem of uniqueness of $\eta$ resembles more the case of the
annulus and,  having all those tools at our disposal which were not
available at the time that \cite{hastingsTroy} was written, we can
infer that uniqueness as well as \emph{non-degeneracy} of a positive
solution $\eta$ of (\ref{eqeta}) follow directly from \cite[Thm.
1.2]{felmerUniqueness}. (In the latter reference, it was assumed
that the potential is strictly positive but it is easy to check that
their proof works equally well for the case at hand, see also
\cite{byeonOshita}, \cite{tanaka}). The non-degeneracy property of
the solution $\tilde{\mathcal{U}}$, which follows readily, is a
\emph{new result}  and, in the context of the original singular
perturbation problem \cite{turcotte} (see also \cite{dancer-lazer},
\cite{danceryanCrtitic}) is more useful than uniqueness (recall
Remark \ref{rempotential}).

To the best of our knowledge, the similarities between the
 singularly perturbed (multi-dimensional) elliptic problem in
\cite{dancer-lazer}, arising from the study of the Lazer-Mckenna
conjecture, and the one-dimensional one in \cite{hastingsTroy},
\cite{holmespainleve}, \cite{turcotte}, arising from the problem of
vertical flow of an internally heated Boussinesq fluid with viscous
dissipation and pressure work, were previously unknown.

The proof of the Lazer-Mckenna conjecture in \cite{dancer-lazer}
consists of constructing solutions of the problem  with arbitrary
many (clustering) sharp downward spikes on top of a positive
minimizer of the corresponding energy functional, as the small
parameter $\varepsilon>0$ tends to zero. The aforementioned
minimizer has a corner layer, along the boundary of the domain,
whose profile is described by the positive solution of
(\ref{eqdancerPainle}). It is our hope that the techniques of the
present paper, together with those already developed in \cite{del
pino cpam}, \cite{malchiodi-fife}, can be used to construct new
solutions, having corner layer profile described by the unstable
solution of (\ref{eqdancerPainle}) (is this unique and non-degenrate
for $p\neq 2$?), at least when $\varepsilon$ stays away from certain
critical values $\varepsilon_1>\varepsilon_2>\cdots>\varepsilon_i\to
0$ (see \cite{karalisourdisresonance} for a related problem). These
solutions would be slightly negative in a small
($\varepsilon$-dependent) neighborhood of the boundary, with their
Morse index diverging as $\varepsilon\to 0$ (away from the critical
numbers). Then, one could use the techniques of \cite{duSpikes},
\cite{weilayerspike} in order to add downward spikes on top of that
unstable solution. One may even be able to prove the existence of
arbitrary many solutions, as $\varepsilon\to 0$, which was the
original assertion of the Lazer-Mckenna conjecture settled in
\cite{dancer-lazer}, just by the fact that the solution's Morse
index diverges (recall also the discussion following
(\ref{eqopenev2})). This would constitute a proof of the
Lazer-Mckenna conjecture, as treated in \cite{dancer-lazer},
\cite{danceryanCrtitic}, that is valid even for \emph{supercritical
exponents}.
\end{rem}

\begin{rem}\label{rempainleveNeumann}
The related boundary value problem
\begin{equation}\label{eqpalamides}
u_{xx}=u(u^2-x),\ x>0;\ \ u_x(0)=0,\ u-\sqrt{x} \to 0\ \textrm{as}\
x \to \infty,
\end{equation}
 arises
 in the study of the superheating field attached to a semi-infinite
superconductor, for the construction of a family of approximate
solutions of the Ginzburg-Landau system via the procedure of
(formally) matching inner and outer solutions (see
\cite{chapman,helfer}). Existence for the problem
(\ref{eqpalamides}) has been established by shooting arguments in
\cite{helfer}, and by topological ones in \cite{palamides}. We can
give a new proof of their results, valid for any power nonlinearity
(as in Proposition \ref{prophalf}), by slightly modifying the above
proof of Proposition \ref{proentire} as follows: One still uses
$U_+$ as a lower solution of (\ref{eqpalamides}) (it is a solution
and $U_+'(0)>0$, see \cite{Berestyckilion}); however in the
construction of the upper solution we have to be careful to chose
the function $\phi$ such that $(u_0+\phi)_x(0)\leq 0$ (and
afterwards the principal eigenfunction subject to Neumann boundary
conditions at $x=0$). It was shown in \cite{guedapainleve} that,
without any assumptions at infinity, problem (\ref{eqpalamides}) has
exactly one global positive solution. As a matter of fact, we expect
that an analogous property holds true for the problem
(\ref{eqhalf}), see also \cite{brezis} for a related result
concerning (\ref{eqkaralisoudisPain}).
\end{rem}
\begin{rem}\label{remSigal}
Since $p>1$, the solutions that we have constructed in Propositions
\ref{proentire}, \ref{prophalf} have infinite energy (more
precisely, their derivative does not belong in $L^2(0,\infty)$).
Nevertheless, we believe that one can also establish existence (and
further characterize the solutions) for problems
(\ref{eqEqappenC})--(\ref{eqEqappenCbdry}), (\ref{eqhalf}), and
(\ref{eqpalamides}) by minimizing a suitable re-normalized energy
functional, as in \cite{sigalOvch1}, or minimizing the standard
energy functional of (\ref{eqEqappenC}) in a large interval $[-R,R]$
with boundary conditions $u(-R)=0; \ u(R)=R^\frac{1}{p}$, and then
letting $R\to \infty$ (see \cite{berestycki-wei2012-2},
\cite{saddleCabreJEMS1}, \cite{delPinoscrew} for some related
situations). Similarly for the other cases.
\end{rem}
\begin{rem}\label{remlastprofile}
If $0<p\leq1$, then there exists a unique solution of
(\ref{eqEqappenC})--(\ref{eqEqappenCbdry}) such that $u>u_0$, where
$u_0$ is as in (\ref{equ0appendix}). We cannot resist to give a
short  proof of this, based on \cite{karalisourdisradial}. If
$0<p<1$, it is easy to see that $u_0$ is a lower solution, while
$u_0+\varphi$, with $\varphi>0$ defined by
\[
-\varphi_{xx}+\left[(p+1)u_0^p-x \right]\varphi=(u_0)_{xx}\geq 0,\ \
\ \varphi(\pm \infty)=0,
\]
is an upper solution of (\ref{eqEqappenC})--(\ref{eqEqappenCbdry}).
If $p=1$, then $u_0$ is a weak lower solution, while $u_0+\phi$,
with $\phi>0$ the unique continuous solution of
\[
-\phi_{xx}+\left[(p+1)u_0^p-x \right]\phi=0,\ \
\phi_x(0^-)-\phi_x(0^+)=1, \ \phi(\pm \infty)=0,
\]
is an upper solution of (\ref{eqEqappenC})--(\ref{eqEqappenCbdry})
(recall (\ref{eqpotentialAppendix})). Existence of the desired
solution follows at once. Uniqueness follows simply by taking the
difference of the equations satisfied by two pairs of solutions.

The case $p=1$, which is equivalent to (\ref{eqkaralisoudisPain}),
has received considerable attention lately, mainly since it appears
in the study of the spatial segregation limit of competitive systems
\cite{teraccini}, \cite{iida} (see also
\cite{hutson-lou-mischaikow-jde}, \cite{schecter-sourdis}).
Interestingly enough, we have noticed that problem
(\ref{eqkaralisoudisPain}) also describes the corner layer profile
of solutions in the paper \cite{hastingsJnonlinear} by
\emph{Hastings and McLeod}, in the case where the constant $c$
therein, which arises from an integration, is chosen to be zero
rather than strictly positive.
\end{rem}
\begin{rem}
Similar results should also hold true for the equation
\[
u''+x|x|^{s}u-|u|^{p}u=0,\ \ x\in \mathbb{R},
\]
where $p,s>0$.
\end{rem}
\begin{rem}\label{remuppersolNew}
As we have already seen in Remark \ref{remlastprofile}, there exists
a unique solution $U>\max\{0,x\}$ of the problem
\begin{equation}\label{eqlogisticp}
u''=pu(u-x)=0, \ \ x\in \mathbb{R},
\end{equation}
such that $U\to 0$ as $x\to -\infty$; $U-x\to 0$ as $x\to \infty$.
(We have found out in \cite{karalisourdisresonance}, by arguing as
in Remark \ref{remhastingsUniq} above, that there exists exactly one
more solution $U_-$ of (\ref{eqlogisticp}) which satisfies the
boundary conditions, and in fact $U_-<\max\{0,x\}$). Actually, the
solution $U$ is the unique (global) solution of (\ref{eqlogisticp})
such that $u\geq \frac{x}{2},\ x\in \mathbb{R}$. This follows at
once from the fact that the equation in (\ref{eqkaralisoudisPain})
has a unique nonnegative solution, see \cite{brezis}. Moreover,
since $p>1$, it is easy to see that the function $U^\frac{1}{p}$ is
an upper-solution of (\ref{eqEqappenC})--(\ref{eqEqappenCbdry}).
\end{rem}

\textbf{Acknowledgment.} The research leading to these results has
received funding from the European Union's Seventh Framework
Programme (FP7-REGPOT-2009-1) under grant agreement
$\textrm{n}^\textrm{o}$ 245749.


\begin{thebibliography}{50}
\bibitem{ablowitz}
{\sc M. J. Ablowitz}, {\sc B. Prinari}, and {\sc A. D. Trubatch},
Discrete and Continuous Nonlinear Schr\"{o}dinger Systems, Cambridge
University Press, Cambridge, England, 2004.

\bibitem{karachalios}
{\sc V. Achilleos}, {\sc G. Theocharis}, {\sc P. G. Kevrekidis},
{\sc N. I. Karachalios}, {\sc F. K. Diakonos}, and {\sc D. J.
Frantzeskakis}, \emph{Stationary states of a nonlinear
Schr\"{o}dinger lattice with a harmonic trap}, Journal of
mathematical physics \textbf{52}, (2011), 092701.

\bibitem{brownJmaa}
{\sc  G. A. Afrouzi}, and {\sc K. J. Brown}, \emph{On a diffusive
logistic equation}, J. Math. Anal. Appl. \textbf{225} (1998),
326--339.

\bibitem{aftalionriviere}
{\sc A. Aftalion}, and {\sc T. Rivi\`{e}re}, \emph{Vortex energy and
vortex bending for a rotating Bose-Einstein condensate}, Phys. Rev.
A \textbf{64} (2001), 043611.

\bibitem{aftaliondu}
{\sc A. Aftalion}, and {\sc Q. Du},  \emph{Vortices in a rotating
Bose--Einstein condensate: Critical angular velocities and energy
diagrams in the Thomas-Fermi regime}, Phys. Rev. A \textbf{64}
(2001).

\bibitem{aftaliondu-painleve}
{\sc A. Aftalion}, {\sc Q. Du}, and {\sc Y. Pomeau},
\emph{Dissipative flow and vortex shedding in the Painlev\'{e}
boundary layer of a Bose-Einstein condensate}, Phys. Rev. Lett.
\textbf{91} (2003), 090407.

\bibitem{aftalionpainleve}
{\sc A. Aftalion}, and {\sc X. Blanc}, \emph{Existence of vortex
free solutions in the Painlev\'{e} boundary layer of a Bose Einstein
condensate}, Journal de Math\'{e}matiques Pures et Appliqu\'{e}es
\textbf{83} (2004), 765--801.


\bibitem{alama}
{\sc A. Aftalion}, {\sc S. Alama}, and  {\sc L. Bronsard},
\emph{Giant vortex and the breakdown of strong pinning in a rotating
Bose--Einstein condensate}, Arch. Ration. Mech. Anal. \textbf{178}
(2005), 247-286.

\bibitem{aftalionbook}
{\sc A. Aftalion}, Vortices in Bose Einstein Condensates, Progr.
Nonlinear Differential Equations Appl., vol. 67, Birkh\"{a}user
Boston, Inc., Boston, MA, 2006.

\bibitem{aftalion-jerrard}
{\sc A. Aftalion}, {\sc R. L. Jerrard}, and {\sc J. Royo-Letelier},
\emph{Non--existence of vortices in the small density region of a
condensate}, Journal of Functional Analysis \textbf{260} (2011),
2387-2406.

\bibitem{alamaTarantelo}
 {\sc S. Alama}, and {\sc G.
Tarantello},
 \emph{On the solvability of
a semilinear elliptic equation via an associated eigenvalue
problem}, Math. Z. \textbf{221} (1996), 467--493.


\bibitem{alamaPinning}
{\sc S. Alama}, and {\sc L. Bronsard}, \emph{Pinning effects and
their breakdown for a Ginzburg-Landau model with normal inclusions},
J. Math. Phys. \textbf{46} (2005),

\bibitem{alamamontero}
{\sc S. Alama}, {\sc L. Bronsard}, and {\sc J. A. Montero},
\emph{Vortices for a rotating toroidal Bose--Einstein condensate},
Arch. Rational Mech. Anal. \textbf{187} (2008), 481-522.

\bibitem{alamabronsardmillot}
{\sc S. Alama}, {\sc L. Bronsard}, and {\sc V. Millot},
\emph{Gamma--convergence of 2D Ginzburg--Landau functionals with
vortex concentration along curves}, Journal d'Analyse
Math\'{e}matique \textbf{114} (2011), 341--391.

\bibitem{saddlealessio}
{\sc F. Alessio}, {\sc A. Calamai}, and {\sc P. Montecchiari},
\emph{Saddle-type solutions for a class of semilinear elliptic
equations}, Adv. Differential Equations \textbf{12} (2007),
361-380.

\bibitem{alfimov}
{\sc G. L. Alfimov}, and {\sc D. A. Zezyulin}, \emph{Nonlinear modes
for the Gross-Pitaevskii equation-a demonstrative computation
approach}, Nonlinearity \textbf{20} (2007), 2075-2092.



\bibitem{stefanopoulos}
{\sc  N. Alikakos}, {\sc G. Fusco}, and {\sc V. Stefanopoulos},
\emph{Critical spectrum and stability of interfaces for a class of
reaction-diffusion equations}, J. Differential Equations
\textbf{126} (1996), 106--167.

\bibitem{alikakos}
{\sc N. D. Alikakos}, {\sc P. W. Bates}, {\sc J. W. Cahn}, {\sc P.
C. Fife}, {\sc G. Fusco}, and {\sc G. B. Tanoglu}, \emph{Analysis of
the corner layer problem in anisotropy}, Discrete Contin. Dyn. Syst.
\textbf{6} (2006), 237-255.

\bibitem{ambrozetti-proddi}
{\sc A. Ambrosetti}, and {\sc G. Prodi}, A primer of nonlinear
analysis, Cambridge studies in advanced mathematics \textbf{34},
Cambridge university press, 1995.

\bibitem{malchiodibook}
{\sc A. Ambrosetti}, and {\sc A. Malchiodi}, Perturbation methods
and semilinear elliptic problems, Progress in Mathematics vol.
\textbf{240}, Birkh\"{a}user Verlag, Basel--Boston--Berlin, 2006.

\bibitem{malchiodicambridge}
{\sc A. Ambrosetti}, and {\sc A. Malchiodi}, Nonlinear analysis and
semilinear elliptic problems, Cambridge studies in advanced
mathematics \textbf{104}, Cambridge university press, 2007.

\bibitem{andrebaumanphilips}
{\sc N. Andr\'{e}}, {\sc P. Bauman}, and {\sc D. Phillips},
\emph{Vortex pinning with bounded fields for the Ginzburg--Landau
equation}, Ann. Inst. H. Poincar\'{e} Anal. Non Lin\'{e}aire
\textbf{20} (2003), 705-729.


\bibitem{andre2}
{\sc N. Andr\'{e}}, and {\sc I. Shafrir}, \emph{Minimization of a
Ginzburg-Landau type functional with nonvanishing Dirichlet
boundary condition}, Calc. Var. Partial Differential Equations
\textbf{7} (1998), 1-27.


\bibitem{andrews}
{\sc B. Andrews}, and {\sc J. Clutterbuck}, \emph{Proof of the
fundamental gap conjecture}, J. Amer. Math. Soc. \textbf{24} (2011),
899-916.


\bibitem{baldojerrard}
{\sc S. Baldo}, {\sc G. Orlandi}, {\sc R. Jerrard}, and {\sc M.
Soner}, \emph{Vortex density models for superconductivity and
superfluidity}, preprint (2010).


\bibitem{bandmalomed}
{\sc Y. B. Band}, {\sc I. Towers}, and {\sc B. A.  Malomed},
\emph{Unified semiclassical approximation for Bose-Einstein
condensates: Application to a BEC in an optical potential}, Phys.
Rev. A \textbf{67} (2003), 023602.

\bibitem{bartsch}
{\sc T. Bartsch}, {\sc Z.- Q. Wang}, and {\sc M. Willem}, \emph{The
Dirichlet problem for superlinear elliptic equations}, Handbook of
differential equations: Stationary partial differential equations
\textbf{II}, Elsevier, 2005, 1--71.

\bibitem{batesjones}
{\sc P. W. Bates}, and {\sc C. K. R. T. Jones}, Invariant manifolds
for semilinear partial differential equations, Dyn. Rep. \textbf{2}
(1989), 1--38, see also
\texttt{[http://www.ima.umn.edu/2010-2011/ND6.20-7.1.11/abstracts.html]}.

\bibitem{numerics}
{\sc W. Bao}, and {Q. Du}, \emph{Computing the ground state solution
of Bose-Einstein condensates by a normalized gradient flow}, SIAM
J. Sci. Comput. \textbf{25} (2004), 1674-1697


\bibitem{baoREVIEW}
{\sc W. Bao}, \emph{Ground states and dynamics of rotating
Bose-Einstein condensates}, in Transport phenomena and kinetic
theory, C. Cercignani and E. Gabetta (eds), Birkha\"{u}ser, 2007.

\bibitem{baoACMAC} {\sc W. Bao}, \emph{Some questions related to NLS}, slides from a talk (2012),
available online at
\texttt{[http://www.acmac.uoc.gr/SMAW2012/talks]}.


\bibitem{BenderO}
 {\sc C. M. Bender}, and  {\sc S. A. Orszag},
Advanced Mathematical Methods for Scientists and Engineers:
Asymptotic Methods and Perturbation Theory, Springer, New York,
1999.

\bibitem{benoit}
{\sc E. Beno\^{i}t (Ed.)}, Dynamic Bifurcations, Proceedings, Luminy
1990, Springer- Verlag, Lecture Notes in Mathematics \textbf{1493},
Berlin, 1991.


\bibitem{Berestyckilion}
{\sc H. Berestycki}, and  {\sc P. L. Lions}, \emph{ Some
applications of the method of sub- and supersolutions}, Springer
Lecture Notes in Mathematics \textbf{782} (1980), pp. 16--41.


\bibitem{berestycki}
{\sc H. Berestycki}, {\sc L. Nirenberg}, and {\sc S. R. S.
Varadhan}, \emph{The principal eigenvalue and maximum principle for
second-order elliptic operators in general domains}, Comm. Pure
Appl. Math. \textbf{47} (1994), 47-92.

\bibitem{berestycki-wei2012}
{\sc H. Berestycki}, {\sc T. C. Lin}, and {\sc C. Y. Zhao}, \emph{On
phase-separation model: Asymptotics and qualitative properties},
preprint (2012), available online at
\texttt{[http://www.math.cuhk.edu.hk/$\sim$wei]}.

\bibitem{berestycki-wei2012-2}
{\sc H. Berestycki}, {\sc S. Terracini}, {\sc K. Wang}, and {\sc J.
Wei}, \emph{On entire solutions of an elliptic system modeling phase
separations}, preprint (2012), available online at
\texttt{[http://www.math.cuhk.edu.hk/$\sim$wei]}.


\bibitem{bethuelCVPDE}
{\sc F. B\'{e}thuel}, {\sc H. Brezis}, and {\sc F. H\'{e}lein},
\emph{Asymptotics for the minimization of a Ginzburg--Landau
functional}, Calc. Var. Partial Differential Equations \textbf{1}
(1993), 123-148.

\bibitem{bethuel}
{\sc F. B\'{e}thuel}, {\sc H. Brezis}, and {\sc F. H\'{e}lein},
Ginzburg-Landau vortices, PNLDE \textbf{13}, Birkh\"{a}user Boston,
1994.

\bibitem{bethuelsaut}
{\sc F. B\'{e}thuel}, {\sc P. Gravejat},  {\sc J--C. Saut}, and {\sc
D. Smets}, \emph{Orbital stability of the black soliton to the
Gross--Pitaevskii equation}, Indiana Math. Univ. J. \textbf{57}
(2008), 2611--2642.

\bibitem{btnn}
{\sc S. Boscolo}, {\sc S. K. Turitsyn}, {\sc V. Yu. Novokshenov},
and {\sc J. H. Nijhof}, \emph{Self-similar parabolic optical
solitary waves}, Theor. Math. Phys. \textbf{133}  (2002),
1647--1656.

\bibitem{brezis}
{\sc H. Brezis}, \emph{Semilinear equations in $\mathbb{R}^n$
without conditions at infinity}, Appl. Math. Optimization
\textbf{12} (1984), 271--282.

\bibitem{brezis-oswald}
{\sc H. Brezis}, and {\sc L. Oswald}, \emph{Remarks on sublinear
elliptic equations}, Nonlin. Anal. \textbf{10} (1986), 55-64.



\bibitem{brunovskyfiedler}
{\sc P. Brunovsk\'{y}}, and {\sc B. Fiedler}, \emph{Number of zeros
on invariant manifolds in reaction--diffusion equations}, Nonlinear
Analysis \textbf{10} (1986), 179--193.


 \bibitem{butuzovJDE2}
 {\sc  V. F. Butuzov}, {\sc N. N. Nefedov}, and {\sc K. R. Schneider},
 \emph{Singularly perturbed boundary value problems for systems of Tichonov's type in case of exchange of
 stabilities},
 J. Differential Equations \textbf{159} (1999), 427-446.


\bibitem{butuzov jde2}
{\sc V. F. Butuzov}, {\sc N. N. Nefedov}, and {\sc K. R. Schneider},
\emph{Singularly perturbed elliptic problems in the case of exchange
of stabilities}, J. Differential Equations \textbf{169} (2001),
373--395.

\bibitem{byeon}
{\sc J. Byeon}, and {\sc Z.-Q. Wang}, \emph{Standing waves with a
critical frequency for nonlinear Schr\"{o}dinger equations}, Arch.
Ration. Mech. Anal. \textbf{165} (2002), 295-316.

\bibitem{byeonOshita}
{\sc J. Byeon}, and {\sc Y. Oshita}, \emph{Uniqueness of standing
waves for nonlinear Schr\"{o}dinger equations}, Proceedings of the
Royal Society of Edinburgh \textbf{138A} (2008), 975-987.

\bibitem{saddleCabreJEMS1}
{\sc X. Cabr\'{e}}, and {\sc J. Terra}, \emph{Saddle-shaped
solutions of bistable diffusion equations in all of
$\mathbb{R}^{2m}$}, J. Eur. Math. Soc. (JEMS)  \textbf{11} (2009),
819-843.

\bibitem{saddleCabreCPDE2}
{\sc X. Cabr\'{e}}, and {\sc J. Terra}, \emph{Qualitative properties
of saddle-shaped solutions to bistable diffusion equations}, Comm.
Partial Differential Equations \textbf{35} (2010), 1923-1957.

\bibitem{saddlecabre3solo}
{\sc X. Cabr\'{e}}, \emph{Uniqueness and stability of saddle-shaped
solutions to the Allen-Cahn equation}, J. Math. Pures Appl. (2012),
DOI:10.1016/j.matpur.2012.02.006.


\bibitem{cafaroqjofr}
{\sc L. A. Caffarelli}, and {\sc J.-M. Roquejoffre}, \emph{Uniform
H\"{o}lder estimates in a class of elliptic systems and applications
to singular limits in models for diffusion flames}, Arch. Ration.
Mech. Anal. \textbf{183} (2007), 457-487.

\bibitem{caffarelli-lin}
{\sc L. A. Caffarelli}, and {\sc F.--H Lin},  \emph{Singularly
perturbed elliptic systems and multivalued harmonic functions with
free boundaries}, J. Amer. Math. Soc. \textbf{21} (2008), 847-862.

\bibitem{calnigap}
{\sc G. Caginalp},  and   {\sc P. C.  Fife},  \emph{Dynamics  of
layered interfaces arising from phase  boundaries}, SIAM. J. Appl.
Math. \textbf{48}  (1988), 506--518.


\bibitem{cantell}
{\sc R. S. Cantrell},  and {\sc C. Cosner}, Spatial ecology via
reaction--diffusion equations, Wiley Series in Mathematical and
Computational Biology, 2003.

\bibitem{kevrekidisREVIEW}
{\sc R. Carretero-Gonz\'{a}lez}, {\sc D. J. Frantzeskakis}, and {\sc
P. G. Kevrekidis}, \emph{Nonlinear waves in Bose-Einstein
condensates: physical relevance and mathematical techniques},
Nonlinearity \textbf{21} (2008), 139-202.




\bibitem{chapman}
{\sc  S. J. Chapman},  \emph{Superheating field of type II
superconductors}, SIAM J. Appl. Math. \textbf{55} (1995),
1233--1258.


\bibitem{chenxinfu}
{\sc X. Chen}, \emph{Spectrums for the Allen--Cahn, Cahn--Hilliard,
and phase--field equations for generic interface}, Comm. Partial
Diff. Eqns. \textbf{19} (1994), 1371--1395.

\bibitem{chenXinfuJDE12}
 {\sc X. Chen}, and {\sc S. Sadhu}, \emph{Uniform asymptotic expansions of
solutions of an inhomogeneous equation}, J. Differential Equations
\textbf{253} (2012), 951-976.


\bibitem{chowHale}
{\sc S. N. Chow}, and {\sc J. K. Hale}, Methods of bifurcation
theory, Springer-Verlag, 1996.

\bibitem{coles}
{\sc M. P. Coles}, {\sc D. E. Pelinovsky}, and {\sc P. G.
Kevrekidis}, \emph{Excited states in the Thomas-Fermi limit: a
variational approach}, Nonlinearity \textbf{23} (2010), 1753--1770.


\bibitem{teraccini}
{\sc M. Conti}, {\sc S. Terracini}, and {\sc G. Verzini},
\emph{Asymptotic estimates for the spatial segregation of
competitive systems}, Adv. Math. \textbf{195} (2005), 524-560.

\bibitem{coppel}
{\sc W. A. Coppel}, Stability and asymptotic behaviour of
differential equations, Heath and Co, Boston MA, 1965.

\bibitem{corregiCMP}
{\sc M. Correggi}, {\sc N. Rougerie}, and {\sc J. Yngvason},
\emph{The transition to a giant vortex phase in a fast rotating
Bose--Einstein condensate}, Comm. Math. Phys. \textbf{303} (2011),
451--308.

\bibitem{crandallrab}
{\sc M. G. Crandall}, and {\sc P. H. Rabinowitz}, \emph{Nonlinear
Sturm--Liouville eigenvalue problems and topological degree}, J.
Math. Mech. \textbf{29} (1970), 1083--1102.

\bibitem{dafermosshocks}
{\sc C. M. Dafermos}, \emph{Solution of the Riemann problem for a
class of hyperbolic systems of conservation laws by the viscosity
method}, Arch. Rational. Mech. Anal. \textbf{52} (1973), 1-9.


\bibitem{dalfovofirst}
{\sc F. Dalfovo}, {\sc L. Pitaevskii}, and {\sc S. Stringari},
\emph{Order parameter at the boundary of a trapped Bose gas}, Phys.
Rev. A \textbf{54} (1996), 4213.

\bibitem{dalfovokinetic}
{\sc F. Dalfovo}, {\sc L. Pitaevskii}, and {\sc S. Stringari},
\emph{The condensate wave function of a trapped atomic gas},  J.
Res. Natl. Inst. Stand. Technol. \textbf{101} (1996), 537.



\bibitem{dancer-hilhorst}
{\sc E. N. Dancer}, and {\sc Y. Du}, \emph{Competing species
equations with diffusion, large interactions, and jumping
nonlinearities}, J. Differential Equations \textbf{114} (1994),
434-475.

\bibitem{danceryanCVPDE}
{\sc E. N. Dancer}, and {\sc S. Yan}, \emph{Construction of various
types of solutions for an elliptic problem}, Calc. Var. Partial
Differential Equations \textbf{20} (2004), 93-118.

\bibitem{dancer-lazer}
{\sc E. N. Dancer}, and {\sc S. Yan}, \emph{On the superlinear
Lazer--McKenna conjecture}, J. Differential Equations \textbf{210}
(2005), 317-351.

\bibitem{dancer-fields}
{\sc E. N. Dancer}, \emph{Stable and not too unstable solutions on
$\mathbb{R}^n$ for small diffusion}, in Nonlinear Dynamics and
Evolution Equations,
 Fields Institute Communications \textbf{48}, American Mathematical Society, (2006),
67-93.

\bibitem{danceryanCrtitic}
{\sc E. N. Dancer}, and {\sc S. Yan}, \emph{On the Lazer--Mckenna
conjecture involving critical and supercritical exponents}, Methods
Appl. Anal.  \textbf{15} (2008), 97-119.


\bibitem{saddleFife}
{\sc H. Dang}, {\sc P. C. Fife}, and {\sc L. A. Peletier},
\emph{Saddle solutions of the bistable diffusion equation}, Z.
Angew. Math. Phys. \textbf{43} (1992), 984-998.

\bibitem{painleveFluid} {\sc P. G. Daniels}, and {\sc A. T. Lee},
\emph{On the boundary--layer structure of patterns of convection in
rectangular-planform containers}, J. Fluid Mech. \textbf{393}
(1999), 357--380.

\bibitem{deboer}
{\sc P. C. T. De Boer}, and {\sc G. S. S. Ludford}, \emph{Spherical
electric probe in a continuum gas}, Plasma Phys. \textbf{17} (1975),
29--43.


\bibitem{delpinoCPDE}
{\sc M. del Pino}, \emph{Layers with nonsmooth interface in a
semilinear elliptic problem}, Comm. Partial Differential Equations
\textbf{17} (1992), 1695--1708.

\bibitem{del pino cpam}
{\sc M.  del Pino}, {\sc  M. Kowalczyk}, and {\sc J. Wei},
\emph{Concentration on curves for nonlinear Schr\"{o}dinger
equations}, Comm. Pure Appl. Math. \textbf{60} (2007), 113--146.


\bibitem{delpinoarma}
{\sc M.  del Pino}, {\sc  M. Kowalczyk}, and {\sc J. Wei}, \emph{The
Toda system and clustering interface in the Allen-Cahn equation},
Archive Rational Mechanical Analysis \textbf{190} (2008), 141-187.

\bibitem{delpinotoda}
{\sc M. del Pino}, {\sc M. Kowalczyk}, {\sc F. Pacard}, and {\sc J.
Wei},\emph{ The Toda system and multiple-end solutions of autonomous
planar elliptic problems}, Adv. Math. \textbf{224} (2010),
1462--1516.

\bibitem{delpinoAnnals}
{\sc M. del Pino}, {\sc M. Kowalczyk}, and {J. Wei}, On De Giorgi
conjecture in dimensions $N \geq 9$, Annals of Mathematics
\textbf{174} (2011), 1485--1569.

\bibitem{delPinoscrew}
{\sc M. del Pino}, {\sc M. Musso}, and {\sc F. Pacard},
\emph{Solutions of the Allen-Cahn equation invariant under
screw-motion}, Manuscripta Mathematica (2012), DOI:
10.1007/s00229-011-0492-3



\bibitem{Dureview}
{\sc Q. Du}, \emph{Diverse vortex dynamics in superfluids}, Contemp.
Math. \textbf{329} (2003), 105--117.



\bibitem{duma}
{\sc Y. Du}, and {\sc L. Ma}, \emph{Logistic type equations on
$\mathbb{R}^N$ by a squeezing method involving boundary blow-up
solutions}, J. London Math. Soc. \textbf{64} (2001), 107-124.


\bibitem{duSpikes} {\sc Y. Du}, \emph{The heterogeneous Allen-Cahn equation in a
ball: Solutions with layers and spikes}, J. Differential Equations
\textbf{244} (2008), 117-169.


\bibitem{guilayer}
{\sc Z. Du}, and {\sc C. Gui}, \emph{Interior layers for an
inhomogeneous Allen-Cahn equation}, J. Differential Equations
\textbf{249} (2010), 215-239.



\bibitem{farina}
{\sc A. Farina}, {\sc B. Sciunzi}, and {\sc E. Valdinoci},
\emph{Bernstein and De Giorgi type problems: new results via a
geometric approach}, Ann. Scuola Norm. Sup. Pisa Cl. Sci.
\textbf{(5)} Vol. \textbf{VII} (2008), 741--791.


\bibitem{felmermartinezJDE}
 {\sc P.
Felmer}, and {\sc S. Martinez}, \emph{High-energy solutions for a
phase transition problem}, J. Differential Equations \textbf{194}
(2003), 198-220.

\bibitem{felmertanaka}
{\sc P. Felmer}, {\sc Salom\'{e} Mart\'{i}nez}, and {\sc K. Tanaka},
\emph{Multi--clustered high--energy solutions for a phase transition
problem}, Proceedings of the Royal Society of Edinburgh
\textbf{135A} (2005), 731-765.

\bibitem{felmermayorgaNonl}
 {\sc P. Felmer},
and {\sc J. Mayorga-Zambrano}, \emph{Multiplicity and concentration
for the nonlinear Schr\"{o}dinger equation with critical frequency},
Nonlinear Analysis \textbf{66} (2007), 151-169.

\bibitem{felmerUniqueness}
{\sc P. Felmer}, {\sc S. Mart\'{i}nez}, and {\sc K. Tanaka},
\emph{Uniqueness of radially symmetric positive solutions for
$-\Delta u +u = u^p$ in an annulus}, J. Differential Equations
\textbf{245} (2008), 1198-1209.

\bibitem{fenichel}
{\sc N. Fenichel}, \emph{Geometric singular perturbation theory for
ordinary differential equations}, J. Differential Equations
\textbf{31} (1979), 53-98.

\bibitem{fermi}
{\sc E. Fermi}, \emph{Statistical method of investigating electrons
in atoms}, Z. Phys. \textbf{48} (1928), 73-79.

\bibitem{fetterPhys}
{\sc A. L. Fetter}, and {\sc D. L. Feder}, \emph{Beyond the
Thomas-Fermi approximation for a trapped condensed Bose-Einstein
gas}, Phycical Review A \textbf{58} (1998), 3185--3194.

\bibitem{fife-arma}
{\sc P. C. Fife}, \emph{Semilinear elliptic boundary value problems
with small parameters}, Arch. Rational Mech. Anal. \textbf{52}
(1973), 205-232.

\bibitem{fifegreenlee}
{\sc P. Fife}, and {\sc M. W. Greenlee}, \emph{Interior transition
layers of elliptic boundary value problem with a small parameter},
Russian Math. Surveys \textbf{29} (1974), 103-131.

\bibitem{fifeunpublished}
{\sc P. C. Fife}, \emph{A phase plane analysis of a corner layer
problem arising in the study of crystalline grain boundaries},
unpublished preprint (2004), available online at
\texttt{[www.math.utah.edu/$\sim$fife]}.

\bibitem{floer}
{\sc A. Floer}, and {\sc A. Weinstein}, \emph{Nonspreading wave
packets for the cubic Schr\"{o}dinger equation with a bounded
potential}, J. Funct. Anal. \textbf{69} (1986), 397-408.

\bibitem{fokas}
{\sc A. S. Fokas}, {\sc A. R. Its}, {\sc A. A. Kapaev} and {\sc V.
Y. Novokshenov}, Painlev\'{e} Transcendents, The Riemann-Hilbert
Approach, Mathematical Surveys and Monographs, Vol. \textbf{128},
AMS, Providence, RI, 2006.


\bibitem{frantzeskakisPurity}
{\sc D. J. Frantzeskakis}, {\sc G. Theocharis}, {\sc F. K.
Diakonos}, {\sc P. Schmelcher}, and {\sc Y. S. Kivshar},
\emph{Interaction of dark solitons with localized impurities in
Bose--Einstein condensates}, Phys. Rev. A \textbf{66} (2002),
053608.

\bibitem{fusco}
{\sc G. Fusco}, and {\sc C. Pignotti}, \emph{Estimates for
fundamental solutions and spectral bounds for a class of
Schr\"{o}dinger operators}, J. Differential Equations \textbf{244}
(2008), 514--554.

\bibitem{fuscoTrans}
 {\sc G. Fusco}, {\sc F. Leonetti}, and {\sc C. Pignotti}, \emph{A uniform
estimate for positive solutions of semilinear elliptic equations},
Trans. Am. Math. Soc. \textbf{363} (2011), 4285-4307.

\bibitem{gallo}
{\sc C. Gallo}, and {\sc D. Pelinovsky}, \emph{Eigenvalues of a
nonlinear ground state in the Thomas-Fermi approximation}, J. Math.
Anal. Appl. \textbf{355} (2009), 495-526; see also the
corresponding presentation available online at
{http://lmv.math.cnrs.fr/conferences-et-colloques/maths-physics-meeting/article/title-and-summary-of-talks}

\bibitem{pelinovsky}
{\sc C. Gallo}, and {\sc D. Pelinovsky}, \emph{On the Thomas--Fermi
ground state in a harmonic potential}, Asymptotic Analysis
\textbf{73} (2011), 53--96.

\bibitem{gallonew}
{\sc C. Gallo}, \emph{Expansion of the energy of the ground state of
the Gross--Pitaevskii equation in the Thomas--Fermi limit}, preprint
(2012),     arXiv:1205.2975v1.




\bibitem{jonesindiana}
{\sc F. Gesztesy},  {\sc C. K. R. T. Jones},  {\sc Y. Latushkin},
and {\sc M. Stanislavova}, \emph{A spectral mapping theorem and
invariant manifolds for nonlinear Schr\"{o}dinger equations},
Indiana Univ. Math. J. \textbf{49} (2000), 221-243.

\bibitem{gui}
{\sc N. Ghoussoub}, and {\sc C. Gui}, \emph{On a conjecture of De
Giorgi and some related problems}, Math. Ann. \textbf{311} (1998),
481--491.


\bibitem{gidas}
{\sc B. Gidas}, {\sc W. M. Ni}, and {\sc L. Nirenberg},
\emph{Symmetry and related properties via the maximum principle},
Comm. Math. Phys. \textbf{68} (1979), 209-243.


\bibitem{Gilbarg-Trudinger}
{\sc D. Gilbarg}, and {\sc N. S. Trudinger}, Elliptic partial
differential equations of second order, second ed., Springer-Verlag,
New York, 1983.

\bibitem{grillakis}
{\sc M. Grillakis}, {\sc J. Shatah}, and {\sc W. A. Strauss},
\emph{Stability theory of solitary waves in the presence of symmetry
I}, J. Funct. Anal. \textbf{74} (1987),  160--197.

\bibitem{grillakisnodal}
{\sc M. Grillakis}, \emph{Existence of nodal solutions of semilinear
equations in $\mathbb{R}^N$}, J. Diff. Eqns. \textbf{85} (1990),
367--400.

\bibitem{guedapainleve}
{\sc M. Guedda}, \emph{Note on the uniqueness of a global positive
solution to the second Painlev\'{e} equation}, Electronic Journal of
Differential Equations \textbf{2001} (2001), 1--4.

\bibitem{hastings}
{\sc S. P. Hastings}, and {\sc J. B. McLeod}, \emph{A boundary value
problem associated with the second Painlev\'{e} transcendent and the
Korteweg-de Vries equation}, Arch. Rat. Mec. Anal. \textbf{73}
(1980), 31-51.

\bibitem{hastingsTroy}
{\sc S. P. Hastings}, and {\sc W. C. Troy}, \emph{On some
conjectures of Turcotte, Spence, Bau, and Holmes}, SIAM J. Math.
Anal. \textbf{20} (1989), 634--642.

\bibitem{hastingsJnonlinear}
{\sc S. P. Hastings}, and {\sc J. B. McLeod}, \emph{Periodic
solutions of a forced second-order differential equation}, J.
Nonlin. Sci. \textbf{1} (1991), 225--245.

\bibitem{hastingsbook} {\sc S. P. Hastings}, and {\sc J. B. McLeod},
Classical methods in ordinary differential equations, Graduate
Studies in Mathematics \textbf{129}, American Mathematical Society,
2012.

\bibitem{helfer}
{\sc B. Helffer}, and {\sc F. B. Weissler}, \emph{On a family of
solutions of the second Painlev\'{e} equation related to
superconductivity}, European J. Appl. Math. \textbf{9} (1998),
223--243.


\bibitem{henry}
{\sc D. Henry}, \emph{Geometric theory of semilinear parabolic
equations}, Lecture Notes in Math. \textbf{840}, Springer, Berlin,
1981.

\bibitem{kevrekidisexcitedradial}
{\sc G. Herring}, {\sc L. D. Carr}, {\sc R. Carretero-Gonz\'{a}lez},
{\sc P. G. Kevrekidis}, and {\sc D. J. Frantzeskakis},
\emph{Radially symmetric nonlinear states of harmonically trapped
Bose-Einstein condensates}, Phys. Rev. A \textbf{77} (2008), 023625.

\bibitem{herve}
{\sc R. M. Herv\'{e}}, and {\sc M. Herv\'{e}}, \emph{Etude
qualitative des solutions r\'{e}elles de l'\'{e}quation
diff\'{e}rentielle
$r^2f''(r)+rf'(r)-q^2f(r)+r^2f(r)\left(1-f(r)^2\right)=0$}, Ann. I.
H. Poincar\'{e} - AN \textbf{11} (1994), 427--440.

\bibitem{hislop}
{\sc P. Hislop}, and {\sc I. M. Sigal}, Introduction to spectral
theory with applications to Schr\"{o}dinger operators, Applied
mathematical sciences \textbf{113}, Springer-Verlag, New York, 1996.

\bibitem{holmespainleve}
{\sc P. Holmes}, and {\sc D. Spence}, \emph{On a Painlev\'{e}-type
boundary-value problem}, Quarterly Jnl. of Mechanics and App. Maths.
\textbf{37} (1984), 525--538.

\bibitem{hutson-lou-mischaikow-jde}
{\sc V. Hutson}, {\sc Y. Lou}, and {\sc K. Mischaikow},
\emph{Spatial heterogeneity of resources versus Lotka--Volterra
dynamics}, J. Differential Equations \textbf{185} (2002), 97-136.

\bibitem{doublewell}
{\sc R. Ichihara}, 
{\sc I. Danshita}, 
and {\sc T. Nikuni},
\emph{Matter--wave dark solitons in a double--well potential},
Physical Review A \textbf{78} (2008), 063604.



\bibitem{ignat}
{\sc R. Ignat}, and {\sc V. Millot}, \emph{The critical velocity for
vortex existence in a two-dimensional rotating Bose-Einstein
condensate}, J. Funct. Anal. \textbf{233} (2006), 260-306.

\bibitem{ignat2}
{\sc R. Ignat}, and {\sc V. Millot}, \emph{Energy expansion and
vortex location for a two--dimensional rotating Bose-Einstein
condensate}, Rev. Math. Phys. \textbf{18} (2006), 119-162.

\bibitem{iida}
{\sc M. Iida}, {\sc K. Nakashima}, and {\sc E. Yanagida}, \emph{On
certain one-dimensionar elliptic systems under different growth
conditions at respective infinities}, in Asymptotic analysis and
singularities, Advanced Studies in Pure Mathematics \textbf{47-2}
(2007), 565-572.

\bibitem{ioss}
{\sc G. Iooss}, and {\sc D. D. Joseph}, Elementary stability and
bifurcation theory,  Springer--Verlag, New York, 1980.

\bibitem{itsLNM}
{\sc A. R. Its}, and {\sc V. Y. Novokshenov},   \emph{The
isomonodromic deformation method in the theory of the Painlev\'{e}
equations}, Lecture Notes in Math. \textbf{1191}, Springer, Berlin,
1986.

\bibitem{jerard}
{\sc R. L. Jerrard}, \emph{Local minimizers with vortex filaments
for a Gross-Pitaevsky functional}, ESAIM Control Optim. Calc. Var.
\textbf{13} (2007), 35-71.

\bibitem{jones}
{\sc C. K. R. T. Jones}, \emph{Geometric singular perturbation
theory}, in Dynamical Systems (Montecatini Terme, 1994), Lecture
Notes in Math. \textbf{1609}, Springer, Berlin, 1995, 44-118.

\bibitem{tanaka}
{\sc Y. Kabeya}, and {\sc K. Tanaka}, \emph{Uniqueness of positive
radial solutions of semilinear elliptic equations in $\mathbb{R}^N$
and S\'{e}r\'{e}s non-degeneracy condition}, Comm. Partial
Differential Equations \textbf{24} (1999), 563-598.


\bibitem{karalikevrekidisefr}
{\sc G. Karali}, {\sc P. Kevrekidis}, and {\sc N. Efremidis},
\emph{Nonlinear from linear states in two--component Bose--Einstein
condensates}, J. Phys. A: Math. Theor. \textbf{42} (2009), 045206.


\bibitem{karalisourdisradial}
{\sc G. Karali}, and {\sc C. Sourdis}, \emph{Radial and bifurcating
non-radial solutions for a singular perturbation problem in the case
of exchange of stabilities}, Ann. I. H. Poincar\'{e} - AN
\textbf{29} (2012), 131-170.

\bibitem{karalisourdisresonance}
{\sc G. Karali}, and {\sc C. Sourdis}, \emph{Resonance phenomena in
a singular perturbation problem in the case of exchange of
stabilities}, to appear in Comm. Partial Diff. Eqns. (2012),
DOI:10.1080/03605302.2012.681333

\bibitem{kenig}
{\sc C. Kenig}, and {\sc W. M. Ni}, \emph{An exterior Dirichlet
problem with application to some non--linear equations arising in
geometry}, Amer. J. Math. \textbf{106} (1984), 689--702.


\bibitem{kevrekidis-pelinovsky}
{\sc P. G. Kevrekidis}, and {\sc D. E. Pelinovsky},
\emph{Distribution of eigenfrequencies for oscillations of the
ground state in the Thomas--Fermi limit}, Phys. Rev. A \textbf{81}
(2010), 023627.

\bibitem{kevrekidisvortices}
{\sc P. G. Kevrekidis}, and {\sc D. E. Pelinovsky},
\emph{Variational approximations of trapped vortices in the
large-density limit}, Nonlinearity \textbf{24} (2011), 1271-1289.


\bibitem{kirrCommPhysics}
{\sc E. Kirr}, {\sc P. G. Kevrekidis}, and {\sc D. E. Pelinovsky},
\emph{Symmetry--breaking bifurcation in the nonlinear
Schr\"{o}dinger equation with symmetric potentials}, Commun. Math.
Phys. \textbf{308} (2011), 795-844.

\bibitem{klidenberg}
{\sc W. Klingenberg}, Riemannian geometry, de Gruyter Studies in
Mathematics \textbf{1}, Walter de Gruyter and Co, Berlin, (1982).

\bibitem{kohnstrenberg}
{\sc R. V. Kohn}, and {\sc P. Sternberg}, \emph{Local minimizers and
singular perturbations}, Proc. Royal Soc. Edinburgh \textbf{111A}
(1989), 69-84.


\bibitem{pego}
{\sc R. Koll\'{a}r}, and {\sc R. L. Pego}, \emph{Spectral stability
of vortices in two--dimensional Bose-Einstein condensates via the
Evans Function and Krein Signature}, Applied Mathematics Research
eXpress (2011), DOI:10.1093/amrx/abr007

\bibitem{konotopkevrekidis}
{\sc V. V. Konotop}, and {\sc P. G. Kevrekidis},
\emph{Bohr--Sommerfeld quantization condition for the
Gross--Pitaevskii equation}, Phys. Rev. Lett. \textbf{91} (2003),
230402.

\bibitem{saddleKowal}
{\sc M. Kowalczyk}, and {\sc Y. Liu}, \emph{Nondegeneracy of the
saddle solution of the Allen-Cahn equation}, Proc. Amer. Math. Soc.
\textbf{139} (2011), 4319--4329.


\bibitem{krupaszmolyanMN}
{\sc M. Krupa}, and {\sc P. Szmolyan}, \emph{Extending geometric
singular perturbation theory to non- hyperbolic points--fold and
canard points in two dimensions}, SIAM J. Math. Analysis \textbf{33}
(2001), 286--314.

\bibitem{kurth}
{\sc M. Kurth}, \emph{On the existence of infinitely many modes of a
nonlocal nonlinear Schr\"{o}dinger equation related to
dispersion--managed solitons}, SIAM J. Math. Anal. \textbf{36}
(2004), 967--985.

\bibitem{kurata}
{\sc K. Kurata}, and {\sc H. Matsuzawa}, \emph{Multiple stable
patterns in a balanced bistable equation with heterogeneous
environments}, Applicable Analysis \textbf{89} (2010), 1023-1035.


\bibitem{lassouedmironescu}
{\sc L. Lassoued}, and {\sc P. Mironescu}, \emph{Ginzburg-Landau
type energy with discontinuous constraint}, J. Anal. Math.
\textbf{77} (1999), 1-26.

\bibitem{painleve-levi}
{\sc D. Levi}, and {\sc P. Winternitz} (eds.), Painlev\'{e}
transcendents: their asymptotics and physical applications, NATO ASI
Series, Series B: Physics \textbf{278}, 1990.

\bibitem{yanedinburg}
{\sc G. Li}, {\sc J. Yang}, and {\sc S. Yan}, \emph{Solutions with
boundary layer and positive peak for an elliptic Dirichlet problem},
Proc. Royal Soc.  Edinburgh  \textbf{134A} (2004), 515--536.

\bibitem{nakashimaNonradial}
{\sc F. Li}, and {\sc K. Nakashima}, \emph{Transition layers for a
spatially inhomogeneous Allen-Cahn equation in multi--dimensional
domains}, Discrete Cont. Dyn. Syst. \textbf{32} (2012), 1391--1420.

\bibitem{lieb}
{\sc E. H. Lieb }, {\sc  R. Seiringer}, and {\sc J. Yngvason},
\emph{A rigorous derivation of the Gross--Pitaevskii energy
functional for a two--dimensional Bose gas}, Comm. Math. Phys.
\textbf{224} (2001), 17--31.


\bibitem{liebBook}
{\sc E. H. Lieb}, {\sc R. Seiringer}, {\sc J--P. Solovej}, and {\sc
J. Yngvason}, The mathematics of the Bose gas and its condensation,
book, 2006,  avalable online at
http://arxiv.org/abs/cond-mat/0610117v1

\bibitem{lin}
{\sc F. H. Lin}, and {\sc T. C. Lin}, \emph{Vortices in
two-dimensional Bose--Einstein condensates}, in Geometry and
nonlinear partial differential equations (Hangzhou, 2001), AMS/IP
Stud. Adv. Math. \textbf{29}, Amer. Math. Soc., Providence, RI,
2002, 87114.

\bibitem{liu}
{\sc Z. Liu}, \emph{The spatial behavior of rotating two-component
Bose-Einstein condensates}, Journal of Functional Analysis
\textbf{261} (2011), 1711-1751.

\bibitem{lundh}
{\sc E. Lundh}, {\sc C. Pethick}, and {\sc H. Smith},
\emph{Zero-temperature properties of a trapped Bose-condensed gas:
beyond the Thomas-Fermi approximation}, Phys. Rev. A \textbf{55}
(1997), 2126--2131.

\bibitem{painlevegotler}
{\sc S. O. MacKerrella}, {\sc P. J. Blennerhassettb}, and {\sc A. P.
Bassomc}, \emph{G\"{o}rtler vortices in the Rayleigh layer on an
impulsively started cylinder}, Physics of fluids \textbf{14} (2002),
2948--2956.


\bibitem{harmonic1}
{\sc K. Madison}, {\sc F. Chevy}, {\sc J. Dalibard}, and {\sc W.
Wohlleben}, \emph{Vortex formation in a stirred Bose--Einstein
condensate}, Phys. Rev. Lett. \textbf{84} (2000).

\bibitem{harmonic2}
 {\sc K. Madison}, {\sc F. Chevy}, {\sc J. Dalibard}, and {\sc W.
Wohlleben}, \emph{Vortices in a stirred Bose--Einstein condensate},
J. Mod. Opt. \textbf{47} (2000).


\bibitem{malchiodi-fife}
{\sc F. Mahmoudi}, {\sc A. Malchiodi}, and {\sc J. Wei},
\emph{Transition layer for the heterogeneous Allen-Cahn equation},
Ann. Inst. H. Poincar\'{e} Anal. Non Linaire \textbf{25} (2008),
609-631.

\bibitem{margetispainleve}
{\sc D. Margetis}, \emph{Asymptotic formula for the condensate wave
function of a trapped Bose gas}, Phys. Rev. A \textbf{61} (2000),
055601.

\bibitem{margetis2012}
{\sc D. Margetis}, \emph{Bose-Einstein condensation beyond mean
field: Many--body bound state of periodic microstructure}, SIAM
Multiscale Modeling and Simulation \textbf{10} (2012), 383--417.

\bibitem{monterohodge}
{\sc A. Montero}, \emph{Hodge decomposition with degenerate weights
and the Gross-Pitaevskii energy}, Journal of Functional Analysis
\textbf{254} (2008), 1926-1973.

\bibitem{schlagBook}
{\sc K. Nakanishi}, and {\sc W. Schlag}, Invariant manifolds and
dispersive Hamiltonian evolution equations, Z\"{u}rich lectures in
advanced mathematics, Europ. Math. Soc., 2011.

\bibitem{nakashimaFisher}
{\sc K. Nakashima}, {\sc W-M. Ni}, and {\sc L. Su}, \emph{An
indefinite nonlinear diffusion problem in population genetics, I:
existence and limiting profiles}, Discrete Cont. Dyn. Syst.
\textbf{27}, (2010), 617-641.

\bibitem{sakamoto}
{\sc N. N. Nefedov}, and {\sc K. Sakamoto}, \emph{Multi--dimensional
stationary internal layers for spatially inhomogeneous
reaction--diffusion equations with balanced nonlinearity}, Hiroshima
Math. J. \textbf{33} (2003), 391-432.

\bibitem{terraciniCPAM}
{\sc B. Noris}, {\sc S. Terracini}, {\sc H. Tavares}, and {\sc G.
Verzini}, \emph{Uniform H\"{o}lder bounds for nonlinear
Schr\"{o}dinger systems with strong competition}, Comm. Pure Appl.
Math. \textbf{63} (2010), 267--302.

\bibitem{niIndiana}
{\sc W. M. Ni}, \emph{On the elliptic eqation $\Delta U +
KU^{(n+2)/(n-2)} = 0$, its generalization and application in
geometry}, Indiana J. Math. \textbf{4} (1982), 493--529.

\bibitem{oh}
{\sc Y-G. Oh}, \emph{Cauchy problem and Ehrenfest's law of nonlinear
Schr\"{o}dinger equations with potentials}, J. Differential
Equations \textbf{81} (1989), 255--274.

\bibitem{sigalOvch1}
{\sc Y. M. Ovchinnikov}, and {\sc I. M. Sigal},
\emph{Ginzburg-Landau equation I. Static vortices}, Partial
differential equations and their applications,  CRM Proc. Lecture
Notes \textbf{12} (1997), Amer. Math. Soc., Providence, 199--220.

\bibitem{pacard-riviere}
{\sc F. Pacard}, and {\sc T. Rivi\`{e}re}, Linear and nonlinear
aspects of vortices. The Ginzburg-Landau model, PNLDE \textbf{39},
Birkha\"{u}ser Boston, 2000.

\bibitem{palamides}
{\sc P. K. Palamides}, and {\sc G. N. Galanis}, \emph{Positive,
unbounded and monotone solutions of the singular second Painlev\'{e}
equation on the half-line}, Nonlinear Analysis \textbf{57} (2004),
401--419.


\bibitem{pelikevrekidisGryllakis}
{\sc D. E. Pelinovsky}, {\sc P. G. Kevrekidis}, \emph{Periodic
oscillations of dark solitons in parabolic potentials}, Contemp.
Math. \textbf{473} (2008), 159-180.

\bibitem{pelinovskynonlinear}
{\sc D. Pelinovsky}, \emph{Asymptotic properties of excited states
in the Thomas--Fermi limit}, Nonlinear Analysis \textbf{73} (2010),
2631--2643.

\bibitem{pelinovskyBifvortex}
{\sc D.E. Pelinovsky}, and {\sc P.G. Kevrekidis}, \emph{Bifurcations
of asymmetric vortices in symmetric harmonic traps}, preprint
(2012).

\bibitem{pethick}
{\sc C. Pethick}, and {\sc H. Smith}, Bose-Einstein condensation in
dilute gases, Cambridge University Press, Cambridge, 2002.

\bibitem{pitaevskibook}
{\sc L. Pitaevskii}, and {\sc S. Stringari}, Bose-Einstein
Condensation, Oxford Univ. Press, Oxford, 2003.

\bibitem{rabinowitz}
{\sc P. H. Rabinowitz}, \emph{On a class of nonlinear
Schr\"{o}dinger equations}, Z. Angew. Math. Phys. \textbf{43}
(1992), 270-291.


\bibitem{rebai}
{\sc Y. Reba\"{i}}, \emph{Weak solutions of nonlinear elliptic
equations with prescribed singular set}, J. Diff. Equations
\textbf{127} (1996), 439--453.

\bibitem{vortexringsARMA}
 {\sc N. Rougerie}, \emph{Vortex rings in fast rotating Bose-Einstein
condensates}, Arch. Rational Mech. Anal. \textbf{203} (2012),
69-135.


\bibitem{lazer1}
{\sc C. Ryu, et al.}, \emph{Observation of persistent flow of a
Bose--Einstein condensate in a toroidal trap}, Phys. Rev. Lett.
\textbf{99} (2007), 260--401.

\bibitem{salman}
{\sc H. Salman}, \emph{Approximating steady states in equilibrium
and nonequilibrium condensates}, to appear in Phys. Rev. A (2012).



\bibitem{serfatybook}
{\sc E. Sandier}, and {\sc S. Serfaty}, Vortices in the magnetic
Ginzburg--Landau model, PNLDE \textbf{70}, Birkh\"{a}user Boston,
2006.

\bibitem{sattinger}
{\sc D. H. Sattinger}, \emph{Topics in stability and bifurcation
theory}, Lecture Notes in Math. \textbf{309}, Springer, Berlin,
1973.

\bibitem{seiringer}
{\sc R. Seiringer}, \emph{Gross-Pitaevskii theory of the rotating
Bose gas}, Comm. Math. Phys. \textbf{229} (2002), 491--509.

\bibitem{seiringerLNF}{\sc R. Seiringer},
\emph{Dillute, trapped Bose gases and Bose-Einstein condensation},
Lecture Notes in Physics \textbf{695} (2006), 249--274.


\bibitem{selemNodal}
{\sc F. H. Selem}, \emph{Radial solutions with prescribed numbers of
zeros for the nonlinear Schr\"{o}dinger equation with harmonic
potential}, Nonlinearity \textbf{24} (2011), 1795-1819.


\bibitem{serfatyESAIM}
{\sc S. Serfaty}, \emph{On a model of rotating superfluids}, ESAIM
Control Optim. Calc. Var. \textbf{6} (2001), 201-238.

\bibitem{schecterdafermos} {\sc S. Schecter}, \emph{Existence of
Dafermos profiles for singular shocks}, J. Differential Equations
\textbf{205} (2004), 185-210.

\bibitem{schecter-sourdis}
{\sc S. Schecter}, and {\sc C. Sourdis}, \emph{Heteroclinic orbits
in slow-fast Hamiltonian systems with slow manifold bifurcations},
J. Dyn. Diff. Equat. \textbf{22} (2010), 629-655.


\bibitem{sourdis-fife}
{\sc C. Sourdis}, and {\sc  P. C. Fife}, \emph{Existence of
heteroclinic orbits for a corner layer problem in anisotropic
interfaces}, Adv. Differential Equations \textbf{12} (2007),
623-668.


\bibitem{lazer2}
{\sc S. Stock}, {\sc V. Bretin}, {\sc F. Chevy}, and {\sc J.
Dalibard}, \emph{Shape oscillation of a rotating Bose-Einstein
condensate}, Europhys. Lett. \textbf{65} (2004), 594.

\bibitem{sulem}
{\sc C. Sulem}, and {\sc P. L. Sulem}, The nonlinear Schr\"{o}dinger
equation, Springer-Verlag, New York, 1999.

\bibitem{sun}
{\sc J. Sun}, \emph{An equation for the limit state of a
superconductor with pinning sites}, Electronic Journal of
Differential Equations, Vol. \textbf{2005} (2005),   1-24.


\bibitem{thomas}
{\sc L. H. Thomas}, \emph{The calculation of atomic fields}, Proc.
Cambridge Philos. Soc. \textbf{23} (1927), 542.

\bibitem{tsai}
{\sc T-P. Tsai}, and {\sc H-T. Yau}, \emph{Asymptotic dynamics of
nonlinear Schr\"{o}dinger equations: Resonance-dominated and
dispersion-dominated solutions}, Comm. Pure Appl. Math. \textbf{55}
(2002), 153-216.



\bibitem{turcotte}
{\sc D. L. Turcotte},  {\sc D. A. Spence},  and {\sc H. H. Bau},
\emph{Multiple solutions for natural convective flows in an
internally heated, vertical channel with viscous dissipation and
pressure work}, Int. J. Heat Mass Transfer \textbf{25} (1982),
699--706.

\bibitem{vandyke}
{\sc M. Van Dyke}, Perturbation methods in fluid mechanics, Academic
Press, 1964.

\bibitem{weiweth}
{\sc J. Wei}, and {\sc T. Weth}, \emph{Asymptotic behavior of
solutions of planar elliptic systems with strong competition},
 Nonlinearity \textbf{21} (2008), 305--317.

\bibitem{weilayerspike}
{\sc J. Wei}, and {\sc J. Yang}, \emph{Solutions with transition
layer and spike in an inhomogeneous phase transition model}, J.
Differential Equations \textbf{246} (2009), 3642--3667.


\bibitem{weicluster}
{\sc J. Wei}, and {\sc J. Yang}, \emph{Toda system and cluster phase
transition layers in an inhomogeneous phase transition model},
Asymptotic Analysis \textbf{69} (2010), 175-218.

\bibitem{weipitaevskii}
{\sc J. Wei}, {\sc T. C. Lin}, and {\sc J. Yang}, \emph{Vortex rings
for the Gross-Pitaevskii equation in $\mathbb{R}^3$}, preprint
(2012), available online at
\texttt{[http://www.math.cuhk.edu.hk/$\sim$wei]}.

\bibitem{weivortex-2}
{\sc J. Wei}, and {\sc J. Yang}, \emph{Vortex rings pinning for the
Gross--Pitaevskii equation in three dimensional space}, preprint
(2012), available online at
\texttt{[http://www.math.cuhk.edu.hk/$\sim$wei]}.


\bibitem{lazer3}
{\sc C. N. Weiler, et al.}, \emph{Spontaneous vortices in the
formation of Bose--Einstein condensates}, Nature \textbf{455}
(2008), 948.

\bibitem{wu-marg}
{\sc T. T. Wu}, \emph{Bose--Einstein condensation in an external
potential at zero temperature: General theory}, Phys. Rev. A
\textbf{58} (1998), 1465-1474.

\bibitem{yankevrekidis}
 {\sc D. Yan},
{\sc P. G. Kevrekidis}, and {\sc D. J. Frantzeskakis}, \emph{Dark
solitons in a Gross-Pitaevskii equation with a power--law
nonlinearity: application to ultracold Fermi gases near the
Bose-Einstein condensation regime}, J. Phys. A: Math. Theor.
\textbf{44} (2011), 415202.

\bibitem{decaypot}
{\sc H. Yin}, and  {\sc P. Zhang}, \emph{Bound states of nonlinear
Schr\"{o}dinger equations with potentials tending to zero at
infinity}, J. Differential Equations \textbf{247} (2009), 618-647.

\bibitem{zhangZAMP}
{\sc J. Zhang}, \emph{Stability of standing waves for nonlinear
Schr\"{o}dinger equations with unbounded potentials}, Z. Angew.
Math. Phys. \textbf{51} (2000), 498--503.


\bibitem{nonlinearanalysisChinese}
{\sc L. Zhou}, {\sc H. Xu}, and {\sc Z. Liu}, \emph{Asymptotic
behavior of critical points for a Gross--Pitaevskii energy},
Nonlinear Analysis \textbf{74} (2011), 4274-4291.

\end{thebibliography}
\end{document}